\newtheorem{theorem}{Theorem}[section]
\newtheorem{CO}[theorem]{Corollary}
\newtheorem{LE}[theorem]{Lemma}
\newtheorem{CN}[theorem]{Conjecture}
\theoremstyle{definition}
  \newtheorem{DEF}[theorem]{Definition}
\newcounter{claim_nb}[theorem]
\newtheorem{claim}[claim_nb]{Claim}
\newtheorem*{claim*}{Claim}
\newcommand{\zC}{\mathcal C}
\newcommand{\zD}{\mathcal D}
\newcommand{\zH}{\mathcal H}
\newcommand{\zL}{\mathcal L}
\newcommand{\zR}{\mathcal R}
\newcommand{\zP}{\mathcal P}
\newcommand{\zQ}{\mathcal Q}
\newcommand{\zS}{\mathcal S}
\newcommand{\zT}{\mathcal T}
\newcommand{\zX}{\mathcal X}
\newcommand{\zY}{\mathcal Y}
\newcommand{\ignore}[1]{}
\newenvironment{cproof}
{\begin{proof}
 [Proof.]
 \vspace{-1.5\parsep}
}
{ \end{proof}}
\newcommand{\?}[1]{%
  \marginpar{%
    \begin{minipage}{2cm}
      \begin{flushleft}%
      \end{flushleft}%
   \end{minipage}%
  }%
}%
\font\smallrm=cmr8
\def\myclaim#1#2\par{{\medbreak\noindent\rlap{\rm(#1)}\ignorespaces
 \rightskip20pt
 \hangindent=20pt\hskip20pt{\ignorespaces\sl#2}\smallskip}}
\def\rt#1{#1}
\def\cc#1{{\color{Magenta}#1}}
\begin{document}
{ 
\baselineskip=12pt
\phantom{a}\vskip .25in
\centerline{\bf  QUICKLY EXCLUDING A NON-PLANAR GRAPH}\vskip.4in
\centerline{{\bf Ken-ichi Kawarabayashi}
\footnote{Supported by JST ERATO Kawarabayashi Large Graph Project JPMJER1201 and by JSPS Kakenhi JP18H05291.}}
\centerline{National Institute of Informatics}
\centerline{2-1-2 Hitotsubashi, Chiyoda-ku, Tokyo 101-8430, Japan}
\medskip
\centerline{{\bf Robin Thomas}%
\footnote{Partially supported by NSF under
Grant DMS-1202640. }}
\centerline{School of Mathematics}
\centerline{Georgia Institute of Technology}
\centerline{Atlanta, Georgia  30332-0160, USA}
\medskip
\centerline{and}
\medskip
\centerline{{\bf Paul Wollan}
\footnote{Supported by the European Research Council under the European Unions Seventh Framework Programme (FP7/2007-2013)/ERC Grant Agreement no. 279558}}
\centerline{Department of Computer Science}
\centerline{University of Rome}
\centerline{00198 Rome, Italy}

\vskip 1in \centerline{\bf ABSTRACT}
\bigskip
\parshape=1.5truein 5.5truein
A cornerstone theorem in the Graph Minors series of Robertson and Seymour
is the result that every graph $G$ with
no minor isomorphic to a fixed graph $H$ has a certain structure.
The structure can then be exploited to deduce far-reaching consequences.
The exact statement requires some explanation, but roughly it says that there
exist  integers $k,n$ depending on $H$ only such that $0<k<n$ and
for every $n\times n$ grid minor $J$ of $G$ the graph $G$ has a
a $k$-near embedding in a surface $\Sigma$ that does
not embed $H$ in such a way that a substantial part of $J$ is embedded in $\Sigma$.
Here a $k$-near embedding means that after deleting at most $k$
vertices the graph can be drawn in $\Sigma$ without crossings,
except for  local
areas of non-planarity, where crossings are permitted, but
at most $k$ of these areas are attached to the rest of the graph by four or more vertices
and inside those the graph is
constrained in a different way, again depending on the parameter $k$.

\parshape=1.5truein 5.5truein
The original and only proof so far is quite long and uses many results
developed in the Graph Minors series.
We give a proof that uses only our earlier paper
[A new proof of the flat wall theorem,
{\it J.~Combin.\ Theory Ser.\ B \bf 129} (2018), 158--203]
and results from graduate textbooks.

\parshape=1.5truein 5.5truein
Our proof is constructive and yields a polynomial time algorithm to construct such a structure.  We
also give explicit constants for the structure theorem, whereas the original proof only guarantees the
existence of such constants.

\vfil\eject
} 

\section{Introduction}

This paper is a continuation of~\cite{flatwall}.
Together these two papers give a self-contained  proof of the excluded clique  theorem~\cite{RS16}
of Robertson and Seymour.

All graphs in this paper are finite, and may have loops and parallel edges.
A graph is a \emph{minor}\?{minor} of another if the first can be obtained
from a subgraph of the second by contracting edges.
An \emph{$H$ minor}\?{$H$ minor} is a minor isomorphic to $H$.
The  question we address is: for a fixed graph $H$,
what can we say about  the structure of graphs that  have no $H$ minor?

There are many so-called excluded minor theorems
in graph theory that answer this question in the form of a necessary and sufficient condition
for various small graphs $H$ (or sets of graphs).
The best known such theorem is perhaps Wagner's reformulation
of Kuratowski's theorem~\cite{Wag37Kur},
which says that a graph has no $K_5$ or
$K_{3,3}$ minor if and only if it is planar.
One can also characterize graphs that exclude only one of those
minors.
To state such a characterization for excluded $K_5$ we need the following
definition.
Let $H_1$ and $H_2$ be graphs, and let $J_1$ and $J_2$ be complete subgraphs
of $H_i$ and $H_2$, respectively, with the same number of vertices.
Let $G$ be obtained from the disjoint union of $H_1-E(J_1)$ and
$H_2-E(J_2)$ by choosing a bijection between $V(J_1)$ and $V(J_2)$
and identifying the corresponding pairs of vertices.
We say that $G$ is a \emph{clique-sum}\?{clique-sum} of $H_1$ and $H_2$.
Since we allow parallel edges, the set that results from the identification
of $V(J_1)$ and $V(J_2)$ may include edges of the clique-sum.
For instance, the graph obtained from $K_4$ by deleting an edge can be
expressed as a clique-sum of two smaller graphs, where one is a triangle and
the other is a triangle with a parallel edge added.
By $V_8$ we mean the graph obtained from a cycle of length eight by
adding an edge joining every pair of vertices at distance four in the cycle.
The characterization of graphs with no $K_5$ minor, due to
Wagner~\cite{Wag37}, reads as follows.

\begin{theorem}
\label{thm:wagner}
A graph has no $K_5$ minor if and only if it can be obtained by
repeated clique-sums, starting from planar graphs and $V_8$.
\end{theorem}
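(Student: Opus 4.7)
The plan is to prove both directions of the equivalence, with most of the effort going into necessity.

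\textbf{Sufficiency.} Planar graphs have no $K_5$ minor by Kuratowski--Wagner, and $V_8$ has no $K_5$ minor by direct inspection (only $12$ edges on $8$ vertices, with a symmetric cyclic structure that makes the check a short case analysis). I would then show that clique-sums over cliques of size at most $3$ preserve the absence of a $K_5$ minor: given $G = G_1 \oplus_S G_2$ with $|S| \leq 3$ and a putative $K_5$ minor of $G$ with branch sets $B_1,\ldots,B_5$, contracting each component of $V(G_2)\setminus S$ yields a graph on $V(G_1)$ together with extra vertices whose neighborhoods lie in the clique $S$; since $S$ is complete, these extra vertices can be absorbed into the branch sets, producing a $K_5$ minor of $G_1$ and contradicting induction on the number of summands.

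\textbf{Necessity.} Induct on $|V(G)|$. If $G$ has a cut of size at most $2$, write $G = G_1 \cup G_2$ with $V(G_1)\cap V(G_2)=S$, add the missing edge inside $S$ to each piece (if $|S|=2$), verify that each augmented piece remains $K_5$-minor-free by using the existence on the opposite side of $|S|$ internally disjoint paths realizing the added clique, and apply induction. If $G$ is $3$-connected, the main subcase is $G$ being $4$-connected, in which case I would prove $G$ must be planar. Since $G$ excludes $K_5$ as a minor, Kuratowski guarantees a $K_{3,3}$-subdivision in $G$ whenever $G$ is non-planar; a careful bridge analysis, exploiting $4$-connectivity to control how bridges can attach to the subdivision, then produces either a $K_5$ minor (contradiction) or a planar embedding. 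When $G$ is $3$-connected but not $4$-connected, I would either exhibit a $3$-cut splitting $G$ into two substantial pieces -- enabling an inductive clique-sum decomposition after adjoining a triangle on the cut and checking that the pieces remain $K_5$-minor-free -- or argue that every $3$-cut of $G$ is trivial in the sense of isolating a single vertex, which combined with the minor hypothesis rigidly forces $G \cong V_8$.

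\textbf{Main obstacle.} The crux is the $4$-connected case: proving that every $4$-connected graph with no $K_5$ minor is planar. This is Wagner's original combinatorial contribution, and it seems to require a delicate bridge-by-bridge analysis of a $K_{3,3}$-subdivision rather than any general minor-theoretic shortcut. A secondary technical point is cleanly identifying when $V_8$ must appear as an atomic summand; namely, showing that a $3$-connected $K_5$-minor-free graph whose only $3$-cuts are the neighborhoods of single vertices must be $V_8$ itself. Both of these steps are where the detailed work of the proof is concentrated; the low-connectivity reductions and the inductive packaging are routine by comparison.
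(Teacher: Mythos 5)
First, a point of reference: the paper does not prove Theorem~\ref{thm:wagner} at all --- it is stated as background and attributed to Wagner~\cite{Wag37} --- so there is no in-paper argument to compare against; your proposal has to be measured against the classical proof, and in outline it is exactly that proof (low-connectivity reductions, the key lemma that every $4$-connected non-planar graph has a $K_5$ minor, and $V_8$ as the unique non-planar atomic summand). The architecture is sound, but as written it is a plan rather than a proof: the two pivotal steps --- the bridge analysis of a $K_{3,3}$-subdivision in the $4$-connected case, and the rigidity statement that a $3$-connected, \emph{non-planar}, $K_5$-minor-free graph whose $3$-cuts are all trivial must be $V_8$ --- are only named, not carried out, and they are where essentially all of Wagner's work lies. (Also note the rigidity claim needs the non-planarity hypothesis made explicit: the cube is $3$-connected, has only trivial $3$-cuts and no $K_5$ minor, and is simply a planar base piece.)

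Two further concrete points. For sufficiency, the theorem as stated (with the paper's definition of clique-sum, which allows the identified complete subgraphs to have any size and deletes their edges) requires that clique-sums of \emph{arbitrary} order preserve $K_5$-minor-freeness --- planar summands can share a $K_4$, so restricting to sums over cliques of size at most $3$ leaves a gap; fortunately your absorption argument (push the model into the summand $H_1$, where the identified set induces a clique, reconnecting and re-adjacent-ing branch sets through that clique) works verbatim for any order, so this is a phrasing issue rather than a mathematical one. For necessity, the step ``adjoin a triangle on a $3$-cut and check the pieces remain $K_5$-minor-free'' is not routine: one needs the far side to contain three internally disjoint paths forming a triangle on the cut (a rooted $K_3$), and proving this from $3$-connectivity together with the far side having at least two vertices is a genuine lemma. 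Its failure when the far side is a single vertex is precisely why $V_8$ must be atomic --- for instance, $V_8-v$ with a triangle added on the three neighbours of $v$ \emph{does} contain a $K_5$ minor --- so your substantial-versus-trivial dichotomy is the right idea, but the ``substantial'' case needs this linkage argument spelled out rather than asserted.
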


We refer to~\cite{DieGrDec} for a survey of excluded minor theorems.
One might expect that  Theorem~\ref{thm:wagner}  could be extended
to $K_p$ minors for $p\ge6$, but  no such extension is known. Even the next case
of excluding a $K_6$ minor appears to be quite complicated.
For instance, there is a class of graphs that came up in the work of
Robertson,  Seymour and the second author.
In~\cite{WhaPhD} it is shown that no member of that class  has a $K_6$ minor,
and it seems that a structural characterization of graphs with no $K_6$ minor
will have to include this class. On the other hand, the fairly complicated nature of
this class of graphs indicates that it will not be easy to prove such a characterization.
That is the situation for excluding a $K_6$ minor, whereas characterizing graphs
with no $K_p$ minor for $p\ge7$ exactly seems hopeless at the moment.

Thus when excluding an $H$ minor for a general graph $H$ we need to settle
for a less ambitious goal---a theorem that gives a necessary condition
for excluding an $H$ minor, but not necessarily a sufficient one.
However, for such a theorem to be meaningful, the structure it describes
must be sufficient to exclude some other, possibly larger graph $H'$.
For planar graphs $H$ this has been done by
Robertson and Seymour~\cite{RS5}.
To state their theorem we need to recall that the
\emph{tree-width}\?{tree-width} of a graph $G$ is the least integer
$k$ such that $G$ can be obtained by repeated clique-sums,
starting from graphs on at most $k+1$ vertices.

\begin{theorem}
\label{thm:grid}
For every planar graph $H$ there exists an integer $k$ such that
every graph with no $H$ minor has tree-width at most $k$.
If $H$ is not planar, then no such integer exists.
\end{theorem}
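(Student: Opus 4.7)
The plan is to prove the two directions separately. For the non-planar direction, I would exhibit the $n \times n$ grids $G_n$ as counterexamples: each $G_n$ is itself planar, so all of its minors are planar, and hence $G_n$ excludes $H$ as a minor whenever $H$ is non-planar. To rule out any constant $k$, it suffices to show that the tree-width of $G_n$ tends to infinity, and I would verify the classical bound that $G_n$ has tree-width at least $n$ by exhibiting a bramble of order $n+1$ built from the horizontal and vertical ``crosses'' of the grid (equivalently, a haven of the same order). Sending $n \to \infty$ then defeats any candidate $k$.

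For planar $H$, the strategy rests on two ingredients. First, every planar graph is a minor of a sufficiently large grid: if $|V(H)| = m$, then via a straight-line embedding of a subdivision of $H$ on an integer grid (F\'ary's theorem combined with local refinement, or Schnyder's grid-drawing theorem), $H$ is a minor of the $N \times N$ grid for some $N$ depending only on $m$. Second, one invokes the Excluded Grid Theorem: for every positive integer $r$ there exists $f(r)$ such that every graph of tree-width at least $f(r)$ contains the $r \times r$ grid as a minor. Setting $k := f(N) - 1$ then closes the argument, since any graph whose tree-width exceeds $k$ contains an $N \times N$ grid minor, and hence an $H$ minor.

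The main obstacle is clearly the Excluded Grid Theorem itself. I would follow the classical route of first extracting a high-order tangle (or bramble) from large tree-width via the tree-width/bramble duality theorem, which costs only a multiplicative constant. The heart of the argument is then to promote this abstract obstruction into a genuine $r \times r$ grid minor: one iteratively extracts large well-linked sets inside the tangle and applies linkage and routing arguments to build $r$ pairwise disjoint ``horizontal'' paths, and then, inside those, $r$ pairwise disjoint ``vertical'' paths arranged in the correct cyclic order. Controlling the interaction of these two path systems so that they cross in a grid-like pattern rather than tangling chaotically is the genuinely hard step, and it is what drives the (originally tower-type, more recently polynomial) quantitative bounds on $f(r)$.
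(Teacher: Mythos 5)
Your reduction is correct, but note that the paper does not actually prove this statement: Theorem~\ref{thm:grid} is quoted from Robertson and Seymour~\cite{RS5}, and the authors explicitly point to the later simplified treatments (\cite{RST}, \cite{DGJ}, Diestel's text~\cite{Diestel}, and the polynomial bounds of~\cite{CC, ChuImproved}) rather than reproving it, declaring a self-contained proof beyond the paper's scope. Your two-direction argument is the standard one and is sound: for non-planar $H$ the planar grids $G_n$ exclude $H$ (planarity being minor-closed) while their tree-width grows, certified by the cross-bramble of order $n+1$; for planar $H$ you use that $H$ is a minor of an $N\times N$ grid with $N$ depending only on $|V(H)|$, and then the Excluded Grid Theorem gives $k:=f(N)-1$. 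The only caveat is that your final paragraph, where the real mathematical weight lies, is a sketch rather than a proof: ``iteratively extract well-linked sets and route horizontal and vertical path systems'' is precisely the hard content of the Excluded Grid Theorem, and as written it would not stand on its own. Since the paper itself treats that theorem as a black box from the literature, your proposal is at the same level of completeness as the paper's treatment, provided you cite one of the published proofs for that step rather than claiming the sketch as a proof.
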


This is a very satisfying theorem, because it is best possible in
at least two respects. Not only is there no such integer when $H$ is not planar,
but no graph of tree-width $k$ has a minor isomorphic to the
$(k+1)\times(k+1)$-grid.

\subsection{The Excluded Clique Minor Theorem}

We now turn to excluding a non-planar graph. First a definition.
An $r$-wall is obtained from a $2r\times r$-grid by deleting
every odd vertical edge in every odd row and every even vertical
edge in every even row, then deleting the two resulting vertices
of degree one, and finally subdividing edges arbitrarily.
(We define this again more carefully in Section~\ref{sec:weak}.)
Our objective is to  prove the excluded clique minor theorem of Robertson and Seymour~\cite{RS16}.
Here is a preliminary version
(definitions to follow).

\begin{theorem}
\label{thm:main0}
For every graph $H$ there exist integers $k,n$ with $0<k<n$ such  that the following holds.
Let $G$ be a graph with no $H$ minor.
Then for every $n$-wall $W$ in $G$ there exist
a set $A\subseteq V(G)$ of size at most $k$,
a surface $\Sigma$ of Euler genus at most $k$ and
a $W$-central $\Sigma$-decomposition
of $G-A$
of depth at most $k$ and  breadth at most $k$.
\end{theorem}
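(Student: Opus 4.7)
The plan is to use the flat wall theorem of \cite{flatwall} as the main engine. Given the $n$-wall $W$ in $G$, I would iteratively apply that theorem to extract a large flat subwall, paying an apex vertex whenever an unavoidable obstruction appears, and then convert the resulting disc embedding of the flat subwall into a $\Sigma$-decomposition of $G-A$ by adding bridges, handles, crosscaps, and vortices one at a time.

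\textbf{Parameters and apex extraction.} I would first fix $k$ large in terms of $|V(H)|$ and the Euler genus $g$ of $H$, and $n$ much larger than $k$ so that the flat wall theorem applied to $W$ returns a flat subwall large enough to survive all subsequent reductions. Starting from $W$, I would repeatedly apply the flat wall theorem: at each stage either we directly produce an $H$ minor (contradiction), or we find a vertex to add to the apex set $A$ together with a still-large subwall of $W$ in the remaining graph, or we arrive at a flat subwall $W_0$ of $W$ inside $G-A$. Since each apex vertex absorbs an obstruction that would otherwise yield an $H$-minor model, and since only boundedly many such obstructions (bounded in terms of $|V(H)|$) can coexist, this loop terminates with $|A|\le k$.

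\textbf{Building the $\Sigma$-decomposition.} The flat wall $W_0$ already comes with a $\Sigma$-decomposition where $\Sigma$ is a closed disc and $W_0$ is drawn in $\Sigma$. I would then extend this decomposition to all of $G-A$ bridge by bridge: for each connected piece of $G-A$ outside the current drawing, attached to the drawing along some vertex set, choose one of three actions: (i) draw the bridge inside a face, keeping $\Sigma$ unchanged; (ii) add a handle or a crosscap to $\Sigma$ and reroute the bridge through it, increasing the Euler genus; or (iii) absorb the bridge into a vortex at some face, possibly increasing the breadth of the decomposition or the depth of an existing vortex. Throughout, $W$-centrality is preserved because every reduction passes from $W_0$ to a subwall of bounded loss, so a substantial part of the original $W$ ends up embedded in the surface part of the decomposition.

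\textbf{Bounding and main obstacle.} To bound the Euler genus, I would argue that more than $g$ handles or crosscaps would let us draw $H$ inside $\Sigma$ and, combined with the density of $W_0$, produce an $H$ minor of $G$. To bound the breadth, each vortex needs its own arc of the wall to attach to, so too many vortices routed in parallel would again let us realize $H$ as a minor using pieces of $W_0$ as connectors. The depth is bounded by a standard Menger/linkage argument: a deep vortex contains many disjoint paths between two groups of attachment vertices, and such a linkage combined with $W_0$ forces an $H$ minor. The main obstacle, and where most of the real work lies, is that the three operations interact: adding a vortex may spawn new bridges that in turn demand a new handle or a new apex vertex, and vice versa. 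The proof must therefore supply a monovariant (a potential depending on $|A|$, the Euler genus of $\Sigma$, and the number and depth of vortices) that is bounded by a function of $H$ and strictly improves at every nontrivial iteration. Provided this monovariant is set up properly, the iteration terminates within a bounded number of steps and delivers the required $W$-central $\Sigma$-decomposition of $G-A$ with all parameters at most $k$.
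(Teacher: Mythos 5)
Your sketch contains a genuine gap, and it sits exactly where you defer the work: the treatment of the vortices and the ``monovariant''. The claim that a deep vortex is impossible because ``many disjoint paths between two groups of attachment vertices, combined with $W_0$, force an $H$ minor'' is false as stated. A large transaction in a vortex society may be planar, and a planar transaction does not by itself yield any minor; it is in fact compatible with arbitrarily large depth unless it is re-embedded. The actual proof has to classify what a large transaction can look like once it is rooted on a nest of cycles around the vortex and made coterminal with a radial linkage tied to the wall: it is either (a) a planar strip, which is shown (via the strip-society analysis and the results of the flat wall paper) to be rural and isolated, so the vortex can be split and the strip absorbed into the disk part of the decomposition; (b) a crosscap or handle transaction, which is used to increase the genus of $\Sigma$; or (c) a nested-crosses transaction or a ``leap pattern'', and only these last two are converted into a clique minor grasped by the wall, by nontrivial routing arguments through the nest and the orthogonal linkage. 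Without this trichotomy there is no way to define a potential that strictly improves: case (a) changes neither $|A|$, the genus, nor the number of vortices in an obvious monotone way, and your outline gives no mechanism to rule it out or to make progress on it. Saying ``provided this monovariant is set up properly, the iteration terminates'' is precisely the content of the paper's central technical theorem (the bounded-width-and-depth rendition theorem and its refinement), which occupies most of the argument; it cannot be assumed.

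Two further points are not minor either. First, $W$-centrality is not ``preserved automatically because we only pass to subwalls of bounded loss'': in the paper it is achieved at the very end by an extra deletion step, using the fact that a society of depth $d$ has a linear decomposition of adhesion $d$, so that from each vortex one can remove a bounded set of vertices ensuring no cell of the decomposition swallows the majority side of a small-order separation of the wall. Second, your genus bound (``more than $g$ handles or crosscaps would let us draw $H$ in $\Sigma$ and hence find an $H$ minor'') needs the handle and crosscap transactions accumulated so far to be pairwise disjoint, attached to disjoint segments of a common society, and orthogonal to one nest, so that they can be combined into a $K_p$ minor grasped by the wall; this is exactly what the $\Sigma$-configuration bookkeeping and the corresponding clique-construction lemma provide, and it does not follow from ``the density of $W_0$'' alone. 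So while your high-level shape (flat wall theorem, apex vertices, grow the surface, bound genus/breadth/depth by producing $H$ minors) matches the paper's strategy in outline, the proposal omits the society/rendition machinery that makes each of those steps true, and the one concrete mechanism you do assert for bounding depth is incorrect.
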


To understand the theorem we need to define $\Sigma$-decompositions, their
depth and breadth, and  what it means for them to be $W$-central. We first do so informally, leaving the precise definitions for later.

A {\em $\Sigma$-decomposition} $\delta$ of a graph $G$ consists of a drawing of $G$ in $\Sigma$ with crossings
and a family of closed disks $\Delta_1,\Delta_2,\ldots,\Delta_m$ such that
\begin{itemize}
\item the disks have pairwise disjoint interiors,
\item the boundary of each disk intersects (the drawing of) $G$ in vertices only,
\item $\Delta_i\cap\Delta_j\subseteq V(G)$ for distinct $i,j\in\{1,2,\ldots ,m\}$, and
\item every crossing of $G$ belongs to one of the disks.
\end{itemize}

The {\em breadth} of $\delta$ is the number of integers $i\in\{1, 2, \ldots,m\}$ such  that the boundary of
$\Delta_i$ intersects  $G$ at least four times.
We will refer to such disks as {\em vortices}.
The {\em depth} of a vortex $\Delta_i$ is the  maximum integer $d$ such that the boundary of
$\Delta_i$  can be written as a disjoint union of two connected sets $A$ and $B$,
and there exist $d$ disjoint paths in $G\cap\Delta$,
the subgraph of $G$ drawn in $\Delta$, from $A\cap V(G)$ to $B\cap V(G)$.
The {\em depth} of $\delta$ is the maximum depth of a vortex.
Finally, $\delta$ is {\em $W$-central} if (roughly) for every $i\in\{1, 2, \ldots,m\}$, most of $W$ is
drawn in the complement of $\Delta_i$.

Before presenting the full technical definitions and the statement of the excluded minor theorem, we conclude this section presenting several additional results.  We give the full statements, but note that they require the following definitions all of which are defined rigorously in the next section: model of a minor grasped by a wall $W$, a subwall of a wall, a flat wall in a $\Sigma$-decomposition, and a $W$-central $\Sigma$-decomposition.

\subsection{In the context of the Graph Minors series}

Robertson and Seymour, in their original Graph Minors series of papers which develops the theory of excluded minors, proved several versions of the characterization of graphs excluding a fixed minor.  Specifically, \cite{RS16} presents a \emph{global} excluded minor structure  theorem and an excluded minor structure theorem \emph{with respect to a fixed tangle}.  The global excluded minor structure theorem, Theorem 1.3 of \cite{RS16}, states that given graph $G$ with no $H$ minor can be decomposed into a tree-structure of graphs each admitting a $\Sigma$-decomposition of bounded breadth and depth in a surface $\Sigma$ of bounded genus.  The second variant, Theorem 3.1 of \cite{RS16}, gives the surface decomposition with respect to a fixed object called a tangle, much in the same way that Theorem \ref{thm:main0} gives the structure with respect to a fixed wall.

Of the two variants, perhaps the global decomposition theorem is better known (see for example \cite{Diestel5}) and it has a number of algorithmic applications.  However, Robertson and Seymour's  original application in their proof of Wagner's conjecture that graphs are well-quasi ordered under graph minors relies on the structure with respect to a tangle, and Robertson and Seymour referred to the global structure theorem as a ``red herring" in their search for the proof of Wagner's conjecture \cite{RS16}.  Moreover, starting from the structure theorem for excluded minors with respect to a tangle (Theorem~3.1 of \cite{RS16}), it is possible to quickly derive the global decomposition theorem (Theorem~1.3 of \cite{RS16}).  After giving the appropriate definitions, the proof is only two pages. See, for example, Theorem 4 of \cite{DKMW}.  For these reasons, in this paper we  will focus on Theorem 3.1 of \cite{RS16} which gives the structure with respect to a tangle.

Our Theorem~\ref{thm:main0} differs from Theorem 3.1 of \cite{RS16} in two ways.  First, Theorem \ref{thm:main0} is weaker than the original theorem of Robertson and Seymour
in the sense that in~\cite[Theorem~3.1]{RS16} the surface $\Sigma$ is further restricted by saying that the excluded graph does not embed in $\Sigma$.  In Section~\ref{sec:bdgenus} we deduce such a version, but not with explicit bounds, as follows.

\begin{theorem}
\label{thm:main2}
For every graph $L$, there exists an integer $\mu = \mu(L)$ such that the following holds.  Let $r \ge 1$ be a positive integer and let $R \ge 49152|V(L)|^{24}r + \mu$.
Let $G$ be a graph, and let $W$ be an $R$-wall in $G$.
Then either $G$ has a model of an $L$ minor grasped by $W$,
or there exist a set $A\subseteq V(G)$ of size at most $\mu$,
a surface $\Sigma$ such that $L$ cannot be drawn in $\Sigma$,
a $\Sigma$-decomposition $\delta$ of $G-A$
of depth and breadth at most $\mu$ and an $r$-subwall
$W'$ of $W$ such that $V(W')\cap A=\emptyset$ and $W'$ is a flat wall in $\delta$.
\end{theorem}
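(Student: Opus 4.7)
The plan is to derive Theorem~\ref{thm:main2} from Theorem~\ref{thm:main0} by a dichotomy on whether $L$ embeds in the surface produced by the latter. I would apply Theorem~\ref{thm:main0} with $H := L$---in the form that gives a \emph{grasped} $L$ minor in its first alternative, which is a minor strengthening that either is implicit in the proof of Theorem~\ref{thm:main0} or follows by rerouting branch sets through a large subwall of $W$---to obtain integers $k_0, n_0$, and set $N$ to be a sufficiently large function of $r$, $k_0$, and $|V(L)|$, the precise value to be fixed at the end. Given an $N$-wall $W$ in $G$ that does not grasp any $L$ minor, this produces an apex set $A$ with $|A| \le k_0$, a surface $\Sigma$ of Euler genus at most $k_0$, and a $W$-central $\Sigma$-decomposition $\delta$ of $G-A$ of depth and breadth at most $k_0$.

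The argument then splits on whether $L$ embeds in $\Sigma$. If $L$ does not embed in $\Sigma$, it suffices to extract an $r$-subwall $W'$ of $W$ that avoids both $A$ and the at most $k_0$ vortex disks of $\delta$; such a $W'$ lies in a planar portion of the drawing in $\Sigma$ and is therefore flat in $\delta$ in the sense of the formal definition. Since $|A| \le k_0$ and there are at most $k_0$ vortices, a standard row-and-column partitioning argument on $W$ produces the desired $W'$ provided $N$ is large enough relative to $r$, $k_0$, and the vortex depth.

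If $L$ does embed in $\Sigma$, the goal is to derive a contradiction with the standing assumption that $W$ does not grasp an $L$ minor. The key ingredient is the classical fact (going back to Robertson--Seymour; see also Mohar--Thomassen) that a graph $2$-cell embedded in $\Sigma$ containing a sufficiently large wall already realizes, as a minor grasped by the wall, every graph that embeds in $\Sigma$, with a size threshold depending only on $|V(L)|$ and the Euler genus of $\Sigma$. By $W$-centrality and the breadth bound, all but a bounded portion of $W$ is drawn without crossings and outside all vortex disks of $\delta$; passing to this genuinely embedded subwall and applying the classical fact inside $G-A$ yields an $L$ minor grasped by $W$, contradicting the assumption.

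The main obstacle is adapting the classical surface-minor lemma to the setting of $\Sigma$-decompositions with vortices, rather than true embeddings. The resolution is the trimming step described above, but executing it so that the resulting minor is grasped by the \emph{original} wall $W$ (rather than merely by the trimmed subwall) requires some care in the routing; this is where the $W$-centrality of $\delta$ and the bounded depth of each vortex are used. Once this is done, calibrating $N$ to dominate both the surface-minor threshold and the subwall-extraction threshold simultaneously completes the proof.
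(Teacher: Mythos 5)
There is a genuine gap in the second half of your argument, and it is the half that carries all the weight. You claim that if the surface $\Sigma$ returned by Theorem~\ref{thm:main0} happens to embed $L$, then one can contradict the assumption that $W$ grasps no $L$ minor, by invoking the classical fact that a graph embedded in $\Sigma$ with a large wall realizes every $\Sigma$-embeddable graph as a grasped minor. But that classical fact requires the embedding to have large representativity (face-width), and nothing in Theorem~\ref{thm:main0} certifies that the genus of $\Sigma$ is actually ``used'' by $G-A$: the theorem only asserts existence of \emph{some} surface of Euler genus at most $k$ admitting a $W$-central decomposition, with no minimality or representativity guarantee. Concretely, if $G$ is a huge planar grid, it has a $W$-central decomposition in the sphere, and one can add a handle inside a face to obtain an equally valid $W$-central decomposition in the torus of the same depth and breadth; taking $L=K_5$, your case analysis would then demand a $K_5$ minor grasped by $W$ in a planar graph. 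So the statement ``$L$ embeds in $\Sigma$ implies a grasped $L$ minor'' is false at the level of generality you are working at, and no calibration of $N$ or rerouting through $W$-centrality and bounded vortex depth can repair it, because the hypothesis simply does not contain a witness that the handles and crosscaps are needed. (Your first case, extracting a flat $r$-subwall avoiding $A$ and the vortices, is closer to workable but also glosses over the fact that wall branches may pass through vortex cells.)

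This is precisely why the paper does not deduce Theorem~\ref{thm:main2} from Theorem~\ref{thm:main0}. Instead it goes back inside the proof, to Theorem~\ref{thm:main3}, whose output is a $\Sigma$-configuration in which every handle and crosscap of $\Sigma$ comes equipped with a transaction of large thickness (the family $\gamma$), together with a nest $\zC_2$ and a radial linkage $\zP_2$ overlaying $W$. These transactions are exactly the missing witnesses: when $L$ embeds in $\Sigma$, the paper uses Lemma~\ref{lem:Hexists} to pass to a graph $L'\supseteq L$ with a clean drawing (handle/crosscap edges forming a matching, the rest Hamiltonian) and then routes a model of $L'$, grasped by $W$, explicitly through $\zC_2\cup\zP_2\cup\gamma$, obtaining the desired contradiction; when $L$ does not embed in $\Sigma$, flatness of the $r$-subwall comes for free from the rural vortex society of the cell $c_0$ of the configuration, rather than from a trimming argument. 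If you want to salvage your outline, you must replace the appeal to Theorem~\ref{thm:main0} by a structure that records, for each unit of genus, a combinatorial certificate such as these thick handle and crosscap transactions.
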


Note that if $L=K_p$, then the Euler genus of $\Sigma$ is at most $\lceil \frac{(p-3)(p-4)}{12} \rceil$.

The second notable difference between Theorem \ref{thm:main0} and Theorem 3.1 of \cite{RS16} is that Theorem \ref{thm:main0} gives the surface decomposition which incorporates a large portion of a fixed wall, whereas Theorem 3.1 of \cite{RS16} gives the structure with respect to a large \emph{tangle}.

Tangles, originally introduced in \cite{RS10}, are defined by a consistent orientation of all small order separations in a graph.  A large order tangle is a certificate that a graph has large tree-width.  Theorem \ref{thm:grid} implies therefore that a large order tangle ensures the existence of a large wall subgraph.  Conversely, a wall subgraph naturally defines a tangle in a graph, as shown in \cite{RS10}.

The relationship between walls and tangles, made explicit through Theorem \ref{thm:grid}, has received significant attention to date, yielding both simplified proofs and improved the bounds.  The original proof of \cite{RS5} did not give an explicit bound on the value $k$.  Robertson, Seymour, and Thomas gave a simpler proof with explicit bounds in \cite{RST}.  Further simplifications in \cite{DGJ} yielded the proof of Theorem \ref{thm:grid} in Diestel's graduate text \cite{Diestel}.  Most recently, Chekuri and Chuzoy proved a polynomial bound on the relationship between the maximum size of a wall subgraph and the order of a tangle in \cite{CC} with further improvements to the polynomial bound in \cite{ChuImproved}.

Given that the relationship between tangles and walls has been well established elsewhere, and in the interest of smoothing the presentation, we have chosen to avoid tangles when possible and present Theorem \ref{thm:main0} with respect to a large wall.  In Section \ref{sec:tangles}, we give the necessary background and definitions to derive the structure with respect to a tangle as well.

To summarize, the primary results of this paper are the following.
\begin{itemize}
\item Our main result, Theorem \ref{thm:main} below, gives the structure of a graph $G$ excluding a fixed clique of order $t$ as a minor with respect to a fixed wall $W$ of $G$.  The proof is self-contained (using the results of \cite{flatwall}) and gives explicit values for all the parameters.  After presenting the necessary definitions and stating the theorem in Section 2, the proof occupies Sections 3 to 12.

\item Theorem \ref{thm:main2} gives the structure with respect to a fixed wall and improves the bound on the Euler genus of the surface $\Sigma$ in the $\Sigma$-decomposition, giving the optimal bound.  The proof is self-contained, but does not give explicit bounds on the other parameters in the statement. The proof is given in Section \ref{sec:bdgenus}.

\item We derive Robertson and Seymour's structure theorem with respect to a tangle (Theorem 3.1 of \cite{RS16}), stated using our notation as in Theorem \ref{thm:RS_struct} below.  The proof relies on Theorem \ref{thm:main2} as well as the version of Theorem \ref{thm:grid} found in \cite{Diestel}.  The necessary definitions and lemmas on tangles are presented in Section \ref{sec:tangles} and the proof is given in Section \ref{sec:RSproof}.  This completes our goal of giving a self contained proof of the Robertson and Seymour structure theorem using only results from graduate texts.

\item By substituting a version of Theorem \ref{thm:grid} with explicit bounds into the proof, we also derive a version of Theorem \ref{thm:RS_struct} with explicit bounds on the parameters.  Following the proof found in \cite{DKMW}, we derive a version the global decomposition theorem as well with explicit bounds on the parameters.  The theorem is stated as Theorem \ref{thm:global} and the proof is given in Section \ref{sec:global}.
\end{itemize}

\subsection{Sufficiency}

Theorem~\ref{thm:main0} a gives a necessary condition for excluding a $K_p$ minor grasping a fixed wall $W$ in a graph $G$.  However, it is not a sufficient condition: there exist choices for the graph $G$ and wall $W$ which admit the $W$-central $\Sigma$-decomposition with the parameters given in Theorem \ref{thm:main0} but which still have a $K_p$ minor grasped by $W$.  Theorem \ref{thm:main0} is \emph{approximately} sufficient though: if $G$ admits a $W$-central $\Sigma$-decomposition then the graph excludes a $K_{p'}$ minor grasped by $W$ for a larger value of $p'$.

\begin{theorem}
\label{thm:mainconverse}
Let $R,\alpha,g,b,d\ge 0$ be integers.
Let $G$ be a graph and $W$ an $R$-wall in $G$.  Assume that there exists a set $A\subseteq V(G)$ of size at most $\alpha$,
a surface $\Sigma$ of Euler genus at most $g$, and
a $W$-central $\Sigma$-decomposition of $G-A$
of depth at most $d$ and  breadth at most $b$.
If $G$ has a $K_{p}$ minor grasped by $W$, then $$p\le \alpha+ 4 + \sqrt{6g} + 202(2g + b)^3d.$$
\end{theorem}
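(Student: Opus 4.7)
The plan is to split the bound into three additive pieces corresponding to the apex set, the surface-embedded part, and the vortex parts. First, dispose of $A$: if $B_1,\ldots,B_p$ are the branch sets of the $K_p$ minor grasped by $W$, then since each vertex of $A$ lies in at most one $B_i$, discarding the at most $|A|\le\alpha$ branch sets that meet $A$ leaves a $K_{p-\alpha}$ minor of $G-A$. Combined with the $W$-centrality of the decomposition and the hypothesis that the minor is grasped by $W$, each surviving branch set still reaches the surface-embedded part of $G-A$, because the portion of $W$ used to grasp it lies outside the vortices. It therefore suffices to bound the order $q:=p-\alpha$ of such a minor of $G-A$ by $4+\sqrt{6g}+202(2g+b)^3 d$.

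For this, classify each branch set $C_i$ as \emph{vortex-free} if $C_i$ is disjoint from the open interior of every vortex disk, and \emph{vortex-touching} otherwise. The non-vortex disks of $\delta$ each meet $G-A$ in at most three boundary vertices, so their contents can be contracted to a single vertex without destroying planarity inside the disk; after these contractions the vortex-free branch sets and the edges between them form a clique minor of a graph drawn in $\Sigma$ without crossings. The standard Euler-genus inequality ($3n-6+3g$ edges on $n$ vertices; equivalently $(n-3)(n-4)\le 6g$ whenever $K_n\subseteq\Sigma$) caps the number $q_1$ of vortex-free branch sets at $4+\sqrt{6g}$.

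The remaining number $q_2$ of vortex-touching branch sets is controlled by the vortex depth together with the topology of $\Sigma$. For a single vortex $\Delta_i$ of depth at most $d$, any family of branch sets that each cross a given arc of $\partial\Delta_i$ produces, inside $\Delta_i$, that many disjoint paths across the arc, so the depth bounds the number of branch sets ``transversal'' to $\Delta_i$ by $d$. To lift this per-vortex estimate to a global one, I would cut $\Sigma$ along the boundaries of the $\le b$ vortex disks together with a standard system of $2g$ noncontractible simple closed curves, obtaining a planar polygonal region whose boundary consists of $O(2g+b)$ cut arcs. A routing/edge-counting argument in this planar region, using $W$-centrality to anchor each vortex-touching branch set by a path through the surface part, bounds how many branch sets can simultaneously traverse triples of cut arcs; this produces the cubic factor $(2g+b)^3$, and combining with the per-vortex linear-in-$d$ bound yields $q_2\le 202(2g+b)^3 d$.

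The main obstacle is this last step: turning the depth bound $d$ and the topological complexity $2g+b$ into a clean global bound with an explicit constant such as $202$. The apex reduction and the Euler-genus bound are essentially textbook, but the vortex count requires carefully tracking how a single branch set can simultaneously wind around several handles and penetrate several vortices, and then executing a Menger/linkage argument on the cut-open surface to convert the local depth-$d$ obstruction into an explicit cubic-in-$(2g+b)$ factor rather than a purely existential tree-width comparison.
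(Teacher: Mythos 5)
Your outline captures the two easy additive pieces (the apex reduction and the Euler-genus bound $4+\sqrt{6g}$ for the branch sets living in the surface part), but it has two genuine gaps. First, the claim that after deleting the $\le\alpha$ branch sets meeting $A$ every surviving branch set ``reaches the surface-embedded part... because the portion of $W$ used to grasp it lies outside the vortices'' is not justified: $W$-centrality does not keep the wall out of the vortex cells (a vortex boundary $\widetilde c$ can be arbitrarily large, so containing many crossing points of $W$ inside a vortex produces no low-order separation and no contradiction). In the paper this is handled by a separate argument: for each vortex one shows via Menger and the $W$-centrality that a branch set lying entirely inside the vortex sends many disjoint paths to the boundary, and then a minimal-segment argument bounds the number of such branch sets; this is exactly what accounts for the extra slack ($202$ versus $200$) in the stated constant. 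Your proposal silently assumes this count is zero.

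Second, and more seriously, the core quantitative step — bounding by $200(2g+b)^3d$ the size of a clique minor in a graph all of whose vertices lie on the boundaries of the $\le b$ vortex disks, with each vortex society of depth $\le d$ — is precisely the part you defer (``the main obstacle is this last step''), and your sketch of cutting along $2g$ curves plus the disk boundaries and doing ``a routing/edge-counting argument'' does not contain the argument. The difficulty is not a per-vortex statement (depth bounds transactions between two disjoint boundary segments, not the number of branch sets ``transversal'' to an arc), but the interaction of many branch sets with several disks and handles at once. The paper's Lemma~\ref{lem:mainconverse} resolves this by a careful induction: when two disks are joined by an edge of the surface part, the disks are merged by ``blowing up'' that edge (losing two branch sets and increasing the segment count by two); when the disk together with an edge carries a non-null-homotopic curve, the surface is cut along it at the cost of a separator of size $O(T^2D)$; and in the planar single-disk base case a triangulation/Hamiltonian-cycle argument locates an edge splitting the branch sets into two large groups, which forces a transaction of size roughly $\tfrac{2}{3}T^2D$ in the vortex society, contradicting the bound of $D$ paths between any pair of boundary segments. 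Without an argument of this kind (or some substitute), the cubic bound $202(2g+b)^3d$ is asserted rather than proved, so the proposal as written does not establish the theorem.
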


As a corollary, if every sufficiently large wall in a graph admits a $W$-central $\Sigma$-decomposition, then the graph does not contain a large clique minor.

\begin{CO}\label{cor:mainconverse}
Let $\alpha,g,b,d\ge 0$ be integers.
Let $R = \alpha + 5 + \lceil \sqrt{6g} \rceil + 202(2g + b)^3d$.
Let $G$ be a graph such that for every
$2R$-wall $W$ in $G$
there exist
a set $A\subseteq V(G)$ of size at most $\alpha$,
a surface $\Sigma$ of Euler genus at most $g$, and
a $W$-central $\Sigma$-decomposition of $G-A$
of depth at most $d$ and  breadth at most $b$.
If $G$ has a $K_{p}$ minor, then $p \le 8R^2$.
\end{CO}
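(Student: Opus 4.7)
The plan is to derive Corollary~\ref{cor:mainconverse} from Theorem~\ref{thm:mainconverse} by contradiction. Suppose $G$ has a $K_p$ minor with $p>8R^2$. The strategy is to extract a $2R$-wall $W$ from inside the $K_p$ minor, argue that $W$ grasps the whole $K_p$ minor, and then apply Theorem~\ref{thm:mainconverse} to the $W$-central $\Sigma$-decomposition guaranteed by hypothesis. That theorem together with the definition of $R$ will give $p \le \alpha + 4 + \sqrt{6g} + 202(2g+b)^3d \le R - 1$, incompatible with $p > 8R^2 \ge R > R-1$ (valid since $R\ge 5$ by inspection of its definition).

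For the extraction step, the key observation is that a $2R$-wall has at most $8R^2$ branch vertices: it is obtained from the $4R\times 2R$-grid by deleting edges, removing two degree-one vertices, and subdividing arbitrarily, and subdivisions introduce only degree-two vertices. Consequently $K_p$ with $p>8R^2$ contains a $2R$-wall as a subgraph (since it contains every simple graph on at most $p$ vertices). Translating this back to $G$, I would pick $8R^2$ of the $p$ branch sets of the $K_p$ minor, choose one representative vertex from each to play the role of a branch vertex of the wall, and realize each wall-edge as an internally vertex-disjoint path between representatives, using the inter-branch-set edges supplied by the $K_p$ model and the internal connectivity of the branch sets to accommodate subdivisions. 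The output is a subdivision of a $2R$-wall $W$ as a subgraph of $G$.

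The next step is to verify that $W$ grasps the full $K_p$ minor of $G$. The precise definition of ``grasped'' is given in the main text but only alluded to in the excerpt; intuitively it should hold here because $W$ was carved out of the branch sets of the minor, and every remaining branch set is, via the $K_p$ model, adjacent to every branch set used in $W$. Once grasping is established, Theorem~\ref{thm:mainconverse} closes the argument exactly as outlined in the first paragraph.

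The main obstacle will be rigorously verifying the grasping condition. ``Grasping'' of a minor by a wall typically requires a quantitative compatibility---for instance, many disjoint paths from each branch set to distinct rows, columns, or bricks of the wall---and when the wall is carved out of the minor itself one must choose representatives, routing paths, and the partition into used vs.\ unused branch sets with some care. If the naive construction falls short of the formal condition, a natural fallback is to first pass to a proper clique subminor of the $K_p$ and then extract the wall inside it, leaving sufficiently many ``external'' branch sets to certify the grasping; since the grasped clique minor produced this way still has order far exceeding $R-1$, the contradiction with Theorem~\ref{thm:mainconverse} still follows and we conclude $p\le 8R^2$.
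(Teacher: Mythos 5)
Your overall skeleton (carve a $2R$-wall out of $8R^2$ of the branch sets, produce a clique minor grasped by that wall, and feed it to Theorem~\ref{thm:mainconverse}, whose bound is at most $R-1$ by the choice of $R$) is the same as the paper's, but the step you yourself flag as the main obstacle is a genuine gap, and neither of your two suggestions closes it. The grasping condition is quantitative: a model of $K_t$ is grasped by $W$ only if \emph{every} branch set contains $t$ distinct intersections $V(P_{i_l}\cap Q_{j_l})$ with the $i_l$ pairwise distinct and the $j_l$ pairwise distinct. For the full $K_p$ model this is outright impossible when $p>2R$, since the wall has only $2R$ horizontal paths; and in your fallback the ``external'' branch sets are disjoint from the wall you carved out of the other branch sets, so they contain no intersection $V(P_i\cap Q_j)$ at all --- adjacency of a branch set to the branch sets used in the wall plays no role in the definition, so leaving external branch sets cannot ``certify the grasping.''

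The missing idea is the paper's regrouping of the wall's \emph{own} branch sets. One realizes the wall so that each intersection $V(P_i)\cap V(Q_j)$ lies in a prescribed pair of the first $8R^2$ branch sets, and then merges these branch sets along $R$ shifted diagonals of the wall, obtaining sets $Y_1,\dots,Y_R$ that are pairwise adjacent (each is a union of branch sets of the original clique model, which are connected and pairwise joined by edges) and such that $Y_l$ contains $V(P_i\cap Q_{j})$ for $R$ pairs $(i,j)$ with distinct rows and distinct columns. This yields a $K_R$ minor grasped by $W$, and since Theorem~\ref{thm:mainconverse} caps any clique minor grasped by $W$ at $\alpha+4+\sqrt{6g}+202(2g+b)^3d\le R-1$, the contradiction follows. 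Without some such augmentation --- either this diagonal regrouping, or routing from each surviving branch set many disjoint paths to distinct intersections and absorbing them --- grasping is not established and the application of Theorem~\ref{thm:mainconverse} is unjustified.
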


We prove Theorem \ref{thm:mainconverse} and its corollary in Section~\ref{sec:suffic}.  A more careful argument by Joret and Wood \cite{JW} finds the asymptotically optimal upper bound on the size of the clique minor in terms of $\alpha, b, g, d$ for an alternate variant of the excluded minor theorem.


\subsection{Algorithmic applications}

Minor-closed familes of graphs have received significant attention in the graph algorithms community.
Specifically, for $H$-minor-free graphs for a fixed graph $H$,
one can obtain good approximation algorithms for NP-hard problems such as graph coloring, independent set, and others. In general graphs, these problems are known to be hard to even approximate. Such algorithms typically use the topological structure of Theorem \ref{thm:main0} to generalize algorithmic results for planar graphs to $H$-minor-free graphs.

In order to obtain such results, one needs a polynomial time algorithm to obtain the global decomposition theorem, Theorem~1.3 of \cite{RS16}.  Each step of our proof of Theorem \ref{thm:main} is constructive and as such, yields an efficient algorithm for the structure in Theorem \ref{thm:main}.  Along with the algorithmic variants of the previously known results necessary to derive the global decomposition theorem, we get an efficient algorithm to find the structure.
More precisely, in the proofs leading to Theorem \ref{thm:main}, we only use the following algorithmic ingredients:
\begin{itemize}
\item[(1)]
defining an appropriate potential function which we maximize or minimize;
\item[(2)]
finding a maximum set of pairwise disjoint vertex disjoint paths or finding a minimum vertex cut;
\item[(3)]
routing sets of paths through grids;
\item[(4)]
testing ``flatness'' (which reduces essentially to a polynomial time algorithm for finding two disjoint paths with specific terminals - see \cite{klr});
\item[(5)]
recursion.
\end{itemize}
Considering (1), when using such a potential function in the proof, the potential function improves in each step with an upper bound of at most $n^2$ possible improvements, yielding a polynomial bound.
For applications of the ideas in (2)--(4), are all known to be computable in polynomial time.  Finally, the starting point for building the structure in the proof, namely a large wall, can be found in polynomial time.
See \cite{CC, ChuImproved}.
Thus it can be shown that there exists an $f(H)n^{O(1)}$ time algorithm which finds the decomposition for $H$-minor-free graphs for some function $f$.  We defer the details to a future paper fully exploring the algorithmic aspects of the proof of Theorem \ref{thm:main}.




\section{Notation and the full statement of the Excluded Clique Theorem}

\subsection{The Two Disjoint Paths Problem}

Let $G$ be a graph, and let $s_1,s_2,t_1,t_2\in V(G)$.
The TWO DISJOINT PATHS PROBLEM asks whether there exist two disjoint
paths $P_1,P_2$ in $G$ such that $P_i$ has ends $s_i$ and $t_i$.
There is a beautiful characterization of the feasible instances,
which we now describe.
First of all, it is profitable to formulate this problem more generally.

Let $\Omega$ be a cyclic permutation of the elements of some set;
we denote this set by $V(\Omega)$.
A {\em society} is a pair $(G,\Omega)$, where $G$ is a graph, and $\Omega$
is a cyclic permutation with $V(\Omega)\subseteq V(G)$.
A {\em cross} in a society $(G,\Omega)$ is a pair $(P_1,P_2)$ of disjoint
paths in $G$ such as that $P_i$ has endpoints $u_i,v_i\in V(\Omega)$ and is
otherwise disjoint from $V(\Omega)$, and the vertices $u_1,u_2,v_1,v_2$
occur in $\Omega$ in the order listed.
Thus if $G,s_1,s_2,t_1,t_2$ are as in the previous sentence, and we define
$\Omega$ by saying that $V(\Omega)=\{s_1,s_2,t_1,t_2\}$ and that
 $s_1,s_2,t_1,t_2$ occur  in $\Omega$ in the order listed, then the
TWO DISJOINT PATHS PROBLEM is feasible if and only if the society
$(G,\Omega)$ has a cross.

A \emph{separation} in a graph $G$ is a pair $(A,B)$ of subsets of vertices such that
$A\cup B=V(G)$, and there is no edge of $G$ with one endpoint in $A \setminus B$ and
the other in $B \setminus A$.
The \emph{order} of the separation $(A,B)$ is $|A\cap B|$.
We  say that a society $(G,\Omega)$ is {\em internally $4$-connected}
if $|V(\Omega)|\ge4$ and there exists no separation $(A,B)$ of order at most three such  that
$V(\Omega)\subseteq A$, $B \setminus A\ne\emptyset$
 and if the order of $(A,B)$ is three, then $G[B]$,
the subgraph of $G$ induced by $B$, has at least four edges.
Internally $4$-connected societies with no cross have a fairly simple characterization:

\begin{theorem}
\label{thm:i4ccross}
An  internally $4$-connected society $(G,\Omega)$ has no cross if and only if
$G$ can be drawn in a disk with the vertices of $V(\Omega)$ drawn on the boundary
of the disk in the order given by $\Omega$.
\end{theorem}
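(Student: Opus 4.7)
The plan is to prove the two directions separately, with the forward direction being a direct geometric argument and the reverse direction requiring an inductive structural argument.

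For the \emph{if} direction, assume $G$ is drawn in a closed disk $D$ with $V(\Omega)$ on $\partial D$ in the cyclic order prescribed by $\Omega$. Toward a contradiction, suppose $(P_1,P_2)$ is a cross with endpoints $u_1,u_2,v_1,v_2$ appearing in that cyclic order on $\Omega$. Concatenate $P_1$ with the arc of $\partial D$ from $u_1$ to $v_1$ containing $u_2$ to form a simple closed curve $C$ in $D$. By the Jordan Curve Theorem, $C$ separates the interior of $D$ into two regions; since $v_2$ lies on the arc of $\partial D$ from $u_1$ to $v_1$ not containing $u_2$, the vertices $u_2$ and $v_2$ are separated by $C$. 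But $P_2$ is a continuous curve in $D$ from $u_2$ to $v_2$, disjoint from $P_1$ and meeting $V(\Omega)$ only at its endpoints, hence disjoint from $C$, a contradiction.

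For the \emph{only if} direction, the plan is induction on $|V(G)|+|E(G)|$. Let $(G,\Omega)$ be a minimum internally $4$-connected counterexample: it has no cross but admits no disk drawing of the required form. I would look for an atomic operation on $G$ producing a smaller internally $4$-connected society $(G',\Omega)$ with no cross, such that a disk drawing of $G'$ lifts to one of $G$. The operations to consider are: (a) deletion of an edge whose removal preserves internal $4$-connectivity; (b) contraction of an edge whose contraction preserves internal $4$-connectivity (observing that contraction cannot create a cross, since a cross in $G/e$ pulls back to one in $G$); (c) suppressing or locally simplifying a low-degree vertex of $V(G)\setminus V(\Omega)$, which is forced to exist only in structurally constrained ways by internal $4$-connectivity; and (d) when a $3$-separation $(A,B)$ exists with $V(\Omega)\subseteq A$ splitting $V(\Omega)$ nontrivially, applying the inductive hypothesis to the two sides and pasting the disk drawings along the separator.

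The main obstacle will be verifying that at least one such reduction is always available while maintaining internal $4$-connectivity; edge operations frequently create small separations that must be analyzed case by case. The key structural lemma I expect to need is a gluing statement: given a $4$-cut $\{x_1,x_2,x_3,x_4\}$ of $G$, disk drawings of the two sides with $x_1,x_2,x_3,x_4$ appearing in the same cyclic order on the boundary of each piece combine to a disk drawing of $G$. The no-cross hypothesis is essential precisely here, because incompatible orderings of the cut vertices on the two sides would allow a routing of a cross through the separator. Once this lemma is in place, the induction closes: in a minimum counterexample no reduction operates, which forces a rigid local structure (every vertex outside $V(\Omega)$ has degree at least $4$, no useful $3$-separations exist, no edge can be deleted or contracted safely), and from this rigidity one either constructs the disk embedding directly or exhibits an explicit cross, in either case contradicting the counterexample assumption.
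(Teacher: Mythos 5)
Your \emph{if} direction is essentially correct but contains a small slip: the closed curve $C$ you form from $P_1$ and the boundary arc through $u_2$ actually contains $u_2$, which is an endpoint of $P_2$, so the assertion that $P_2$ is ``disjoint from $C$'' is false as written. The clean way to argue is directly with $D\setminus P_1$: since $P_1$ is an arc of the disk $D$ with both ends on $\partial D$, $D\setminus P_1$ has exactly two components, one containing the open boundary arc through $u_2$ and the other the open boundary arc through $v_2$ (here the cyclic order $u_1,u_2,v_1,v_2$ is used), and $P_2$ is a connected subset of $D$ avoiding $P_1$, a contradiction. Note that internal $4$-connectivity plays no role in this direction.

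The genuine gap is the \emph{only if} direction, which is the entire content of the theorem: it is a form of the two-disjoint-paths (2-linkage) theorem of Jung, Seymour, Shiloach and Thomassen, and what you give for it is a plan rather than a proof. The three things your induction hinges on — that some reduction (a)--(d) is always available while preserving internal $4$-connectivity and crosslessness, the gluing lemma along a $4$-cut (including why the cut vertices receive the same cyclic order on both sides), and above all the terminal analysis of an irreducible counterexample, where one must ``either construct the disk embedding directly or exhibit an explicit cross'' — are precisely where all known proofs spend their effort, and none of them is carried out; you flag them yourself as expected obstacles. Moreover, reduction (d) as stated is incoherent: a separation $(A,B)$ with $V(\Omega)\subseteq A$ cannot split $V(\Omega)$ nontrivially, and separations of order at most three with $V(\Omega)$ on one side are exactly what internal $4$-connectivity forbids (when the small side carries at least four edges), whereas the separations you genuinely have to handle in such an induction are those that do split $V(\Omega)$, which the hypothesis does not restrict at all. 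For comparison with the paper: the paper offers no proof of Theorem~\ref{thm:i4ccross} — it records it as a known characterization and works instead with the general statement Theorem~\ref{thm:crossreduct} about rural societies, citing \cite{Jung,RS9,SeyDisj,Shi,Tho2link} and referring to \cite{flatwall} for an expository proof — so there is no argument in the present text to lean on, and completing your sketch would in effect require reproducing one of those proofs.
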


To state the corresponding theorem for societies that are not necessarily internally $4$-connected
we need to introduce some terminology. We do so in greater generality than immediately necessary.

\begin{DEF}
By a {\em drawing (with crossings) in a surface $\Sigma$} we mean a triple $(U,V,E)$ such that
\begin{itemize}
\item $V\subseteq U\subseteq \Sigma $ and $V$ is finite,
\item $E$ is finite,
\item $V\cup\bigcup_{e\in E}e=U$ and $V$ is disjoint from every $e\in E$,
\item for every $e\in E$, either $e=h((0,1))$, where $h:[0,1]\to U $ is a homeomorphism onto
its image  with $h(0),h(1)\in V$, or $e=h({\mathbb S}^2-(1,0))$, where $h:{\mathbb S}^2\to U$
is a homeomorphism onto its image  with $h((1,0))\in V$, and
\item if $e,e'\in E$ are distinct, then $|e\cap e'|$ is finite.
\end{itemize}
Thus if $\Gamma=(U,V,E)$ is a drawing, then there is a graph $G$ with vertex-set $V$, edge-set $E$ and the
 obvious incidences. We say that $\Gamma $ is a {\em drawing of} every graph isomorphic to $G$.  Two distinct elements $e, e' \in E$ \emph{cross} if $e \cap e' \neq \emptyset$.  We say the drawing is \emph{cross-free} if no pair of elements $e, e'$ cross.
\end{DEF}

\begin{DEF}
By a {\em surface} we mean a compact $2$-dimensional manifold with or without boundary.
By the classification theorem of surfaces every surface is homeomorphic to the sphere
with $h$ handles and $c$ cross-caps added, and the interiors of $d$ disjoint closed disks removed,
 in which case the {\em Euler genus}
of the surface is defined to be $2h+c$.
\end{DEF}

\begin{DEF}
\label{def:sigdec}
Let $\Sigma$ be a surface.
A {\em $\Sigma$-decomposition} of a graph $G$ is
a pair $\delta=(\Gamma,\zD)$, where
\begin{itemize}
\item $\Gamma$ is a drawing of $G$ in $\Sigma$ with crossings, and
\item $\zD$ is a collection of closed disks, each a subset of $\Sigma$
\end{itemize}
such that
\begin{itemize}
\item[(D1)] the disks in $\zD$ have pairwise disjoint interiors,
\item[(D2)] the boundary of each disk in $\zD$ intersects  $\Gamma$ in vertices only,
\item[(D3)] if $\Delta_1,\Delta_2\in \zD$ are distinct, then  $\Delta_1\cap\Delta_2\subseteq V(\Gamma)$, and
\item[(D4)] every edge of $\Gamma$
belongs to the
interior of one of the disks in~$\zD$.
\end{itemize}
Let $N$ be the set of all vertices of $\Gamma$ that do not belong to the interior of any of the disks in~$\zD$.
We will refer to the elements of $N$ has the {\em nodes} of $\delta$.
If $\Delta\in\zD$, then we refer to the set $\Delta-N$ as a {\em cell} of $\delta$.
We denote the set of nodes of $\delta$ by $N(\delta)$ and the set of cells by $C(\delta)$.
Thus the nodes and cells form a partition of $\bigcup(\Delta\,:\,\Delta\in\zD)$.
For a cell $c\in C(\delta)$ we denote by $\widetilde c$  the set of nodes that belong to the closure of $c$.
Thus the cells $c$ of $\delta$ that satisfy $\widetilde c\ne\emptyset$
form the hyperedges of a hypergraph with vertex-set $N(\delta)$, where
 $\widetilde c$ is the set of vertices incident with $c\in C(\delta)$.
For a cell $c\in C(\delta)$ we define $\sigma_\delta(c)$ (or $\sigma(c)$ when $\delta$ is understood)
 to be the subgraph of $G$ consisting of all vertices and edges drawn in the closure of $c$.
We define $\pi_\delta:N(\delta)\to V(G)$  to be the mapping that assigns to every node
in $N(\delta)$ the corresponding vertex of $G$.

A cell $c\in C(\Gamma)$ is called a {\em vortex} if $|\widetilde c|\ge4$.
The $\Sigma$-decomposition $\delta$ is {\em vortex-free}
if no cell in $C(\delta)$ is a vortex.
\end{DEF}

\begin{DEF}
Let $(G,\Omega)$ be a society, and let $\Sigma $ be a surface with one boundary component $B$.
A {\em rendition in $\Sigma $} of $(G,\Omega)$ is a $\Sigma$-decomposition
$\delta$ of $G$
such that
the image under $\pi_\delta$ of $N(\Gamma)\cap B$
is $V(\Omega)$, mapping one of the cyclic
orders of $B$ to the order of $\Omega$.
\end{DEF}

The characterization of cross-free societies, obtained in various forms by
Jung~\cite{Jung},
Robertson and Seymour~\cite{RS9},
Seymour~\cite{SeyDisj}, Shiloach~\cite{Shi}, and Thomassen~\cite{Tho2link}
reads as follows.

\begin{theorem}
\label{thm:crossreduct}
A society $(G,\Omega)$ has no cross if and only if
it has a vortex-free rendition in a disk.
\end{theorem}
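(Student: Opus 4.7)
The plan is to prove the two directions separately: the backward direction by a short topological argument, and the forward direction by a delicate induction on $|V(G)|+|E(G)|$. For the backward direction, I suppose $(G,\Omega)$ admits a vortex-free rendition $\delta$ in a disk $\Sigma$ and, for contradiction, also has a cross $(P_1,P_2)$ with endpoints $u_1,u_2,v_1,v_2$ appearing in this cyclic order on $\Omega$. Since $\delta$ draws $G$ in $\Sigma$ with all crossings confined to cells, $P_1\cup P_2$ is drawn in $\Sigma$ as a pair of continuous arcs from $u_1$ to $v_1$ and from $u_2$ to $v_2$, with endpoints interleaved on the boundary of $\Sigma$. Inside each cell $c$, the traces of the two paths use at most three distinct boundary nodes in total (since $|\widetilde c|\le 3$ and $P_1,P_2$ are vertex-disjoint), so inside $c$ the two traces can be rerouted to disjoint cross-free arcs with the same endpoints on $\partial c$. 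After performing this local rerouting in every cell, $P_1$ and $P_2$ become two disjoint cross-free arcs in the disk $\Sigma$ with endpoints interleaved on its boundary, contradicting the Jordan curve theorem.

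For the forward direction, I induct on $|V(G)|+|E(G)|$. If $(G,\Omega)$ is internally $4$-connected, Theorem~\ref{thm:i4ccross} produces a cross-free drawing of $G$ in a disk with $V(\Omega)$ on the boundary; taking $\zD$ to be a small closed disk around each edge makes every resulting cell have exactly two boundary nodes, giving a vortex-free rendition. If $(G,\Omega)$ is not internally $4$-connected, I fix a witnessing separation $(A,B)$ of order $k\le 3$ (so $V(\Omega)\subseteq A$, $B\setminus A\ne\emptyset$, and, if $k=3$, $G[B]$ has at least four edges), chosen with $|B|$ minimum. I then form two smaller societies: $(G_1,\Omega)$, where $G_1$ is $G[A]$ augmented by virtual edges on $A\cap B$ that encode the routing capacity of $G[B]$ (for $k=3$, a triangle on $A\cap B$, justified by Menger's theorem in $G[B]$ using minimality of $|B|$); and $(G_2,\Omega')$, where $G_2=G[B]$ and $\Omega'$ is a cyclic permutation of $A\cap B$ compatible with $\Omega$. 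Neither has a cross: $(G_2,\Omega')$ trivially because $|V(\Omega')|\le 3$, and $(G_1,\Omega)$ because any cross in it would lift, through the Menger paths in $G[B]$, to a cross of $(G,\Omega)$. Induction gives vortex-free renditions of both, which I glue by identifying the nodes corresponding to $A\cap B$ and placing the rendition of $G_2$ inside a single cell of the rendition of $G_1$.

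The main obstacle is the $k=3$ case. One must verify (a) that the triangle-augmentation on $A\cap B$ truly preserves the absence of a cross, which requires three internally disjoint paths linking $A\cap B$ in $G[B]$ supplied by the minimum-$|B|$ choice together with a short case analysis excluding degenerate configurations in which the augmentation itself would create a cross; and (b) that the glued rendition remains vortex-free, which amounts to checking that the single cell holding the rendition of $G_2$ has exactly the three nodes $A\cap B$ on its boundary. The cases $k\in\{0,1,2\}$ are routine: order zero is handled by placing any deleted component into a fresh trivial cell, order one by splitting at the cut vertex and concatenating the two renditions at the shared node, and order two analogously by a two-node gluing, possibly after adding a single virtual edge between the two cut vertices.
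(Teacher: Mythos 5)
Your overall plan is the standard one, and it is worth noting at the outset that the paper itself does not prove Theorem~\ref{thm:crossreduct}: it quotes the result of Robertson and Seymour~\cite{RS9} and points to the expository proof in~\cite{flatwall}, so there is no in-paper argument to compare against. Your backward direction is fine and is exactly what the paper's track machinery (tie-breakers, tracks of grounded paths) is designed for: two disjoint paths in a vortex-free rendition have disjoint tracks, and interleaved endpoints on the boundary then contradict the Jordan curve theorem. Your point (a) is also sound in substance: with a witnessing separation chosen so that $|B|$ is minimum, for any two vertices $u,v\in A\cap B$ there is a $u$--$v$ path in $G[B]$ avoiding the third cut vertex (otherwise one exhibits a witnessing separation of order at most two with a strictly smaller second side), and since $|A\cap B|\le 3$ forces at most one path of a cross in $(G_1,\Omega)$ to use virtual edges, any cross in $(G_1,\Omega)$ lifts to a cross in $(G,\Omega)$.

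The genuine gap is step (b), the gluing. You presuppose that the vortex-free rendition of $(G_1,\Omega)$ produced by induction contains a single cell with all of $A\cap B$ on its boundary and say this ``amounts to checking.'' It does not: the three virtual edges may be drawn in three different cells, and no cell of the given rendition need have all three attachment vertices in its closure, so there is nowhere to insert $G[B]$; the inductive hypothesis gives you no control over where the virtual triangle is drawn. Closing this requires an actual construction, for instance: if some vertex of $A\cap B$ lies in the interior of a disk, then every edge incident with it (in particular the virtual edges) lies in that same disk, so all three attachments lie in one disk and $G[B]$ can be drawn there; if all three are nodes and the triangle meets at least two cells, take the track of the virtual triangle, note that its Jordan interior is contained in the interior of the ambient disk and hence contains no vertex of $V(\Omega)$, and replace all disks inside it by the single closed disk it bounds, into which $G[B]$ is drawn; the remaining case (all three virtual edges in one cell) is immediate. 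Without some such surgery the induction does not close. Two smaller wrinkles you should also repair: your dichotomy misses societies with $|V(\Omega)|\le 3$ that admit no witnessing separation (handled by a one-cell rendition); recursing on $(G_2,\Omega')$ is unnecessary (any drawing of $G[B]$ inside the chosen cell suffices, since crossings are allowed there) and is circular when $B=V(G)$; and since this paper allows parallel edges, you should add only those virtual edges not already present, as otherwise the measure $|V|+|E|$ need not decrease.
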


A society which has a vortex-free rendition in a disk
 is also called \emph{rural} and in the interest of brevity, we will often refer to it as such.  An expository proof of
Theorem~\ref{thm:crossreduct},
originally due to Robertson and Seymour~\cite{RS9}, is given in~\cite{flatwall}.


\subsection{The Flat Wall Theorem}
\label{sec:weak}

In this section we formulate the weaker version of the excluded
$K_p$ minor theorem of Robertson and Seymour~\cite[Theorem~9.8]{RS13},
for which we have given a simplified proof and a numerical improvement in~\cite{flatwall}.
A further numerical improvement was given by Chuzhoy~\cite{ChuFlat}.

We use $[r]$ to denote $\{1,2,\ldots,r\}$.
Let  $r\ge2$ be an  integer.
An $r\times r$-grid is the graph with vertex-set $[r]\times[r]$ in which
$(i,j)$ is adjacent to $(i',j')$ if and only if $|i-i'|+|j-j'|=1$.
An \emph{elementary $r$-wall} is obtained from the
$2r\times r$-grid
by deleting all edges with endpoints $(2i-1,2j-1)$ and $(2i-1,2j)$
for all $i=1,2,\ldots,r$ and $j=1,2,\ldots,\lfloor r/2\rfloor$
and all edges with endpoints  $(2i,2j)$ and $(2i,2j+1)$
for all $i=1,2,\ldots,r$ and $j=1,2,\ldots,\lfloor (r-1)/2\rfloor$
and then deleting the two resulting vertices of degree one.
An \emph{$r$-wall}
is any graph obtained from an elementary $r$-wall by subdividing
edges.
In other words each edge of the elementary $r$-wall is replaced by
a path.

Let $W$ be an $r$-wall, where $W$ is a subdivision of an elementary
wall $Z$. Let $X$ be the set of vertices of $W$ that
correspond to vertices $(i,j)$ of $Z$ with $j=1$, and let $Y$ be the
set of vertices of $W$ that correspond to vertices $(i,j)$ of $Z$ with
$j=r$.
There is a unique set of
$r$ disjoint paths $Q_1,Q_2,\ldots,Q_r$ in $W$, such that each has
one endpoint in $X$ and the other endpoint in $Y$, and no other vertex in $X\cup Y$.
 We may assume that the
paths are numbered so that the first coordinates of their vertices
are increasing.
We say that $Q_1,Q_2,\ldots,Q_r$ are the
\emph{vertical paths} of $W$.
There is a unique set of $r$ disjoint paths with one endpoint in $Q_1$,
the other endpoint in $Q_r$, and otherwise disjoint from $Q_1\cup Q_r$.
Those will be called the \emph{horizontal paths} of $W$.
Let $P_1,P_2,\ldots,P_r$ be the horizontal paths numbered in the
order of increasing second coordinates.
Then $P_1\cup Q_1\cup P_r\cup Q_r$ is a cycle, and we will call it the
\emph{outer cycle} of $W$.
If $W$ is drawn as a plane graph in the obvious way, then this is
indeed the cycle bounding the outer face.
The sets $V(P_1\cap Q_1)$, $V(P_1\cap Q_r)$, $V(P_r\cap Q_1)$, and
$V(P_r\cap Q_r)$ each include exactly one vertex of $W$;
those vertices will be called the \emph{corners} of $W$.
The vertices of $W$ that correspond to vertices of $Z$ of degree two
will be called the \emph{pegs} of $W$.
Thus given $W$ as a graph the corners and pegs are not necessarily uniquely
determined.
Finally let $W,W'$ be walls such that $W'$ is a subgraph of $W$.
We say that $W'$ is a \emph{subwall} of $W$ if every
horizontal path of $W'$ is a subpath of a horizontal path of $W$,
and every vertical path of $W'$ is a subpath of a vertical path of $W$.

Now let $W$ be a large wall in a graph $G$ with no $K_t$ minor.
The Flat Wall Theorem asserts that there exist a set of vertices
$A\subseteq V(G)$ of bounded size and a reasonably big subwall $W'$
of $W$ which is disjoint from $A$ and has the following property.
Let $C'$ be the outer cycle of $W'$.
The property we want is that $C'$ separates the graph $G-A$ into two graphs,
and the one containing $W'$, say $H$, can be drawn in the plane with
$C'$ bounding a face.
However, as the discussion of the previous subsection attempted to
explain, the latter condition is too strong.
The most we can hope for is for the graph $H$ to be $C'$-flat.
That is, in spirit, what the theorem will guarantee,
except that we cannot guarantee that all of $C'$ be part of a planar
$C'$-reduction of $H$.
The correct compromise is that some subset of $V(C')$ separates off
the wall $W'$, and it is that subset that is required to be incident with
one face of the planar drawing.
Here is the formal definition.

\begin{DEF}
Let $G$ be a graph,
and let $W$ be a wall in
$G$ with outer cycle $D$.
Let us assume that there exists a separation $(A,B)$ such that
$A\cap B\subseteq V(D)$,  $V(W)\subseteq B$,
and there is a choice of pegs of $W$ such that every peg belongs to $A$.
Let $\Omega$ be the cyclic permutation with $V(\Omega)=A\cap B$ and
the cyclic ordering determined by the order on $D$.
If  $(G[B],\Omega)$ is rural,
then we say that
the wall $W$ is \emph{flat in $G$}\?{flat in $G$}.
It follows that it is possible to choose the corners of $W$ is such
a way that every corner belongs to $A$.
\end{DEF}

We need one more definition. Given a wall $W$ in a graph $G$ we will
(sometimes) produce a $K_t$ minor in $G$.
However, this $K_t$ will not be arbitrary; it will be
very closely related to the wall $W$.
To make this notion precise we first notice that a $K_t$ minor in $G$
is determined by $t$ pairwise disjoint sets $X_1,X_2,\ldots,X_t$
such that each induces a connected subgraph and every two of the sets
are connected by an edge of $G$.
We say that $X_1,X_2,\ldots,X_t$ form a \emph{model}\?{model} of a $K_t$
minor and we will
refer to the sets $X_i$ as the \emph{branch-sets} of the model.
Often we will shorten this to a model of $K_t$.
Let $P_1,P_2,\ldots,P_r$ be the horizontal paths and
$Q_1,Q_2,\ldots,Q_r$  the vertical paths of $W$.
We say that a model of a $K_t$ minor in $G$ is \emph{grasped}\?{grasped}
by the  wall $W$ if for every branch-set  $X_k$ of the model
there exist distinct indices $i_1,i_2,\ldots,i_t\in\{1,2,\ldots,r\}$ and distinct
indices $j_1,j_2,\ldots,j_t\in\{1,2,\ldots,r\}$ such that $V(P_{i_l}\cap Q_{j_l})\subseteq X_k$
for all $l=1,2,\ldots,t$.
Let us remark, for those familiar with the literature, that if
a wall grasps a model of $K_t$, then
the tangle determined by $W$
controls it in the sense
of~\cite{RS16}.  We can similarly extend the definition to an arbitrary $H$ minor.  Let $H$ be a graph.  The wall $W$ \emph{grasps an $H$ minor} if there exist pairwise disjoint subsets $\{X_v \subseteq V(G): v \in V(H)\}$ each inducing a connected subgraph and with the property that for all $uv \in E(H)$ there exists an edge of $G$ with one endpoint in $X_u$ and the other endpoint in $X_v$ and such that for all $v \in V(H)$, there exist distinct indices $i_1,i_2,\ldots,i_{|V(H)|}\in\{1,2,\ldots,r\}$ and distinct
indices $j_1,j_2,\ldots,j_{|V(H)|}\in\{1,2,\ldots,r\}$ such that $V(P_{i_l}\cap Q_{j_l})\subseteq X_v$

We can now formulate the Flat Wall Theorem.
It first appeared in a slightly weaker form in~\cite[Theorem~9.8]{RS13}
with an unspecified bound
on $R$ in terms of $t$ and $r$.

\begin{theorem}
\label{thm:flatwall}
Let $r,t \ge 1$ be integers,
let $R=49152t^{24}(\rt{40}t^2+r)$, let $G$ be a graph, and
let $W$ be an $R$-wall in $G$.
Then either $G$ has a model of a $K_t$ minor grasped by $W$, or there exist
a set $A\subseteq V(G)$ of size at most $12288t^{24}$ and an $r$-subwall
$W'$ of $W$ such that $V(W')\cap A=\emptyset$ and $W'$ is a flat wall
in $G-A$.
\end{theorem}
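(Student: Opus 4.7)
The plan is to invoke the Flat Wall Theorem directly from our companion paper~\cite{flatwall}, where this exact statement (with these numerical bounds) is the main result. Since the present paper is designed as a continuation of~\cite{flatwall}, no new proof is required here; the theorem is restated in the form most convenient for the subsequent sections.

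If I were to reprove it from scratch, the approach would center on Theorem~\ref{thm:crossreduct} (the characterization of cross-free societies). Given a subwall $W_0$ of $W$, together with a separation isolating it from the rest of the graph, one naturally obtains a society $(H,\Omega)$ whose cyclic order is inherited from the outer cycle of $W_0$. Flatness of $W_0$ (after removing an apex set $A$) is equivalent to this society being rural; by Theorem~\ref{thm:crossreduct} the sole obstruction is a cross. I would then iterate: starting with $W$ and $A = \emptyset$, select a subwall and test whether the induced society is rural. If yes, we are done. Otherwise, the cross provided by Theorem~\ref{thm:crossreduct}, via a Menger-type routing argument, either yields disjoint paths that combine with the horizontal and vertical paths of $W$ to form branch-sets of a $K_t$-minor grasped by $W$, or permits augmenting $A$ by a bounded number of vertices and restricting to a smaller---but still sufficiently large---subwall on which the procedure can be repeated.

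The main obstacle is the numerical bookkeeping. Each iteration either shrinks the current wall by a controlled factor or grows the apex set by a bounded amount, and ensuring the process terminates with $|A| \leq 12288 t^{24}$ and a surviving subwall of size at least $r$ is what forces $R = 49152 t^{24}(40 t^2 + r)$. A further subtlety is guaranteeing that the extracted $K_t$ minor is \emph{grasped} by the original wall, not merely a generic $K_t$ minor; this requires choosing branch-set anchors at intersections of distinct horizontal and vertical paths of $W$, which constrains how crosses are combined with wall substructure at each step of the recursion.
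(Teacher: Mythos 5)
Your primary plan---quoting the theorem from the companion paper~\cite{flatwall}, where it is proved with exactly these constants---is precisely what the present paper does; no proof is given here beyond the citation. The sketch of a hypothetical reproof is not needed and is not part of the paper's treatment, so the proposal matches the paper's approach.
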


An earlier version of~\cite{flatwall} as well as other articles refer to Theorem~\ref{thm:flatwall}
as the Weak Structure Theorem. However, we prefer the current name, because it gives a more
accurate description of the result.

By  Theorem~\ref{thm:grid}
every graph of sufficiently large tree-width has an $R$-wall.
It follows from~\cite{kk} that in Theorem~\ref{thm:flatwall}
the hypothesis that $G$ has an $R$-wall can be replaced by the assumption
that $G$ has tree-width at least $t^{\Omega(t^2\log t)}R$,
and by~\cite{ChuImproved} it can be replaced by the assumption
that $G$ have tree-width at least $\Omega(R^{19}\hbox{ poly }\log R)$.

\subsection{Linkages and the Excluded Clique Minor Theorem}

Let $G$ be a graph.  A {\em linkage} in $G$ is a set of pairwise vertex disjoint paths.  In a slight abuse of notation, for a linkage $\zP$, we will use $V(\zP)$ and $E(\zP)$ to denote $\bigcup_{P \in \zP} V(P)$ and $\bigcup_{P \in \zP} E(P)$, respectively.  If $P$ is a path and $x$ and $y$ are vertices on $P$, we will denote by $xPy$ the subpath of $P$ with endpoints $x$ and $y$.  If $H$ is a subgraph of a graph $G$, an \emph{$H$-path} is a path of length at least one with both endpoints in $V(H)$, no internal vertex in $H$, and no edge in $H$.  For a subset of vertices $X\subseteq V(G)$, an $X$-path denotes an $H$-path where $H$ is the subgraph with vertex set $X$ and no edges.  In a society $(G, \Omega)$, an \emph{$\Omega$-path} will denote a $V(\Omega)$-path.  An \emph{$H$-bridge} is either an edge $e$ with both endpoints in $V(H)$ such that $e \notin E(H)$, or a subgraph of $G$ formed by a component $C$ of $G-V(H)$ along with all the edges of $G$ with one endpoint in $V(C)$ and the other endpoint in $V(H)$.

Let $(G, \Omega)$ be a society.
A \emph{segment} of $\Omega$ is a subset $X \subseteq V(\Omega)$ such that there do not exist
$x_1, x_2 \in X$ and $y_1, y_2 \in V(\Omega) \setminus X$ all distinct such that $x_1, y_1, x_2, y_x$ occur in $\Omega$ in that order. A vertex $x \in X$ is an \emph{endpoint} of the segment $X$ if there is a vertex $y \notin X$ such that $y$ immediately precedes or immediately succeeds $x$ in the order $\Omega$.  Note that every segment $X$ which is a \cc{non-empty} proper subset of $V(\Omega)$ has endpoints, one of which is \emph{first} and one which is \emph{last} in the order given by $\Omega$.  For vertices $x, y \in V(\Omega)$, we will denote by $x\Omega y$ the uniquely determined segment with first vertex $x$ and final vertex $y$.  In the case that $y$ immediately precedes $x$, we define $x \Omega y$ to be the trivial segment $V(\Omega)$.
A \emph{transaction} in $(G, \Omega)$ is a linkage  $\zP$  of $\Omega$-paths in $G$ such that there exist  disjoint segments $A, B$
of  $\Omega$ such that every element of $\zP$ has one endpoint in $A$ and one endpoint in $B$.
We define  the {\em depth} of $(G, \Omega)$ as the maximum cardinality of  a transaction in $(G, \Omega)$.

We define the {\em breath} of the $\Sigma$-decomposition $\delta$
to be the number
of cells $c\in C(\delta)$ which are a vortex.
Every vortex $c$ defines a society $(\sigma(c),\Omega)$,
called the {\em vortex society of $c$}, by saying that
$\Omega$ consists of the vertices $\pi_\delta(n)$ for
$n\in\widetilde c$ in the order given by bd$(c)$.
(There are two possible choices of $\Omega$, namely $\Omega$ and its
reversal. Either choice gives a valid vortex society.) The \emph{complimentary society} of the vortex society is $(H, \Omega)$ where $H$ is the union of $\bigcup_{c' \in C(\delta), c' \neq c} \sigma(c')$ and the subgraph with vertices $V(\Omega)$ and no edges.
We define the {\em depth} of the $\Sigma$-decomposition as the maximum of
the depths of the vortex societies $(\sigma(c),\Omega)$ over all
vortex cells $c\in C(\delta)$.

\begin{DEF}
Let $G$ be a graph  and $W$  a wall in $G$ with outer cycle $C$.
We say that $W$ is {\em flat in a $\Sigma$-decomposition $\delta$ of $G$}
if there exists a closed disk $\Delta \subseteq\Sigma$   such that
\begin{itemize}
\item the boundary of $\Delta$  does not intersect any cell of $\delta$,
\item $\pi(N(\delta)\cap\hbox{\rm bd}(\Delta))\subseteq V(C)$,
\item every peg of $W$ is the image under $\pi_\delta$ of some element of $N(\delta)\cap\hbox{\rm bd}(\Delta)$,
\item no cell $c\in C(\delta)$ with $c\subseteq\Delta$ is a vortex, and
\item $W-V(C)$ is a subgraph of $\bigcup(\sigma(c)\,:\, c\subseteq\Delta)$.
\end{itemize}
It follows from the proof of \cite[Lemma~5.1]{flatwall} if a wall is flat in $\delta$, then it is flat.
\end{DEF}

\begin{DEF}
Let $G$ be a graph, let $r\ge1$ be an integer and let $W$ be an $r$-wall in $G$.
If $(A,B)$ is a separation of $G$ of order at most $r-1$, then exactly one of the sets $A\setminus B,B \setminus A$ includes the vertex set
of a horizontal and a vertical path of $W$.
If it is the set $A$, then we say that $A$ is the {\em $W$-majority side of the separation $(A,B)$};
otherwise, we say that $B$ is the $W$-majority side.
Now let $\Sigma$ be a surface and let $\delta$ be a $\Sigma$-decomposition of $G$.
We say that $\delta$ is {\em $W$-central} if there is no cell $c\in C(\delta)$ such that $V(\sigma(c))$
includes the $W$-majority side of a separation of $G$ of order at most $r-1$.
Similarly, let $Z \subseteq V(G)$, $|Z| \le r-1$, let $\Sigma'$ be a surface and $\delta'$ be a $\Sigma'$-decomposition of $G-Z$.  The surface decomposition $\delta'$ is $W$-central decomposition of $G-Z$ if for all separations $(A, B)$ of order at most $r - |Z| -1$ such that $B \cup Z$ is the majority side of the separation $(A \cup Z, B \cup Z)$ of $G$, there is no cell $c \in C(\delta')$ such that $V(\sigma_{\delta'}(c))$ includes $B$.  For those readers familiar with the original notation of Robertson and Seymour, if $\zT$ is the tangle defined by the wall $W$, then the $\Sigma$-decomposition is $W$-central if and only the natural segregation defined by the decomposition is $\zT$-central.
\end{DEF}

The following is the full statement of our main theorem.

\begin{theorem}
\label{thm:main}
Let $r,p \ge 0$ be integers,
and let $$R:=49152p^{24}r + p^{10^7p^{26}}.$$
Let $G$ be a graph, and
let $W$ be an $R$-wall in $G$.
Then either $G$ has a model of a $K_p$ minor grasped  by $W$,
or there exist
a set $A\subseteq V(G)$ of size at most $p^{10^7p^{26}}$,
a surface $\Sigma$ of Euler genus at most $p(p+1)$,
a $W$-central $\Sigma$-decomposition $\delta$ of $G-A$
of depth at most $p^{10^7p^{26}}$ and  breadth at most $2p^2$
and an $r$-subwall
$W'$ of $W$ such that $V(W')\cap A=\emptyset$ and $W'$ is a flat wall
in $\delta$.
\end{theorem}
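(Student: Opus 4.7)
The plan is to use the Flat Wall Theorem (Theorem~\ref{thm:flatwall}) as the base case and then iteratively enrich the $\Sigma$-decomposition while preserving $W$-centrality, stopping either when all of $G-A$ is covered or when a $K_p$ minor grasped by $W$ is produced. Concretely, I would first apply Theorem~\ref{thm:flatwall} with $t=p$ and a suitable internal wall parameter to obtain an apex set $A_0$ of size at most $12288p^{24}$ and a large flat subwall $W_1$ of $W$ in $G-A_0$. This yields an initial $\Sigma$-decomposition $\delta_0$ covering only the disk around $W_1$, where $\Sigma_0$ has Euler genus $0$, no vortices, and $W_1$ lies in a single cell.

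The main inductive step tries to extend the current $\Sigma$-decomposition $\delta$ outward until it swallows all of $G-A$ for some slightly enlarged apex set $A \supseteq A_0$. Let $H$ consist of the components of $G-A$ not yet embedded in $\delta$ together with their attachments to the non-vortex cell boundaries; this determines one or more societies $(H',\Omega)$. If every such society is rural, Theorem~\ref{thm:crossreduct} lets us absorb $H'$ into $\delta$ as a vortex-free cell and we are done. Otherwise some society admits a cross, and the main combinatorial lemma shows that one of the following must occur: (i)~the cross can be absorbed as a handle or crosscap, raising the Euler genus of $\Sigma$; (ii)~the cross enlarges an existing vortex (raising depth) or creates a new vortex (raising breadth); (iii)~a vertex must be added to the apex set $A$; or (iv)~the cross, together with the wall and existing structure, exhibits a $K_p$ minor grasped by $W$. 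Each application of (i)--(iii) makes further progress toward (iv) and, in combination with Theorem~\ref{thm:mainconverse} in its contrapositive form, eventually forces case (iv).

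The bounds in the theorem arise from a global accounting argument. Theorem~\ref{thm:mainconverse} says an apex set of size $\alpha$, genus $g$, breadth $b$, and depth $d$ can exclude $K_p$ only when $p \le \alpha + 4 + \sqrt{6g} + 202(2g+b)^3 d$; turning this around, as long as $|A| \le p^{10^7p^{26}}$, $g \le p(p+1)$, $b \le 2p^2$, and $d \le p^{10^7p^{26}}$, any \emph{further essential} augmentation must produce a $K_p$ minor grasped by $W$, because the parameters match exactly the quantities appearing in the converse. I would track a lexicographic potential on $(g, b, \text{total depth})$ that strictly increases under (i)--(iii), guaranteeing termination in $n^{O(1)}$ steps (and yielding the polynomial-time algorithm). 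Throughout, $W$-centrality is preserved by the following procedure: whenever a candidate cell $c$ of the extended decomposition would contain the $W$-majority side of an order-$(r{-}1)$ separation, we trim $c$ by rerouting the separator out of the cell, using the fact that the majority side of a small separation cannot simultaneously sit inside a cell and contain most of $W$.

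The hardest part will be orchestrating the augmentation so that $W$-centrality, flatness of a large enough subwall $W'$, and the complexity bounds on $\delta$ hold simultaneously. Each surgery on $\Sigma$ (new handle, new crosscap, new or enlarged vortex) can destroy rows and columns of the available wall, so $W$ must begin large enough that an $r$-subwall survives every augmentation; this is what the additive term $p^{10^7 p^{26}}$ in $R$ pays for, while the multiplicative term $49152 p^{24} r$ is exactly the Flat Wall input. A secondary obstacle is the cross-to-minor conversion step: one must show that a cross in a non-rural society can always be routed through the wall to contribute genuinely new branch sets to the emerging $K_p$, with distinct row and column indices as required by the grasp condition. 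This routing, together with the recursion on $(g,b,d)$ that appears in item~(5) of the algorithmic outline in the introduction, is the technical heart of the proof.
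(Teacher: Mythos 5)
Your overall shape---start from the Flat Wall Theorem, then repeatedly either absorb the rest of the graph into the decomposition, add a handle/crosscap, grow a vortex, add an apex vertex, or else produce a $K_p$ minor grasped by $W$---matches the paper's strategy in broad outline (the paper's Theorem~\ref{thm:main3} is exactly an induction that adds one handle or crosscap at a time starting from an $\mathbb{S}^2$-configuration obtained from Theorem~\ref{thm:flatwall}). But there is a genuine gap at the step you call the ``global accounting argument.'' You propose to bound the genus, breadth, depth and apex size by invoking Theorem~\ref{thm:mainconverse} ``in its contrapositive form,'' arguing that once the parameters reach the stated bounds any further essential augmentation must yield a $K_p$ minor grasped by $W$. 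This is logically backwards: Theorem~\ref{thm:mainconverse} only says that a graph \emph{possessing} a $W$-central $\Sigma$-decomposition with parameters $\alpha,g,b,d$ cannot have a grasped $K_{p'}$ minor for $p'$ exceeding $\alpha+4+\sqrt{6g}+202(2g+b)^3d$; its contrapositive tells you that a graph with a huge grasped clique has no such decomposition, which is the opposite of what you need, and it provides no mechanism whatsoever for \emph{constructing} a grasped $K_p$ minor when an augmentation step fails. Quantitatively it is also vacuous here, since with $\alpha=d=p^{10^7p^{26}}$ the bound is astronomically larger than $p$, so ``the parameters match exactly'' does not hold. Consequently your lexicographic potential on $(g,b,\text{depth})$ has no a priori cap and the termination argument collapses.

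What actually fills this hole in the paper is a large body of positive construction machinery that your proposal never engages: cylindrical renditions with nests and orthogonal radial linkages around the one exceptional vortex, crooked transactions (Lemma~\ref{lem:GM9}), planar strips (Theorems~\ref{thm:RTplanarstrip} and~\ref{thm:planarstripjump}), leap patterns (Section~\ref{sec:augmenting}), and nested crosses, culminating in Theorem~\ref{thm:maintransstrong}/\ref{thm:maintrans2}, whose proof (by induction on the breadth and leap parameters) is the technical heart and is what delivers the bounds $2p^2$ on breadth and $p^{10^7p^{26}}$ on depth. The genus bound $c+h\le\binom{p+1}{2}$ comes not from any converse statement but from Lemma~\ref{lem:findingaclique}: too many pairwise disjoint handle/crosscap transactions attached to a nest that overlays $W$ directly yield a $K_p$ minor grasped by $W$; similarly Corollary~\ref{cor:nonestedcrosses} and Lemma~\ref{lem:nolargeleap} convert large nested-crosses transactions and leap patterns into grasped clique minors. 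Finally, your sketch of preserving $W$-centrality by ``rerouting the separator out of the cell'' is not a proof; the paper instead achieves $W$-centrality at the very end by taking a linear decomposition of each bounded-depth vortex (Theorem~\ref{thm:bdeddepthvortex}) and adding at most $2\mu+1$ carefully chosen vertices per vortex to the apex set so that no crossing pair $P_i\cup Q_j$ of wall paths survives inside any cell. Without these ingredients---or a substitute for them---your argument does not go through.
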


The function $p^{10^7p^26}$ could probably be tightened through more careful analysis in the proof.  
But the bound on the Euler genus will be tightened as mentioned in the introduction so that $K_p$ cannot be drawn in $\Sigma$. Note that the Euler genus of $\Sigma$ is at most $\lceil \frac{(p-3)(p-4)}{12} \rceil$. So our tightened bound only improves the constant. 

Our goal has been to simplify the presentation whenever possible while maintaining a bound which is exponential in a polynomial of $p$.  Our proof cannot fundamentally yield a better bound than exponential, but we conjecture that the correct value should by polynomial.

\begin{CN}
There exist polynomials $f, g$ which satisfy the following.  Let $r, p \ge 0$ be integers.  Let $R = f(r, p) + g(p)$.  Let $G$ be a graph and $W$ an $R$-wall in $G$.  Then either $G$ has a model of a $K_p$ grasped by $W$, or there exists a subset $A$ of at most $g(p)$ vertices, a surface $\Sigma$ of Euler genus at most $g(p)$ and a $W$-central $\Sigma$-decomposition $\delta$ of $G-A$ of depth and breadth at most $g(p)$ and an $r$-subwall $W'$ of $W$ such that $V(W') \cap A = \emptyset$ and $W'$ is flat in $\delta$.
\end{CN}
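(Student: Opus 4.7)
The plan is to prove Theorem~\ref{thm:main} by an iterative refinement argument, starting from the Flat Wall Theorem (Theorem~\ref{thm:flatwall}) and progressively enriching a $\Sigma$-decomposition until it is $W$-central, or until enough rich structure has been accumulated to extract a $K_p$ minor grasped by $W$. First I would apply Theorem~\ref{thm:flatwall} with an internal parameter chosen sufficiently larger than $r$ (this is why $R$ has the term $49152p^{24}r$); either we directly obtain a $K_p$ minor grasped by $W$ and are done, or we obtain an apex set $A_0$ of size at most $12288p^{24}$ and a large flat subwall $W_0$ of $W$ in $G - A_0$. Flatness of $W_0$ certifies an initial sphere-decomposition $\delta_0$ of $G - A_0$: the wall $W_0$ sits in a planar region, and everything else is initially stuffed into one cell attached along the pegs of~$W_0$.

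Next, I would maintain an invariant quadruple $(A, \Sigma, \delta, W')$ in which $A \supseteq A_0$ is an apex set, $\delta$ is a $\Sigma$-decomposition of $G - A$ with flat subwall $W'$ of $W_0$, and the current apex size, Euler genus, breadth and depth all respect the bounds of Theorem~\ref{thm:main} with some slack. If $\delta$ is $W$-central we stop. Otherwise, by definition there is a separation $(A',B')$ of order at most $|V(W')|^{1/2}$ whose $W$-majority side lies inside some cell $c$ of $\delta$; applying Theorem~\ref{thm:crossreduct} to the society $(\sigma(c),\Omega)$ either refines $c$ by a vortex-free disk-rendition, or produces a large transaction in $(\sigma(c),\Omega)$. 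In the former case we merge the rendition into $\delta$ and decrement the count of unprocessed cells; in the latter case we absorb the transaction by one of three operations: adding a handle or crosscap to $\Sigma$ (increasing Euler genus by at most $2$), creating a new vortex (increasing breadth by $1$), or extending the depth of an existing vortex. A potential function such as $\Phi = |A| + c_1 \cdot \mathrm{genus}(\Sigma) + c_2 \cdot \mathrm{breadth}(\delta) + c_3 \cdot \mathrm{depth}(\delta)$ strictly increases at each iteration and is polynomially bounded, so the process terminates.

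The crucial extraction step is: whenever a parameter of $\delta$ exceeds the threshold allowed by Theorem~\ref{thm:main}, we build a $K_p$ minor grasped by $W$ and terminate. For the genus, since $K_p$ has Euler genus at most $\lceil (p-3)(p-4)/6 \rceil < p(p+1)$, any surface of Euler genus exceeding $p(p+1)$ embeds $K_p$ and its branch-sets can be threaded back through the horizontal and vertical paths of $W'$ to give $K_p$ grasped by $W$. For the breadth, more than $2p^2$ vortices contain $p$ pairwise ``independent'' vortices whose images on the wall can be connected by $\binom{p}{2}$ disjoint routes through $W'$. For the depth, a vortex society containing a transaction of size exceeding $p^{10^7p^{26}}$ can be iteratively peeled via repeated applications of Theorem~\ref{thm:crossreduct} and standard society linkages to yield $p$ branch-sets routed through the vortex and the wall.

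The main obstacle is the depth reduction inside a vortex, which is precisely the source of the super-exponential bound $p^{10^7p^{26}}$. Turning a large transaction in a vortex society into a $K_p$ minor grasped by $W$ requires a recursive simplification in which, at each layer, either a disk-renderable slice is peeled off or a transaction-induced twist forces further structure; controlling the interaction of this peeling with the grid of $W'$, so that every branch-set intersects enough distinct horizontal and vertical paths for the ``grasp'' condition, is the most delicate bookkeeping of the proof. A secondary difficulty is preserving $W$-centrality and the subwall structure under repeated augmentation: each iteration may add apex vertices and contract pieces of $W'$, so one must budget these losses against the initial size $R$ of $W$ to guarantee that the final $W'$ is still an $r$-subwall and that $\delta$ is truly $W$-central (not merely $W'$-central) at termination.
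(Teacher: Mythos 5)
The statement you are asked to prove is the paper's \emph{Conjecture}, not its main theorem: it demands that the apex size, Euler genus, breadth and depth all be bounded by a \emph{polynomial} $g(p)$, with the wall size $R = f(r,p)+g(p)$ also polynomial. The paper does not prove this; it explicitly states that its method ``cannot fundamentally yield a better bound than exponential'' and records the polynomial version only as a conjecture. Your proposal is, in essence, a sketch of the paper's strategy for Theorem~\ref{thm:main} (flat wall theorem to seed a sphere-decomposition, then iterative absorption of transactions into handles, crosscaps, new vortices or deeper vortices, with clique-extraction when a parameter exceeds a threshold), and that strategy delivers exactly the exponential bounds of Theorem~\ref{thm:main}, not the polynomial ones claimed in the statement. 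Indeed your own extraction step for depth invokes the threshold $p^{10^7p^{26}}$, which is super-polynomial in $p$, so even if every step of your outline were carried out, the resulting bound on the depth (and hence on $g(p)$) would not be a polynomial and the statement would remain unproved. The source of the blow-up is intrinsic to this kind of recursion: each time the genus or breadth is incremented, the nest/depth parameters needed to continue must be multiplied by a factor exponential in a polynomial of $p$ (in the paper this appears as $s(g)=\bigl(6p^{10^7p^{24}}\bigr)^{\binom{p+1}{2}+2-g}M$ and in the function $f_{r,p}(l,k,q)$), and no potential-function bookkeeping of the type you describe removes that multiplicative cascade. A proof of the conjecture would need a genuinely new idea that avoids re-running the full transaction analysis inside each newly created vortex.

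There are also smaller inaccuracies that would matter even for the exponential version: $W$-centrality concerns separations of order at most $r-1$ (not $|V(W')|^{1/2}$), and a surface of Euler genus exceeding that of $K_p$ does not by itself yield a $K_p$ minor \emph{in $G$} grasped by $W$ --- the paper instead builds the clique from $\binom{p+1}{2}+1$ disjoint handle/crosscap transactions attached to a nest (Lemma~\ref{lem:findingaclique}), which is a statement about the graph, not about the abstract surface. But the decisive issue is the first one: the proposal proves (at best) Theorem~\ref{thm:main}, while the statement asserts polynomial bounds, which neither your argument nor the paper establishes.
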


The proof of Theorem \ref{thm:main} uses some of the ideas presented in the extended abstract \cite{KW}; however, the proof presented here differs in a significant way.  The proof in \cite{KW} required the use of the Unique Linkage theorem, a technical result proven in \cite{RS21}.  Thus, the techniques in \cite{KW} yield neither a self-contained proof nor explicit bounds on the parameters in contrast to the present article.
\subsection{Societies}

The proof of Theorem \ref{thm:main} is based on the systematic study of societies.  We conclude this section with some of the notation and definitions we will need in discussing societies.

\begin{DEF}
Let $(G, \Omega)$ be a society, let $\delta=(\Gamma,\zD)$ be a rendition of $(G,\Omega)$ in
a disk, and let $c_0\in C(\delta)$ be such that no cell
in $C(\delta)\setminus \{c_0\}$ is a vortex.
In those circumstances we say that
the triple  $(\Gamma,\zD,c_0)$ is
a {\em cylindrical rendition of $(G,\Omega)$}.
\end{DEF}

\begin{DEF}
Let $\rho=(\Gamma,\zD)$ be a  rendition of a society $(G,\Omega)$
in a surface $\Sigma$.
For every cell $c\in C(\rho)$ with $|\widetilde c|=2$
we select one of the components of bd$(c)-\widetilde c$.
We will refer to this selection as a {\em tie-breaker in $\rho$},
and we will assume, without any further mention, that every rendition comes equipped
with a tie-breaker.
Now let $P$ be either a cycle or a path in $G$
that uses no edge of $\sigma(c)$ for every vortex $c\in C (\rho)$.
We say that $P$ is {\em grounded in $\rho$}
if either $P$ is a non-zero length path with both endpoints in $\pi_\rho(N(\rho))$, or $P$ is a cycle and it uses
edges of $\sigma(c_1)$ and $\sigma(c_2)$ for two distinct cells
$c_1,c_2\in C(\rho)$.
If $P$ is grounded we define the track of $P$ as follows.  Let $Q_1, \dots, Q_k$ be the distinct maximal subpaths of $P$ such that $Q_i$ is a subgraph of $\sigma(c)$ for some cell $c$.  Fix an index $i$.  The maximality of $Q_i$ implies that its endpoints are $\pi(n_1)$ and $\pi(n_2)$ for distinct nodes $n_1, n_2 \in N(\Gamma)$.
If $|\widetilde c|=2$, define $L_i$ to be the component of bd$(c)-\{n_1,n_2\}$ selected by the tie-breaker,
and if $|\widetilde c|=3$, define $L_i$ to be the component of bd$(c)-\{n_1,n_2\}$ that is disjoint from
$\widetilde c$.
Finally, we define $L_i'$ by pushing $L_i$ slightly so that it is disjoint from all cells in $C(\rho)$.
We define such a curve $L_i'$ for all $i$, maintaining that the curves intersect only at a common endpoint.  The \emph{track of $P$} is defined to be $\bigcup_1^k L_i'$.
Since a track is unique up to homeomorphic shifts, we can speak of the track of $P$
without serious ambiguity.
Thus the track of a cycle is the homeomorphic image of the unit circle,
and the track of a path is an arc in $\Delta$ with both endpoints in $N(\Gamma)$.
We will use the notation $tr(P)$ to denote the track of the subgraph $P$.
\end{DEF}

\begin{DEF}
Let $\rho=(\Gamma,\zD)$ be a  rendition of a society $(G,\Omega)$
in a surface $\Sigma$ and let $\Delta\subseteq \Sigma$ be an arcwise connected set.
A {\em nest in $\rho$ around $\Delta$} is a sequence $\zC=(C_1,C_2,\ldots,C_s)$ of disjoint cycles
in $G$ such that each of them is grounded in $\rho$,
and the track of $C_i$ bounds a closed disk $\Delta_i$ in such a way that
$\Delta \subseteq\Delta_1\subseteq \Delta_2\subseteq\cdots\subseteq
\Delta_s\subseteq\Sigma$.
We will often use the letter $\zC$ to denote the graph $\bigcup_{i=1}^s C_i$.
If $\rho=(\Gamma, \zD, c_0)$ is a cylindrical rendition, then we say that $\zC$
is a {\em nest in $\rho$} if it is a nest around $c_0$.
\end{DEF}

\begin{DEF}
\label{def:innerouter}
Let $(G, \Omega)$ be a society and $\rho = (\Gamma, \zD)$ a  rendition of $(G, \Omega)$ in a surface $\Sigma$.
Let $C$ be a  cycle in $G$ that is grounded in $\rho$ such that track $T$ of $C$ bounds a closed disk $D$.
Let $L$ be the subgraph of $G$ with vertex set $\pi (N(\rho) \cap T)$ and no edges.
We define the {\em outer graph of $C$ in $\rho$} as the graph
$$L \cup \bigcup(\sigma(c)\,:\,c\in C(\rho)\hbox{ and }c\not\subseteq D).$$
We define the \emph{inner graph of $C$ in $\rho$} as the graph
$$L \cup \bigcup(\sigma(c)\,:\,c\in C(\rho)\hbox{ and }c\subseteq D).$$
Finally, we define the inner society of $C$ in $\rho$ as follows.  Let $\Omega'$ be the cyclically ordered set of vertices with $V(\Omega') = V(L)$ and the cyclic order taken from traversing $T$ clockwise.  If $G'$ is the inner graph of $C$ in $\rho$, then the \emph{inner society of $C$ in $\rho$} is the society $(G', \Omega')$.  If $G''$ is the outer graph of $C$, note that $G'$ and $G''$ are edge disjoint, $G' \cup G'' = G$, and $G' \cap G'' = L$.  Moreover, the cycle $C$ need not be a subgraph of either the inner graph $G'$ or the outer graph $G''$.

We say that a linkage $\zQ$ is {\em coterminal with a linkage
 $\zP$ up to level $C$} if there exists  a subset $\zP'$ of $\zP$ such that
$H \cap \zQ =H \cap \zP'$, where $H$ denotes the outer  graph of $C$.
When $\zC=(C_1,C_2,\ldots,C_s)$ will be a nest in $\rho$ and it will be clear from the context which nest
we are referring to, then we will abbreviate ``coterminal with $\zP$ up to level $C_i$" by
``coterminal with $\zP$ up to level $i$".
\end{DEF}

\begin{DEF}
\label{def:radialorthogonal}
Let $(G,\Omega)$ be a society,
let $\zC = (C_1, \dots, C_s)$ be a nest in a rendition
$\rho=(\Gamma, \zD)$ of $(G,\Omega)$ in a surface $\Sigma$.
We say that a linkage $\zP$ is a \emph{radial linkage in $\rho$} if every component has one endpoint in $V(\Omega)$, the other endpoint in the inner graph of $C_1$, and no internal vertex in $V(\Omega)$.
We say that $\zP$ is
 {\em orthogonal to $\zC$} if for all $C\in\zC$ and all $P \in \zP$ the graph $C\cap P$
has exactly one component.
\end{DEF}

\begin{DEF}
Let $(G, \Omega)$ be a society and let $\zP$ be a transaction in $(G, \Omega)$.
We say that $\zP$ is a {\em crosscap transaction} if the members of $\zP$ can be numbered
$P_1,P_2,\ldots,P_n$ and the endpoints of $P_i$ denoted by $u_i$ and $v_i$ in such a way that
$u_1,u_2,\ldots,u_n,v_1,v_2,\ldots,v_n$ appear in $\Omega$ in the order listed.
We say that  $\zP$ is a  crosscap transaction of {\em thickness} $n$.

We say that $\zP$ is a {\em handle transaction} if the members of $\zP$ can be numbered
$P_1,P_2,\ldots,P_{2n}$ and the endpoints of $P_i$ denoted by $u_i$ and $v_i$ in such a way that
$$u_1,u_2,\ldots,u_{2n},v_n,v_{n-1},\ldots,v_1,v_{2n},v_{2n-1},\ldots,v_{n+1}$$
appear in $\Omega$ in the order listed.
We say that  $\zP$ is a  handle transaction of {\em thickness} $n$.  In this case, the linkages $\zP_1 = \{P_1, \dots, P_n\}$ and $\zP_2 = \{P_{n+1}, \dots, P_{2n}\}$ are the \emph{constituent} transactions of the handle transaction.
\end{DEF}

\begin{DEF}
\label{def:grasped}
Let $(G,\Omega)$ be a society,
let $\zC=(C_1,C_2,\ldots,C_s)$ be a nest in a cylindrical rendition
$\rho=(\Gamma, \zD, c_0)$ of $(G,\Omega)$,
and  let ${\cal P}$ be radial a linkage in $\rho$.
We say that a model $(X_1,X_2,\ldots ,X_p)$ of a $K_p$ minor is grasped by $\zC\cup\zP$
if for each $l=1,2,\ldots ,p$ there exist distinct indices $i_1,i_2,\ldots ,i_p\in\{1,2,\ldots ,s\}$
and distinct paths $P_1,P_2,\ldots ,P_p\in\zP$ such that $V(C_{i_j})\cap V(P_j)\subseteq X_l$
for all $j=1,2,\ldots ,p$.
\end{DEF}

\section{Excluding a crooked transaction}\label{sec:GM9}

Let $\zP$ be a transaction in a society $(G, \Omega)$.
We say that $\zP$ is {\em planar} if no two members of $\zP$ form a cross in $(G, \Omega)$.
An element $P \in \zP$ is \emph{peripheral} if there exists a segment $X$
of $\Omega$ containing both the endpoints of $P$
and no endpoint of another path in $\zP$.
A transaction is \emph{crooked} if it has no peripheral element.

In this section, we describe the basic structure of a society with no large crooked transaction. Our main result in this section, Lemma \ref{lem:GM9}, is a slight weakening
of~\cite[Theorem 6.1]{RS9} which gives a better constant in the bound on the depth of the rendition.  However, our proof is slightly shorter. 

We first need several lemmas.

\begin{LE}\label{lem:planarorcrooked}
Let $(G, \Omega)$ be a society and $p\ge 1, q \ge 2$ positive integers.  Let $\zP$ be a transaction in  $(G, \Omega)$
of order $p+q - 2$.  Then there exists $\zP' \subseteq \zP$ such that $\zP'$ is either a planar transaction of order $p$ or a crooked transaction of order $q$.
\end{LE}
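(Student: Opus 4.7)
My plan is to build the required sub-transaction by greedily peeling off peripheral paths. The cornerstone observation I would establish first is that a peripheral path $P$ in a transaction $\zQ$ cannot cross any other path $Q\in\zQ$: the segment $X$ witnessing the peripherality of $P$ contains both endpoints of $P$ and no other endpoint of $\zQ$, so both endpoints of $Q$ lie in $V(\Omega)\setminus X$, and the four endpoints cannot alternate in $\Omega$.

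The construction then proceeds as follows. Set $\zP^{(0)}:=\zP$; as long as $\zP^{(i)}$ has a peripheral element $R_{i+1}$, I remove it and let $\zP^{(i+1)}:=\zP^{(i)}\setminus\{R_{i+1}\}$, noting that $\zP^{(i+1)}$ is still a transaction with the same witnessing segments. Stop at the first index $k$ for which $\zP^{(k)}$ has no peripheral element, equivalently, $\zP^{(k)}$ is crooked, and let $R:=\{R_1,\ldots,R_k\}$. Each $R_i$ is peripheral in $\zP^{(i-1)}$, which contains all later $R_j$'s as well as all of $\zP^{(k)}$; by the observation above, $R$ is pairwise non-crossing, and every path of $\zP^{(k)}$ is non-crossing with every $R_i$.

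A case analysis on $k$ completes the argument. If $k\ge p$, any $p$-subset of $R$ is a planar sub-transaction of order $p$. If $k\le p-2$, then $|\zP^{(k)}|=p+q-2-k\ge q$, so $\zP^{(k)}$ itself is a crooked sub-transaction of order at least $q$. In the remaining boundary case $k=p-1$, we have $|\zP^{(k)}|=q-1\ge 1$ by the hypothesis $q\ge 2$; picking any $P\in\zP^{(k)}$, the set $R\cup\{P\}$ is pairwise non-crossing and hence a planar sub-transaction of order $p$.

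The only delicate point is that boundary case $k=p-1$, where the hypothesis $q\ge 2$ is essential to ensure that $\zP^{(k)}$ is nonempty and thus supplies the extra path needed to pad $R$ out to $p$ paths; the other two cases follow immediately from the count $|\zP|=p+q-2$.
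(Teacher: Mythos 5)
Your proof is correct and is essentially the paper's argument in unrolled form: the paper removes a single peripheral path and applies induction on $p$, which is exactly your greedy peeling, and your explicit observation that a peripheral path crosses no other member of the transaction is the (implicit) fact the paper uses when it adds the peeled path back to a planar transaction of order $p-1$. The only cosmetic difference is that in your case $k\le p-2$ you obtain a crooked transaction of order at least $q$ rather than exactly $q$, but the paper's own proof adopts the same reading (it declares victory whenever the current transaction is crooked), so this is not a gap.
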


\begin{proof}
We proceed by induction on $p$.  If $p = 1$, any element of $\zP$ forms a planar transaction of order one, and so the claim trivially follows.  Assume $p \ge 2$, and thus $|\zP| \ge q$.  If $\zP$ is crooked, the claim again holds.  Thus, we may assume that there exists an element $P \in \zP$ which is peripheral.  By induction $\zP - P$ has a transaction $\zP'$ which is either a crooked transaction of order $q$ or a planar transaction of order $p-1$.  If $\zP'$ is crooked, the claim holds, and otherwise, $\zP' \cup \{P\}$ is a planar transaction of order $p$, as desired.
\end{proof}


Note that the bound in Lemma \ref{lem:planarorcrooked} is best possible.  Let $p$ and $q$ be positive integers, and let $\zP$ be a transaction in a society $(G, \Omega)$ such that $\zP = \zP_1 \cup \zP_2$ where $\zP_1$ and $\zP_2$ are disjoint transactions which satisfy the following.  The transaction $\zP_1$ is a planar transaction of order $p-2$, $\zP_2$ is a transaction of order $q-1$ such that the elements pairwise cross (i.e., a crosscap), and for every $P \in \zP_1$, $Q \in \zP_2$, $P$ and $Q$ do not cross.  It follows that $\zP$ is a transaction of order $p+q - 3$ which does not contain either a planar transaction of order $p$ or a crooked transaction of order $q$.

\begin{LE}\label{lem:crookeddeletions}
Let $(G, \Omega)$ be a society.  Let $\zP$ be a crooked transaction in $(G, \Omega)$ of order at least five.  Then there exists an element $P \in \zP$ such that $\zP - P$ is a crooked transaction.
\end{LE}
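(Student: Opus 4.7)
The plan is to reduce the problem to purely combinatorial bookkeeping about the cyclic ordering of endpoints. I would fix the two disjoint segments $A, B$ from the definition of a transaction and label the paths as $P_1, \dots, P_n$ so that their $A$-endpoints $a_1, \dots, a_n$ appear along $A$ in that order; let $\tau$ be the permutation of $\{1, \ldots, n\}$ so that the $B$-endpoints appear along $B$ in the order $b_{\tau(1)}, \ldots, b_{\tau(n)}$. Since the cyclic order on $\Omega$ contains exactly two ``gap regions'' between $A$ and $B$ in which no endpoints of $\zP$ lie, a path $P_i$ is peripheral in $\zP$ iff either $i = n$ and $\tau(1) = n$, or $i = 1$ and $\tau(n) = 1$. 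Hence $\zP$ is crooked iff $\tau(1) \ne n$ and $\tau(n) \ne 1$.

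Next I would analyse, for a fixed $k$, which $P_i$ with $i\ne k$ can become peripheral after deleting $P_k$. By the criterion above, this happens iff every endpoint of $\zP$ strictly between $a_i$ and $b_i$ on one side of $\Omega$ is an endpoint of $P_k$. Since $P_k$ contributes only two endpoints in total and the relevant arc is either a suffix of $A$-endpoints followed by a prefix of $B$-endpoints, or the symmetric version through the opposite gap, a brief case analysis using $\tau(1) \ne n$ and $\tau(n) \ne 1$ rules out arcs containing zero or two endpoints of $P_k$. So exactly one endpoint of $\zP$ lies in the arc, and it is $a_k$ or $b_k$; recording the four sub-cases (two gaps, times the endpoint being in $A$ or $B$) yields exactly four configurations producing a bad $k$: $(k, \tau(1)) = (n, n-1)$, $(k, \tau(2)) = (\tau(1), n)$, $(k, \tau(n-1)) = (\tau(n), 1)$, and $(k, \tau(n)) = (1, 2)$.

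The key observation is that in each configuration $k$ is uniquely determined (as $n$, $\tau(1)$, $\tau(n)$, or $1$), so at most four elements of $\zP$ have the property that their deletion destroys crookedness. Since $|\zP| \ge 5$, there is some $P \in \zP$ for which $\zP - P$ is crooked, completing the proof. The only real obstacle is careful bookkeeping, especially handling the boundary indices $k = 1$ and $k = n$ uniformly and verifying that the case of two endpoints of $P_k$ in the arc is indeed impossible; the bound $|\zP|\ge 5$ is in fact tight, since for $n = 4$ the permutation $\tau = (3,4,1,2)$ produces a crooked transaction in which every single removal is bad.
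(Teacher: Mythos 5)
Your proposal is correct, but it takes a genuinely different route from the paper. The paper argues by contradiction with the path structure itself: assuming every deletion destroys crookedness, it picks $P_1$, extracts a peripheral witness $Q_1$ in $\zP - P_1$ and shows $P_1, Q_1$ cross; it repeats this with a second path $P_2 \notin \{P_1, Q_1\}$ to get a second crossing pair whose witnessing segments $a_1\Omega b_1$ and $b_2\Omega a_2$ are disjoint, forces every remaining path to run between the two complementary segments, and then observes that (since $|\zP|\ge 5$) such a remaining path exists and its deletion preserves crookedness, a contradiction. You instead reduce everything to the cyclic endpoint pattern: encoding the transaction by the permutation $\tau$, characterizing crookedness as $\tau(1)\neq n$ and $\tau(n)\neq 1$, and enumerating which $P_i$ can become peripheral after deleting $P_k$. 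I checked your case analysis: the ``zero endpoints of $P_k$'' case contradicts crookedness of $\zP$, the ``two endpoints'' case forces $\tau(1)=n$ or $\tau(n)=1$, and the remaining possibilities are exactly your four configurations $(k,\tau(1))=(n,n-1)$, $(k,\tau(2))=(\tau(1),n)$, $(k,\tau(n-1))=(\tau(n),1)$, $(k,\tau(n))=(1,2)$, so the bad indices lie in $\{1,n,\tau(1),\tau(n)\}$ and $|\zP|\ge 5$ suffices. (The stated peripherality criterion does depend on the convention that $B$ is traversed in the cyclic direction of $\Omega$ continuing from $A$; with the opposite convention the roles of $\tau(1)$ and $\tau(n)$ swap, which is harmless, but you should fix the convention explicitly when writing this up.) What each approach buys: the paper's argument is shorter to state and matches the style of the neighboring lemmas (peripheral witnesses and crossings, reused in Lemma~\ref{lem:mcdonaldsprep}), while yours gives strictly more information --- at most four explicitly identified elements are ``bad,'' and your $n=4$ example with $\tau=(3,4,1,2)$ shows the hypothesis of order at least five is sharp, a point the paper does not address.
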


\begin{proof}
Assume the claim is false.  Let $P_1$ be an element of $\zP$.  There exists a path $Q_1$ which is peripheral in $\zP - P_1$.  Let $a_1$ and $b_1$ be the endpoints of $Q_1$ such that the $a_1 \Omega b_1$ does not contain the endpoint of any element of $\zP - P_1$ other than $Q_1$.  As $Q_1$ is not peripheral in $\zP$, the path $P_1$ must have at least one endpoint in $a_1\Omega b_1$, and since $P_1$ is not peripheral, it cannot have both endpoints in $a_1 \Omega b_1$.  Thus $P_1$ and $Q_1$ cross.

Let $P_2$ be a path in $\zP \setminus \{P_1, Q_1\}$.  Again, $\zP - P_2$ has a peripheral element $Q_2$ with endpoints $a_2$ and $b_2$ such that the segment $b_2 \Omega a_2$ is disjoint from $\zP - P_2$.  As in the paragraph above, it holds that $P_2$ and $Q_2$ cross.  The vertices $a_1, b_1, b_2,a_2$ occur on $\Omega$ in that order and the two segments $a_1 \Omega b_1$ and $b_2 \Omega a_2$ are disjoint.  It now follows that every element  of $\zP \setminus \{P_1, Q_1, P_2, Q_2\}$ must have one endpoint in $a_2 \Omega a_1$ and the other endpoint in $b_1 \Omega b_2$.  By our assumptions on the order of $\zP$, this set is non-empty and any such path can be deleted from $\zP$ and the resulting linkage will still be crooked, a contradiction.
\end{proof}

There is a natural exchange property for crosses which given a cross $P_1$ and $P_2$ in a society $(G, \Omega)$ along with a path $R$ from $V(\Omega)$ to $P_1 \cup P_2$ allows us to find a new cross containing $R$.

\begin{LE}\label{lem:crossreroute}
Let $(G, \Omega)$ be a society and $P_1, P_2$ a cross in $(G, \Omega)$.  Let $R$ be a path with one endpoint in $V(\Omega)$, the other endpoint in $V(P_1) \cup V(P_2)$ which is internally disjoint from $V(\Omega) \cup V(P_1) \cup V(P_2)$.  Then there exists an index $i \in \{1, 2\}$ and a subpath $Q$ contained in $P_i$ such that $R \cup Q$ and $P_{3-i}$ form a cross.
\end{LE}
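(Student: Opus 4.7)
The plan is to set up notation based on the cross condition and then apply a one-line cyclic interleaving argument.

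First I label the endpoints of $P_1$ as $u_1, v_1$ and of $P_2$ as $u_2, v_2$ so that $u_1, u_2, v_1, v_2$ appear in $\Omega$ in the listed cyclic order; this is available by the definition of a cross. Let $r\in V(\Omega)$ and $s\in V(P_1)\cup V(P_2)$ be the two endpoints of $R$. By the symmetry between $P_1$ and $P_2$, I may assume $s\in V(P_1)$ (and plan to take $i=1$). Let $Q_u:=sP_1u_1$ and $Q_v:=sP_1v_1$ be the two subpaths of $P_1$ obtained by splitting at $s$. Each of $R\cup Q_u$ and $R\cup Q_v$ is a path of $G$---because $R$ is internally disjoint from $V(P_1)$---with endpoint sets $\{r,u_1\}$ and $\{r,v_1\}$ respectively, and these are the two candidate paths to pair with $P_2$.

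Next comes the interleaving step. The vertices $u_2, v_2$ split the cyclic order $\Omega$ into two arcs $A, B$. By the cross condition on $(P_1,P_2)$, the vertices $u_1$ and $v_1$ lie in different arcs. Hence exactly one of $u_1, v_1$ lies on the opposite arc from $r$, and I choose the corresponding candidate: if $r$ is on the opposite arc from $u_1$, then $\{r,u_1\}$ and $\{u_2,v_2\}$ interleave in $\Omega$ and I take $Q=Q_u$; otherwise $\{r,v_1\}$ and $\{u_2,v_2\}$ interleave and I take $Q=Q_v$.

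It remains to verify that $R\cup Q$ and $P_2$ are vertex-disjoint, which completes the cross. This holds because $R$ is internally disjoint from $V(P_2)$; the endpoint $s\in V(P_1)$ of $R$ avoids $V(P_2)$ (as $P_1$ and $P_2$ are disjoint); the endpoint $r$ avoids $V(P_2)$ under the natural reading that the $V(\Omega)$-endpoint of $R$ is distinct from the vertices of $P_1\cup P_2$; and $Q\subseteq P_1$ is disjoint from $P_2$. Hence $(R\cup Q, P_2)$ is a cross and the lemma holds with $i=1$; the case $s\in V(P_2)$ is entirely symmetric.

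I do not anticipate any real obstacle: once the cyclic order $u_1, u_2, v_1, v_2$ is fixed, the proof is essentially a case analysis on which arc of $\Omega\setminus\{u_2,v_2\}$ contains $r$. The only subtlety is the degenerate possibility that $r$ coincides with an endpoint of $P_{3-i}$, in which case the disjointness step fails; this is averted either by the intended reading of the hypothesis or, if necessary, by switching the roles of $P_1$ and $P_2$ so that the affected endpoint becomes the attachment side of $R$.
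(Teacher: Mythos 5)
The paper states Lemma~\ref{lem:crossreroute} without proof (it is offered as a ``natural exchange property'' and never formally verified), so there is no proof of record to compare against; your argument is the natural one and, under the intended reading of the hypothesis, it is correct. Splitting $P_1$ at the attachment vertex $s$ and choosing the branch whose far endpoint lies on the arc of $\Omega$ determined by $u_2,v_2$ not containing $r$ does give the interleaving, and disjointness of $R\cup Q$ from $P_2$ is immediate. One small item you should add to the verification: the paper's definition of a cross also requires each path to meet $V(\Omega)$ only in its two ends, so you should note (it is immediate when $s$ is an internal vertex of $P_1$ and $r$ is not an end of $P_1$ or $P_2$) that $R\cup Q$ has no internal vertex in $V(\Omega)$.

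Two caveats about your closing remark on degeneracies. First, the proposed repair for the case where $r$ coincides with an end of $P_{3-i}$ --- ``switching the roles of $P_1$ and $P_2$'' --- does not work: if $r=u_2$ and $s$ is an internal vertex of $P_1$, then every path of the form $R\cup Q$ with $Q\subseteq P_2$ has the endpoint $s\notin V(\Omega)$ and meets $P_1$, while every $R\cup Q$ with $Q\subseteq P_1$ contains $u_2\in V(P_2)$; so in that configuration no cross containing $R$ exists at all, and the lemma is literally false. The statement must therefore be read with the $\Omega$-endpoint of $R$ not lying on $P_1\cup P_2$, as you suggest in your alternative reading. Second, there is a further degeneracy you do not address: if $s$ is an end of $P_1$, say $s=u_1$, and $r$ lies on the same arc of $\Omega$ (with respect to $u_2,v_2$) as $u_1$, then your rule selects $Q=sP_1v_1$ and the resulting path passes through $u_1\in V(\Omega)$ as an internal vertex, which violates the paper's definition of a cross; again no valid cross containing $R$ exists there under that definition. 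So the hypothesis should also be read as saying that the endpoint of $R$ in $V(P_1)\cup V(P_2)$ is not a vertex of $V(\Omega)$. With these (clearly intended) readings, your proof is complete.
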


The analogous statement for crooked transactions does not hold, but a similar type of result can be shown.
The next two lemmas are restated results of \cite{RS9}.  The first lemma is~\cite[(3.2)]{RS9}
and is in preparation for the following lemma which shows a variant of Lemma \ref{lem:crossreroute} for crooked transactions.  We include the proofs for completeness.

\begin{LE}
\label{lem:mcdonaldsprep}
Let $(G, \Omega)$ be a society and let $\zP$ be a crooked transaction in $(G, \Omega)$.
Let $Q$  be a path in $G$ with one end  $w\in V(\Omega)$, the other endpoint $z\in V(P)$, where $P\in\zP$,
and no vertex besides $z$ of $P$ is in $V(\zP)$.
Let $u,v$ be the endpoints of $P$, and let $R_1:=uPz$ and $R_2:=vPz$.
If neither $(\zP - P) \cup \{R_1 \cup Q\}$ nor $(\zP - P) \cup \{R_2 \cup Q\}$ is a crooked transaction in $(G, \Omega)$,
then there exist distinct paths $S_1,S_2\in\zP-P$ with endpoints $x_1,y_1$ and $x_2,y_2$, respectively, such that
\begin{itemize}
\item[{\rm(i)}] the vertices $u,x_1,x_2,v,y_2,y_1$ occur in $\Omega$ in the order listed,
\item[{\rm(ii)}] every path in $\zP-P$  has one endpoint in $x_1\Omega x_2$ and the other endpoint in $y_2\Omega y_1$, and
\item[{\rm(iii)}] $w\in x_1\Omega x_2\cup y_2\Omega y_1$.
\end{itemize}
\end{LE}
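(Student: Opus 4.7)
My plan is to analyze the two altered transactions $\zP_1 := (\zP - P) \cup \{R_1 \cup Q\}$ and $\zP_2 := (\zP - P) \cup \{R_2 \cup Q\}$. Passing from $\zP$ to $\zP_i$ replaces the endpoint $v$ of $P$ (when $i=1$) or $u$ (when $i=2$) by $w$ on $\Omega$, leaving the endpoints of the remaining paths unchanged. Since $\zP$ is a transaction, fix disjoint segments $A, B$ of $\Omega$ with every path of $\zP$ having one endpoint in each, with $u \in A$ and $v \in B$. By hypothesis each $\zP_i$ has a peripheral element $T_i$; the strategy is first to show each $T_i \in \zP - P$, and then to take $S_2 := T_1$ and $S_1 := T_2$.

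For the first step, I rule out $T_i = R_i \cup Q$ by analyzing the witnessing arcs of $\Omega$. If both $T_i$ equal $R_i \cup Q$, the witnessing arcs from $u$ to $w$ and from $v$ to $w$, both disjoint from the endpoints of $\zP - P$, combine to yield an arc of $\Omega$ from $u$ to $v$ disjoint from those endpoints, making $P$ peripheral in $\zP$ and contradicting crookedness. If only $T_1 = R_1 \cup Q$ while $T_2 = S \in \zP - P$, let $\gamma$ be the arc witnessing periphericity of $T_1$ and $\alpha$ the arc witnessing periphericity of $T_2$. Because $S$ was not peripheral in $\zP$ but becomes peripheral in $\zP_2$, and because $\alpha$ avoids every endpoint of $\zP_2 - S$ (in particular $v$, $w$, and all endpoints of $\zP - P$ other than those of $S$), the arc $\alpha$ must contain $u$ as an interior vertex. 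Then both $\Omega$-neighbors of $u$ lie in $\alpha$, so $\gamma$ enters $\alpha$'s interior immediately from $u$ and, since $w \notin \alpha$, must exit $\alpha$ through an endpoint of $\alpha$; but that exit point is an endpoint of $S \in \zP - P$, contradicting the fact that $\gamma$ avoids all such endpoints. The symmetric case $T_2 = R_2 \cup Q$ with $T_1 \in \zP - P$ fails identically.

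Hence $T_1 = S_2$ and $T_2 = S_1$ both lie in $\zP - P$, with witnessing arcs $\alpha_2$ between $x_2, y_2$ (containing $v$, avoiding $u$, $w$, and the other endpoints of $\zP - P$) and $\alpha_1$ between $x_1, y_1$ (containing $u$, avoiding $v$, $w$, and the other endpoints). Distinctness of $S_1, S_2$ follows because $S_1 = S_2$ would force $\alpha_1$ and $\alpha_2$ to be the two distinct arcs of $\Omega$ between a common pair of endpoints, hence together cover all of $\Omega$; this would contradict $w \in V(\Omega)\setminus(\alpha_1 \cup \alpha_2)$, using the hypothesis $w \notin V(\zP)$ to rule out $w$ being an endpoint of $S_1$. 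The cyclic order (i) is then forced by extremality: $\alpha_1$ requires $u$ to be extremal in $A$ with $x_1$ immediately next and $y_1$ extremal in $B$ on the side opposite $v$, while $\alpha_2$ requires $x_2$ to be extremal in $A$ on the side opposite $u$ with $v$ extremal in $B$ and $y_2$ immediately after $v$. These constraints together produce the cyclic order $u, x_1, x_2, v, y_2, y_1$. Property (ii) then holds because every other $S' \in \zP - P$ has endpoints in $A\setminus\{u\}$ and $B\setminus\{v\}$ but outside $\alpha_1 \cup \alpha_2$, hence necessarily in $x_1 \Omega x_2$ and $y_2 \Omega y_1$; property (iii) follows because $V(\Omega)\setminus(\alpha_1 \cup \alpha_2) = x_1 \Omega x_2 \cup y_2 \Omega y_1$. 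The principal obstacle is the arc-exit argument in the mixed case of the second step, where careful bookkeeping on $\Omega$ is needed; once that is in hand, the rest unfolds from the extremality observations.
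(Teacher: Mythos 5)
Your overall strategy---extracting peripheral elements of $\zP_1=(\zP-P)\cup\{R_1\cup Q\}$ and $\zP_2=(\zP-P)\cup\{R_2\cup Q\}$, ruling out that either peripheral element is the new path, and reading (i)--(iii) off the two witnessing segments---is the same as the paper's, and your arc-crossing argument, the distinctness argument, and the final derivation of (i)--(iii) are essentially sound. The genuine gap is at the very first step: from the hypothesis that neither $\zP_1$ nor $\zP_2$ is a \emph{crooked transaction} you infer that ``each $\zP_i$ has a peripheral element $T_i$.'' That inference is valid only if each $\zP_i$ is known to be a transaction; the hypothesis is equally consistent with $\zP_i$ failing to be a transaction at all, in which case no peripheral element is guaranteed. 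Once you fix complementary segments $A\ni u$ and $B\ni v$, exactly one of the two added paths ($u$--$w$ or $v$--$w$) has both ends in the same segment, and for that $\zP_i$ the transaction property is genuinely in doubt: crookedness of $\zP$ alone does not force it (one can write down endpoint patterns with $\zP$ crooked, $w$ between $v$ and $b_2$, a member of $\zP-P$ whose $B$-end lies between $b_1$ and $v$ and whose $A$-end is closer to $a_2$ than the $A$-end of a member whose $B$-end lies beyond $w$; then no pair of disjoint segments separates all paths of $\zP_2$). Under the full hypotheses such configurations do turn out to be impossible, but proving that is exactly the work your proof skips, and it is where the paper spends its effort: it first uses the symmetry to assume $w\notin A$, so that $\zP_1$ is honestly a transaction and $S_1$ exists; it invokes a peripheral element of $\zP_2$ only after verifying, from facts already derived, that $\zP_2$ is a transaction---inside the claim ruling out $S_1=R_1\cup Q$ it first shows no member of $\zP-P$ has an end in $w\Omega u$ (so $\zP_2$ is a transaction from $B\setminus w\Omega b_2$ to $A\cup w\Omega b_2$), and later it checks that $x_1\Omega y_1$ contains no ends of $\zP-P$ (so $\zP_2$ is a transaction from $A\cup b_1\Omega v$ to $B\setminus b_1\Omega v$). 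Since your argument uses $T_2$ both in the ``both $T_i=R_i\cup Q$'' case and in the mixed case, as well as for $S_1$ itself, this gap sits at the core of the proof rather than in a side case.

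Two smaller points. You cite ``the hypothesis $w\notin V(\zP)$,'' but that is not a hypothesis; it needs the short degenerate-case argument (if $w$ were an end of a member of $\zP$, then $Q$ would be trivial and $w=z\in\{u,v\}$, making $\zP_1$ or $\zP_2$ equal to $\zP$ and hence crooked, contrary to assumption), which the paper supplies at the outset and which your distinctness step relies on. Also, the ``extremality'' justification of (i) and (ii) is thin as written: what actually makes (ii) work is that each witnessing segment contains exactly one of the two $A$--$B$ junctions (because $\alpha_1$ must avoid $v$ and the ends of $S_2$, and $\alpha_2$ must avoid $u$ and the ends of $S_1$), so the two complementary gaps $x_1\Omega x_2$ and $y_2\Omega y_1$ lie entirely in $A$ and in $B$ respectively; this is easy to supply, but it should be said.
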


\begin{proof}
Assume that neither $(\zP - P) \cup (R_1 \cup Q)$ nor $(\zP - P) \cup (R_2 \cup Q)$ is a crooked transaction.  Fix $\zP_1 = (\zP - P) \cup (Q \cup R_1)$ and $\zP_2 = (\zP - P) \cup (Q \cup R_2)$.

We begin by noting that $w$ is not an endpoint of any member of $\zP$.
Otherwise, since $z$ is the only vertex of $Q$ in $V(\zP)$, we have that $Q$ is a trivial path and $w=z\in\{u,v\}$.  If say $w=z=v$, then $R_1\cup Q=P$, and hence $\zP_1 = \zP$ which is crooked, a contradiction. Thus $w$ is not an end of any member of $\zP$.
Let $A$ and $B$ be disjoint, complimentary segments of $\Omega$ such that  $u\in A$ and every element of $\zP$ has one end in $A$ and one endpoint in $B$.  Let $a_1$ and $a_2$ be the ends of $A$ such that $A = a_1 \Omega a_2$.  Similarly, let $b_1$ and $b_2$ be the endpoints of $B$ such that $B = b_1 \Omega b_2$.
It follows that $v\in B$.
We may assume from the symmetry that $w\not\in A$ and that $u,v,w$ occur in $\Omega$ in the order listed.
Then  $\zP_1$ is a transaction in $(G, \Omega)$, and since it is not crooked there exists a
peripheral path $S_1\in \zP_1$.

We claim that $S_1\in\zP-P$. To prove this claim, suppose for a contradiction that $S_1=R_1 \cup Q$.
Thus either $w\Omega u$ or $u\Omega w$ includes no endpoint of a member of $\zP-P$.  If $u\Omega w$ does not contain the endpoint of an element of $\zP -P$, then as $u \Omega v \subseteq u \Omega w$, we have that $P$ is also peripheral in $\zP$, a contradiction.  Thus, no element of $\zP - P$ has an endpoint in $w \Omega u$.  As $P$ is not peripheral in $\zP$, some element of $\zP - P$ has an endpoint in $v \Omega u$.  As no such element has an endpoint in $w \Omega u \subseteq v \Omega u$, we conclude that some element of $\zP - P$ has an endpoint in $v \Omega w$.  Fix $P_1$ to be such a path.  Note that as $P_1$ has no endpoint in $a_1 \Omega u$, and thus $P_1$ crosses $P$.  The linkage $\zP_2$ is a transaction from $B \setminus w\Omega b_2$ to $A \cup w\Omega b_2$.  Moreover, neither $Q \cup R_2$ nor $P_1$ is peripheral in $\zP_2$ as $Q \cup R_2$ crosses $P_1$.  By assumption, $\zP_2$ has a peripheral element $P_2 \notin\{ P_1, Q \cup R_2\}$.  Note that $P_2 \in \zP - P$.  Let the endpoints of $P_2$ be $c_1 \in A$ and $c_2 \in B$.  Thus, $c_1 \in u \Omega a_2$.  As $P_2$ does not cross $Q \cup R_2$, $c_2 \in b_1 \Omega v$.  We conclude that $c_1 \Omega c_2$ does not contain the endpoint of any element of $(\zP - P) \cup (Q \cup R_2)$ nor does it contain $v$.  Thus, $P_2$ is peripheral also in $\zP$, a contradiction.  This proves the claim that $S_1 \in \zP - P$.

Fix the endpoints of $S_1$ to be $x_1 \in A$ and $y_1 \in B$.  There exists a segment $Z = x_1 \Omega y_1$ or $Z = y_1 \Omega x_1$ such that $Z$ does not have an endpoint of an element of $\zP_1$ other than the vertices $x_1$ and $y_1$.  As $S_1$ is not peripheral in $\zP$, $Z$ contains an endpoint of an element of $\zP$ as an internal vertex.  The only possibility for such a vertex is $v$ and, thus, $v \in Z$, $u,w \notin Z$.  We conclude that $u, x_1, v, y_1, w$ occur in $\Omega$ in that order with $Z = x_1 \Omega y_1$.  Specifically, $x_1 \Omega y_1$ does not contain the endpoint of any element of $\zP - P$.

We conclude that $\zP_2$ is a transaction from $A \cup b_1 \Omega v$ to $B \setminus b_1 \Omega v$ and by assumption, it has a peripheral element $S_2$.  Note that as $Q \cup R_2$ crosses $S_1$, the path $S_2$ is distinct from $Q \cup R_2$ and $S_1$.  Let the endpoints of $S_2$ be $x_2 \in A$ and $y_2 \in B$.  We have already shown that $x_2$ and $y_2$ are not in $x_1 \Omega y_1$, and so the vertices $x_2 \in a_1 \Omega x_1$ and $y_2 \in y_1 \Omega b_2$.  As $S_2$ is peripheral in $\zP_2$, but not in $\zP$, we have as above that $u \in y_2 \Omega x_2$ but neither $Q \cup R_2$ nor an element of $\zP-P$ has an endpoint as an internal vertex of $y_2 \Omega x_2$.  We conclude that $x_2 \in u \Omega x_1$ and $y_2 \in w \Omega b_2$.  Thus, $u, x_2, x_1, v, y_1, y_2$ occur in $\Omega$ in that order, completing the proof of the lemma.
\end{proof}

The final lemma we need is a restatement of~\cite[(3.3)]{RS9}.

\begin{LE}
\label{lem:crookedmcdonalds}
Let $(G, \Omega)$ be a society and let $\zP$ be a crooked transaction in $(G, \Omega)$.
Let $Q_1$ and $Q_2$ be disjoint paths, each with one endpoint in $V(\Omega)$, the other endpoint in $V(\zP)$, and no other vertex
in $V(\zP)$.
There exists a path $P \in \zP$ and subpaths $R_1, R_2$ of $P$ such that one of the following is a crooked transaction:
\begin{itemize}
\item[{\rm(i)}] $(\zP - P) \cup \{R_1 \cup Q_1\}$,
\item[{\rm(ii)}] $(\zP - P) \cup \{R_2 \cup Q_2\}$, or
\item[{\rm(iii)}] $(\zP - P) \cup \{R_1 \cup Q_1, R_2 \cup Q_2\}$.
\end{itemize}
\end{LE}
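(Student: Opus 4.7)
The plan is to reduce to Lemma~\ref{lem:mcdonaldsprep} by considering, for $i\in\{1,2\}$, the path $P^i\in\zP$ that contains the non-$\Omega$ endpoint $z_i$ of $Q_i$. If for some choice of $R_1$ among $u P^1 z_1$ and $v P^1 z_1$ (where $u,v$ are the ends of $P^1$) the set $(\zP - P^1)\cup\{R_1\cup Q_1\}$ is a crooked transaction, then conclusion (i) holds, and symmetrically for $Q_2$ and (ii). So I may assume that no single absorption works. Then Lemma~\ref{lem:mcdonaldsprep} applies to both $(Q_1,P^1)$ and $(Q_2,P^2)$, producing pairs of paths $S^{(i)}_1,S^{(i)}_2\in\zP-P^i$ together with the cyclic arrangement of their endpoints on $\Omega$ described in items (i)--(iii) of that lemma.

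First I would dispose of the case $P^1\ne P^2$. Here conclusion (iii) of the present lemma is unavailable, so I must reach a contradiction. The structure produced by Lemma~\ref{lem:mcdonaldsprep} for $Q_1$ confines every path of $\zP-P^1$, and in particular $P^2$, to have one endpoint in $x^{(1)}_1\Omega x^{(1)}_2$ and the other in $y^{(1)}_2\Omega y^{(1)}_1$, while $w_1\in x^{(1)}_1\Omega x^{(1)}_2\cup y^{(1)}_2\Omega y^{(1)}_1$; analogously for $P^1$ coming from the $Q_2$-application. Juxtaposing these two pictures tightly restricts the relative cyclic position of $u_1,v_1,u_2,v_2,w_1,w_2$ on $\Omega$, and I would show that one of these arrangements in fact allows a choice of $R_1$ (or $R_2$) producing a peripheral-free linkage, contradicting the failure of the individual absorptions.

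For the case $P^1=P^2=P$, let $u,v$ be the endpoints of $P$, so that $u,z_1,z_2,v$ appear in that order along $P$ after relabeling. The only way to pick vertex-disjoint subpaths $R_1,R_2$ of $P$ hitting $z_1,z_2$ respectively is $R_1:=uPz_1$ and $R_2:=vPz_2$, giving the candidate $\zP' := (\zP-P)\cup\{R_1\cup Q_1,\,R_2\cup Q_2\}$. Applying Lemma~\ref{lem:mcdonaldsprep} with each of the two admissible choices of $R_i\cup Q_i$ (for each $i$ separately) pins down the positions of $w_1$ and $w_2$ on $\Omega$ relative to the endpoints of paths in $\zP-P$ and to $u,v$. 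Using these positions I would exhibit disjoint segments $A,B$ of $\Omega$ containing the endpoints of every element of $\zP'$, showing $\zP'$ is a transaction, and then argue that neither newly added path can be peripheral in $\zP'$: each is prevented from being peripheral by the placement of the other, together with the obstructing paths $S^{(i)}_1,S^{(i)}_2$ supplied by the preparation lemma.

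The main obstacle will be the second case, where several overlapping endpoint orderings from two applications of Lemma~\ref{lem:mcdonaldsprep} have to be maintained simultaneously and one must verify that no element of $\zP-P$ becomes peripheral in $\zP'$ either. The asymmetry between the ``$u$-side'' absorption $R_1\cup Q_1$ and the ``$v$-side'' absorption $R_2\cup Q_2$, together with the fact that the path $P^1$ (respectively $P^2$) plays the role of one of the two obstructing paths $S^{(2)}_j$ (respectively $S^{(1)}_j$) in the other application, is what forces crookedness of $\zP'$ and closes the argument.
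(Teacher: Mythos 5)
Your setup coincides with the paper's: assume outcomes (i) and (ii) fail, apply Lemma~\ref{lem:mcdonaldsprep} to $Q_1$ and to $Q_2$, and aim for (iii). However, the two places where you write ``I would show'' are exactly where the content of the proof lies, and the decisive idea is missing. The conclusion of Lemma~\ref{lem:mcdonaldsprep} for $Q_1$ gives more than a cyclic pattern to ``juxtapose'': since every member of $\zP-P^1$ has one endpoint in $x_1\Omega x_2$ and the other in $y_2\Omega y_1$, no member of $\zP-P^1$ crosses $S_1$ (nor does $S_1$ cross $S_2$), while $P^1$ crosses all of them; hence $P^1$ is the \emph{unique} member of $\zP$ crossing every other member of $\zP$. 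Applying the same reasoning to $Q_2$ shows $P^2$ is that same unique member, so $P^1=P^2$ is automatic and (as the paper further notes) the two applications produce the same pair $\{S_1,S_2\}=\{S_1',S_2'\}$, i.e.\ a single aligned structural picture. This is how the paper disposes of your first case in one line; your proposed route there --- restricting the relative cyclic positions of $u_1,v_1,u_2,v_2,w_1,w_2$ and then ``showing that one of the arrangements allows a successful absorption'' --- is not an argument as stated, and it is unclear it can be completed without rediscovering exactly this uniqueness observation.

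In the case $P^1=P^2=P$ your choice $R_1=uPz_1$, $R_2=vPz_2$ agrees with the paper, but you explicitly defer the verification that $(\zP-P)\cup\{R_1\cup Q_1,R_2\cup Q_2\}$ is crooked, calling it the main obstacle. That verification is precisely where one needs the aligned picture: a single ordering $u,x_1,x_2,v,y_2,y_1$, all of $\zP-P$ running between $x_1\Omega x_2$ and $y_2\Omega y_1$, and $w_1,w_2\in x_1\Omega x_2\cup y_2\Omega y_1$, with $u$ and $v$ attached to opposite new paths. Without having established $P^1=P^2$ and $\{S_1,S_2\}=\{S_1',S_2'\}$ first, your plan to ``exhibit disjoint segments $A,B$ and argue neither new path is peripheral'' has nothing concrete to run on. So the proposal identifies the right tool (Lemma~\ref{lem:mcdonaldsprep}) and the right logical skeleton, but it omits the structural consequence that makes both cases go through; as written it is a plan whose two essential steps remain unproven.
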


\begin{proof}
Let $P$ be the unique member of $\zP$ intersected by $Q_1$.
We may assume that there is no subpath $R_1$ of $P$ such that $(\zP - P) \cup \{R_1 \cup Q_1\}$  is a crooked transaction.
By Lemma~\ref{lem:mcdonaldsprep}  applied to the path $Q_1$  there exist paths $S_1,S_2$
with endpoints $x_1,y_1$ and $x_2,y_2$, respectively, satisfying (i)--(iii) of Lemma~\ref{lem:mcdonaldsprep}.
It follows that $P$  is the only member of $\zP$ that crosses every other member of $\zP$.
Let $P'$ be the unique member of $\zP$ intersected by $Q_2$. Similarly,
by Lemma~\ref{lem:mcdonaldsprep}  applied to the path $Q_2$ we may assume that there exist paths $S_1',S_2'$
with endpoints $x_1',y_1'$ and $x_2',y_2'$, respectively, satisfying (i)--(iii) of Lemma~\ref{lem:mcdonaldsprep}.
Then $P'$ also crosses every other member of $\zP$, and hence $P=P'$ and $\{S_1,S_2\}=\{S_1',S_2'\}$.
For $i=1,2$ let $R_i$ be the subpath of $P$ with one endpoint an endpoint of $P$ and the other endpoint an endpoint of $Q_i$,
which is disjoint from $Q_{3-i}$.
It follows that $(\zP - P) \cup \{R_1 \cup Q_1, R_2 \cup Q_2\}$ is a crooked transaction, as desired.
\end{proof}

We now present the main result of this section.  As we said above, it is a slight weakening
of~\cite[Theorem 6.1]{RS9} which gives a bound of $3p + 7$ on the depth of the rendition.  However, our proof is somewhat shorter.

\begin{LE}\label{lem:GM9}
Let $(G, \Omega)$ be a society and $p \ge 4$ a positive integer.
Then $(G, \Omega)$ has either  a crooked transaction of cardinality $p$, or  a cylindrical rendition
 of depth at most $6p$.
\end{LE}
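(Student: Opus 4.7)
The plan is to extract a maximum crooked transaction $\zP$ in $(G, \Omega)$; if $|\zP| \ge p$ the first conclusion holds, and otherwise the maximality of $\zP$ is strong enough to force a cylindrical rendition whose single vortex has depth at most $6p$. First I dispose of the trivial case: if $(G, \Omega)$ has no cross, then Theorem \ref{thm:crossreduct} directly produces a vortex-free rendition in a disk, which is a cylindrical rendition of depth zero, and we are done. Otherwise a cross exists, and I choose a crooked transaction $\zP$ of maximum cardinality $q$, where $2 \le q \le p - 1$; Lemma \ref{lem:crookeddeletions} ensures we can shrink an overly large transaction down to the relevant size if needed.

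The first structural step is to show that the $2q$ endpoints of $\zP$ partition $V(\Omega)$ into arcs, and that the sub-society of $(G, \Omega)$ determined by each such arc (together with the pieces of $G$ attached there but avoiding $V(\zP)$) is cross-free. For if there were a cross in such a sub-society, the two crossing paths could serve as the input to Lemma \ref{lem:crookedmcdonalds} and produce a crooked transaction in $(G, \Omega)$ of cardinality $q + 1$, contradicting maximality. Theorem \ref{thm:crossreduct} then gives a vortex-free rendition of each arc-sub-society in a disk, and these renditions can be assembled around a single vortex cell $c_0$ whose attachment points cover $V(\zP)$, producing a cylindrical rendition $\rho$ of $(G, \Omega)$.

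The main obstacle is bounding the depth of $c_0$ by $6p$. Suppose for contradiction that the vortex society $(\sigma(c_0), \Omega_0)$ admits a transaction $\zQ$ of cardinality at least $6p + 1$. Extending each endpoint of a $\zQ$-path outward through the disjoint rural cells produced in the previous step lifts $\zQ$ to a transaction $\widetilde{\zQ}$ in $(G, \Omega)$ of the same cardinality; disjointness is maintained because each non-vortex cell in $\rho$ has at most three attachment points and the extensions can be taken through cell-disjoint rural renditions. Applying Lemma \ref{lem:planarorcrooked} to $\widetilde{\zQ}$ with $q' = p$ and $p' = 5p + 3$ yields either a crooked sub-transaction of cardinality $p$ (finishing the proof) or a planar sub-transaction of cardinality $5p + 3$. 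In the latter case, the large number of parallel paths in the planar sub-transaction, combined with two applications of Lemma \ref{lem:crookedmcdonalds} against a well-chosen pair of paths in $\zP$, enlarges $\zP$ to a crooked transaction of cardinality $q + 1$ and so contradicts maximality.

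The technical heart of the argument, and the reason for the factor $6$ rather than a smaller constant, lies in making this final rerouting uniform: one must partition the paths of $\widetilde{\zQ}$ according to how their endpoints interact cyclically with the $2q$ endpoints of $\zP$ on $\Omega$, and the threshold $6p$ is exactly what the pigeonhole bound demands to locate, within such a large planar sub-transaction, a pair of paths positioned so that Lemma \ref{lem:crookedmcdonalds} produces a crooked transaction of cardinality $q + 1$. I expect this step, rather than the construction of the rendition itself, to absorb most of the detailed case analysis.
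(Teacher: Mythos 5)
Your overall blueprint (maximal crooked transaction $\Rightarrow$ cross-free pieces around one vortex $\Rightarrow$ bound the vortex depth) has a genuine gap already at the first structural step. The maximality of $\zP$ does \emph{not} force the arc sub-societies to be cross-free. A cross whose four endpoints all lie in a single arc between two consecutive endpoints of $\zP$ need not combine with $\zP$ into any transaction at all: its paths do not run between the two segments $A,B$ that $\zP$ uses, so ``$\zP$ plus the cross'' is not even a transaction, let alone a crooked one of cardinality $q+1$. Nor can Lemma~\ref{lem:crookedmcdonalds} be invoked here, since its hypothesis requires the two auxiliary paths to end on $V(\zP)$ and be internally disjoint from it, which a cross living entirely inside one arc need not satisfy; and even when that lemma does apply, only its outcome (iii) increases the cardinality --- outcomes (i) and (ii) return a crooked transaction of the \emph{same} size $q$, so no contradiction with maximality is obtained. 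The same defect undermines your depth-bounding step: after lifting a transaction of order $6p+1$ out of the vortex and extracting a planar sub-transaction of order $5p+3$ via Lemma~\ref{lem:planarorcrooked}, the claim that ``two applications of Lemma~\ref{lem:crookedmcdonalds}'' enlarge $\zP$ to cardinality $q+1$ is exactly the delicate point, and the lemma alone does not deliver it. Arranging outcome (iii), and ruling out the degenerate ways the rerouted paths can collapse back onto $\zP$, is where essentially all the work of the proof lies, and your sketch asserts it rather than proves it; in effect, ``a maximal crooked transaction cannot be enlarged, hence the structure exists'' is a restatement of the lemma one is trying to prove.

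For comparison, the paper avoids both problems by arguing on a minimal counterexample (induction on $|V(G)|$): it extracts a planar transaction of order $5p+3$ from a transaction of order $6p+1$, splits off two side societies, shows at least one is cross-free, cuts the graph along a single path $P_1$ of the planar transaction, and applies the statement to the strictly smaller society $(G^*,\Omega^*)$. The induction hypothesis then either supplies a rendition (which is glued back to contradict being a counterexample) or a crooked transaction of order $p$ in $G^*$, chosen with a minimal edge set, and the contradiction is reached by the careful rerouting claims (each such path meeting every $P_2,\dots,P_{p+2}$, at least two paths meeting each $P_i$, available endpoints) that make Lemma~\ref{lem:crookedmcdonalds} produce outcome (iii). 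Some mechanism of this kind --- induction or an explicit iterative rerouting with a decreasing potential --- is what your maximality argument is missing; without it the construction of the cylindrical rendition and the $6p$ depth bound do not follow.
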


\begin{proof}
Assume the lemma is false, and let $(G, \Omega)$ be a counterexample with a minimal number of vertices.
Since $(G, \Omega)$ is a counterexample to our lemma, it has depth at least $6p+1$, and therefore it has
a transaction of order $6p+1$.
By Lemma \ref{lem:planarorcrooked}, we may assume that $(G, \Omega)$ has a planar transaction $\zP$ of order $5p+3$.
Let $A$ and $B$ be disjoint segments such that every element of $\zP$ has one endpoint in $A$ and one endpoint in $B$.
Label the elements of $\zP$ as $P_0, P_1, \dots, P_{5p+2}$ and the endpoints of $P_i$ as $a_i\in A$ and $b_i\in B$
in such a way that traversing $A$ from one endpoint to the other in
the order given by $\Omega$, we encounter $a_0, a_1, \dots, a_{5p+2}$ in that order.

Let $H$ denote the subgraph of $G$ obtained from the union of the elements of $\zP$ by adding the elements of  $V(\Omega)$
as isolated vertices.

Let $G_1'$ be the subgraph of $H$ consisting of $P_0\cup P_1\cup\cdots\cup P_{2p+1}$ and all the vertices in the segment
$b_{2p+1}\Omega a_{2p+1}$.
Let us consider all $H$-bridges with at least one attachment in $V(G_1')\setminus V(P_{2p+1})$, and for each such
an $H$-bridge $B$ let $B'$  denote the graph obtained from $B$ by deleting all attachments that do not belong to  $V(G_1')$.
Finally, let $G_1$ denote the union of $G_1'$ and all the graphs $B'$ as above.
Let the cyclic permutation $\Omega_1$ be defined by saying that $V(\Omega_1)=b_{2p+1}\Omega a_{2p+1}$
and that the order on $\Omega_1$ is obtained by following $b_{2p+1}\Omega a_{2p+1}$.
Thus $(G_1,\Omega_1)$ is a society, and we define the society $(G_2,\Omega_2)$ analogously using the paths
$P_{3p+1}, P_{3p+2},\ldots, P_{5p+2}$ instead, with $P_{3p+1}$ playing the role of $P_{2p+1}$.

If the graphs $G_1$ and $G_2$ are not disjoint, then $G_1\cup G_2$ includes a path
from $b_{2p+1}\Omega a_{2p+1}$ to $a_{3p+1}\Omega b_{3p+1}$, which is disjoint from $P_{2p+2}\cup P_{2p+3}\cup\cdots\cup P_{3p}$,
and hence that path together with the paths $P_{2p+2},\allowbreak P_{2p+3},\ldots, P_{3p}$ forms a crooked transaction
of cardinality $p$, a contradiction.
Thus we may assume that the graphs $G_1$ and $G_2$ are disjoint.
If both the societies $(G_1,\Omega_1)$ and $(G_2,\Omega_2)$ have a cross then, since $p\ge4$, the union of the two crosses
and $P_{2p+2}\cup P_{2p+3}\cup\cdots\cup P_{3p-3}$ is a crooked transaction of cardinality $p$, as required.
We may therefore assume from the symmetry that  $(G_1,\Omega_1)$ has no cross.

Fix a vortex-free rendition $\rho = (\Gamma, \zD)$ of $(G_1, \Omega_1)$ in the disk $\Delta$.  For each $i$, $0 \le i \le p+2$, let $T_i$ be the track of $P_i$ in $\rho$.  Define the subgraph $J_i$ of $G_1$ as follows.  Let $\Delta'$ be the closed disk defined by the closure of the connected component of $\Delta \setminus T_i$ which does not contain $\pi^{-1}(b_{2p+3})$.   Let $J_i$ be the subgraph formed by the union of $\bigcup_{\{c \in C(\rho): c \subseteq \Delta'\}} \sigma(c)$ along with the vertices $\pi(N(\rho) \cap T_i)$ treated as isolated vertices.  Note that $J_i$ is a subgraph of $J_{i'}$ for $i' \ge i$.

\begin{claim}\label{cl:Jisolated}
Let $1 \le i \le p+2$.  Let $X \subseteq V(G)$ be the set of vertices $\pi(N(\rho) \cap T_i)$.  There does not exist an edge of $G$ with one endpoint in $V(J_i) \setminus X$ and the other endpoint in $V(G) \setminus V(J_i)$.
\end{claim}

\begin{cproof}
Assume the claim is false, and let $i$ be an index such that there exists an edge $xy$ forming a counterexample to the claim.  Assume $x \in V(J_i)$ and $y \in V(G) \setminus V(J_i)$.  Given the rendition $\rho$, the vertex $y$ is not an element of $V(G_1)$.  Thus, there exists a path $Q_1$ in $J_i$ linking $x$ and a vertex of $V(\Omega)$ in $J_i$ and a path $Q_2$ linking $y$ and a vertex of $V(\Omega)$ in $G - G_1$.  Thus, $Q_1 \cup Q_2 \cup xy$ along with the paths $P_{p+3}, \dots, P_{2p+1}$ forms a crooked transaction of order $p$, a contradiction.
\end{cproof}

We define the society $(G^*, \Omega^*)$ as follows.  Let $Y = \pi(N(\rho) \cap T_1)$, and let $G^* = G - (J_1 - Y)$.  Note that at least the vertex set $\{a_0, b_0\}$ is contained in $V(J_1) \setminus Y$, and thus $|V(G^*)|<|V(G)|$.  We define the society vertices $\Omega^*$ as $V(\Omega^*) = Y \cup a_1\Omega b_1$ where the vertices $a_1\Omega b_1$ are ordered according to $\Omega$ and the vertices of $Y$ are ordered by the order in which they occur when traversing $P_1$ from $b_1$ to $a_1$.  Note that $E(G^*) \cap E(J_1) = \emptyset$ by construction,  and by construction and Claim \ref{cl:Jisolated}, we have that $G = G^* \cup J_1$.

Define $J^{**}$ to be the subgraph $J_{p+2} - (J_1 - Y)$ of $G^*$.  Let $Z = \pi( T_{p+2} \cap N(\rho))$.  We define a society $(J^{**},\Omega^{**})$ with $V(\Omega^{**}) = Y \cup a_1\Omega a_{p+2} \cup  Z \cup b_{p+2}\Omega b_1$ with the cyclic order $\Omega^{**}$ defined by the set $Y$ ordered by traversing $P_1$ from $b_1$ to $a_1$, $a_1 \Omega a_{p+2}$ ordered according to $\Omega$, $Z$ ordered by traversing $P_{p+2}$ from $a_{p+2}$ to $b_{p+2}$ and finally $b_{p+2}\Omega b_1$ ordered by following $\Omega$.  Note that $(J^{**}, \Omega^{**})$ has a vortex free rendition in the disk of with the vertices of $\Omega^{**}$ on the boundary in the appropriate order and that $P_i$ is a subgraph of $J^{**}$ for $2 \le i \le p+1$.

By the minimality of $(G,\Omega)$, the society $(G^*,\Omega^*)$ has either a crooked transaction of order $p$, or  a cylindrical rendition
 of depth at most $6p$.  In the latter case, we can find a cylindrical rendition of depth at most $6p$ of $(G, \Omega)$ by joining the rendition of $(G^*, \Omega^*)$ to the restriction of $\rho$ to $J_1$.
We may therefore assume that $(G^*,\Omega^*)$ has  a crooked transaction of order $p$.
Choose such a transaction $\zQ$ to minimize by containment $E(\zQ) \cup E(P_2)\cup E(P_3)\cup\cdots\cup E(P_{p+2})$.

If no element of $\zQ$ has an endpoint in $Y$, then the lemma is proven.  Thus, we may assume that at least one $Q \in \zQ$ endpoints in a vertex of $Y$.

We first observe that no element of $\zQ$ is contained in $J^{**}$.  To see this, assume $Q \in \zQ$ is a subgraph of $J^{**}$.  The track of $Q$ defines a subdisk $\Delta_Q$ of the disk which is disjoint from $Z$ and it follows that $Q$ is not crossed by any element of $\zQ$.  If we assume that such a path $Q$ is picked to  minimize $\Delta_Q$ (by containment), we conclude that not only $Q$ is not crossed by any element of $\zQ$, but it is also peripheral, contradicting the fact that $\zQ$ is crooked.  This proves that no such $Q$ can exist.

\begin{claim}
\label{cl:meetPs}
If a path $Q\in\zQ$ uses a vertex of $Y$, then it intersects each of  $P_2, P_3,\ldots,P_{p+2}$.  For every $i$, $2 \le i \le p+2$, there exist at least two distinct elements of $\zQ$ which intersect $P_i$.
\end{claim}

\begin{cproof}
The first part of the claim follows from the fact that if such a path $Q$ was disjoint from $P_i$, the rendition of $(J^{**}, \Omega^{**})$ would imply that $Q$ is a subgraph of $J^{**}$, contrary to what we have already seen.

To see the second part of the claim, fix $i$ with $2 \le i \le p+2$.  As at least one element of $Q \in \zQ$ has an endpoint in $Y$, we have that $Q$ intersects $P_i$.  Let the endpoints of $Q$ be $y$ and $y'$ with $y \in Y$.  Assume that no other element of $\zQ$ intersects $P_i$.  Let $R$ be the path in $P_i$ with one endpoint equal to $a_i$, the other endpoint in $Q$, denote it $x$, and no internal vertex in $Q$.  As no element of $\zQ$ is contained in $J^{**}$, again the rendition implies that no element of $\zQ$ has an endpoint in the segment $y\Omega^{**}a_i$.  Thus, the linkage $(\zQ - Q )\cup (R \cup xQy')$ which replaces the path $Q$ with $R \cup xQy'$ has the endpoints in the same circular order on $\Omega$.  Thus, $(\zQ - Q )\cup (R \cup xQy')$ is a crooked transaction and will have strictly fewer edges in $E((\zQ - Q )\cup (R \cup xQy')) \cup E(P_2) \cup \dots \cup E(P_{2p+2})$ as at least one edge of $yQx$ is not contained in any $P_j$.  This contradicts the choice of $\zQ$ and completes the proof of the claim.
\end{cproof}

Note that an immediate consequence of Claim \ref{cl:meetPs} is that none of the paths $P_2, \dots, P_{p+2}$ is also an element of $\zQ$.

For $i=2,3,\ldots,p+2$ let us say that an endpoint $x$ of $P_i$ is {\em available} if no path in $\zQ$ has the vertex $x$ as an endpoint.
Since $\zQ$ has cardinality $p$, there are at least two available vertices.
Let $x$ and $y$ be distinct available vertices, where $x$ is an endpoint of $P_i$ and $y$ is an endpoint of $P_j$ (possibly with $i=j$).  Let $R_x$ be a minimal subpath of $P_i$ with one endpoint equal to $x$ and one endpoint in $\zQ$.  Similarly, let $R_y$ be a minimal subpath of $P_j$ with one endpoint equal to $y$ and one endpoint in $\zQ$.

We claim that $R_x$ is disjoint from $R_y$.  If not, it must be the case that $i = j$, $R_x$ and $R_y$ have a common endpoint, and that this endpoint is the unique vertex of $P_i$ which intersects $\zQ$, contradicting Claim \ref{cl:meetPs}.

Apply Lemma \ref{lem:crookedmcdonalds} to the paths $R_x$, $R_y$, and the transaction $\zQ$.  Thus there exists an element $Q' \in \zQ$ and subpaths $Q_1$ and $Q_2$ of $Q'$ such that one of the three outcomes of Lemma \ref{lem:crookedmcdonalds} holds for $Q'$, $Q_1$, $Q_2$, $R_x$ and $R_y$.  Assume one of the first two outcomes holds, and without loss of generality, that $(\zQ - Q' )\cup (R_x \cup Q_1)$ is a crooked transaction.  By the choice of $\zQ$, the edge set of $(\zQ - Q') \cup (R_x \cup Q_1)$ is the same as the crocked transaction $\zQ$.  Thus, every edge of the path $Q' - Q_1$ is contained in $P_2 \cup \dots \cup P_{p+2}$.  It follows that $V(P_i) = V(R_x) \cup V(Q' - Q_1)$.  Thus, $Q'$ is the unique path of $\zQ$ to intersect $P_i$, contradicting Claim \ref{cl:meetPs}.

We conclude that the third outcome holds and that $\zQ'$ is a crooked transaction of order $p+1$.  Let $Q_3$ be the minimal subpath of $Q'$ with endpoints in $R_x$ and $R_y$ such that $Q' = Q_1 \cup Q_2 \cup Q_3$.  If $Q_3$ has at least one edge of $P_2 \cup \dots \cup P_{p+2}$, then $E(\zQ') \cup E(P_2) \cup \dots \cup E(P_{p+2})$ is a proper subset of $E(\zQ) \cup E(P_2) \cup \dots \cup E(P_{p+2})$, a contradiction since $\zQ'$ contains a crooked transaction of order $p$ by Lemma \ref{lem:crookeddeletions}.

Thus, $E(Q_3)$ is a subset of $E(P_2) \cup \dots \cup E(P_{p+2})$.  But in this case, $i = j$ and both $R_x$ and $R_y$ are subpaths of $P_i$.  It follows that $Q'$ is the unique path of $\zQ$ which intersects $P_i$, a contradiction.  This completes the proof of the lemma.
\end{proof}

\section{Rooting a transaction}

In this section we develop four lemmas with the following common theme.
Suppose we are given a nest in a cylindrical rendition of a society, and a radial linkage orthogonal to the nest.
Given a transaction  in the society, we would like to modify the transaction so as to coincide with the radial linkage,
at least in a substantial portion of the rendition.  Similar results are proved in \cite{RS15}, but our proof is simpler.

We need  different lemmas depending on the properties we want the resulting transaction to satisfy.
In the first lemma  all we require is that the resulting transaction avoid part of the society.

\begin{LE}\label{lem:rootedtransaction}
Let $r, s$ be positive integers with $r \ge 1$ and $s \ge r+1$.  Let $(G, \Omega)$ be a society, $\rho = (\Gamma, \zD, c_0)$
a cylindrical rendition of $(G, \Omega)$ and  let $\zC= (C_1, C_2, \dots, C_s)$ be a nest in $\rho$ around $c_0$.
Let $X_1,X_2, Y_1, Y_2$ be disjoint segments of $\Omega$, such that $X_1, Y_1, X_2, Y_2$ occur in $\Omega$ in that order.  Let $\zP_1,\zP_2, \zR_1, \zR_2$ be radial
 linkages in $\rho$ orthogonal to $\zC$  such that $\zP_1\cup\zP_2 \cup \zR_1 \cup \zR_2$ is a linkage.  Assume that for $i=1,2$ every member of $\zP_i$ has an endpoint in $X_i$ and every member of $\zR_i$ has an endpoint in $Y_i$.
 Assume $|\zP_1| = |\zP_2| = r$ and $|\zR_1|  = |\zR_2| = 2$.
Let $J$  be the subgraph formed by $\zR_1 \cup \zR_2$ and the union of all subpaths of $\zC$ with both endpoints in $V(\zR_1) \cup V(\zR_2)$ and no internal vertex in $\zP_1\cup\zP_2 \cup \zR_1 \cup \zR_2$.
Let $\zP$ be a transaction of order $r$ in $(G, \Omega)$ from the segment $X_1$ to the segment $X_2$ which is disjoint from $J$.
Then there exists a transaction $\zQ$ in $(G, \Omega)$ of order $r$ from $X_1$ to $X_2$ such that
\begin{itemize}
\item $V(\zQ) \subseteq V(\zP)\cup V(\zP_1)\cup V(\zP_2)\cup \left( \bigcup_1^r V(C_i) \setminus V(J)\right)$, and
\item $\zQ$ is coterminal with $\zP_1\cup\zP_2$ up to level $C_{r+1}$.
 \end{itemize}
\end{LE}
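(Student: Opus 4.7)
The plan is to keep the radial linkages $\zP_1$ and $\zP_2$ intact outside $C_{r+1}$, and reroute their continuations through the inner region of $C_{r+1}$ to form a transaction from $X_1$ to $X_2$ of order $r$. By orthogonality of $\zP_1$ to $\zC$, each element of $\zP_1$ meets $C_{r+1}$ in a connected subpath; let $\alpha_i'$ be the endpoint of this intersection nearest $X_1$ on the $i$-th element of $\zP_1$, and let $P_1^i$ denote the subpath of this element from its $X_1$-endpoint to $\alpha_i'$. Define $\beta_j'$ and $P_2^j$ analogously for $\zP_2$. Then $P_1^i$ is precisely the intersection of the $i$-th element of $\zP_1$ with the outer graph of $C_{r+1}$, and similarly for $P_2^j$.

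The main step is to find, in the subgraph $G^{\mathrm{in}}$ of the inner graph of $C_{r+1}$ induced on $V(\zP)\cup V(\zP_1)\cup V(\zP_2)\cup\bigl(\bigcup_{i=1}^{r}V(C_i)\setminus V(J)\bigr)$, a family of $r$ pairwise vertex-disjoint paths $L_1,\dots,L_r$, each $L_k$ joining some $\alpha_{i_k}'$ to some $\beta_{j_k}'$, with the maps $k\mapsto i_k$ and $k\mapsto j_k$ bijections onto $\{1,\dots,r\}$. Granted this, set $\zQ:=\{P_1^{i_k}\cup L_k\cup P_2^{j_k}:k=1,\dots,r\}$. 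Pairwise vertex-disjointness of the members of $\zQ$ follows from the disjointness of $\zP_1$, of $\zP_2$, and of the $L_k$, combined with the facts that inner and outer pieces of distinct $\zQ$-paths only meet on $C_{r+1}$ at the matched (and hence distinct) endpoints $\alpha_{i_k}',\beta_{j_k}'$, and that $\zP_1$ and $\zP_2$ are vertex-disjoint. The containment of $V(\zQ)$ in the required set is immediate. Coterminality with $\zP_1\cup\zP_2$ up to level $C_{r+1}$ holds with $\zP':=\zP_1\cup\zP_2$, since the intersection of $\zQ$ with the outer graph of $C_{r+1}$ is exactly $\bigcup_k(P_1^{i_k}\cup P_2^{j_k})$, which equals the intersection of $\zP_1\cup\zP_2$ with that outer graph.

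To produce $L_1,\dots,L_r$ I apply Menger's theorem to $G^{\mathrm{in}}$ between the terminal sets $A=\{\alpha_1',\dots,\alpha_r'\}$ and $B=\{\beta_1',\dots,\beta_r'\}$. Suppose for contradiction that some vertex set $S\subseteq V(G^{\mathrm{in}})$ with $|S|<r$ separates $A$ from $B$ in $G^{\mathrm{in}}$. The topological picture in the disk carrying the rendition shows that $J$, together with all cells of the decomposition having some vertex outside the allowed set, forms two ``walls'' extending from the $Y_1$- and $Y_2$-segments of $\Omega$ inward to the inner graph of $C_1$. Consequently, each of the $r$ paths of $\zP$, being disjoint from $J$, enters the inner graph of $C_{r+1}$ via some vertex on the $X_1$-arc of $C_{r+1}$ and exits via some vertex on the $X_2$-arc. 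Using arcs of the cycles $C_1,\dots,C_r$ outside $J$ and portions of $\zP_1,\zP_2$ lying between $C_1$ and $C_{r+1}$ to exchange the entry points of the $\zP$-paths with the $\alpha_i'$ and the exit points with the $\beta_j'$, one transforms $\zP$ into $r$ vertex-disjoint $A$-to-$B$ paths in $G^{\mathrm{in}}$, contradicting the existence of $S$.

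The main obstacle is carrying out this endpoint-exchange step rigorously: one must show that the $r$ pairs of endpoints of $\zP$ on $C_{r+1}$ can be simultaneously and vertex-disjointly rerouted to the $\alpha_i'$ and $\beta_j'$ using only the cycle arcs outside $J$ and the radial linkages $\zP_1,\zP_2$. Orthogonality of the radial linkages to $\zC$ and the planar structure of the cylindrical rendition in the disk reduce this to a planar rerouting exercise in the annular region bounded by $C_1$ and $C_{r+1}$, where all terminals lie in a controlled cyclic order on $C_{r+1}$ and the available ``reroute material'' consists of the $2r+2$ arcs of each cycle $C_i$ ($1\le i\le r$) lying outside $J$.
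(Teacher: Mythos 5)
There is a genuine gap, and it sits exactly where you flagged it. Your plan applies Menger between the prescribed vertex sets $A=\{\alpha_1',\dots,\alpha_r'\}$ and $B=\{\beta_1',\dots,\beta_r'\}$ inside the inner graph of $C_{r+1}$, and then, to refute a separator $S$ with $|S|<r$, you propose to convert $\zP$ into $r$ \emph{pairwise disjoint} $A$--$B$ paths by a simultaneous ``endpoint-exchange'' using the cycle arcs outside $J$ and pieces of $\zP_1,\zP_2$. That rerouting claim is never proved, and it is not a routine planar exercise: producing $r$ disjoint paths whose ends are forced onto the specific crossing vertices of $\zP_1,\zP_2$ with $C_{r+1}$ is a rooted-linkage statement of essentially the same difficulty as the lemma itself (compare Lemma~\ref{lem:transrotation2}, which is a whole separate argument devoted to exactly this kind of terminal rotation). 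It is also far stronger than what refuting $S$ requires: to contradict a cut of size at most $r-1$ you only need \emph{one} path avoiding it, not $r$ disjoint ones. Moreover, fixing a bijection onto the $\alpha_i',\beta_j'$ is not needed for the conclusion, since coterminality up to level $C_{r+1}$ only asks that $\zQ$ agree with some sublinkage of $\zP_1\cup\zP_2$ in the outer graph.

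The paper's proof avoids the problem by choosing the auxiliary graph and the Menger terminals differently. It sets $H=\zP_1\cup\zP_2\cup\bigl(\bigcup_{i=1}^{r}C_i - V(J)\bigr)$, lets $H_1,H_2$ be the components meeting $X_1,X_2$, trims each $P\in\zP$ to a minimal subpath $P'$ between $C_r\cap H_1$ and $C_r\cap H_2$ (so $P'$ avoids $C_{r+1},\dots,C_s$), and applies Menger between $X_1$ and $X_2$ in $G'=H_1\cup H_2\cup\bigcup_{P\in\zP}P'$. A cut $X$ with $|X|\le r-1$ is then refuted by pigeonhole alone: some member of $\zP_1$, some member of $\zP_2$, some $C_i$ with $i\le r$, and some $P'$ all avoid $X$, and their union already contains an $X_1$--$X_2$ path. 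Any resulting family of $r$ disjoint $X_1$--$X_2$ paths in $G'$ automatically satisfies the vertex containment (by the definition of $G'$) and the coterminality condition (since the only material of $G'$ in the outer graph of $C_{r+1}$ belongs to $\zP_1\cup\zP_2$). If you replace your $A$--$B$ Menger step and the simultaneous rerouting by this single-path pigeonhole refutation in the larger auxiliary graph, your argument closes.
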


\begin{proof}
Let $H$ be subgraph formed by the union of the elements of $\zP_1$, the elements of $\zP_2$ and $\bigcup_1^r C_i - V(J)$.  For $i = 1, 2$, let $H_i$ be the union of the component of $H$ intersecting $X_i$.  Let $P \in \zP$.  As the path $P$ is disjoint from $J$ and given the cylindrical rendition of $(G, \Omega)$, for every $1 \le i \le r$ the path $P$ intersects $C_i \cap H_1$ and $C_i \cap H_2$.

For $P \in \zP$, fix $P'$ to be a minimal subpath of $P$ with one endpoint in $C_r \cap H_1$ and the other endpoint in $C_r \cap H_2$.  It follows that $P'$ is disjoint from $C_{i}$ for all $i \ge r+1$.

Let $G'= H_1 \cup H_2 \cup \bigcup_{P \in \zP} P'$.  By Menger's theorem there exist $r$ disjoint paths in $G'$ from $X_1$ to $X_2$.  To see this, assume otherwise.  Then there exists a set $X$ of size at most $r-1$ intersecting every $X_1 - X_2$ path in $G'$.  There exists $P_1 \in \zP_1$ and $P_2 \in \zP_2$ which are disjoint from $X$ and similarly, there exists $i$, $1 \le i \le r$ and $P' \in \zP$ such that $C_i$ and $P'$ are disjoint from $X$.  It follows that $P_1 \cup P_2 \cup C_i \cup P'$ contains a path from $X_1$ to $X_2$, a contradiction.

Fix $\zQ$ to be a set of $r$ disjoint paths from $X_1$ to $X_2$ in $G'$, and it now satisfies the desired outcome of the lemma.
\end{proof}

The second lemma is a slight  variation of the first.
We omit the almost identical proof.


\begin{LE}\label{lem:rootedlinkage}
Let $r, s$ be positive integers with $r \ge 1$ and $s \ge r+1$.  Let $(G, \Omega)$ be a society, $\rho = (\Gamma, \zD, c_0)$
a cylindrical rendition of $(G, \Omega)$ and  let $\zC= (C_1, C_2, \dots, C_s)$ be a nest  in $\rho$.
Let $\zP_1$ be a radial linkage in $\rho$ orthogonal to $\zC$.
 Assume $|\zP_1| = r$,
and let $\zP$ be a linkage of cardinality $r$ in $(G, \Omega)$ from a set $Z\subseteq V(\sigma(c_0))$ to  $V(\Omega)$.
Let $J:=C_1\cup C_2\cup\cdots\cup C_r$.
Then there exists a linkage $\zQ$ in $(G, \Omega)$ of order $r$ from $Z$ to  $V(\Omega)$ such that
\begin{itemize}
\item $V(\zQ) \subseteq V(\zP)\cup V(\zP_1)\cup V(J)$, and
\item $\zQ$ is coterminal with $\zP_1$ up to level $C_{r+1}$.
 \end{itemize}
\end{LE}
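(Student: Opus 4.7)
The proof follows the pattern of Lemma~\ref{lem:rootedtransaction} by Menger's theorem, with one key twist: since the source set $Z$ lies inside the disk around $c_0$ rather than on $V(\Omega)$, the construction is organised around the cycle $C_{r+1}$. Each path in the output $\zQ$ will consist of an outer piece coinciding with the outer portion of some $P'\in\zP_1$ (yielding coterminality) and an inner piece built inside the disk bounded by $C_{r+1}$.

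For the setup, use orthogonality to write each $P'\in\zP_1$ as the concatenation of $P'_{\mathrm{out}}$ (from its $V(\Omega)$-endpoint to the innermost vertex $v_{P'}$ of $P'\cap C_{r+1}$) and $P'_{\mathrm{in}}$ (the rest of $P'$). Let $T:=\{v_{P'}:P'\in\zP_1\}\subseteq V(\zP_1)$; note $|T|=r$. For each $P\in\zP$ let $P_{\mathrm{in}}$ be the initial subpath of $P$ from $z_P\in Z$ to the first vertex on $V(C_{r+1})$. The plan is then to reduce the problem to finding $r$ pairwise vertex-disjoint $Z$-$T$ paths in the subgraph
$$G^*\;:=\;\bigcup_{P'\in\zP_1}P'_{\mathrm{in}}\;\cup\;\bigcup_{P\in\zP}P_{\mathrm{in}}\;\cup\;C_1\cup\cdots\cup C_r,$$
since concatenating each such path with the $P'_{\mathrm{out}}$ corresponding to its $T$-endpoint gives the required path in $\zQ$; both $V(\zQ)\subseteq V(\zP)\cup V(\zP_1)\cup V(J)$ and coterminality with $\zP_1$ up to level $C_{r+1}$ follow from the construction.

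For the Menger step, assume for contradiction that a vertex cut $X\subseteq V(G^*)$ with $|X|\le r-1$ separates $Z$ from $T$. Three pigeonhole arguments produce (i)~some $P'_{\mathrm{in}}$ avoiding $X$ (using that $\zP_1$ has $r$ pairwise disjoint members), (ii)~some $P_{\mathrm{in}}$ avoiding $X$ (similarly from $\zP$), and (iii)~some $C_i$ with $1\le i\le r$ entirely disjoint from $X$ (from the $r$ pairwise disjoint cycles). The main point is to argue that both of the chosen inner subpaths cross $C_i$: each runs from a vertex of $V(C_{r+1})$ to a vertex inside the disk bounded by $C_1$, and the grounded cycle $C_i$ separates the interior of its disk from the exterior in the cylindrical rendition. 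Hence $P_{\mathrm{in}}\cup C_i\cup P'_{\mathrm{in}}$ is a connected subgraph of $G^*-X$ meeting both $Z$ and $T$, the desired contradiction. This crossing argument is the main technical ingredient, inherited directly from the corresponding step in the proof of Lemma~\ref{lem:rootedtransaction}.
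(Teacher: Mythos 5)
Your proof is correct and follows essentially the route the paper intends: the paper omits the proof of this lemma as an almost identical variation of Lemma~\ref{lem:rootedtransaction}, and your argument is exactly that adaptation — an auxiliary graph built from $\zP_1$, truncated pieces of $\zP$, and $C_1\cup\cdots\cup C_r$, followed by Menger's theorem, with any cut of size at most $r-1$ refuted by picking a member of $\zP_1$, a member of $\zP$, and a cycle $C_i$ that it misses and using the nest/orthogonality crossing argument. The only cosmetic difference is that you run Menger on $Z$--$T$ paths inside $C_{r+1}$ and then splice on the outer halves of $\zP_1$ instead of applying Menger to a graph already containing the full paths of $\zP_1$; this is the same idea (and since $\zP$ is not assumed disjoint from $\zP_1$, trimming the $\zP$-pieces at $C_r$ rather than at $C_{r+1}$ makes the splicing step cleanest).
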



In the third lemma we are given a crooked transaction, and we want the resulting transaction also to be crooked.

\begin{DEF}
Let $(G, \Omega)$ be a society and $H$ a subgraph of $G$.  Let $R$ be any $\Omega$-path, and let $x$ be an endpoint of $R$.  The \emph{$H$-hook of $R$ containing $x$} is the maximal subpath of $R$ containing $x$ contained in $H$.  Note, the hook may be the null graph in the case when the endpoint $x$ is not contained in $X$, and it may as also be the whole path $R$ in the case when $R$ is a subpath of $H$.

Let the endpoints of $R$ be $x$ and $y$, and let $S_x$ and $S_y$ be the $H$-hooks of $R$ containing $x$ and $y$, respectively. The \emph{$H$-unhooked interior of $R$} is the minimal subpath $R^*$ or $R$ such that $R = R^* \cup S_x \cup S_y$.

When there can be no confusion as to the subgraph $H$, we will simply refer to the \emph{hook of $R$} and the \emph{unhooked interior of $R$}.
\end{DEF}

We will also need the following auxiliary result for the third main result of this section.


\begin{LE}
\label{lem:crookinrend}
Let $(G, \Omega)$ be a society and $\rho = (\Gamma, \zD, c_0)$ a cylindrical rendition of
$(G, \Omega)$. If $\zQ$ is a crooked society in $(G, \Omega)$, then no member of  $\zQ$
is a subgraph of $\bigcup(\sigma(c)\,:\,c \in C(\rho)\setminus \{ c_0\})$.
\end{LE}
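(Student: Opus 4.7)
The plan is to argue topologically from the rendition. Suppose for contradiction that some $Q\in\zQ$ is a subgraph of $\bigcup(\sigma(c):c\in C(\rho)\setminus\{c_0\})$, and let $T$ be the track of $Q$ in the disk $\Delta$ of the cylindrical rendition $\rho$. By definition of track and by the hypothesis on $Q$, every piece of $T$ is drawn either in a cell $c\neq c_0$ or in the complement of all cells; in particular $T$ is a simple arc in $\Delta$ joining two points of the outer boundary $B$ of $\Delta$ with $T\cap c_0=\emptyset$. Since $c_0$ is a closed disk in the interior of $\Delta$, the arc $T$ separates $\Delta$ into two regions $\Delta_1$ and $\Delta_2$ with $c_0\subseteq\Delta_2$. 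Let $\alpha_1,\alpha_2$ be the two arcs of $B$ cut off by the endpoints $u,v$ of $T$, labeled so that $\alpha_i\subseteq\overline{\Delta_i}$, and let $S_i:=V(\Omega)\cap(\alpha_i\cup\{u,v\})$.

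The key topological step is that any other $Q'\in\zQ$ has both of its endpoints on the same side. Indeed, $\zQ$ is a linkage, so by choosing the tracks compatibly (using the freedom in the tie-breaker and the slight pushes in the definition of track) the tracks of distinct elements of $\zQ$ can be assumed to be interior-disjoint. The track of $Q'$ is therefore a simple arc in $\Delta$ with endpoints on $B\setminus\{u,v\}$, disjoint from $T$, so its endpoints lie in a single component of $B\setminus\{u,v\}$. Let $\zQ_1\subseteq\zQ$ be the subset of paths (including $Q$) with both endpoints in $S_1$. Each $Q'\in\zQ_1$ has its track contained in $\overline{\Delta_1}$, because its endpoints lie in $\overline{\Delta_1}$ and its track cannot cross $T$; in particular the tracks of $\zQ_1$ form a family of pairwise disjoint chords of the closed disk $\overline{\Delta_1}$.

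Now I conclude by a standard outermost-chord argument. Among the chords corresponding to paths in $\zQ_1$, choose one, say the track of $Q^*\in\zQ_1$, that together with a sub-arc $\alpha^*$ of $\alpha_1\cup\{u,v\}$ bounds a disk $D^*\subseteq\overline{\Delta_1}$ whose interior meets no other track of $\zQ_1$; such a chord exists because the chords form a finite laminar family in $\overline{\Delta_1}$. Let $X:=V(\Omega)\cap\alpha^*$. Then $X$ is a segment of $\Omega$ containing both endpoints of $Q^*$. No endpoint of any other path in $\zQ_1$ lies in $X$ by choice of $Q^*$, and no endpoint of any path in $\zQ\setminus\zQ_1$ lies in $X$ because the endpoints of such paths lie in $S_2\setminus\{u,v\}$, which is disjoint from $S_1\supseteq X$. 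Hence $Q^*$ is peripheral in $\zQ$, contradicting the assumption that $\zQ$ is crooked.

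The only delicate point is the claim that the tracks of distinct elements of $\zQ$ may be assumed interior-disjoint; this follows directly from the construction in the definition of the track, since distinct paths in a linkage are vertex-disjoint and the perturbation $L_i\mapsto L_i'$ used in the definition can be carried out simultaneously for all members of $\zQ$. Given this, the separation argument and the outermost-chord selection are elementary planar topology, and no further quantitative work is required.
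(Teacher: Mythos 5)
Your overall strategy---take the track $T$ of the offending member $Q$, note that it separates the disk with $c_0$ on one side, and then select an innermost/outermost member on the vortex-free side to exhibit a peripheral element of $\zQ$---is exactly what this paper intends: the lemma is stated without proof, and the identical innermost-track argument is run in-line elsewhere (e.g.\ in the proofs of Lemma~\ref{lem:GM9} and Theorem~\ref{thm:RTplanarstrip}). However, the step you yourself flag as the ``only delicate point'' is justified incorrectly, and as written it fails in precisely the situation the lemma is about. A member $Q'\in\zQ\setminus\{Q\}$ may use edges of $\sigma(c_0)$---indeed, in a crooked transaction that is what you expect of the other members---and for such a $Q'$ the track is simply not defined (tracks are defined only for paths avoiding every vortex). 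So ``the tracks of distinct elements of $\zQ$ can be assumed interior-disjoint'' is not available, and your derivation that the ends of every other $Q'$ lie in one component of $B\setminus\{u,v\}$ has no support. The same problem recurs in the next step: a path with both ends in $S_1$ could a priori pass through $c_0\subseteq\Delta_2$, in which case it has no track at all, let alone one contained in $\overline{\Delta_1}$.

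The facts you need are true, but they must be argued through the drawings of the other members, not their tracks: after the pushing in the definition, $T$ is disjoint from every cell and meets $N(\rho)$ only in nodes mapped to vertices of $Q$, while the drawing of $Q'$ lies in the union of the cells together with nodes mapped to $V(Q')$; since $Q'$ is vertex-disjoint from $Q$, its drawing is disjoint from $T$ and hence lies in a single component of $\Delta\setminus T$. This yields the ``same side'' claim for all $Q'$ (vortex-traversing or not), and it also shows that any member with both ends on the $\Delta_1$-side is drawn entirely in $\Delta_1$, hence avoids $\sigma(c_0)$, does have a track, and that track lies in $\overline{\Delta_1}$. Note also that pairwise disjointness of these tracks does not follow merely from ``simultaneous perturbation'': the real reason is that two vertex-disjoint paths avoiding the vortex cannot both traverse a cell with at most three boundary nodes, so their drawings, and hence suitably chosen tracks, never cross. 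With these repairs your outermost-chord selection and the conclusion that $Q^*$ is peripheral go through; alternatively, you can bypass the laminarity discussion entirely by choosing, among the members drawn in $\overline{\Delta_1}$, one whose track cuts off a minimal disk by containment, which is how the paper phrases its in-line versions of this argument.
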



Recall that a radial orthogonal linkage was defined in Definition \ref{def:radialorthogonal}
\begin{LE}\label{lem:crookedrooted}
Let $r, s$ be positive integers with
$s \ge 2r+7$.
Let $(G, \Omega)$ be a society and $\rho = (\Gamma, \zD, c_0)$ a cylindrical rendition of $(G, \Omega)$.
 Let $\zC= (C_1, C_2, \dots, C_s)$ be a nest of size $s$ in $\rho$.
Let $\zP$ be a radial orthogonal linkage of size $2r+6$.
If there exists a crooked transaction of  cardinality at least $r$ in $(G, \Omega)$, then there exists a crooked transaction
 of cardinality at least $r$
in $(G, \Omega)$ that is coterminal with $\zP$ up to level $C_{2r+7}$.
\end{LE}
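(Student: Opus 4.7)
The strategy is to choose an extremal crooked transaction and apply the exchange principle of Lemma~\ref{lem:crookedmcdonalds} to derive a contradiction if it is not already coterminal with $\zP$ up to level $C_{2r+7}$. Let $H$ denote the outer graph of $C_{2r+7}$ in $\rho$. Among all crooked transactions in $(G,\Omega)$ of cardinality at least $r$, select one $\zQ$ that first minimizes $|E(\zQ)\cap E(H)\setminus E(\zP)|$ and, as a tiebreaker, minimizes $|E(\zQ)|$; such a $\zQ$ exists by hypothesis. Suppose for contradiction that the first quantity is positive, so some path $Q\in\zQ$ contains an edge $e\in E(H)\setminus E(\zP)$, and let $q\in V(\Omega)$ be an endpoint of $Q$ such that the subpath $qQe$ is contained in $H$.

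Next, construct two disjoint auxiliary paths $Q_1,Q_2$, each with one endpoint in $V(\Omega)$ and the other endpoint on some member of $\zQ$, meeting $V(\zQ)$ only at its final vertex, such that the portion of $Q_i$ in $H$ coincides with a subpath of some member of $\zP$ and such that each $Q_i$ reaches $V(\zQ)$ at a point interior to $\Delta_{2r+7}$ (strictly past the offending edge $e$). Here one uses that the orthogonality of $\zP$ to the nest combined with $|\zP|=2r+6$ ensures at least two members of $\zP$ are ``free'' to serve as the $H$-portions of $Q_1$ and $Q_2$ after accounting for the at most $2r$ outer segments of $\zQ$, and that $s\geq 2r+7$ provides enough nest rings to route these $\zP$-paths along the rings $C_1,\dots,C_{2r+6}$ to a point interior to $\Delta_{2r+7}$ on the path $Q$ (or on paths of $\zQ\setminus\{Q\}$) without meeting the remainder of $\zQ$.

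Apply Lemma~\ref{lem:crookedmcdonalds} to $\zQ$ together with $Q_1,Q_2$. In outcomes~(i) and~(ii) the resulting crooked transaction $\zQ'$ has the same cardinality as $\zQ$; in outcome~(iii) one has $|\zQ'|=|\zQ|+1$, and (for $|\zQ|\geq 4$) one trims via Lemma~\ref{lem:crookeddeletions} to restore cardinality at least $r$ while preserving both rerouted paths. By construction each rerouted path follows a $\zP$-path through $H$, so the edge $e$ is no longer used by $\zQ'$, yielding a strict improvement in the primary extremal quantity and contradicting the choice of $\zQ$.

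\emph{Main obstacle.} The chief technical difficulty lies in the construction of the disjoint auxiliary paths $Q_1,Q_2$ with the required disjointness and entry properties; the slack $|\zP|=2r+6$ beyond $2r$ and the extra nest ring $s\geq 2r+7$ beyond the $2r+6$ needed to accommodate the radial paths are precisely calibrated to make this construction feasible via a Menger-type argument on the union of $\zP$ and the inner rings (as in the proofs of Lemmas~\ref{lem:rootedtransaction} and~\ref{lem:rootedlinkage}). A secondary subtlety is choosing which path to delete in outcome~(iii) so that neither newly rerouted path is removed: this requires arguing that at least one old member of $\zQ$ remains deletable, which follows from Lemma~\ref{lem:crookeddeletions} applied after verifying that the two new paths are not peripheral in $\zQ'$.
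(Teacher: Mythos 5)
Your extremal scheme has two genuine gaps, and the second one breaks the improvement step outright. First, the win condition: if your primary quantity $|E(\zQ)\cap E(H)\setminus E(\zP)|$ is zero (with $H$ the outer graph of $C_{2r+7}$), it does \emph{not} follow that $\zQ$ is coterminal with $\zP$ up to level $C_{2r+7}$. Coterminality requires $H\cap\zQ=H\cap\zP'$ exactly for some $\zP'\subseteq\zP$, whereas with count zero a member of $\zQ$ may still follow only a proper initial or middle portion of some $P\cap H$ (leaving $P$ at a vertex lying on $C_{2r+7}$ and diving into the inner graph), or may touch a vertex of the outer graph on $C_{2r+7}$ that lies on no member of $\zP$ while using only inner edges; your tie-breaker $|E(\zQ)|$ does not exclude these. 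The paper avoids this by working with $H:=\zP\cup C_1\cup\cdots\cup C_{2r+6}\cup V(\Omega)$ and measuring the total number of edges in the $H$-unhooked interiors: in the degenerate case every unhooked interior is trapped inside $C_{2r+6}$, and then a hook cannot terminate at an interior vertex of a $\zP$-trace in the outer graph of $C_{2r+7}$ (the interior would have to begin there), so the hooks are forced to run along entire traces and coterminality does follow.

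Second, and more seriously, the claimed strict improvement fails. Lemma~\ref{lem:crookedmcdonalds} replaces a \emph{single} member $P$ of the transaction, namely the one met by your auxiliary paths (which need not be the member containing the offending edge $e$ at all), and the retained pieces $R_1,R_2$ are subpaths running from the \emph{endpoints} of that member to the attachment points; you have no control over which side is kept. If, as you arrange, both attachments lie strictly deeper than $e$, then in outcome (iii) the discarded portion is the middle segment between the two attachments, so $e$, lying between an $\Omega$-endpoint and an attachment, survives inside $R_1$ or $R_2$; outcomes (i) and (ii) can likewise retain the side containing $e$. Hence ``the edge $e$ is no longer used by $\zQ'$'' is unjustified, your primary quantity need not decrease, and the contradiction evaporates. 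The paper's measure is chosen precisely to be monotone under this operation: because the auxiliary paths attach at \emph{internal} vertices of the unhooked interiors and are themselves contained in $H$, whichever side of the replaced path the lemma keeps, the part of its unhooked interior beyond the attachment is lost or absorbed into a hook, giving a strict decrease. A further (smaller) gap is that producing the two disjoint auxiliary paths meeting $V(\zQ)$ only in their last vertices is not a plain Menger count; the paper needs an augmenting-path argument that simultaneously reroutes the hooks of $\zQ$ while preserving its endpoint pattern and its unhooked interiors, which is the content of the Claim inside the paper's proof.
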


\begin{proof}
Assume the lemma is false.
Let $\rho = (\Gamma, \zD, c_0)$ be the cylindrical rendition of
$(G, \Omega)$.
Let $H$ be the union of the subgraph $\bigcup_{P \in \zP} P \cup \bigcup_1^{2r+6} C_i$ along with the vertices of $V(\Omega)$ treated as a subgraph with no edges.  Let $X$ be the set of vertices $V(\zP) \cap V(\Omega)$.

Let $r' = max\{4, r\}$, and let $\zQ$ be a crooked transaction in $(G, \Omega)$ with $r \le |\zQ| \le r'$.  For all $Q \in \zQ$, let $Q^*$ be the $H$-unhooked interior.  We assume that $\zQ$ minimizes $\left| \bigcup E(Q^*) \right|$ among all crooked transactions which satisfy $r \le |\zQ| \le r'$.

Observe that for all $Q \in \zQ$, the unhooked interior $Q^*$ is not contained in $\bigcup(\sigma(c)\,:\,c \in C(\rho)\setminus \{ c_0\})$.  Otherwise, given that each $H$-hook is a subgraph of $H$, we would have $Q$ contained in $\bigcup(\sigma(c)\,:\,c \in C(\rho) \setminus \{ c_0\})$, contrary to Lemma \ref{lem:crookinrend}.  Specifically, the unhooked interior is not empty for all $Q \in \zQ$.

If for all $Q \in \zQ$ we have that $Q^*$ is disjoint from $V(C_{2r + 6})$, then again by Lemma \ref{lem:crookinrend}, the unhooked interior is contained in the inner graph of $C_{2r+6}$ for every element of $\zQ$, a contradiction.  Thus there exists at least one element of $\zQ$ for which the unhooked interior intersects $V(\Omega) \cup V(C_{2r +6})$.  Note that if $Q^*$ intersects $V(\Omega) \cup V(C_{2r+6})$, then $Q^*$ intersects $C_{i}$ for all $1 \le i \le 2r+6$, lest $Q^*$ be contained in $\bigcup(\sigma(c)\,:\,c \in C(\rho) \setminus \{ c_0\})$.
Let $|\zQ| = t$.  Label the elements of $\zQ$ as $Q_1, \dots, Q_{t}$ and the endpoints of $Q_i$ as $x_i, y_i$ such that there exists a permutation $p:[t] \rightarrow [t]$ such that $x_1, x_2, \dots, x_t y_{p(1)}, y_{p(2)}, \dots, y_{p(t)}$ occur in $\Omega$ in that order.

\begin{claim}
There exists a crooked transaction $\zQ'$ of order $t$ with elements $Q_1', \dots, Q_{t}'$ such that the endpoints of $Q_i'$ can be labeled $x_i'$ and $y_i'$ which satisfy the following:
\begin{itemize}
\item[(i)] $x_1', x_2', \dots, x_{t}', y_{p(1)}', y_{p(2)}', \dots, y_{p(t)}'$ occur in that order on $\Omega$;
\item[(ii)] the union of the unhooked interiors of the elements of $\zQ'$ is equal to
$\bigcup_{Q \in \zQ} Q^*$.
\end{itemize}
Moreover, there exist paths $R_1$ and $R_2$ which are pairwise disjoint, internally disjoint from $\zQ'$, and $R_i$ has one endpoint in $X \setminus V(\zQ')$ and one endpoint which is an internal vertex of an unhooked interior of an element of $\zQ'$ for $i = 1, 2$.
\end{claim}

\begin{cproof}
Let $Y$ be the set $\left( \bigcup_{Q \in \zQ} V(Q^*) \right) \cap V(H)$.  Let $J$ be the union over $1 \le i \le t$ of the $x_i$ and $y_i$ hook of $Q_i$.  The subgraph $J$ is a linkage in $H$ from $X$ to $Y$ of order at most $2t \le 2r+4$.  The set $Y$ contains a vertex of $V(C_i)$ for all $1 \le i \le 2r+6$, so it follows that there does not exist a separation in $H$ of order  $\le 2t+2$ separating $X$ and $Y$.

By Menger's theorem there exists an \emph{augmenting path} to $J$ (see \cite{Diestel}[Lemma 3.3.3]).  That is, there exists $k \ge 0$ and internally disjoint paths $R^1, R^2, \dots, R^{2k+1}$ each of length at least one which satisfy the following:
\begin{itemize}
\item $\bigcup_1^{2k+1} R^i$ is a path from $X \setminus V(J)$ to $Y\setminus V(J)$ in $H$;
\item $R^{2i}$ is a subpath of $J$ for $1 \le i \le k$ and $R^{2i+1}$ is internally disjoint from $J$ for $0 \le i \le k$.
\item Let $S$ be a component of $J$ and $1 \le i \le k$ such that $R^{2i}$ is a subpath of $S$ .  Let $x$ and $y$ be the endpoints of $S$ in $X$ and $Y$, respectively. Then the vertices $x, V(R^{2i+1} \cap R^{2i}), V(R^{2i-1} \cap V(R^{2i}), y$ occur on $S$ in that order when traversing $S$ from $x$ to $y$.  If $j$ is another value such that $i<j$ and $R^{2j}$ is a subpath of $S$, then the vertices $x, V(R^{2i}), V(R^{2j}), y$ occur in that order traversing $S$ from $x$ to $y$.
\end{itemize}
Pick $\zQ'$ satisfying (i) and (ii) and $R^1, \dots, R^{2k+1}$ as above to minimize $k$.

We claim that $k = 0$; assume not, and fix $i$ to be the index such that $R^2$ is a subpath of $Q_i'$.  Without loss of generality, assume that $R^2$ is contained in the $x_i'$ hook of $Q_i'$.  Let $a$ and $b$ be the endpoints of $R^2$, and assume that $a$ is also an endpoint of $R^1$.  The path $x_i'Q_i'a \cup R^1$ is an $\Omega$-path and the track of $x_i'Q_i'a \cup R^1$ divides the disk into two connected components, one of which, say $\Delta'$ does not contain $c_0$.  The subgraph of $G = \bigcup_{c \in C(\rho): c \subseteq \Delta'} \sigma(c)$ can intersect the unhooked interior of some element of $\zQ'$ only in the case when $a$ is the endpoint of the $x_i'$ hook of $Q_i'$ in $Y$.  Thus, the vertices of $\Omega$ mapped to the boundary of $\Delta'$ defines a segment of $\Omega$ containing both the vertices $x_i'$ and $V(R_1) \cap V(\Omega)$ which is disjoint from $V(J)$ except for the vertices $x_i'$ and $V(R_1) \cap V(\Omega)$.  Let $Q_i'' = R_1 \cup a Q_i' y_i'$.  We conclude that $(\zQ' - Q_i' )\cup Q_i''$ satisfies (i) and (ii).  Moreover, the path $x_i'Q_i'b \cup R^3 \cup \dots \cup R^{2k+1}$ violates our choice to minimize $k$.

This completes the proof that $k = 0$ and thus $R^1 = R_1$ is a path from $X \setminus V(J)$ to $Y \setminus V(J)$ with is disjoint from $J$ and therefore satisfies the outcome of the claim.  The desired path $R_2$ exists by the same augmenting path argument as above applied to $J \cup R_1$.
This completes the proof of the claim.
\end{cproof}

The lemma now follows easily from Lemma \ref{lem:crookedmcdonalds} applied to the crooked transaction $\zQ'$ and paths $R_1$ and $R_2$ to obtained a crooked transaction $\zQ''$ of order at most $t+1$.  In the case when $|\zQ''| = t+1$ and $t = r'\ge 4$, we can delete an element to find a transaction of order $r'$.  As both $R_1$ and $R_2$ are contained in $H$, the union of the edge sets of the unhooked interiors of the elements of $\zQ''$ will be a proper subset of $\bigcup_{Q \in \zQ} E(Q^*)$, contradicting our choice of $\zQ'$.
\end{proof}


The final result of this section shows how to effectively rotate a transaction coterminal using a given radial linkage to terminate on an alternate set of specified endpoints.

\begin{LE}\label{lem:transrotation2}
Let $p \ge 1$ be a positive integer.
Let $(G, \Omega)$ be a society and $\rho = (\Gamma, \zD, c_0)$ a cylindrical rendition of $(G, \Omega)$.
Let $\zC = (C_1, \dots, C_s)$ be a nest in $(G, \Omega)$ of cardinality $s \ge 4p+2$.
Let $\zP$ be a radial orthogonal linkage of order at least $4p $.
Let $\zQ$ be a linkage of $\Omega$-paths of  cardinality $p$ that is coterminal with $\zP$ up to level $C_1$,
and let $x_1,x_2,\ldots,x_{2p}$ be the endpoints of the paths in~$\zQ$, listed in their order in $\Omega$.
Let $  x'_1,x'_2,\ldots,x'_{2p}\in V(\Omega)$ be distinct vertices, each an endpoint of a member of~$\zP$,
 listed in their order in $\Omega$.
Then there exists a linkage $\zQ'$ of $\Omega$-paths such that
\begin{itemize}
\item $\zQ'$ is coterminal with $\zP$ up to level $C_{4p+2}$, and
\item for every member of $\zQ$ with endpoints  $x_i$ and $x_j$ there exists a member of $\zQ'$ with endpoints $x'_i$ and $x'_j$.
\end{itemize}
\end{LE}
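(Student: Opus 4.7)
The plan is to preserve the portion of each $Q\in\zQ$ lying in the inner graph of $C_1$ and to replace the two ``outer'' arcs of $Q$ (from $V(\Omega)$ down to $C_1$) by new radial routes ending at the prescribed vertices $x_i'$. By coterminality of $\zQ$ with $\zP$ up to level $C_1$, for each index $i$ there is a unique $P_i\in\zP$ through $x_i$, and the unique member of $\zQ$ through $x_i$ meets the outer graph of $C_1$ exactly in the subpath of $P_i$ from $x_i$ to $\alpha_i:=P_i\cap C_1$. Writing each $Q\in\zQ$ with endpoints $x_a,x_b$ as $Q=A_a\cup Q^\circ\cup A_b$, where $A_a,A_b$ are these radial subpaths and $Q^\circ$ is the portion of $Q$ in the inner graph of $C_1$, and letting $P_i^\star\in\zP$ denote the path through $x_i'$, I will set $Q':=R_a\cup Q^\circ\cup R_b$ for $2p$ pairwise disjoint annular arcs $R_i$ constructed below.

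In the annular region between $V(\Omega)$ and $C_1$, each $R_i$ will be a concatenation of three pieces, all drawn from $\zP\cup\zC$: a subpath of $P_i^\star$ from $x_i'$ inward to $P_i^\star\cap C_{k_i}$ for a chosen level $k_i\in\{2,\ldots,4p+1\}$; an arc of $C_{k_i}$ from $P_i^\star\cap C_{k_i}$ to $P_i\cap C_{k_i}$; and the subpath of $P_i$ from $P_i\cap C_{k_i}$ down to $\alpha_i$. The main obstacle is selecting the levels $k_i$ and the switching arcs on each $C_{k_i}$ so that the $R_i$'s are pairwise internally disjoint, and this is precisely where the bound $s\ge 4p+2$ is used. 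The key fact is that the matching $x_i'\mapsto\alpha_i$ preserves cyclic order on the two boundary components of the annulus: the $x_i'$ are in this cyclic order on $V(\Omega)$ by hypothesis, and the $\alpha_i$ inherit the cyclic order of the $x_i$ on $C_1$ because $\zP$ is orthogonal to $\zC$ in the cylindrical rendition. Hence the required routing realizes a cyclic-order-preserving matching across the annulus, which is topologically achievable by pairwise non-crossing curves; the $4p$ cycles $C_2,\ldots,C_{4p+1}$ together with the at least $4p$ radial paths in $\zP$ provide enough room to realize these curves inside the grid-like subgraph $\zP\cup\zC$. Concretely, I would assign distinct levels $k_1,\ldots,k_{2p}$ to the indices (in an order matching the cyclic structure of the pairs $(x_i,x_i')$) and, on each $C_{k_i}$, choose the unique arc between $P_i^\star\cap C_{k_i}$ and $P_i\cap C_{k_i}$ whose interior avoids the vertices already committed to other $R_j$'s, namely $P_j^\star\cap C_{k_i}$ for $j$ with $k_j<k_i$ and $P_j\cap C_{k_i}$ for $j$ with $k_j>k_i$; the order-preserving nature of the matching guarantees that a valid arc exists at every level.

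Once the $R_i$'s are in hand, $\zQ':=\{R_a\cup Q^\circ\cup R_b : Q\in\zQ \text{ has endpoints } x_a,x_b\}$ is a linkage: the $R_i$'s lie in the annular region while the $Q^\circ$'s lie in the inner graph of $C_1$, so the two families meet only at the points $\alpha_i$, each used exactly once on each side; and the $Q^\circ$'s are pairwise disjoint because $\zQ$ is a linkage. Each $R_i$ agrees with $P_i^\star$ in the outer graph of $C_{4p+2}$, so $\zQ'$ is coterminal with $\zP$ up to level $C_{4p+2}$ via the subset $\{P_1^\star,\ldots,P_{2p}^\star\}\subseteq\zP$, and the endpoint correspondence is built into the construction.
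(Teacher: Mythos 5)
Your reduction — keep the pieces of the members of $\zQ$ inside $C_1$ and replace the two radial stubs of each path by new routes through $\zC\cup\zP$ ending at the prescribed vertices $x_i'$ — is exactly the paper's first step, but the routing step contains a genuine gap. You require each replacement path $R_i$ to switch exactly once, at its own level $k_i$, directly from $P_i$ to $P_i^\star$, and your key claim that ``the order-preserving nature of the matching guarantees that a valid arc exists at every level'' is false. This lemma is a \emph{rotation} lemma: the targets may be a cyclic shift of the sources, and then the new paths must collectively wind around the annulus, which a family of single-switch paths cannot do. Concretely, if $x_i'=x_{i+1}$ (indices mod $2p$) — a legitimate instance — then $P_i^\star=P_{i+1}$, and disjointness of your radial segments on $P_{i+1}$ (segment (a) of $R_i$ occupies $P_{i+1}$ above level $k_i$, segment (c) of $R_{i+1}$ occupies it below level $k_{i+1}$) forces $k_{i+1}<k_i$ for every $i$, a cyclic contradiction. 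The failure does not require coinciding vertices: take $2p=4$, sources $\alpha_1,\dots,\alpha_4$ at cyclic positions $0,90,180,270$ and each $x_i'$ on a radial path just past $\alpha_{i+1}$, all eight $\zP$-paths distinct. With the forbidden vertices on $C_{k_i}$ exactly as you list them, the arc on the ``short'' side of $C_{k_i}$ is admissible only if $k_{i-1}>k_i>k_{i+1}$, while the ``long'' arc is never admissible (it needs $k_{i+2}$ simultaneously above and below $k_i$); cyclically there is no level assignment, so some $R_i$ has no valid arc. Thus your construction breaks precisely on the instances the lemma exists to handle.

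The paper's proof avoids prescribing the winding explicitly. It keeps only short stubs: $R_i$ from $y_i$ out to $C_{2p+1}$ and $S_i$ from $x_i'$ in to $C_{2p+2}$, and it adds the middle portions $\overline P$ (between $C_2$ and $C_{4p+1}$) of \emph{all} members of $\zP$ — at least $4p$ of them — as stepping stones, which is what permits winding through intermediate radial paths. It then cuts this annular graph open along a single $y_1$--$x_1'$ path $Q$ by deleting one edge of each cycle $C_2,\dots,C_{4p+1}$ and one edge of each $\overline P$ meeting $Q$, uses a short Menger argument (some $R_i\cup S_i$, some $\overline P$ avoiding the deleted edges, and two surviving cycles, one on each side of $C_{2p+1}$, already give a $Y$--$X'$ path through any set of at most $2p-1$ vertices) to obtain $2p$ disjoint $Y$--$X'$ paths, and only then concludes that in the cut-open (planar, linearly ordered) graph these paths must pair $y_i$ with $x_i'$. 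To repair your argument you would need both ingredients: access to radial paths other than $P_i$ and $P_i^\star$ (so routes may switch more than once), and an actual argument — not an appeal to order preservation — that the resulting pairing is the prescribed one.
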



\begin{proof}
For notational convenience let
$X':=\{x'_1,x'_2,\ldots,x'_{2p}\}$.
Let $i\in\{1,2,\ldots,2p\}$. The vertex $x_i$ is the endpoint of a unique path $P\in\zP$.
We define $y_i$ to be the vertex of $P\cap C_1$ that is closest to $x_i$ on $P$,
and we define $P_i=x_iPy_i$.
Let $Y:=\{y_1,y_2,\ldots,y_{2p}\}$.
We shall construct a linkage $\zL$ from $Y$ to $X'$
in a subgraph $H$ of $G$ in such a way that $\zL$ will connect $y_i$ with $x_i'$.
By replacing the paths $P_1,P_2, \ldots,P_{2p}$
by the linkage $\zL$ we will arrive at the desired
linkage $\zQ'$.

For each cycle $C_j$, $1 \le j \le 4p+1$, there are two natural cyclic orderings of the vertices $V(C_j)$.  We define the \emph{clockwise} order of $C_j$ to be the cyclic ordering of $V(C_j)$ such that traversing the vertices of $C_j$ in order we encounter $P_1, P_2, \dots, P_{2p}$ in that order.  The alternate cyclic ordering of $V(C_j)$ is the \emph{counterclockwise} ordering.

To construct $\zL$ we will define an appropriate subgraph $H$ and find the desired paths in $H$. To that end we first define a several linkages.  For $1 \le i \le 2p$, let $P_i'$ be the minimal subpath of $\zP$ with one endpoint equal to $x_i'$ and the other endpoint in $C_1$.  Let $R_i$ be the minimal subpath of $P_i$ with one endpoint equal to $y_i$ and the other endpoint in $C_2p+1$.  Let $S_i$ be the minimal subpath of $P_i'$ with one endpoint equal to $x_i'$ and the other endpoint in $C_{2p+2}$.  Finally, for $P \in \zP$, let $\overline{P}$ be the shortest subpath of $P$ with one endpoint in $C_2$ and the other endpoint in $C_{4p+1}$.  Define the subgraph $H'$ as
$$H' = \bigcup_1^{4p+1} C_i \cup \bigcup_1^{2p} (R_i \cup S_i) \cup \bigcup_{P \in \zP} \overline{P}.$$

We define the path $Q$. If $P_1' = P_1$, let $Q = P_1$.  Otherwise, define $Q$ as follows.
The graph $P_1\cup C_{2p+1}\cup P_1'$ includes two paths from $y_1$ to $x_1'$; let $Q$ denote the one
that intersects no more members of $\zP$ than the other one.  Without loss of generality, we assume that traversing $Q \cap C_{2p+1}$ from a vertex of $P_1'$ to a vertex of $P_1$, we  traverse the vertices in the clockwise order.

Let $H$ be obtained from $H'$ by deleting the following sets of edges:
\begin{itemize}
\item for $i=2,3,\ldots,4p+1$, the edge of $C_i$ that is incident with $C_i\cap Q$ counterclockwise,
\item for every $P\in\zP$ such that $\overline{P}$ intersects $Q$, the edge of $\overline{P}$ incident with $Q$ that lies
on $\overline{P}$ between $C_{2p+1}$ and $C_{2p+2}$.
\end{itemize}

Let $F$ denote the set of edges deleted according to the second rule above. By the choice of $Q$, at least $2p$ distinct paths $P \in \zP$ have $\overline{P}$ disjoint from $F$.
We claim that $H$  has a linkage of cardinality $2p$  from $Y$ to $X'$.
To prove the claim it suffices to show, by Menger's Theorem, that for every  set $Z\subseteq V(H)$ of cardinality
at most $2p-1$ there exists a path in $H-Z$ from $Y$ to $X'$.
To that end let $Z$ be such a set.  Note that by construction, the elements of $\{S_i \cup R_i : 1 \le i \le 2p\}$ are pairwise disjoint.  Thus, there exists an index $i$ such that $S_i \cup R_i$ is disjoint from $Z$.  There exists $P \in \zP$ such that $\overline{P}$ is disjoint from both the edge set $F$ and vertex set $Z$.  Finally, there exist indices $j$ and $j'$ such that $2 \le j \le 2p+1$ and $2p + 2 \le j' \le 4p+1$ such that $C_j$ and $C_{j'}$ are both disjoint from $Z$.  It follows that $R_i \cup S_i \cup C_j \cup C_{j'} \cup \overline{P}$ has the desired path in $H$ avoiding $Z$.  This proves our claim that $H$  has  a linkage $\zL$ of cardinality $2p$ from $Y$ to $X'$.

It follows from the definition of $F$ that the linkage $\zL$ connects $y_i$ with $x_i'$,
and hence it gives rise to the desired linkage $\zQ'$, as indicated earlier.
\end{proof} 
\section{Finding a planar strip}\label{sec:planarstrip}

In this section, we focus on societies which contain a large transaction of pairwise crossing or pairwise non-crossing paths, and consider how the rest of the graph attaches to the transaction.  We show that if we exclude a clique minor and one other specific type of transaction, then we can find a large, nearly planar subgraph containing a large portion of the transaction.   The results in this section are a new ingredient that did not appear in the original proof of Robertson and Seymour and constitutes one of the key ideas in our proof.

To simplify the notation going forward, we give the following definitions.  A transaction $\zP$ in a society $(G, \Omega)$ where every pair of elements cross is called a \emph{crosscap transaction}.  If a transaction $\zP$ is either a crosscap or planar transaction, we say it is \emph{monotone}.

This section will rely heavily on results from \cite{flatwall}.  We reintroduce some of the notation of \cite{flatwall} which we will use going forward.

\begin{DEF} Let $r,s \ge 2$ be positive integers, let $M$ be a graph, and let $P_1, P_2,\dots ,P_r, Q_1, Q_2,\dots , Q_s$ be paths in $M$ such that the following conditions hold for all $ i = 1,2,\dots ,r$ and $j = 1,2,\dots ,s$:
\begin{itemize}
\item[(1)] $P_1, P_2, \dots ,P_r$ are pairwise vertex disjoint, $Q_1, Q_2, \dots ,Q_s$ are pairwise vertex disjoint, and $M = P_1 \cup P_2 \cup \dots \cup P_r \cup Q_1 \cup Q_2 \cup \dots \cup Q_s$;
\item[(2)] $P_i \cap Q_j$ is a path, and if $i \in \{ 1,s\}$ or $j \in \{1,r\}$ or both, then $P_i \cap Q_j$ has exactly one vertex;
\item[(3)] $P_i$ has one endpoint in $Q_1$ and the other endpoint in $Q_s$, and when traversing $P_i$ the paths $Q_1, Q_2 ,\dots , Q_s$ are encountered in the order listed;
\item[(4)] $Q_j$ has one endpoint in $P_1$ and the other endpoint in $P_r$, and when traversing $Q_j$ the paths $P_1 , P_2 ,\dots , P_r$ are encountered in the order listed.
\end{itemize}
In those circumstances we say that $M$ is an \emph{$r \times s$ mesh}. We will refer to $P_1 , P_2 , \dots , P_r$ as \emph{horizontal paths} and to $Q_1 , Q_2 , \dots , Q_s$ as vertical paths. Thus every $r \times s$ grid is an $r \times s$ mesh, and, conversely, every planar graph obtained from an $r \times s$ grid by subdividing edges and splitting vertices is an $r \times s$ mesh. In particular, every $r$-wall is an $r \times r$-mesh.
\end{DEF}

The definition of a clique minor grasped by a wall extends to meshes almost verbatim.  Let $r, s, t$ be positive integers, let $M$ be an $r \times s$ mesh and let $X_1, \dots, X_t$ form the branch sets of a model of a $K_t$ minor.  The model is \emph{grasped} by the mesh $M$ if $t \le s, r$ and for all $1 \le l \le t$, there exist distinct indices $i_1, i_2, \dots, i_t$ and distinct indices $j_1, j_2, \dots, j_t$ such that $X_l$ contains $V(P_{i_k})\cap V(Q_{j_k})$ for $1 \le k \le t$.

\begin{DEF}
Let $M$ be a mesh with horizontal paths $P_1,P_2,\ldots,P_r$ and vertical paths $Q_1,Q_2,\ldots,Q_s$.  Let $\zP$ be the set of horizontal paths and $\zQ$ the set of vertical paths.  Let $x$ be a vertex of $M$.  For a positive index $i$, we say $x$ is \emph{in the vicinity of $P_i$} if either $x \in V(P_i)$ or there exists a path from $x$ to a vertex of $P_i$ which intersects $V(\zP)$ in exactly one vertex.  Similarly, we say that $x$ is in the \emph{vicinity of $Q_j$} if either $x \in V(Q_j)$ or there exists a path in $M$ from $x$ to $V(Q_j)$ intersecting $V(\zQ)$ in exactly one vertex.
\end{DEF}

Note that every vertex in a given mesh is in the vicinity of at least one and at most two horizontal (or vertical) paths.

\begin{DEF}
Let $M$ be a mesh with  horizontal paths $P_1,P_2,\ldots,P_r$ and vertical paths $Q_1,Q_2,\ldots,Q_s$.
Let $d\ge 1$.
We say that a vertex $v\in V(M)$ {\em is at distance at least $d$ from the boundary} if there does not exist an $i$ such that $i \le d$ or $r-d+1 \le i \le r$ for which
$v$ is in the vicinity of a horizontal path $P_i$ and there does not exist $j$ such that $1 \le j \le d$ or $s - d + 1 \le j \le s$ for which $v$ is in the vicinity of the vertical path $Q_j$.
\end{DEF}

The definition of distance at least $d$ from the boundary of a mesh is motivated by the following.  Let $M$ be an $r \times s$ mesh wish horizontal paths $P_1, \dots, P_r$ and vertical paths $Q_1, \dots, Q_s$.  If we let $dist$ be the distance function defined on $V(M)$ in Section 3 of \cite{flatwall}, then a vertex $x \in V(M)$ is at distance $d$ from the boundary implies that for all $z \in V(P_1 \cup P_r \cup Q_1 \cup Q_s)$, $dist(x, z) \ge d$.  This will specifically allow us to apply the results of \cite{flatwall} in the coming arguments without the technical details required to define the distance function used in \cite{flatwall}.

In the next lemma and its proof, by an {\em interval} we mean a set of consecutive
positive integers and the \emph{length} of the interval is simply the order.

\begin{LE}
\label{l:strips}
Let $k,l,p,r,s,t\ge1$ be integers such that
$$s\ge 49152p^{24}(15p^{2} +l)+kl.$$
Then for every graph $G$ and every $r\times s$ mesh $M$ in $G$ with vertical paths $Q_1,Q_2,\ldots,Q_s$,
if G has no $K_p$ minor grasped by $M$, then there exist a set $A\subseteq V(G)$ of size at most
$ 12288(p(p-1))^{12}$ and a set $I\subseteq\{1,2,\ldots,s\}$
such  that
the following conditions hold:
\begin{itemize}
\item the set $I$ is a union of $k$ disjoint intervals of length $l$;
\item if $i\in I$, then $Q_i$ is disjoint from $A$;
\item if $P$ is an $M$-path in $G-A$ with endpoints $x$ and $y$ such that $x$ is at distance at least $10p(p-1)$ from the boundary and $i$ and $j$ are indices such that $x$ is in the vicinity of $Q_i$ and $y$ is in the vicinity of $Q_j$ with $|j-i|> 10p(p-1)+1$,
then $i\not\in  I$;
\end{itemize}
\end{LE}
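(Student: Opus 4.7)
The plan is to apply the Flat Wall Theorem (Theorem~\ref{thm:flatwall}) to a subwall extracted from the mesh $M$ and then use the resulting flat structure to locate the required $k$ intervals of vertical paths. Every $r$-wall embeds as a subgraph of an $r \times r$ mesh, so from $M$ one can carve out a subwall $W_0$ using sufficiently many consecutive vertical paths of $M$ together with the corresponding horizontal structure. Since any $K_p$ minor grasped by $W_0$ is also grasped by $M$, the hypothesis implies $W_0$ has no $K_p$ minor grasped by it. Applying Theorem~\ref{thm:flatwall} with $t=p$ and the free parameter set to $15p^2+l$ (up to minor adjustments to absorb the $40t^2$ term in the flat-wall constant) yields an apex set $A\subseteq V(G)$ of size at most $12288\,p^{24}\le 12288(p(p-1))^{12}$ and a flat $(15p^2+l)$-subwall $W'$ of $W_0$ in $G-A$.

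The core structural claim I would then establish is that in $G-A$ the pairwise disjoint long $M$-paths---those with an endpoint $x$ at distance at least $10p(p-1)$ from the boundary of $M$ and jumping more than $10p(p-1)+1$ vertical paths---cannot be concentrated near any small interval of vertical paths. Concretely, if roughly $\binom{p}{2}$ such pairwise disjoint jumps have endpoints distributed within a $p\times p$ subgrid of $M$, then one constructs a model of $K_p$ grasped by $M$ by taking each row of the subgrid as a branch set and routing the missing $\binom{p}{2}-(p-1)$ edges through the long jumps. The $10p(p-1)$ distance-from-boundary bound and the $10p(p-1)+1$ jump-length bound are precisely tuned to leave enough mesh on either side of the jumps to ensure each branch set contains $V(P_{i_l}\cap Q_{j_l})$ for $p$ distinct index pairs, thereby securing the grasping condition and producing the required contradiction.

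The final step is a covering/pigeonhole argument. The set $B\subseteq\{1,2,\ldots,s\}$ of \emph{bad} vertical-path indices---those $i$ for which either $Q_i$ meets $A$ or the vicinity of $Q_i$ contains a deep endpoint of a long $M$-path in $G-A$---is, by the structural claim combined with a Menger-type covering argument, contained in a bounded union of short intervals. Combined with the hypothesis $s\ge 49152p^{24}(15p^2+l)+kl$, this leaves room in the complement $\{1,2,\ldots,s\}\setminus B$ for $k$ disjoint intervals of length $l$, which form the required set $I$; the $kl$ term in the bound on $s$ exactly accommodates the cumulative length of these intervals.

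The main obstacle is the $K_p$-minor-from-jumps construction underlying the structural claim. Executing it rigorously requires choosing the $\binom{p}{2}$ disjoint long $M$-paths so that their endpoints are spread across both the horizontal and vertical directions of a $p\times p$ subgrid, and then assembling the branch sets from mesh rows augmented by jump segments so that each branch set contains $p$ distinct intersection vertices $V(P_{i_l}\cap Q_{j_l})$. The geometric bookkeeping---ensuring disjointness of the chosen jumps, correct routing, and the precise grasping intersections---is the technically delicate part of the proof, and motivates the specific threshold $10p(p-1)$ and the $15p^2$ buffer inside the bound on $s$.
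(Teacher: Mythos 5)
There is a genuine gap. The anchor of your argument is Theorem~\ref{thm:flatwall}, but its conclusion --- a single flat $r$-subwall $W'$ in $G-A$ --- gives no control over $M$-paths attaching near the $k$ intervals you ultimately need; flatness of one small subwall says nothing about long jumps landing in the vicinity of vertical paths elsewhere in the mesh. The paper instead invokes \cite[Lemma~4.6]{flatwall}, which is exactly the missing ingredient: it produces an apex set $A$ with $|A|\le 12288(p(p-1))^{12}$ \emph{together with} a set $Z$ of at most $3\cdot 12288(p(p-1))^{12}$ indices such that every $M$-path in $G-A$ whose deep endpoint lies in the vicinity of $Q_i$ and which jumps more than $10p(p-1)+1$ columns forces $i$ to lie within $10p(p-1)$ of some member of $Z$. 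Your ``structural claim'' (that $\binom{p}{2}$ pairwise disjoint long jumps concentrated in a $p\times p$ subgrid yield a $K_p$ grasped by $M$) is not the statement actually needed and does not by itself bound the set of bad column indices: bad indices can be produced by jumps that pairwise intersect or share endpoints, or whose endpoints are spread out rather than concentrated, so ``no large disjoint family of jumps'' does not localize the bad set. Converting that into a bounded union of short bad intervals is precisely the Menger/deletion-and-iteration argument packaged in \cite[Lemma~4.6]{flatwall} (built on \cite[Lemma~4.5]{flatwall}, stated here as Lemma~\ref{lem:meshandpaths}, which moreover requires the jumps' endpoints to be pairwise \emph{far apart}, at vertical-path distance at least $10p(p-1)$, not clustered in a $p\times p$ subgrid); your proposal only gestures at this step, and it is the heart of the lemma.

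Two further concrete problems. First, your size bound is backwards: Theorem~\ref{thm:flatwall} with $t=p$ gives $|A|\le 12288p^{24}$, and $12288p^{24} > 12288(p(p-1))^{12}$ since $(p-1)^{12}<p^{12}$, so the apex set it provides is too large for the lemma's conclusion, whereas \cite[Lemma~4.6]{flatwall} yields the correct bound $12288(p(p-1))^{12}$ directly. Second, once the correct localization statement is in hand, the remaining work is only the counting done in the paper: discard the at most $|A|$ columns meeting $A$ and the at most $(20p(p-1)+1)|Z|$ columns within distance $10p(p-1)$ of $Z$, note the surviving good indices split into at most $|A|+|Z|+1$ intervals, and use the hypothesis on $s$ to extract $k$ disjoint good intervals of length $l$; your final pigeonhole paragraph is consistent with this, but it cannot be executed until the localization to $Z$ is actually established.
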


\begin{proof}
By~\cite[Lemma~4.6]{flatwall}
we may assume that there
exist sets $A\subseteq V(G)$ and $Z\subseteq \{1,2,\dots,s\}$ such that

\begin{itemize}
\item[($*$)]
$|A|\le 12288(p(p-1))^{12}$, $|Z|\le 3\cdot  12288(p(p-1))^{12}$,
and if $x$ and $y$ are the endpoints of an $M$-path in $G-A$ and $i$, $j$ such that $x$ is in the vicinity of $Q_i$ and $y$ is in the vicinity of $Q_j$, then one of the following holds:
\begin{itemize}
\item  the vertex $x$ is not at distance at least $10p(p-1)$ from the boundary, or
\item $|i-j|\le10p(p-1) +1$, or
\item there exists $z \in Z$ such that $|i-z| \le 10p(p-1)$.
\end{itemize}
\end{itemize}

Let us say that an integer $i\in\{1,2,\ldots,s\}$ is {\em good} if $Q_i$ is disjoint from $A$ and
there does not exist an $M$-path $P$ in $G-A$ and an integer $j$ such that one endpoint $x$ of $P$ is in the vicinity of $Q_i$ and the other endpoint $y$ is in the vicinity of $Q_j$, such that
$|j-i|>10p(p-1)+1$ and $x$ is at distance at least $10p(p-1)$ from the boundary.
We say that an interval $I$ is {\em good} if every member of $I$ is good.
Thus it suffices to show that there are $k$ pairwise disjoint good intervals.

Let $S$ be the set of all integers  $i\in\{1,2,\ldots,s\}$ such that $Q_i$ is disjoint from
$A$ and $i$ is not within $10p(p-1) $ from some member of $Z$.
By ($*$), $S$ is good.
Clearly $|S|\ge s-|A|-(20p(p-1)+1)|Z|$ and $S$ breaks into at most $|Z|+|A|+1$ intervals.
Every subinterval of $S$ of length $L$ gives rise to $\lfloor L/l\rfloor$ disjoint good intervals of length $l$.
By ($*$) and the hypothesis on $s$ we have $s\ge 20p(p-1)|Z|+ (|A|+|Z|)l +kl $; thus
$|S|\ge ( k-1)l + (|Z|+|A|+1)(l-1) + 1 $, and hence  there are
$k$  pairwise disjoint good intervals, as desired.
\end{proof}

An obstruction to the presence of the large, nearly planar subgraph for which we are searching is a structure which we will call $k$-nested crosses transaction. In the analysis, we will also need to consider the reversed version which we call a twisted $k$-nested crosses transaction.
\begin{DEF}
Let $(G, \Omega)$ be a society and $k \ge 1$ a positive integer.  A \emph{$k$-nested crosses transaction} is a transaction $\zP$ of order $2k$ such that the elements of $\zP$ can be labeled $P_1, \dots, P_k, Q_1, \dots, Q_k$ such that
\begin{itemize}
\item for all $1 \le i < j \le k$, $P_i$ and $P_j$ do not cross and $Q_i$ and $Q_j$ do not cross;
\item for all $1 \le i \le k$, $1 \le j \le k$, $P_i$ and $Q_j$ cross if and only if $i = j$.
\end{itemize}
\end{DEF}

\begin{DEF}
Let $(G, \Omega)$ be a society and $k \ge 1$ a positive integer.  A \emph{twisted $k$-nested crosses transaction} is a transaction $\zP$ of order $2k$ such that the elements of $\zP$ can be labeled $P_1, \dots, P_k, Q_1, \allowbreak \dots, Q_k$ such that
\begin{itemize}
\item for all $1 \le i < j \le k$, $P_i$ and $P_j$ cross and $Q_i$ and $Q_j$ cross;
\item for all $1 \le i \le k$, $1 \le j \le k$, $P_i$ and $Q_j$ do not cross if and only if $i = j$.
\end{itemize}
\end{DEF}

\begin{DEF}
\label{def:planarstrip}
Let $\zP$ be a planar transaction of cardinality at least two in a society $(G, \Omega)$.
We wish  to define a new society, which we will call the ``$\zP$-strip society of $(G, \Omega)$".
The members of $\zP$ may be denoted by $P_1,P_2,\ldots,P_n$ and the endpoints of $P_i$ by
$a_i$ and $b_i$ in such a way that $a_1,a_2,\ldots,a_n,b_n,b_{n-1},\ldots,b_1$  occur in $\Omega$
in the order listed.  We refer to $P_1$ and $P_n$ as the \emph{external elements of $\zP$}.

Let $H$ denote the subgraph of $G$ obtained from the union of the elements of $\zP$ by adding the elements of  $V(\Omega)$
as isolated vertices.
Let $H'$ be the subgraph of $H$ consisting of $\zP$ and all the vertices  $ a_{1}\Omega a_{n}\cup b_{n}\Omega b_{1}$.
Let us consider all $H$-bridges of $G$ with at least one attachment in $V(H')\setminus V(P_1\cup P_{n})$, and for each such
$H$-bridge $B$ let $B'$  denote the graph obtained from $B$ by deleting all attachments that do not belong to  $V(H')$.
Finally, let $G_1$ denote the union of $H'$ and all the graphs $B'$ as above.
Let the cyclic permutation $\Omega_1$ be defined by saying that
$V(\Omega_1)=a_{1}\Omega a_{n}\cup b_{n}\Omega b_{1}$
and that the order on $\Omega_1$ is that induced by $\Omega$.
Thus $(G_1,\Omega_1)$ is a society, and we call it the {\em $\zP$-strip society of $(G, \Omega)$}.
We say that $P_1$ and $P_n$  are the {\em boundary paths} of the $\zP$-strip society  $(G_1,\Omega_1)$.
We say that the $\zP$-strip society of $(G, \Omega)$ is {\em isolated in $G$} if no edge of $G$ has one
endpoint in $V(G_1)\setminus V(P_1 \cup P_n)$ and the other endpoint in $V(G)\setminus V(G_1)$.
Thus $(G_1,\Omega_1)$ is isolated if and only if every $H$-bridge of $G$ with at least one attachment in $V(H')\setminus V(P_1\cup P_{n})$
has all its attachments in $V(H')$.
\end{DEF}

\begin{DEF}
\label{def:crosscapstrip}
We now define a $\zP$-strip society of $(G, \Omega)$ for a crosscap  transaction  $\zP$.  For a given planar transaction $\zP'$, there are unique minimal (by containment) segments $U_1, U_2$ such that $\zP'$ is a $U_1$ to $U_2$ linkage; this is no longer the case for the crosscap transaction $\zP$.  Thus, when defining the $\zP$-strip society for a crosscap transaction, we will define it with respect to a pair $(X_1, X_2)$ of segments such that $\zP$ is an $X_1$ to $X_2$ linkage.

Assume that $X_1$ and $X_2$ are segments of $\Omega$ which are pairwise disjoint such that $\zP$ is an $X_1$ to $X_2$ linkage.
The members of $\zP$ may be denoted by $P_1,P_2,\ldots,P_n$ and the endpoints of $P_i$ by
$a_i$ and $b_i$ in such a way that $a_1,a_2,\ldots,a_n,b_1,b_{2},\ldots,b_n$  occur in $\Omega$
in the order listed and $a_i \in X_1$ and $b_i \in X_2$ for all $i$.  Again, we refer to $P_1$ and $P_n$ as the \emph{external elements of $zP$}.

Let $H$ denote the subgraph of $G$ obtained from the union of the elements of $\zP$ by adding the elements of  $V(\Omega)$
as isolated vertices.
Let $H'$ be the subgraph of $H$ consisting of $\zP$ and all the vertices  $ a_{1}\Omega a_{n}\cup b_{1}\Omega b_{n}$.
We define $G_1$ as in  Definition~\ref{def:planarstrip} and
 the cyclic permutation $\Omega_1$  by saying that
$V(\Omega_1)=a_{1}\Omega a_{n}\cup b_{1}\Omega b_{n}$
and that the order on $\Omega_1$ is obtained by following $a_{1}\Omega a_{n}$ in the order given by $\Omega$,
and then following $b_{1}\Omega b_{n}$ in the {\em reverse} order from  the one given by $\Omega$.
We call $(G_1, \Omega_1)$ the {\em $\zP$-strip society of $(G, \Omega)$ with respect to $(X_1, X_2)$}.
We define the boundary paths and isolation in the same way as in  Definition~\ref{def:planarstrip}.
When there can be no confusion as to the choice of segments $(X_1, X_2)$, we will omit their reference.
\end{DEF}

Recall that a society is rural if it has a vortex-free rendition.

\begin{LE}\label{lem:inhereitedrurality}
Let $(G, \Omega)$ be a society and
 $\zQ$ a monotone transaction in $(G, \Omega)$.
Let $X^1,X^2$ be disjoint segments of $\Omega$ such that $\zQ$ is a linkage from $X^1$ to $X^2$.  For $i = 1, 2$, let $\zQ_i$ be a subset of $\zQ$ and let $X^j_i$ be the minimal segment contained in $X^j$ containing all the endpoints of $\zQ_i$ for $j = 1, 2$.  If the $\zQ$-strip society with respect to $(X^1, X^2)$ is isolated and rural, then the $\zQ_i$-strip society with respect to $(X^1_i, X^2_i)$ is isolated and rural for $i = 1, 2$.  Moreover, if $X^1_1 \cap X^1_2  = \emptyset$, then the $\zQ_1$-strip society is disjoint from the $\zQ_2$-strip society.
\end{LE}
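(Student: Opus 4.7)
Proof plan:

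Fix a vortex-free rendition $\delta=(\Gamma,\zD)$ of $(G_1,\Omega_1)$ in a disk $\Delta$. The plan is to produce a vortex-free rendition of $(G_{1,i},\Omega_{1,i})$ by restricting $\delta$ to a subdisk $\Delta_i\subseteq\Delta$ cut out by the external paths of $\zQ_i$. I treat the planar case with $i=1$; the crosscap case and $i=2$ follow by the same argument with the natural modifications. Label the elements of $\zQ$ as $P_1,\ldots,P_n$ with endpoints $a_j,b_j$ as in Definition~\ref{def:planarstrip}, and write $\zQ_1=\{P_{i_1},\ldots,P_{i_k}\}$ with $i_1<\cdots<i_k$. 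In $\delta$, each $P_j$ is drawn from $a_j$ to $b_j$ on $\partial\Delta$, and these endpoints appear in the cyclic order prescribed by $\Omega_1$. Let $\Delta_1\subseteq\Delta$ be the closed region bounded by the tracks of $P_{i_1}$ and $P_{i_k}$ together with the arc of $\partial\Delta$ from $a_{i_1}$ to $a_{i_k}$ through $X^1_1$ and the arc from $b_{i_k}$ to $b_{i_1}$ through $X^2_1$.

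The main task is to show that a suitable restriction of $\delta$ to $\Delta_1$ is a vortex-free rendition of $(G_{1,1},\Omega_{1,1})$. I first establish two preparatory facts. The first is the inclusion $V(H'_{\zQ_1})\setminus V(P_{i_1}\cup P_{i_k})\subseteq V(H')\setminus V(P_1\cup P_n)$, where $H'_{\zQ_1}$ denotes the analogue of $H'$ for the $\zQ_1$-strip; this follows from the pairwise disjointness of the paths in $\zQ$ together with the observation that any $a_j$ or $b_j$ in $V(\Omega_{1,1})\setminus\{a_{i_1},b_{i_1},a_{i_k},b_{i_k}\}$ satisfies $i_1<j<i_k$, hence $j\notin\{1,n\}$. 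The second, using isolation of $(G_1,\Omega_1)$ in $G$, is that every $H$-bridge of $G$ with an attachment in $V(H')\setminus V(P_1\cup P_n)$ has all its vertices in $V(G_1)$ and all its attachments already in $V(H')$, so such bridges appear in $G_1$ unmodified. Combining these, every $H_{\zQ_1}$-bridge of $G$ contributing to $G_{1,1}$ is obtained inside $G_1$ by stringing together such $H$-bridges with internal portions of paths $P_j$ for $j\notin\{i_1,\ldots,i_k\}$ with $i_1<j<i_k$.

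The hardest step is the topological one: showing that in $\delta$ every such $H_{\zQ_1}$-bridge is drawn inside $\Delta_1$. The key observation is that if a connected subgraph of $G_1$ simultaneously attaches to a vertex on the interior of $\partial\Delta_1\cap\partial\Delta$ and to a vertex drawn outside $\Delta_1$, then one can extract two disjoint $\Omega_1$-paths whose endpoints interleave in $\Omega_1$, producing a cross in $(G_1,\Omega_1)$ and contradicting rurality via Theorem~\ref{thm:crossreduct}. Once such escapes are ruled out, restricting $\delta$ to $\Delta_1$ and then discarding the cells corresponding to $H$-bridges attached only to $V(P_{i_1}\cup P_{i_k})$ (which belong to $G_1$ but not $G_{1,1}$) yields a vortex-free rendition of $(G_{1,1},\Omega_{1,1})$ in $\Delta_1$, with $V(\Omega_{1,1})$ lying on $\partial\Delta_1$ in the correct cyclic order. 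Isolation of $(G_{1,1},\Omega_{1,1})$ in $G$ follows directly from the first preparatory fact together with isolation of $(G_1,\Omega_1)$: any edge exiting $V(G_{1,1})\setminus V(P_{i_1}\cup P_{i_k})$ would exit $V(G_1)\setminus V(P_1\cup P_n)$. The crosscap case runs identically with the cyclic order of $V(\Omega_{1,1})$ reversed on $X^2_1$, matching the boundary orientation of $\Delta_1$ forced by the crossings within $\zQ$. Finally, if $X^1_1\cap X^1_2=\emptyset$, the index sets of $\zQ_1$ and $\zQ_2$ are disjoint and non-interleaving along $\partial\Delta$, so the subdisks $\Delta_1$ and $\Delta_2$ have disjoint interiors and disjoint boundary tracks, giving $V(G_{1,1})\cap V(G_{1,2})=\emptyset$.
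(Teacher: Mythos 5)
Your proposal follows essentially the same route as the paper's proof: fix a vortex-free rendition of the $\zQ$-strip society, cut out the subdisk bounded by the tracks of the two external paths of $\zQ_i$, use isolation of the $\zQ$-strip to confine the relevant bridges to that region, and obtain rurality and isolation of the $\zQ_i$-strip by restricting the rendition. The only (minor and harmless) deviations are that you justify the no-escape step by extracting a cross against $P_{i_1}$ or $P_{i_k}$ where the paper reads it off directly from the rendition together with isolation, and that you deduce the final disjointness statement from the disjointness of the two subdisks where the paper derives it from isolation of the $\zQ_1$-strip society.
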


\begin{proof}
Let the elements of $\zQ$ be enumerated $Q_1, \dots, Q_l$ for some positive integer $l$ such that traversing the elements of $X^1$, we encounter the endpoints of $Q_1, Q_2, \dots, Q_l$ in that order.  Let $i \in \{1, 2\}$.  Let $J_i$ be the subgraph of $G$ formed by the union of $X^1_i$ and $X^2_i$ treated as isolated vertices with $\bigcup_{Q \in \zQ_i} Q$.  Without loss of generality, assume that $i = 1$, and let $Q$ and $Q'$ be the first and last paths, respectively, we encounter in $\zQ_1$ traversing $X^1_1$ according to $\Omega$.

Let $(G', \Omega')$ be the $\zQ$-strip society with respect to $(X^1, X^2)$.  Fix a vortex-free rendition $\rho$ of $(G', \Omega')$ in the disk $\Delta$, and let $T$, $T'$ be the track of $Q$ and $Q'$ in $\Delta$, respectively.  $\Delta \setminus (T \cup T')$ has exactly three connected components.  Let $\Delta'$ be the connected component whose boundary contains both $T$ and $T'$, and let $G'' = (Q \cup Q') \cup \bigcup_{c \in C(\rho): c \subseteq \Delta'} \sigma(c)$ and let $\Omega''$ be the cyclic ordering on $X^1_1 \cup X^2_1$ induced by $\Omega'$.  Given the rendition of $(G', \Omega')$ and the fact that it is isolated, we see that no edge of $G$ has an endpoint in $V(G'') \setminus V(Q \cup Q')$ and an endpoint in $V(G) \setminus V(G'')$.  Thus, every $J_1$-bridge with an attachment in $V(J_1) \setminus V(Q \cup Q')$ must be contained in $G''$.  This implies both that the $\zQ_1$-strip society is a subgraph of $G''$ and also that it is isolated.  To see that the $\zQ_1$-strip society is rural, observe that $(G'', \Omega'')$ is rural and a rendition can be restricted to the $\zQ_1$-strip society to show that the $\zQ_1$-strip society is rural as well.

Assume to reach a contradiction that the $\zQ_1$-strip society intersects the $\zQ_2$-strip society.  It follows that there is a $J_1$-bridge in $G$ with an attachment in $V(J_1) \setminus V(Q \cup Q')$ which also contains a vertex of $X^1_2 \cup X^2_2 \cup V(\zQ_2)$.  But such an bridge would violate the fact that the $\zQ_1$-strip society is isolated. This contradiction completes the proof of the lemma.
\end{proof}

Recall that a radial linkage orthogonal to a nest was defined in Definition~\ref{def:radialorthogonal}.
Let $(G, \Omega)$ be a society and $\rho = (\Gamma, \zD, c_0)$ a cylindrical rendition of $(G, \Omega)$.
Let $\zC = (C_1, \dots, C_{s})$ be a nest  in $\rho$ and let $\zQ$ be a transaction in $(G, \Omega)$.
We say that {\em $\zQ$ is orthogonal to $\zC$} if for all $C\in\zC$ and all $Q \in \zQ$ the graph $C\cap Q$
has exactly two components.
We say that $\zQ$ is \emph{unexposed in $\rho$} if for every $Q \in \zQ$, $Q$ has at least one edge in $\sigma(c_0)$, or simply unexposed when there can be no ambiguity.   Note that any crooked transaction is necessarily unexposed.
If $\zQ$ is unexposed, then every element of $\zQ$ contains exactly two distinct minimal subpaths which each have one endpoint in $V(\Omega)$ and the other endpoint in $V(\sigma(c_0))$ (they will also be necessarily disjoint).  Let $\zP$ be the union of all such minimal subpaths over the elements of $\zQ$.  We call $\zP$ the \emph{radial linkage truncation of $\zQ$} and note that it follows that the radial linkage truncation is a radial linkage orthogonal to the nest $\zC$.
If $\zQ'$ is a transaction in $(G, \Omega)$, we say that it is \emph{coterminal with $\zQ$ up to level $i$} if $\zQ'$ is coterminal with the radial linkage truncation $\zP$ of $\zQ$ up to level $i$.

Finally, we extend the definition of a model of a clique grasped by the union of a nest and a radial linkage.

\begin{DEF}
Let $p, r, s$ be integers with $r, s \ge p$.  Let $(G, \Omega)$ be a society.  Let $\zC = (C_1, \dots, C_s)$ be an $s$-nest in a cylindrical rendition of $(G, \Omega)$, and let $\zP$ be a radial linkage of order $r$.  Let the elements of $\zP$ be labeled $P_1, \dots, P_r$.  We say that $\zC \cup \zP$ \emph{grasps a $K_p$ minor} if there exists the branch sets $X_1, \dots, X_p$ of a model of a $K_p$ minor such that for all $l$, $1 \le l \le p$, there exist distinct indices $i_1, i_2, \dots, i_p$ and distinct indices $j_1, j_2, \dots, j_p$ such that $X_l$ contains $C_{i_k}\cap P_{j_k}$ for all $1 \le k \le p$.
\end{DEF}

\begin{theorem}
\label{thm:RTplanarstrip}
Let $l,k \ge 1$ and $p\ge2$ be integers, $\alpha:=12288(p(p-1))^{12}$ and $$l':=(49152p^{24}+k)(55p^{2} +l).$$
Let $(G, \Omega)$ be a society and $\rho = (\Gamma, \zD, c_0)$ a cylindrical rendition of $(G, \Omega)$.
Let $\zC = (C_1, \dots, C_{s})$ be a nest of cardinality $s \ge 2\alpha + 20p(p-1)+4$ in $\rho$ and
 $\zQ$  an unexposed monotone transaction in $(G, \Omega)$ of order $l'$ which is
orthogonal to  $\zC$.
Let $X_1,X_2$ be disjoint segments of $\Omega$ such that $\zQ$ is a linkage from $X_1$ to $X_2$.
Then one of the following holds:
\begin{itemize}
\item[(i)] $G$ contains a $K_p$ minor grasped by $\zQ \cup \zC$;
\item[(ii)] there exists a transaction $\zQ'$ from $X_1$ to $X_2$ that is coterminal with $\zQ$
up to level $C_{\alpha+8}$, such that $\zQ'$ is either a $k$-nested crosses transaction or a twisted $k$-nested crosses transaction;
\item[(iii)] there exists a set $Z \subseteq V(G)$ with $|Z| \le \alpha$
and a set $\zQ'\subseteq\zQ$ of size at least $l$
such that the $\zQ'$-strip in $(G - Z, \Omega - Z)$ is isolated and rural.  Moreover, $Z$ is disjoint from $\zQ'$ and $Z$ is disjoint from the outer graph of $C_{2\alpha+20p(p-1) + 4}$.
\end{itemize}
\end{theorem}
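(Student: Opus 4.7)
The plan is to construct a mesh $M$ from $\zQ$ and the cycles of the nest $\zC$ and apply Lemma~\ref{l:strips} to produce good intervals of vertical paths. The vertical paths of $M$ will be the $l'$ elements of $\zQ$, labeled $Q_1,\ldots,Q_{l'}$ so that their $X_1$-endpoints appear in that order in $\Omega$, and the horizontal paths will be suitable arcs of the cycles $C_{\alpha+9},\ldots,C_{s-\alpha-7}$, obtained by cutting each cycle at its intersections with $\zQ$ and selecting an arc on one side of the cylindrical rendition. In the planar case the $Q_i$'s are naturally parallel in the rendition, and this yields a well-defined mesh. In the crosscap case, one first cuts the rendition along the extreme element $Q_{l'}$ to make the remaining $l'-1$ elements parallel in the cut rendition, and the mesh is formed analogously. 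The bound $l'\ge(49152p^{24}+k)(55p^2+l)$ is calibrated so that Lemma~\ref{l:strips} applies to $M$ with parameter $l+20p(p-1)+2$ in place of its parameter $l$ and with the same $k$; this yields either a $K_p$ minor grasped by $M$ (which is then also grasped by $\zQ\cup\zC$, giving outcome (i)) or a set $A\subseteq V(G)$ with $|A|\le\alpha$ together with $k$ pairwise disjoint good intervals $I_1,\ldots,I_k$ of length $l+20p(p-1)+2$.

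For each good interval $I_j$, let $\zQ_j\subseteq\zQ$ consist of the middle $l$ elements of $I_j$ (leaving a buffer of $10p(p-1)+1$ elements on each end), and consider the $\zQ_j$-strip society in $G-A$ with respect to $(X_1,X_2)$. The ``good'' property guarantees that every $M$-path in $G-A$ whose endpoint lies in the vicinity of some $Q_i$ with $i\in I_j$ (and is far enough from the boundary) jumps by at most $10p(p-1)+1$ positions, so any cross of the strip society must be local to $I_j$. If for some $j$ the strip society is isolated and rural in $G-A$, then outcome (iii) follows; some care is needed because $Z$ must lie inside the cycle $C_{2\alpha+20p(p-1)+4}$, but any vertex of $A$ in the outer region of this cycle can be discarded or rerouted inward using the intervening cycles of the nest, since the corresponding portion of the strip is already rural. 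Otherwise, by Theorem~\ref{thm:crossreduct}, the strip society has a cross, and using Lemma~\ref{lem:crossreroute} together with the locality property one extracts from this cross an $\Omega$-path $P_j$ in $(G,\Omega)$ that crosses exactly one element $Q'_j\in\zQ_j$ and is disjoint from the other elements of $\zQ$.

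If outcome (iii) fails in every good interval, the $k$ pairs $(P_j,Q'_j)$ assemble into a transaction of order $2k$. In the planar case, the $P_j$'s are pairwise non-crossing because they lie in disjoint intervals of a linearly-ordered transaction, the $Q'_j$'s are pairwise non-crossing as a subfamily of the planar $\zQ$, and $P_i$ crosses $Q'_j$ exactly when $i=j$ by construction and locality; thus $\{P_1,Q'_1,\ldots,P_k,Q'_k\}$ is a $k$-nested crosses transaction. In the crosscap case, the same construction (carried out on the cut rendition) gives a twisted $k$-nested crosses transaction, since $Q'_i$ and $Q'_j$ now pairwise cross and the new paths $P_i$ inherit an analogous pairwise crossing pattern. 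Finally, Lemma~\ref{lem:transrotation2}, applied with the radial orthogonal linkage obtained from the radial linkage truncation of $\zQ$ on the inner cycles, reroutes the transaction so that it is coterminal with $\zQ$ up to level $C_{\alpha+8}$, giving outcome (ii).

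The main obstacle will be the extraction step in which a cross of the strip society $(G_1,\Omega_1)$ must be converted into an $\Omega$-path $P_j$ transverse to a single element of $\zQ_j$ in the original society $(G,\Omega)$. The locality furnished by the good interval is essential here, but some bookkeeping is required to handle crosses that involve the boundary paths of the strip and to verify that $P_j$ is disjoint from the other good intervals (any ``jump'' out of $I_j$ would contradict the good property). A secondary obstacle is ensuring that the crosscap case produces a genuinely twisted nested crosses transaction; this requires that the cutting along $Q_{l'}$ be chosen carefully so that the resulting mesh inherits the orthogonality to $\zC$ and that Lemma~\ref{l:strips} applies with the intended parameters.
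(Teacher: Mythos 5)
Your skeleton (build a mesh from $\zQ$ and arcs of the nest, apply Lemma~\ref{l:strips}, then examine the strip society of each good interval, getting outcome (iii) if one of them is rural and isolated) is the same as the paper's. The genuine gap is in how you manufacture outcome (ii). You propose to extract, from a cross of the strip society of the interval $I_j$, a single new $\Omega$-path $P_j$ that crosses exactly one original element $Q_j'\in\zQ_j$ and is vertex-disjoint from the rest of $\zQ$, and then to pair $P_j$ with $Q_j'$. This extraction step is not justified and in general fails: a cross $R_1,R_2$ of the strip society lives inside a subgraph containing all of $\zQ_j$, so its paths may pass through many vertices of $\zQ_j$ (ruining the required vertex-disjointness from $\zQ$), and, measured by interleaving of endpoints, each of $R_1,R_2$ may cross several elements of $\zQ_j$ or none at all (for instance when both endpoints of a cross path lie in the segments of $\Omega$ between consecutive endpoints of $\zQ_j$). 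Worse, in the crosscap case your pattern is backwards: a twisted $k$-nested crosses transaction requires the designated pair $(P_i,Q_i)$ \emph{not} to cross while all other pairs cross, whereas your $P_j$ is built precisely to cross $Q_j'$; the observation that a cross of the strip society corresponds, in $(G,\Omega)$, to two $X^1$--$X^2$ paths that do \emph{not} cross (because $\Omega_1$ reverses one side) is exactly what the paper exploits and what your sketch glosses over. The paper sidesteps all of this by never pairing with an original element of $\zQ$: from each non-rural $(H_i',\Omega_i')$ it takes a cross, makes it coterminal with the radial truncation of $\zQ$ by the minimal-unhooked-interior argument (as in Lemma~\ref{lem:crookedrooted}, using Lemma~\ref{lem:crookedmcdonalds}), and then extends \emph{both} cross paths through two nest cycles and two spare buffer elements of $\zQ_i$ into two disjoint $X^1$--$X^2$ paths $R_1',R_2'$; since these pairs live in pairwise disjoint subgraphs $H_i''$, their union over the $k$ intervals is automatically a $k$-nested crosses (planar case) or twisted $k$-nested crosses (crosscap case) transaction with the required coterminality.

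Two secondary problems: your final appeal to Lemma~\ref{lem:transrotation2} to force coterminality with $\zQ$ up to level $C_{\alpha+8}$ is a misapplication, since that lemma only rotates a transaction that is \emph{already} coterminal with a radial linkage up to level $C_1$ onto different prescribed endpoints of that linkage; the coterminality in outcome (ii) has to come from the unhooking argument applied to the cross inside each strip society, not from a rotation at the end. And your treatment of the requirement that $Z$ avoid the outer graph of $C_{2\alpha+20p(p-1)+4}$ ("discard or reroute inward") is not an argument; the paper gets this by choosing $Z$ inclusion-minimal and showing (Claim~\ref{cl:Zdisjoint}) that any offending $M$-path meeting a vertex of $Z$ in the outer region can be rerouted around it through an untouched nest cycle, so a minimal $Z$ already lies inside that cycle. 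These last two points are fixable with the paper's machinery, but the extraction-and-pairing step for outcome (ii) is a real gap in the proposed argument.
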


\begin{proof}
Assume the claim is false.  We begin by labeling the elements of the various linkages in the statement in order to define an appropriate mesh to work with.  Note, we will have to work with the inner and outer graph defined by the nest $\zC$ which were defined in Section 2.4.

Let $X^1 \subseteq X_1$ and $X^2\subseteq X_2$ be the two disjoint minimal segments of $\Omega$ such  that every path
in $\zQ$ has one endpoint in $X^1$ and the other point in $X^2$.
Let $\zQ=\{Q_1,Q_2,\ldots,Q_{l'}\}$, where the paths are numbered in such a way that
their endpoints appear in $X_1$ (and therefore also in $X_2$) in order.  Let $\zP$ be the radial linkage truncation of $\zQ$ and let $\zP_i$ be the subset of $\zP$ defined by elements with an endpoint in $X^i$ for $i = 1,2$.

For $\alpha + 2 \le i \le \alpha + 10p(p-1) +2$, let $R_i$ be the minimal subpath of $C_i$ with one endpoint in $Q_1$, the other endpoint in $Q_{l'}$, and disjoint from $\zP_2$.  For $\alpha + 10p(p-1) + 3 \le i \le \alpha + 20p(p-1)+3$, let $R_i$ be the minimal subpath of $C_i$ with one endpoint in $Q_1$, the other endpoint in $Q_{l'}$, and disjoint from $\zP_1$.  Let $\zR$ be the union of $R_i$ for $\alpha + 2 \le i \le \alpha + 20p(p-1) + 3$.
Let $M$ be the graph
obtained from $\zQ\cup \zR$ by repeatedly deleting vertices of degree at most one.
Then we can regard $M$ as a $20p(p-1) + 2\times l'$-mesh, where each  horizontal path is a subpath
of a member of $\zC$ and each vertical path is a subpath of a distinct member of $\zQ$.
Note that by construction, each horizontal path of $M$ is a subpath of a distinct cycle of $\zC$.  Thus, a model of $K_p$ which grasps $M$ will also grasp $\zQ \cup \zC$.

Let $l_1:=l+40p(p-1)+2$. Then $l'\ge 49152p^{24}(15p^{2} +l_1)+kl_1$.
By Lemma~\ref{l:strips} applied to the integers $k,l_1,p$, the graph $G$ and mesh $M$,
either the graph $G$  has a $K_p$ minor grasped by $M$, in which case $(i)$ holds,
or there exist sets $Z \subseteq V(G)$ with $|Z|\le\alpha$
and  $I\subseteq\{1,2,\ldots,l'\}$
such that the conclusions of Lemma~\ref{l:strips} hold with $l$ replaced by $l_1$.  Assume that we pick $Z$ minimal by inclusion so that the outcomes of Lemma \ref{l:strips} hold.

\begin{claim}\label{cl:Zdisjoint}
The set $Z$ is disjoint from the outer graph of $C_{2\alpha+20p(p-1) + 4}$.
\end{claim}
\begin{cproof}
By construction, the subgraph $M$ is disjoint from $C_i$ for $i \ge \alpha + 20p(p-1) + 4$.  Assume, to reach a contradiction, that $Z$ contains a vertex $z$ contained in the outergraph of $C_{2\alpha + 20p(p-1) + 4}$.  Given the bound on $|Z|$, there exists an index $i$, $\alpha + 20p(p-1) + 4 \le i \le 2\alpha + 20p(p-1) + 3$ such that $C_i$ is disjoint from $Z$.

By the minimality of $Z$, there exists an $M$-path, call it $P$, with violates the conditions in the third outcome of Lemma \ref{l:strips} such that $P$ intersects $Z$ only in the vertex $z$.  As $P$ has both endpoints contained in the inner graph of $C_{\alpha + 20p(p-1) + 4}$, the cylindrical rendition of $\rho$ ensures that each component of $P-z$ intersects every cycle $C_j$ for $\alpha + 20p(p-1) + 4 \le j \le 2\alpha + 20p(p-1) + 3$.  Thus, $(P-z) \cup C_i$ contains an $M$-path with the same endpoints as $P$ which is disjoint from $Z$, a contradiction, proving the claim.
\end{cproof}

Note that specifically, $Z$ is disjoint from $V(\Omega)$.

Let $I_1',I_2',\ldots,I_k'$ be the intervals of length $l_1$ with union $I$ given by the application of Lemma \ref{l:strips}.
For $i\in\{1,2,\ldots,k\}$
let $I_i$ be obtained from $I_i'$ by deleting the first and last $10p(p-1)$ elements.
Thus $|I_i|=l+20p(p-1)+2$.

For $1 \le i \le k$, let $\zQ_i$ be the set $\{Q_j: j \in I_i\}$.  For $1 \le i \le k$, and $1 \le j \le 2$, let $X^j_i$ be the minimum subset of $X^j$ forming a segment of $\Omega$ containing an endpoint of every element of $\zQ_i$.  Let $(H_i, \Omega_i)$ be the $\zQ_i$-strip society of $(G - Z, \Omega )$.  Note that both $\zQ_i$ and $X^j_i$ are disjoint from $Z$.

For $1 \le i \le k$, we define societies $(H_i', \Omega_i')$ which are closely related to the strip society $(H_i, \Omega_i)$.  Fix $i$, $1 \le i \le k$.  Let $J$ be the graph consisting of the union of  $V(\Omega)$ treated as isolated vertices and the $\bigcup_{Q\in \zQ} Q - Z$.  For every $J$-bridge $B$ in $G-Z$, let $B'$ be the subgraph obtained by deleting all attachments of $B$ not in $V(\zQ_i) \cup X^1_i \cup X^2_i$.  Let $c$ be the smallest value in $I_i$.  Let $H_i'$ be the union of $J[V(\zQ_i) \cup X^1_i \cup X^2_i]$ and $B'$ for every $J$-bridge $B$ in $G-Z$ with at least one attachment in $(V(\zQ_i) \cup X^1_i \cup X^2_i) \setminus V(Q_c \cup Q_{c+l+20p(p-1) + 1})$ (that is, an attachment not in the first or last element of $\zQ_i$.)
Let $\Omega_i' = \Omega_i$.  Note that $H_i'$ is a subgraph of $H_i$ the $\zQ_i$-strip society, but it will typically be a proper subgraph as multiple $J$-bridges may merge together when considered as bridges for the subgraph of $J$ formed by $\zQ_i$ and $V(\Omega)$.

\begin{claim}\label{cl:stripdisjoint}
The subgraphs $H_i'$ are pairwise vertex disjoint.
\end{claim}

\begin{cproof}
Assume the claim is false and let $i, i'$ be indices such that $H_i'$ intersects $H_{i'}'$.  Thus, there exists a path $R$ in $G-Z$ with one endpoint in $X^1_i \cup X^2_i \cup V(\zQ_i)$, the other endpoint in $X^1_{i'} \cup X^2_{i'} \cup V(\zQ_{i'})$ and no internal vertex in $V(\Omega) \cup V(\zQ)$.

Fix $j$ to be an index such that $C_j$ is disjoint from $Z$.  Let $J$ be the subgraph formed by the union of $C_j$ and the inner graph of $C_j$.  We fix the path $R$ to minimize the number of edges not contained in $E(J)$.  Let $\bar{R} = R \cap J$.

Consider a maximal subpath $T$ of $R$ with all internal vertices contained in $V(R) \setminus V(\bar{R})$.  There are two possible cases given the cylindrical rendition and the fact that $R$ is internally disjoint from $\zQ$: either $T$ has one endpoint in $\{x_i, x_{i'}\}$ and one endpoint in $C_j$ or, alternatively, $T$ has both endpoints in a component of $C_j - V(\zQ)$.  By replacing any such subpath $T$ with an appropriately chosen subpath of $C_j$, it follows that there exists a path $R'$ contained in $C_j \cup \bar{R}$ with endpoints $x_i'$ and $x_{i'}'$ such that
\begin{itemize}
\item $R'$ is internally disjoint from $\zQ$ and,
\item $x_i'$ is contained in $V(\zQ_i)$ and $x_{i'}'$ is contained in $V(\zQ_{i'}')$.
\end{itemize}
Note that if $x_i$ is in an element of $\zQ$, we can choose $x_i'$ to be in the same element of $\zQ$ as $x_i$.  If $x_i \in V(\Omega)$, we can choose $x_i'$ to be in either of the two elements of $\zQ$ closest to $x_i$ on $\Omega$.

We conclude, by the choice of $R$, that $R$ is a subgraph of $J$.  Thus, both endpoints $x_i'$ and $x_{i'}'$ are contained in $V(\zQ) \cap V(J)$ and thus both contained in $M$.  The path $R$ therefore violates the third condition of Lemma \ref{l:strips}, a contradiction and completing the proof of the claim.
\end{cproof}

\begin{claim}
There exists an index $i$, $1 \le i \le k$, such that $(H_i', \Omega_i')$ is rural.
\end{claim}

\begin{cproof}
Assume not and let $i$ be a positive integer, $i \le k$.  By Theorem \ref{thm:crossreduct} there exists a cross in $(H_i', \Omega_i')$.  We will define the graph $H_i''$ which contains $H_i'$ as a subgraph and using a cross in $(H_i', \Omega_i')$, we show that there exist disjoint paths $R_1'$, $R_2'$ in $H_i''$ which are coterminal with $\zP$ up to level $\alpha + 8$ such that: (1) each path $R_1'$ and $R_2'$ has one endpoint in $X^1$ and one endpoint in $X^2$ and (2)  $R_1', R_2'$ form a cross in the case $\zQ$ is planar, and $R_1', R_2'$ do not cross in the case $\zQ$ is a crosscap transaction.  Moreover, the subgraphs $H_i''$ will be pairwise disjoint for distinct indices $i$.  This suffices to prove the claim, as the union of $R_1', R_2'$ over all $1 \le i \le k$ forms a $k$-nested crosses or twisted $k$-nested crosses transaction.

Fix $i$, $1 \le i \le k$, and let $a$ be the minimal value such that there are at least five distinct indices $j$, $1 \le j \le a$ such that $C_j$ is disjoint from $Z$. First, we show that there is a cross in $(H_i', \Omega_i')$ which is coterminal with $\zP$ up to level $a+1$.  The argument here closely follows the proof of Lemma \ref{lem:crookedrooted}.

Let $R_1, R_2$ form a cross in $(H_i', \Omega_i')$.  Fix $j \in \{1, 2\}$.  We claim that the path $R_j$ contains an edge of $E(\sigma(c_0))$.  In the case when $R_1$ and $R_2$ form a cross in $(G, \Omega)$, this follows from Lemma \ref{l:crookinrend}.  If $R_1, R_2$ is not a cross in $(G, \Omega)$, it must be the case that $\zQ$ is a crosscap transaction and that each of $R_1$ and $R_2$ has one endpoint in $X^1_i$ and the other endpoint in $X^2_i$.  However, every path in $G$ from $X^1_i$ to $X^2_i$ which is disjoint from $E(\sigma(c_0))$ must intersect an element of $\zQ \setminus \zQ_i'$.  Given the definition of $H_i'$, we conclude that each of the paths $R_1$ and $R_2$ must contain an edge of $\sigma(c_0)$.

Define $L$ to be the subgraph of $H_i'$ given by $\left(\bigcup_1^{a} C_i \cup \bigcup_{P \in \zP} P\right) \cap H_i'$. For $i = 1, 2$, let $R_i^*$ be the $L$-unhooked interior of $R_i$.  Note that by the previous paragraph, the $L$-unhooked interior of each $R_i$ is nonempty.  We pick $R_1, R_2$ to be a cross which minimizes $| E(R_1^*) \cup E(R_2^*)|$.

If $R_j^*$ is disjoint from $C_{a+1}$ for $j = 1, 2$, then $R_1$, $R_2$ are coterminal with $\zP$ up to level $a+1$ by the construction of $L$.  Assume, instead, that $R_j^*$ intersects $C_{a+1}$ for some $j$.  It follows that $R^*_j$ intersects $C_l$ for all $l \le a$ as $R_j^*$ contains an edge of $\sigma(c_0)$.  Thus, by the same augmenting path argument used in Lemma \ref{lem:crookedrooted}, we may assume that there exists a path $R$ contained in $L$ with one endpoint in $X^1_i \cup X^2_i$ and the other point which is an internal vertex of either $R_1^*$ or $R_2^*$.  By Lemma \ref{lem:crookedmcdonalds}, there exists a cross with strictly fewer edges in the $L$-unhooked interiors, contrary to our choice of $R_1$ and $R_2$.  We conclude that $R_1$ and $R_2$ are coterminal with $\zP$ up to level $a+1$.

Let $N_i$ be the subgraph formed by the union over all $j$, $a+1\le j \le \alpha+7$ of the union of all subpaths of $C_j$ with endpoints in $V(\zQ_i)$ and no internal vertex in $V(\zQ)$.  Let $H_i'' = H_i' \cup N_i \cup \bigcup_{Q \in \zQ_i} Q$.  Note that $H_i''$ and $H_{i'}''$ are disjoint for distinct indices $i, i'$.  There exist two cycles $C_j$, $C_{j'}$ which are disjoint from $Z$ with $a+1 \le j < j' \le \alpha+7$ and fix $Q, Q'$ in $\zQ_i \setminus \zQ_i'$.  Given that $R_1$, $R_2$ is coterminal with $\zP$ up to level $a+1$ in $H_i'$, the desired paths $R_1', R_2'$ exist in $C_j \cup C_{j'} \cup Q \cup Q' \cup R_1 \cup R_2$, completing the proof.
\end{cproof}

We fix, for the remainder of the proof, an index $i$ such that $(H_i', \Omega_i')$ is rural, and fix a vortex-free rendition $\rho'$ of $(H_i', \Omega_i')$ in the disk $\Delta$.  Let $I^*$ be the interval obtained from $I_i$ by deleting the first and last $10p(p-1) + 1$ elements.  Note that $|I^*| = l$.  Let the first element in $I^*$ be $\beta + 1$.  Let $\zQ^*$ be the set $\{Q_j: j \in I^*\}$, and $X^{1*}$ and $X^{2*}$ be the minimal by containment subsets of $X^1_i$ and $X^2_i$, respectively, containing an endpoint of each element of $\zQ^*$ and forming sections in $(G-Z, \Omega)$.  Let $T_1$ and $T_l$ be the track of $Q_{\beta + 1}$ and $Q_{\beta +l}$ in the rendition $\rho'$.

Of the three connected components of $\Delta \setminus ( T_1 \cup T_l)$, let $\Delta^*$ be the (unique) component whose boundary contains $T_1$ and $T_l$.  Let $H^* = \bigcup_{c \in C(\rho'): c \subseteq \Delta^*} \sigma(c) \cup Q_{\beta + 1} \cup Q_{\beta + l}$.  Let $\Omega^*$ be the cyclically ordered set with $V(\Omega^*) = X^{1*} \cup X^{2*}$ obtained by restricting $\Omega_i'$ to $V(\Omega^*)$.

The rendition $\rho'$ restricted to the disk $\Delta^*$ can be extended to a vortex-free rendition of $(G^*, \Omega^*)$ by mapping the vertices of $Q_{\beta+1}$ and $Q_{\beta+l}$ to the boundary, and thus the following claim immediately follows.

\begin{claim}
$(H^*, \Omega^*)$ is rural.
\end{claim}

We now see that no edge to $V(H^*)$ avoids the paths $Q_{\beta+1}$ and $Q_{\beta+l}$.  Recall that the definition of the subgraph $J$ of $G-Z$ is the subgraph consisting of the union of  $V(\Omega)$ treated as isolated vertices and the $\bigcup_{Q\in \zQ} Q - Z$.

\begin{claim}\label{cl:nojumpedge}
There does not exist an edge $xy$ of $G-Z$ with $x$ in $V(H^*) \setminus (V(Q_{\beta+1}) \cup V(Q_{l}))$ and $y$ in $V(G-Z) \setminus V(H^*)$.
\end{claim}

\begin{cproof}
Assume that such an edge $xy$ exists.  Given the rendition $\rho'$, it follows that the edge $xy \notin E(H_i')$, and thus the vertex $y$ is the attachment of a $J$-bridge $B$ in $G-Z$ such that $y \in (V(\Omega) \setminus V(H_i')) \cup \bigcup_{j \notin I_i} V(Q_j)$ and $B$ has an attachment $x'$ in $V(\Omega^*) \cup V(\zQ^*)$.  Fix a path $P$ in $B$ from $x'$ to $y$ which is internally disjoint from $V(\Omega) \cup V(\zQ)$.  While $P$ may not be an $M$-path (for example, if $x'$ or $y$ is a vertex of $V(\Omega)$), the existence of $P$ implies that $G-Z$ contains an $M$-path $P'$ with one endpoint in the vicinity of an element of $\zQ^*$ and one endpoint in the vicinity of a path of $\zQ \setminus \zQ_i$.  Given that $I^*$ was obtained from $I_i$ by deleting the first and last $10p(p-1) + 1$ elements, the $M$-path violates the third point of Lemma \ref{l:strips}, a contradiction.
\end{cproof}

We now give the final claim in the proof.

\begin{claim}
The $\zQ^*$-strip society of $(G-Z, \Omega)$ is a subgraph of $H^*$.
\end{claim}

\begin{cproof}
Let $J^*$ be the subgraph of $G-Z$ formed by the union of the subgraph formed by $V(\Omega)$ treated as isolated vertices along with the subgraph $\bigcup_{Q \in \zQ^*} Q$.  Note $J^*$ is a subgraph of $J$, and so every $J$-bridge is a subgraph of some $J^*$-bridge.

Consider a $J^*$-bridge $B$ with at least one attachment in $(V(\Omega^*) \cup V(\zQ^*)) \setminus V(Q_{\beta+1} \cup Q_{\beta + l})$.  Assume that $B$ contains a vertex of $V(J) \setminus V(J^*)$.  Then $B$ contains a path from a vertex of $(V(\Omega^*) \cup V(\zQ^*)) \setminus V(Q_{\beta+1} \cup Q_{\beta + l})$ to a vertex of $V(J) \setminus V(J^*)$ which is disjoint from $V(Q_{\beta + 1} \cup Q_{\beta+l})$.  Such a path must contain an edge with one end in $V(H^*) \setminus V(Q_{\beta + 1} \cup Q_{\beta+l})$ and one end in $V(G-Z) \setminus V(H^*)$, contradicting Claim \ref{cl:nojumpedge}.

We conclude that the bridge $B$ is a bridge of $J$ in addition to being a bridge of $J^*$.  The claim now follows from the definition of the strip society and the definition of $H'_i$.
\end{cproof}

The theorem now follows as the vortex-free rendition of $(H^*, \Omega^*)$ restricted to the $\zQ^*$-strip society shows that the strip society is rural, and Claim \ref{cl:nojumpedge} shows that the $\zQ^*$-strip society is isolated.

\end{proof}

\begin{theorem}
\label{thm:planarstripjump}
Let $l,k \ge 1$ and $p\ge2$ be integers, let  $\alpha:=12288(p(p-1))^{12}$
and
$$l':= 24576p^{24}(49152p^{24}+k)l.$$
Let $(G, \Omega)$ be a society and $\rho = (\Gamma, \zD, c_0)$ a cylindrical rendition of $(G, \Omega)$.
Let $\zC = (C_1, \dots, C_{s})$ be a nest of cardinality $s \ge 2 \alpha + 20p(p-1) + 4$ in $\rho$ and
 $\zQ$  an unexposed monotone transaction in $(G, \Omega)$ of order $l'$
orthogonal to $\zC$.  Let $X^1, X^2$ be disjoint segments of $\Omega$ such that $\zQ$ is a transaction from $X^1$ to $X^2$.
Then one of the following holds.
\begin{itemize}
\item[(i)] $G$ contains a $K_p$ minor grasped by $\zQ \cup \zC$;
\item[(ii)]
there exists a transaction $\zQ'$ from $X^1$ to $X^2$ which is coterminal with $\zQ$ up to level $C_{\alpha+8}$ such that $\zQ'$ is either a $k$-nested crosses transaction or a $k$-twisted nested crosses transaction;
\item[(iii)]
there exist a set $Z \subseteq V(G)$ with $|Z| \le \alpha$ and a subset $\zQ'$ of $\zQ$ of size $l$ such that the $\zQ'$-strip society $(G', \Omega')$ is rural and isolated and $Z$ is disjoint from both $\zQ'$ and the outer graph of $C_{2\alpha + 20p(p-1) + 4}$.  Moreover, for all $z \in Z$ there exists an $\Omega$-path $P$ in $G - V(G')$ linking the two segments of $\Omega - V(\Omega')$ such that $P$ intersects $Z$ exactly in the vertex $z$.
\end{itemize}
\end{theorem}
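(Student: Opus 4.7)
The plan is to apply Theorem~\ref{thm:RTplanarstrip} with an inflated inner parameter and then iteratively prune the resulting apex set to enforce the new ``Moreover'' clause. Set $L:=24576p^{24}l-55p^{2}$; then $(49152p^{24}+k)(55p^{2}+L)=l'$, so Theorem~\ref{thm:RTplanarstrip} may be applied to $(G,\Omega)$, $\rho$, $\zC$, and $\zQ$ with parameters $k$ and $L$ in place of $l$. If outcomes (i) or (ii) of Theorem~\ref{thm:RTplanarstrip} hold, they immediately yield outcomes (i) or (ii) of the present theorem. Otherwise, outcome (iii) supplies a set $Z_{0}\subseteq V(G)$ with $|Z_{0}|\le\alpha$ and a subset $\zQ'_{0}\subseteq\zQ$ with $|\zQ'_{0}|\ge L$ such that the $\zQ'_{0}$-strip $(G'_{0},\Omega'_{0})$ in $(G-Z_{0},\Omega-Z_{0})$ is rural and isolated, and $Z_{0}$ is disjoint from both $\zQ'_{0}$ and the outer graph of $C_{2\alpha+20p(p-1)+4}$.

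I would then refine the pair $(Z,\zQ')$, initialized at $(Z_{0},\zQ'_{0})$, by a cleanup loop. Let $G'$ denote the current strip and let $Y^{1},Y^{2}$ be the two segments of $V(\Omega)\setminus V(\Omega')$. While some $z\in Z$ violates the Moreover condition, form the auxiliary graph $H$ from $G-V(G')-(Z\setminus\{z\})$ by contracting $Y^{1}$ and $Y^{2}$ to single vertices $y^{1},y^{2}$. The failure of the condition is equivalent to the statement that no $y^{1}$-$y^{2}$ path in $H$ passes through $z$, so by a standard Menger argument $H$ admits a separator of order at most one cutting $z$ from one of $y^{1},y^{2}$. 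Uncontracting produces a separation $(A,B)$ of $G-V(G')-(Z\setminus\{z\})$ with $Y^{1}\subseteq A$, $Y^{2}\subseteq B$, $|A\cap B|\le 1$, and, without loss of generality, $z\in A\setminus B$.

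Combining this one-vertex external separation with the rural rendition of $G'$ and the monotonicity of $\zQ$, I would select a contiguous sub-linkage $\zQ''\subseteq\zQ'$ obtained by deleting at most $(L-l)/\alpha$ paths from the $Y^{1}$-side of $\zQ'$, so that the associated strip $G''\subseteq G'$ has the property that every edge of $G$ incident with $z$ whose other endpoint lies in $V(G'')$ actually lands on a boundary path of $G''$. Then the $\zQ''$-strip in $(G-(Z\setminus\{z\}),\Omega-(Z\setminus\{z\}))$ is rural by Lemma~\ref{lem:inhereitedrurality} and isolated by construction, so we update $(Z,\zQ')$ to $(Z\setminus\{z\},\zQ'')$. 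The Moreover condition for any surviving $z'\in Z\setminus\{z\}$ is preserved because any connecting $\Omega$-path in $G-V(G')$ also lies in the larger $G-V(G'')$. The loop therefore terminates after at most $|Z_{0}|\le\alpha$ iterations, and the final $|\zQ'|\ge L-\alpha\cdot(L-l)/\alpha=l$, delivering outcome (iii).

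The hard part will be justifying the shrinking step: one must show that the one-vertex external separation $(A,B)$ in $G-V(G')$ forces the neighbors of $z$ in the interior of $G'$ to cluster in a short contiguous range of $\zQ'$-indices, so that only $O(l/\alpha)$ paths need be removed. This requires a careful analysis of the interplay between the disk rendition of $G'$, the orthogonality of $\zQ$ to $\zC$, and the position inside the rendition disk of the separator vertex of $(A,B)$: any neighbor of $z$ lying outside a bounded annular region near the $Y^{1}$-boundary would, combined with the rural structure of $G'$, produce a $y^{1}$-$y^{2}$ path in $H$ through $z$, contradicting the failure of the Moreover condition for $z$.
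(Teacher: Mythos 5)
Your opening move—applying Theorem~\ref{thm:RTplanarstrip} with an inflated inner parameter so that outcomes (i) and (ii) carry over—is sound and matches the paper's first step, but the cleanup loop, which is where the real content of the theorem lies, does not work. The failure of the ``Moreover'' condition for a vertex $z$ is a statement purely about $G-V(G')$: it says that $z$ lies on no $Y^1$--$Y^2$ path outside the strip avoiding $Z\setminus\{z\}$. It carries no information about where the neighbours of $z$ \emph{inside} the strip are, so the claimed clustering of those neighbours ``in a short contiguous range of $\zQ'$-indices'' near the $Y^1$-boundary does not follow; your closing heuristic is circular, since a neighbour of $z$ in the interior of $G'$ contributes nothing to a $y^1$--$y^2$ path in $H=G-V(G')$. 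Concretely, let $z\in Z_0$ be adjacent to an internal vertex of every path of $\zQ'_0$, to one vertex of $Y^1$, and to nothing else: the Moreover condition fails for $z$, yet once you stop deleting $z$ the bridge through $z$ attaches to the interiors of far-apart paths of \emph{any} sub-linkage you keep, so no trimming of $(L-l)/\alpha$ boundary paths can restore isolation, and the new strip is not even rural (an $\Omega_1$-path through $z$ from $a_i$ to $b_j$ with $|i-j|$ large crosses every intermediate path). There is also a secondary error in the Menger step: a one-vertex cut separating $z$ from $\{y^1,y^2\}$ does not uncontract to a separation $(A,B)$ with $Y^1\subseteq A$, $Y^2\subseteq B$ and $|A\cap B|\le 1$, because $Y^1$ and $Y^2$ may be joined by many paths missing $Z$ altogether.

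The underlying strategic problem is that you try to \emph{shrink} $Z$ when a witnessing path is missing, but un-deleting an apex vertex is in general incompatible with keeping the strip rural and isolated. The theorem instead allows $Z$ to stay put, provided each of its members is witnessed, and the paper achieves this by choosing \emph{which} strip to output: it applies Theorem~\ref{thm:RTplanarstrip} with $l$ replaced by $(2\alpha+1)l$, splits the resulting strip into $2\alpha+1$ pairwise disjoint sub-strips of size $l$ using Lemma~\ref{lem:inhereitedrurality}, records for each $z\in Z$ the smallest and largest indices $i_z,j_z$ of sub-strips containing a neighbour of $z$, and picks by pigeonhole an index $a$ avoiding all of these. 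Every $z$ with a neighbour in the $a$-th strip then has neighbours in strips on both sides of it, which immediately supply the required $\Omega$-path in $G-V(G'_a)$ through $z$, and the remaining vertices of $Z$ can simply be discarded. If you want to salvage your outline, you would need to replace the pruning loop with some such selection argument; the one-vertex separation you extract cannot do the job.
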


\begin{proof}
Let $l,k,p$ be as stated in the theorem, and let $l_2:=(2\alpha + 1) l$.
By Theorem~\ref{thm:RTplanarstrip} applied with $l$ replaced by $l_2$
we may assume that there exist a set $Z \subseteq V(G)$ with $|Z| \le \alpha$, and a subset $\zQ' \subseteq \zQ$ of order $l_2$ such that $\zQ'$ is disjoint from $Z$ and the $\zQ'$-strip society of $(G-Z, \Omega)$ with respect to $(X^1, X^2)$ is rural and isolated.  Moreover, $Z$ is disjoint from the outer graph of $C_{2\alpha + 20p(p-1) + 4}$.  The remainder of the proof will focus on ensuring that we can pick a rural and isolated planar strip which satisfies the property that the desired $V(\Omega)$ path exists for every vertex in $Z$.

Fix pairwise disjoint subsets $\zQ'_1, \dots, \zQ'_{2\alpha+1}$ of $\zQ'$ each of order $l$.  We may choose the subsets such that if we let $X^1_i$ and $X^2_i$ be the minimal subset of $X^1$ and $X^2$, respectively, such that $\zQ'_i$ is a linkage from $X^1_i$ to $X^2_i$, we have that $X^1_i \cap X^1_j = \emptyset$ for $1 \le i < j \le 2\alpha+1$.  Assume that $\zQ_1', \dots, \zQ_{2\alpha + 1}'$ are labeled such that $X_1^1, \dots, X_{2\alpha+1}^1$ occur on $X^1$ in that order.  Let $(G'_i, \Omega'_i)$ be the $\zQ_i'$-strip society in $(G - Z, \Omega - Z)$ with respect to $(X^1_i, X^2_i)$.  By Lemma \ref{lem:inhereitedrurality}, $(G'_i, \Omega'_i)$ is rural and isolated.

For all $z \in Z$, let $i_z$ be the minimum index $i$ in $[2\alpha + 1]$ such that $G_i'$ contains a neighbor of $z$.  Similarly, let $j_z$ be the maximum index $j$ in $[2\alpha + 1]$ such that $G_j'$ contains a neighbor of $z$.
Fix an index $a \in [2\alpha+1] \setminus \left (\bigcup_{z \in Z} \{i_z, i_z'\}\right)$.  Given the size of $Z$, it follows that such an index $a$ exists and $1 < a < 2\alpha + 1$.

Let $Z' = \{z \in Z: \text{ $V(G_{a}')$ contains a neighbor of $z$}\}$.  The strip society $(G_a', \Omega_a')$ in $(G-Z, \Omega)$ is therefore also the strip society of $\zQ_a'$ in $(G-Z', \Omega)$.  Let $Y_1, Y_2$ be the two segments of $\Omega -  (X^1_a \cup X^2_a)$.  Let $z \in Z'$.  Given that $z \in Z'$, the two indices $i_z$ and $j_z$ are defined and $i_z < a < j_z$.  As every component of $G_{i_z}'$ and $G_{j_z}'$ contains a vertex of either $Y_1$ or $Y_2$, we have the existence of the desired path in $G - V(G_a')$ from $Y_1$ to $Y_2$ intersecting $Z$ in exactly the vertex $z$, completing the proof of the theorem.
\end{proof}

We conclude the section with a result which shows that an unexposed monotone transaction whose strip society is rural and isolated induces a monotone transaction with a rural and isolated strip society in the natural society defined on the inner graph of a cycle in a given nest in a cylindrical rendition.

\begin{DEF}
\label{def:restriction-rendition}
Let $\rho=(\Gamma,\zD)$ be a  rendition of a society $(G,\Omega)$ in a surface $\Sigma$ and let $C$ be a cycle in $G$
that is grounded in $\rho$. Assume that the track of $C$ either bounds a closed disk  $\Delta$ or forms
one boundary component of a cylinder $\Delta $ whose other boundary component is a boundary of $\Sigma$.
Let $\Gamma=(U,V,E)$, let $U'=U\cap\Delta$, $V'=V\cap\Delta$ and $E'=\{e\in E: e\subseteq \Delta\}$.
Then $\Gamma'=(U',V',E')$ is a drawing in $\Delta$.
Let $\zD'=\{D\in\zD:D\subseteq \Delta \}$.
Then  $(\Gamma',\zD')$ is a $\Delta $-decomposition. We will call it the {\em restriction of $\rho$ to $\Delta$.}
\end{DEF}

\begin{DEF}
\label{def:restriction-transaction}
Let $(G, \Omega)$ be a society and $\rho$ a cylindrical rendition. $\zC = (C_1, \dots, C_{s})$ be a nest in $\rho$ of order $s \ge 2$ and let $\zQ$ be an unexposed monotone transaction in $(G, \Omega)$ which is orthogonal to $\zC$.

Fix an index $i$, $2 \le i \le s$, and let $T$ be the track of $C_i$ in $\rho$.  Let $(G', \Omega')$ be the inner \cc{society} of $C_i$, defined in
Definition~\ref{def:innerouter}.
Let $\Delta'$ be the closed subdisk whose boundary is $T$.  Note that the restriction $\rho'$ of $\rho$ to $\Delta'$ gives a cylindrical rendition of $(G', \Omega')$.
Moreover, $(C_1, \dots, C_{i-1})$ is a nest in $\rho'$.

Let $\zQ$ be an unexposed monotone transaction in $(G, \Omega)$ which is orthogonal to $\zC$.  For every $Q \in \zQ$, there exists a unique subpath $Q'$ of $Q$ which is an $\Omega'$-path containing an edge of $\sigma(c_0)$.  Note that uniqueness here requires that $i \ge 2$ and that $\zQ$ be orthogonal to $\zC$.  We define $\zQ' = \{Q' : Q \in \zQ\}$ to be the \emph{restriction of $\zQ$ in the inner society of $C_i$}.
\end{DEF}

\begin{LE}\label{lem:planarstriprestriction}
Let $(G, \Omega)$ be a society and $\rho = (\Gamma, \zD, c_0)$ a cylindrical rendition.  Let $\zC = (C_1, \dots, C_{s})$ be a nest in $\rho$ and let $\zQ$ be an unexposed monotone transaction in $(G, \Omega)$ which is orthogonal to $\zC$ of order at least three.  Let $(H, \Omega_H)$ be the $\zQ$-strip society in $(G, \Omega)$ and let $Y_1$ and $Y_2$ be the two segments of $\Omega \setminus V(H)$.  Assume that there exists a linkage $\zP = \{P_1, P_2\}$ such that $P_i$ links $Y_i$ and $V(\sigma(c_0))$ for $i = 1, 2$, $\zP$ is disjoint from $\zQ$, and $\zP$ is orthogonal to $\zC$.

Let $i \ge 7$.  Let $(G', \Omega')$ be the inner society of $C_i$ and let $\zQ'$ be the restriction of $\zQ$ to $(G', \Omega')$.  Let $\rho' = (\Gamma', \zD', c_0)$ be the restriction of $\rho$ to be a cylindrical rendition of $(G', \Omega')$.  Then $\zQ'$ is unexposed and monotone in $\rho'$; moreover, $\zQ'$ is a crosscap transaction if and only if $\zQ$ is a crosscap transaction.  If the $\zQ$ strip society is rural and isolated in $(G, \Omega)$, then the $\zQ'$ strip society is rural and isolated in $(G', \Omega')$.
\end{LE}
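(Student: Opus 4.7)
The plan is to verify the three assertions in turn, with isolation and rurality of the restricted strip being the main work.

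Unexposedness of $\zQ'$ is immediate from the construction in Definition~\ref{def:restriction-transaction}: every $Q' \in \zQ'$ is chosen to contain an edge of $\sigma(c_0)$.

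For monotonicity and the crosscap/planar correspondence, the key tool is orthogonality of $\zQ$ to $\zC$ together with the cylindrical rendition. Label $\zQ = \{Q_1,\ldots,Q_n\}$ with endpoints $a_j,b_j$ witnessing its monotone structure, and let $a'_j,b'_j$ denote the endpoints of $Q'_j$ on $V(\Omega')$; these are well-defined because $Q_j \cap C_i$ has exactly two components by orthogonality. In the cylindrical rendition $\rho$, crossings of tracks between distinct elements of $\zQ$ can only occur inside the unique vortex cell $c_0$, which lies in $\Delta_i$. Consequently, the tracks of $\zQ$ are pairwise disjoint in the annular region bounded by $\Omega$ and the track of $C_i$, forcing the cyclic order of the points $a'_j,b'_j$ on $\Omega'$ to be of planar type when $\zQ$ is planar and of crosscap type when $\zQ$ is crosscap. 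This simultaneously yields monotonicity of $\zQ'$ and the iff statement.

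For rurality and isolation of $(H^*,\Omega^*)$, the guiding principle is that $H^*$ is essentially the portion of $H$ trapped inside $\Delta_i$, completed by inner arcs of $C_i$ that sit between consecutive intersections of $\zQ$ with $C_i$. I will first establish the inclusion $V(H^*) \cap V(G) \subseteq V(H)$. Vertices of $V(\zQ')$ lie in $V(H)$ trivially, and a bridge $B^*$ of $\zQ' \cup V(\Omega^*)$ in $G'$ with an attachment in the interior of the $\zQ'$-strip must be contained in a bridge of $\zQ \cup V(\Omega)$ in $G$ satisfying the strip attachment condition; otherwise, the isolation of $(H,\Omega_H)$ combined with a suitable rerouting would produce a path in $G$ from the interior of $H$ to $V(G)\setminus V(H)$, a contradiction. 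Given this inclusion, a vortex-free rendition of $(H^*,\Omega^*)$ is obtained by taking a vortex-free rendition $\mu$ of $(H,\Omega_H)$, identifying in $\mu$ the sub-disk cut off by the inner-side arcs of $C_i$ together with the appropriate subpaths of the external paths $Q_1$ and $Q_n$, and restricting $\mu$ to that sub-disk. Isolation of $(H^*,\Omega^*)$ in $(G',\Omega')$ then follows: any edge $xy$ of $G'$ witnessing a violation would, using the containment and the cylindrical structure, lift to an edge witnessing a violation of isolation for $(H,\Omega_H)$.

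The main obstacle will be the bridge-containment claim, because bridges of $\zQ' \cup V(\Omega^*)$ in $G'$ can a priori aggregate pieces of several distinct bridges of $\zQ \cup V(\Omega)$ in $G$ by routing through vertices of $V(C_i)$ that are not vertices of $V(\Omega)$. The hypothesis $i \ge 7$ leaves at least six nest cycles strictly inside $\Delta_i$, supplying enough room for the rerouting arguments, and the linkage $\zP$ guarantees that both sides $Y_1, Y_2$ of $\Omega \setminus V(H)$ are connected to $\sigma(c_0)$ through every nest cycle; this prevents any bridge of $G'$ attached to the interior of the $\zQ'$-strip from secretly connecting through $C_i$ to $V(G)\setminus V(H)$ and thereby escaping the isolation hypothesis of $(H,\Omega_H)$.
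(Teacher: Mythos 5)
There is a genuine gap, and it is exactly the subtlety the paper goes out of its way to handle. Your plan rests on the containment $V(H^*)\cap V(G)\subseteq V(H)$, i.e.\ that the restricted strip society is contained in the original one, so that rurality can be inherited by cutting a sub-disk out of a vortex-free rendition of $(H,\Omega_H)$. This containment is false in general: a $J$-bridge of $G$ (where $J=\zQ\cup V(\Omega)$) that attaches \emph{only} to the external paths $Q_1,Q_n$ of $\zQ$ is excluded from $H$ by Definition~\ref{def:planarstrip}, yet after restriction it may attach to vertices of $(X_1'\cup X_2')\setminus V(\zQ')$ on $C_i$ and hence belongs to the restricted strip $H'$. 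Such bridges do not violate isolation of $(H,\Omega_H)$ — isolation only forbids edges leaving $V(G_1)\setminus V(P_1\cup P_n)$, and these bridges hang off the external paths — so your proposed contradiction ``isolation of $(H,\Omega_H)$ plus the linkage $\zP$ and $i\ge 7$'' cannot eliminate them, and the rendition-restriction argument for rurality does not go through as stated. Your isolation step has a related soft spot: a violating edge in $(G',\Omega')$ starts at a vertex of $\Omega'$ on $C_i$, not at a vertex of $\Omega$, so it does not ``lift'' directly; one must route it back out to $V(\Omega)\setminus(X_1\cup X_2)$ using $C_{i-1}$ and the paths $P_1,P_2$, as the paper does via the two components $L_1,L_2$ of $C_{i-1}-V(\zQ)$ not attached to non-external elements of $\zQ'$.

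The paper's route is different in precisely the place where your plan breaks. It first proves (its Claim 1) that every edge of $E(H')\setminus E(H)$ lies in $E(G')\setminus E(G'')$, i.e.\ strictly between $C_{i-1}$ and $C_i$, because a bridge of the restricted structure reaching $C_{i-1}$ would be forced by the rendition to attach to a non-external element of $\zQ'$ and hence already lie in $H$. It then proves isolation with the $L_1,L_2$/$\zP$ rerouting just described, and proves rurality not by constructing a rendition of $(H',\Omega_H')$ but by contradiction: a hypothetical cross in $(H',\Omega_H')$ is first shown to use edges of $\sigma'(c_0)$ (using Lemma~\ref{lem:crookinrend} in the planar case and $P_1,P_2$ plus isolation in the crosscap case), then rerouted, by minimizing the $L$-unhooked interiors and invoking the augmenting-path argument and Lemma~\ref{lem:crookedmcdonalds} as in Lemma~\ref{lem:crookedrooted}, to be coterminal with $\zQ'$ up to level $6$; at that point all its edges outside $E(G'')$ lie on $\zQ'$, so by Claim 1 the cross is a subgraph of $H$ and extends along $\zQ$ to a cross in $(H,\Omega_H)$, contradicting Theorem~\ref{thm:crossreduct}. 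To repair your proof you would need to either prove an analogue of the paper's Claim 1 and handle the extra bridges in your sub-disk construction, or switch to the cross-rerouting strategy; as written, the key rurality step fails.
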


\begin{proof}
That $\zQ'$ is unexposed follows immediately from the definitions.   The fact that $\zQ'$ is monotone, and moreover, that $\zQ'$ is a crosscap transaction if and only if $\zQ$ is a crosscap transaction follows from the rendition $\rho$ and the fact that $\zQ$ is orthogonal to $\zC$.  Let $(H', \Omega_H')$ be the $\zQ'$-strip society in $(G', \Omega')$.  Assume that $(H, \Omega_H)$ is rural and isolated.  It remains to show that $(H', \Omega_H')$ is rural and isolated.  Let $X_1, X_2$ be segments of $\Omega$ and $X_1', X_2'$ segments of $\Omega'$ and such that while traversing every element $Q\in\zQ$ from one end to the other, we encounter $X_1, X_1', X_2', X_2$ in that order.  Moreover, we assume $X_1, X_2, X_1', X_2'$ are all chosen to be minimal by containment.

The graph $H'$ is not necessarily a subgraph of $H$ - specifically, there could be $\zQ$-bridges in $G$ which only attach to an external element of $\zQ$ which given the rendition induced on $(G', \Omega')$ have attachments in $(X_1' \cup X_2') \setminus V(\zQ')$.  We begin by characterizing edges of $H'$ which are not contained in $H$.  Let $G''$ be the inner graph of $C_{i-1}$ and let $Q_1, Q_2$ be the external paths of $\zQ$ and $Q_1', Q_2'$ the external paths of $\zQ'$.

\begin{claim}
Let $e = uv$ be an edge contained in $E(H')$ but not $E(H)$.    Then $e \in E(G') \setminus E(G'')$.
\end{claim}
\begin{cproof}
Let $J$ be the subgraph $\bigcup_{Q \in \zQ} Q$ along with $V(\Omega)$ treated as isolated vertices.  Similarly, let $J'$ be the subgraph $\bigcup_{Q \in \zQ'} Q$  with $V(\Omega')$ treated as isolated vertices.  The edge $e$ is contained in a $J'$-bridge $B'$ in $G'$ with at least one attachment in $\left ( \bigcup_{Q \in \zQ'} V(Q) \cup X_1' \cup X_2' \right) \setminus (Q_1', Q_2')$.  The subgraph $B'$ is contained in a $J$-bridge $B$ in $G$.

Every path of $\zQ'$ is a subpath of an element of $\zQ$, thus if $B'$ has an attachment in an element of $\zQ' \setminus \{Q_1', Q_2'\}$, then $B$ would also be included in $H$, contrary to our assumptions.  Thus we may assume that $B'$ has no such attachment and instead does have an attachment in $(X_1'\cup X_2') \setminus V(\zQ)$.  Given the rendition $\rho'$ of $(G', \Omega')$ and the fact that $B'$ is connected, it follows $B'$ must be disjoint from $C_{i-1}$, lest $B'$ have an attachment on a non-external element of $\zQ'$.  We conclude that every edge of $B'$ is contained in $E(G') \setminus E(G'')$, as desired.
\end{cproof}

\begin{claim}
$(H', \Omega')$ is isolated.
\end{claim}
\begin{cproof}
Assume the claim is false.  Then there exists a path $P$ with one end in $V(\Omega') \setminus (X_1' \cup X_2')$ and the other end in $(V(\zQ') \cup (X_1' \cup X_2')) \setminus (V(Q_1' \cup Q_2'))$ which is internally disjoint from $V(\Omega') \cup V(\zQ')$.  Given the rendition $\rho'$, we may assume that the path $P$ has an endpoint in $V(\zQ') \setminus (V(Q_1' \cup Q_2'))$ by possibly rerouting $P$ through the cycle $C_{i-1}$.

Consider the components of $C_{i-1} - V(\zQ)= C_{i-1} - V(\zQ')$.  There are exactly two such components which do not have an edge in $C_{i-1}$ to a non-external element of $\zQ'$; denote these components $L_1$ and $L_2$.  The rendition $\rho$ of $(G, \Omega)$ ensures that $P_1$ intersects one of $L_1$ and $L_2$, say $L_1$, and $P_2$ intersects $L_2$.  Similarly, the rendition $\rho'$ ensures that $P$ also intersects one of $L_1$ and $L_2$, say $L_1$.  We conclude that $P_1 \cup P \cup L_1$ contains a path from $V(\Omega) \setminus (X_1 \cup X_2)$ to a vertex of $V(\zQ) \setminus V(Q_1 \cup Q_2)$ which is internally disjoint from $V(\Omega) \cup V(\zQ)$, contrary to the fact that $\zQ$ is isolated.
\end{cproof}

We conclude now by showing that $(H', \Omega_H')$ is rural.  If not, there must exist a cross in $(H', \Omega_H')$.  We show that by carefully picking the cross, we can extend it to a cross in $(H, \Omega_H)$, a contradiction to the rurality of $(H, \Omega_H)$

\begin{claim}
$(H', \Omega_H')$ is rural.
\end{claim}
\begin{cproof}
The proof closely follows the proof of Lemma \ref{lem:crookedrooted}.  Assume that the claim is false.  By Theorem \ref{thm:crossreduct} there exists a cross in $(H', \Omega_H')$.

We may assume that for any choice of cross $R_1$ and $R_2$ in $(H', \Omega_H')$, the path $R_j$ contains an edge of $\sigma'(c_0)$ for $j=1,2$.  To see this, assume there exists a cross $R_1$ and $R_2$ such that $R_1$ is edge disjoint from $E(\sigma'(c_0))$.  It follows from Lemma \ref{lem:crookinrend} that $R_1$ and $R_2$ are not a cross in $(G', \Omega')$.   Thus $\zQ'$ is a crosscap transaction and each of $R_1$ and $R_2$ has one endpoint in $X_1'$ and one end in $X_2'$.  However, every path in $G'$ from $X_1'$ to $X_2'$ which is edge disjoint from $\sigma'(c_0)$ must intersect either $P_1$ or $P_2$.  We conclude that either $P_1$ or $P_2$ is contained in $H'$, contrary to the previous claim that $(H', \Omega_H')$ is isolated.

Define $L$ to be the subgraph of $H'$ given by $\left(\bigcup_1^{5} (C_j \cap H') \cup \bigcup_{Q \in \zQ'} Q\right)$. For $j = 1, 2$, let $R_j^*$ be the $L$-unhooked interior of $R_j$.  Note that by the previous paragraph, the $L$-unhooked interior of each $R_j$ is nonempty.  We pick $R_1, R_2$ to be a cross which minimizes $| E(R_1^*) \cup E(R_2^*)|$.

Observe that if $R_j^*$ is disjoint from $C_{6}$ for $j = 1, 2$, then $R_1$, $R_2$ are coterminal with $\zQ'$ up to level $6$ by the construction of $L$.  Assume, instead, that $R_j^*$ intersects $C_6$ for some $j$.  Then $R_j^*$ intersects each of $C_1\cap L, \dots, C_5\cap L$. As in the proof of Lemma \ref{lem:crookedrooted}, we may assume that there exists a path $R$ contained in $L$ with one end in $X_1' \cup X_2'$ and one end which is an internal vertex of either $R_1^*$ or $R_2^*$.  By Lemma \ref{lem:crookedmcdonalds}, there exists a cross with strictly fewer edges in the $L$-unhooked interiors, contrary to our choice of $R_1$ and $R_2$.  We conclude that $R_1$ and $R_2$ are coterminal with $\zQ'$ up to level $6$.

Observe that every edge of $R_1$ and $R_2$ contained in $E(G') \setminus E(G'')$ is an edge contained in $E(\zQ')$ and so $R_1 \cup R_2$ is a subgraph of $H$ as well as $H'$ by Claim 1.   Thus, we can extend $R_1$ and $R_2$ to a cross in $(H, \Omega_H)$ using subpaths in $\zQ$, contradicting our assumptions.
\end{cproof}
This completes the proof of the lemma.
\end{proof}

\section{Leap patterns}\label{sec:augmenting}

The primary technical step in the proof of Theorem \ref{thm:main} is to extend Lemma \ref{lem:GM9} to quantify the obstructions to when a society has a rendition in the disk of bounded width and depth.  One such an obstruction which we have already seen is a large nested crosses transaction.  Another is what we will call a leap pattern.  Leap patterns will play a similar role to what were referred to as ``long jumps'' in the original graph minor papers.  This section introduces leap patterns and presents several technical lemmas in preparation for the coming sections.

\begin{DEF}
\label{def:rho}
Let $(G, \Omega)$ be a society, let $\zP$ be an $\Omega$-linkage, and let $X$ be the set of all vertices $v\in V(\Omega)$
such that $v$ is not an endpoint of any member of $\zP$.
Let $d_{\zP}(x,x)=0$ for every $x\in X$ and for distinct
$x,y\in X$ let $d_{\zP}(x,y)$ be the number of paths $P$ such that the endpoints of $P$ belong to different maximal segments
of $\Omega-\{x,y\}$.
\end{DEF}

\begin{DEF}
Let $(G, \Omega)$ be a society and let $k, l$ be positive integers.  A \emph{$(k, l)$-leap pattern} is a pair $(\zP, \zQ)$ such that:
\begin{itemize}
\item both $\zP$ and $\zQ$ are linkages of $V(\Omega)$-paths and $V(\zP) \cap V(\zQ)= \emptyset$;
\item $\zP$ is a planar transaction;
\item $|\zQ| = k$ and for every $Q \in \zQ$, the endpoints $x$ and $y$ of $Q$ satisfy $d_{\zP} (x, y) \ge l$.
\end{itemize}
Moreover, if the elements of $\zQ$ are $Q_i$, $1 \le i \le k$, we can label the endpoints of $Q_i$ as $s_i$ and $t_i$ such that
\begin{itemize}
\item for all $i, j$, $1 \le i < j \le k$, it holds that $d_{\zP} (t_i, t_j) \ge l$.
\end{itemize}
\end{DEF}

We first give the following observation on the size of a leap pattern.

\begin{LE}\label{lem:leapsandbounds}
Let $(G, \Omega)$ be a society and $k, l \ge 1$ positive integers.  Let $(\zP, \zQ)$ be a $(k,l)$-leap pattern such that for all $P \in \zP$, it holds that $(\zP - P, \zQ)$ is not a $(k,l)$-leap pattern.  Then $|\zP \cup \zQ| \le k(2l+1)$.
\end{LE}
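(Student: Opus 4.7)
The plan is to exploit the planarity of $\zP$ to reduce the problem to a one-dimensional covering argument. Since $\zP$ is a planar transaction, I would enumerate its members as $P_1, \ldots, P_m$ with endpoints $a_i \in A$ and $b_i \in B$ on disjoint segments of $\Omega$ such that $a_1, \ldots, a_m, b_m, \ldots, b_1$ appear in $\Omega$ in this cyclic order and the ``chords'' $\overline{a_i b_i}$ are pairwise non-crossing. For each $v \in V(\Omega) \setminus V(\zP)$, define the \emph{level} $L(v) \in \{0, 1, \ldots, m\}$ to be the number of chords $\overline{a_i b_i}$ that lie strictly between $v$ and, say, $a_1$ along $\Omega$. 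A case analysis on the position of $v$ and $w$ (on the $A$-side, on the $B$-side, or in a corner arc) then establishes the key identity $d_{\zP}(v, w) = |L(v) - L(w)|$.

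Setting $\tau_i := L(t_i)$ and reordering the $Q_i$'s if necessary, I may assume $\tau_1 < \tau_2 < \cdots < \tau_k$. Because $|\tau_i - \tau_j| \geq l \geq 1$ for all distinct $i, j$, consecutive gaps in this sorted order are at least $l$, and consequently any non-consecutive pair satisfies $|\tau_i - \tau_j| \geq 2l$. Therefore at most $k - 1$ pairs $(t_i, t_j)$ can satisfy $d_{\zP}(t_i, t_j) = l$ exactly.

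The heart of the argument is the following witness assignment. Removing $P_r$ from $\zP$ decreases $d_{\zP}(v, w)$ by exactly one when $r$ lies in the interval $(\min(L(v), L(w)), \max(L(v), L(w))]$, and leaves it unchanged otherwise. Hence if, after removing $P_r$, the original labeling still satisfied both the per-path distance condition and the pairwise $t$-distance condition, then $(\zP - P_r, \zQ)$ would itself be a $(k,l)$-leap pattern, contradicting the minimality hypothesis (which rules out every labeling, including the original). It follows that for each $r \in \{1, \ldots, m\}$ there is a ``witness pair'' $(u, v)$ drawn from $\{(s_i, t_i)\}_{i=1}^{k} \cup \{(t_i, t_j)\}_{i < j}$ with $d_{\zP}(u, v) = l$ and $r$ in the corresponding half-open level interval of length $l$.

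To conclude, I count at most $k$ witness pairs of type $(s_i, t_i)$ and, by the second paragraph, at most $k - 1$ witness pairs of type $(t_i, t_j)$, each contributing an interval of $l$ consecutive integers that together must cover $\{1, 2, \ldots, m\}$. Therefore $m \leq (2k - 1)l \leq 2kl$, and $|\zP \cup \zQ| = m + k \leq k(2l + 1)$. The main technical care lies in verifying the level identity $d_{\zP}(v, w) = |L(v) - L(w)|$ cleanly across the various positional cases; once that is in hand, the counting is immediate.
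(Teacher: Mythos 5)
Your proof is correct, but it takes a genuinely different route from the paper's. The paper argues by pigeonhole on windows: assuming $|\zP|>2kl$, it partitions the first $2kl+1$ paths of the planar transaction into $2k$ blocks of $l$ consecutive ``chords''; if some block's two boundary arcs contain no endpoint of $\zQ$, then removing the first path of that block changes no relevant $d_{\zP}$-distance by enough to matter (every pair separated by one chord of the block is separated by all $l+1$ of them), while if every block contains an endpoint, then all $2k$ endpoints are accounted for, none lies in the corner arc cut off by $P_1$, and so $P_1$ can be removed without changing any distance between endpoints of $\zQ$ --- a contradiction either way. You instead set up a potential/level function with $d_{\zP}(u,v)=|L(u)-L(v)|$ and run a covering argument: by minimality, every index $r$ must be ``needed,'' i.e.\ $P_r$ separates some witness pair (the two endpoints of a single $Q_i$, or a pair of major endpoints $t_i,t_j$ under the fixed original labeling) whose $d_{\zP}$-distance is exactly $l$; there are at most $k$ tight pairs of the first kind and, by your gap observation on the sorted levels $\tau_i$, at most $k-1$ of the second kind, and each tight pair separates exactly $l$ chords, giving $|\zP|\le(2k-1)l$, which is in fact slightly sharper than the paper's bound $|\zP|\le 2kl$ (both yield $|\zP\cup\zQ|\le k(2l+1)$). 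Your reduction of ``not a leap pattern after deletion'' to failure of the original labeling is handled correctly, since deletion of one path affects neither the disjointness, planarity, nor $|\zQ|=k$ conditions. The only point needing polish is the definition of $L$: ``the number of chords strictly between $v$ and $a_1$'' is ambiguous for vertices on the two corner arcs; it is cleaner to define $L(v)$ as the number of chords separating $v$ from the arc between $b_1$ and $a_1$, after which the identity $d_{\zP}(v,w)=|L(v)-L(w)|$ follows from the case analysis you indicate and the rest of your counting goes through unchanged.
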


\begin{proof}
Let $|\zP| = m$, and let the elements of $\zP$ be labeled $P_1, \dots, P_m$ with the endpoints of $P_i$ labeled $x_i$ and $y_i$ such that when traversing $\Omega$ in order, we encounter the vertices $x_1, x_2, \dots, x_m, y_m, y_{m-1}, \dots, y_1$ in that order.  Assume, to reach a contradiction, that $m > 2kl$.  Consider the segments $x_{il + 1} \Omega x_{(i+1)l + 1}$ and $y_{il + 1} \Omega y_{(i+1)l + 1}$ for $ 0 \le i \le 2k-1$.  If there exists an index $i$ such that $x_{il + 1} \Omega x_{(i+1)l + 1}$ and $y_{il + 1} \Omega y_{(i+1)l + 1}$ are disjoint from the endpoints of $\zQ$, then if $u$ and $v$ are endpoints of $\zQ$ which are separated by the endpoints of $P_j$ for some $il + 1 \le j \le (i+1)l+1$, then $u$ and $v$ are separated by the endpoints of $P_j$ for all $j$, $il + 1 \le j \le (i+1)l + 1$.  It follows that $(\zP - P_{il + 1}, \zQ)$ is a $(k,l)$-leap pattern, a contradiction.

Thus we may assume that $x_{il + 1} \Omega x_{(i+1)l + 1} \cup y_{il + 1} \Omega y_{(i+1)l + 1}$ contains at least one endpoint of an element of $\zQ$ for all $0 \le i \le 2k-1$.  As the endpoints of $\zQ$ are disjoint from $\{x_1, \dots, x_m, y_1, \dots, y_m\}$ and $|\zQ| = k$, we have that each set $x_{il + 1} \Omega x_{(i+1)l + 1} \cup y_{il + 1} \Omega y_{(i+1)l + 1}$ contains exactly one endpoint of $\zQ$ and moreover, no endpoint of $\zQ$ is not contained in one such set.  Specifically, the segment $y_1 \Omega x_1$ does not contain an endpoint of $\zQ$.  Thus, $d_{\zP - P_1} (u, v) = d_{\zP}(u,v)$ for all endpoints $u$ and $v$ of $\zQ$ implying that $(\zP - P_1, \zQ)$ is a $(k,l)$-leap pattern.  This contradiction completes the proof of the lemma.
\end{proof}

Recall that if $G$ is a graph, $X \subseteq V(G)$, and $\zS$ a partition of $X$, then an $\zS$-path is a path with both endpoints in $X$, no internal vertex in $X$, and endpoints in distinct sets of the partition $\zS$.

\begin{DEF}
\label{def:diverse}
Let $G$ be a graph, $X \subseteq V(G)$, $\zS$ a partition of $X$, and $k \ge 1$ a positive integer.  A set $\zQ = \{Q_1, \dots, Q_k\}$ of $\zS$-paths are \emph{$\zS$-diverse} if they are pairwise disjoint and for all $i$, $1 \le i \le k$, we can label the endpoints of $Q_i$ as $s_i$ and $t_i$ such that for all $i, j$, $1 \le i < j \le k$, it holds that $t_i$ and $t_j$ are not in the same set of $\zS$.
We say that $t_i$ is the {\em major endpoint} and $s_i$ is the {\em minor endpoint} of $Q_i$. We say an $\zS$-diverse set $\zQ'$ of paths is \emph{majority equivalent} to $\zQ$ if the elements of $\zQ'$ can be labeled $Q_1', \dots, Q_k'$ with the endpoints of $Q_i'$ labeled $s_i'$ and $t_i'$ such that $t_i'$ is the major endpoint of $Q_i'$ and $t_i'$ is in the same set of $\zS$ as $t_i$.
\end{DEF}

\begin{LE}\label{lem:indMcDonalds}
Let $G$ be a graph, $\zS$ a partition of a subset $X \subseteq V(G)$, and $\zQ$ an $\zS$-diverse set of paths.
Let $R$ be a path of length at least one with one endpoint in $X$, the other endpoint which is an internal vertex in $Q \in \zQ$,
and which is otherwise disjoint from $V(\zQ)$.
Then there exists an $X$-path $Q' $ such that $Q'\ne Q$, $Q'$ is a  subgraph of $ Q \cup R$ and $(\zQ - Q) \cup Q'$ is an
$\zS$-diverse  set of paths which is majority equivalent to $\zQ$.
\end{LE}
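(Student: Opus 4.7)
The plan is a short case analysis that replaces one half of $Q$ with $R$. First, by truncating $R$ to begin at the vertex of $V(R)\cap X$ closest to $y$ along $R$, I may assume without loss of generality that $R$ is internally disjoint from $X$; this is permissible since the internal vertices of $R$ already avoid $V(\zQ)$ by hypothesis, so any truncated endpoint lies in $X \setminus V(\zQ)$ and the new $R$ is still a path of length at least one from $X$ to a vertex $y$ internal to $Q$. Label the endpoints of $Q$ as $s$ (minor) and $t$ (major), and let $y$ split $Q$ into $sQy$ and $yQt$. The two natural candidates inside $Q \cup R$ are $Q_a := (sQy) \cup R$ from $s$ to $x$ and $Q_b := (yQt) \cup R$ from $t$ to $x$; each is a path, has both endpoints in $X$, has no internal vertex in $X$ (interior vertices come from the interior of $Q$ or of $R$, both disjoint from $X$), and is distinct from $Q$ since the first edge of $R$ at $y$ leaves $Q$ (its other endpoint is either $x \notin V(Q)$ or an internal vertex of $R$, which is disjoint from $V(\zQ)$).

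The choice between $Q_a$ and $Q_b$ is dictated by the $\zS$-class of $x$. If $x$ lies in the same class as $t$, I take $Q' := Q_a$ and designate $x$ as the new major endpoint; the updated list of major endpoints is $t_1, \ldots, t_{i-1}, x, t_{i+1}, \ldots, t_k$, and since $x$ now occupies the class previously occupied by $t = t_i$, the list still visits pairwise distinct classes, so the resulting set is $\zS$-diverse and majority equivalent to $\zQ$. Otherwise---whether $x$ lies in the class of $s$ or in a third class---I take $Q' := Q_b$ and keep $t$ as its major endpoint; the list of major endpoints is unchanged, so the resulting set is again $\zS$-diverse and majority equivalent. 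In each case $Q'$ is a valid $\zS$-path because its two endpoints lie in different classes of $\zS$, and vertex-disjointness from every other $Q_j$ is immediate since $V(Q') \subseteq V(Q) \cup V(R)$ while $R$ meets $V(\zQ)$ only at $y \in V(Q)$.

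There is essentially no main obstacle; this lemma is the $\zS$-diverse analogue of the standard cross-rerouting principle (cf.\ Lemma \ref{lem:crossreroute}), and the only items needing care are the truncation step that enforces $R$ to be internally disjoint from $X$ and the verification that the designated major endpoint of $Q'$ remains in the $\zS$-class of $t$---both routine.
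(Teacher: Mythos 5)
Your proof is correct and follows essentially the same argument as the paper: split $Q$ at the attachment vertex $y$, attach $R$ to the half of $Q$ chosen according to whether $x$ lies in the $\zS$-class of the major endpoint $t$, and designate the major endpoint of the new path so that it remains in the class of $t$, which preserves both $\zS$-diversity and majority equivalence. Your preliminary truncation of $R$ (to make it internally disjoint from $X$) and the explicit checks that $Q'$ is an $\zS$-path disjoint from the other members of $\zQ$ are harmless refinements of the same approach.
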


\begin{proof}
Let us assume that the endpoints of every member of $\zQ$ are designated as major and minor as in Definition~\ref{def:diverse}.
Let $t$ be the major endpoint of $Q$, let $s$ be the minor endpoint of $Q$, and let $x,y$  be the endpoints of $R$, where
$x\in X$ and $y\in V(Q)$.
If $x$ and $t$ do not belong to the same member of $\zS$, then let $Q':=yQt\cup R$.  Otherwise, $x$ and $t$ belong to the same member of $\zS$, and we let $Q':=yQs\cup R$.  In the first case, we designate $x$ as the major endpoint of $Q'$ and in the second case, we designate $x$ to be the major endpoint of $Q'$.
In either case, $Q'$ is as desired by the lemma with the major endpoint in the same set of $\zS$ as $t$.
\end{proof}

\begin{LE}\label{lem:Spaths1}
Let $G$ be a graph, $X \subseteq V(G)$, and $\zS$ a partition of $X$.  Let $k \ge 1$ be an integer and assume that there exists an $\zS$-diverse set $\zQ$ of $k-1$ paths.  Let $Y \subseteq V(G) \setminus X$, and assume that there exists a linkage $\zR$ of $2k-1$ disjoint $X-Y$ paths in $G$.  Then there exists an $\zS$-diverse set $\zQ'$ of $k-1$ paths which is majority equivalent to $\zQ$ and a path $R$ from $X$ to $Y$ such that $V(\zQ) \cap V(R) = \emptyset$.
\end{LE}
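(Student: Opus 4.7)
The plan is to establish this via an iterated application of Lemma \ref{lem:indMcDonalds} within a minimality argument. First observe that since $\zQ$ has exactly $2(k-1)$ endpoints in $X$ while the linkage $\zR$ supplies $2k-1$ distinct $X$-endpoints, the pigeonhole principle gives at least one $R\in\zR$ whose $X$-endpoint lies outside the endpoint set of $\zQ$; the same pigeonhole observation applies to any $\zS$-diverse set of $k-1$ paths in place of $\zQ$, and will be invoked repeatedly.

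Consider pairs $(\zQ',R)$ where $\zQ'$ is an $\zS$-diverse set of $k-1$ paths majority equivalent to $\zQ$, and $R$ is an $X$-$Y$ path in $G$ whose $X$-endpoint is not an endpoint of any member of $\zQ'$. Among such pairs, which exist by the pigeonhole observation, choose one minimizing $|V(R)\cap V(\zQ')|$. The plan is to show that this minimum equals zero, which immediately yields the conclusion.

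Suppose for contradiction the minimum is positive. Let $x$ be the $X$-endpoint of $R$ and let $v$ be the first vertex of $V(\zQ')$ encountered while traversing $R$ from $x$. Since $x\notin V(\zQ')$ and no internal vertex of $R$ lies in $X$, the vertex $v$ must be internal to some $Q\in\zQ'$, and $xRv$ is internally disjoint from $V(\zQ')$. Applying Lemma \ref{lem:indMcDonalds} to the path $xRv$ and $Q$ produces a new $\zS$-diverse set $\zQ''=(\zQ'\setminus\{Q\})\cup\{Q^*\}$ majority equivalent to $\zQ$, with $Q^*\subseteq Q\cup xRv$. By pigeonhole applied to $\zQ''$, there exists $R'\in\zR$, necessarily distinct from $R$ since $x$ is now an endpoint of $\zQ''$, whose $X$-endpoint is not an endpoint of $\zQ''$.

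The contradiction will come from showing that $(\zQ'',R')$ improves on $(\zQ',R)$. Since $R'$ is disjoint from $R$ in the linkage $\zR$ and the new portion of $Q^*$ lies entirely inside $V(R)$, one has the weak containment $V(R')\cap V(\zQ'')\subseteq V(R')\cap V(\zQ')$. The main obstacle is to upgrade this to a strict decrease in the minimized quantity: the cleanest way is to refine the minimization to the lexicographic pair $\bigl(|V(R)\cap V(\zQ')|,\,|V(\bigcup\zR)\cap V(\bigcup\zQ')|\bigr)$ and verify that each rerouting step strictly decreases it, using that the intersection of $V(\bigcup\zR)$ with $V(Q^*)$ can only add vertices of $R$ while removing the nontrivial subpath $V(Q)\setminus V(Q^*)$ from $V(\zQ')$. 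This bookkeeping---carefully distinguishing the case in which $V(Q)\setminus V(Q^*)$ meets some path of $\zR$ (producing a strict drop in the secondary quantity) from the case in which it does not (in which case the pair $(\zQ'',R')$ attains the zero minimum immediately, since $R'$ meets only the unchanged part of $\zQ'$, hence meets nothing contributing a new intersection)---produces the contradiction and completes the proof.
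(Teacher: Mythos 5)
Your local move is the right one and matches the paper's: pigeonhole on the $2k-1$ distinct $X$-endpoints of $\zR$ against the at most $2(k-1)$ endpoints of the current diverse family, then reroute with Lemma \ref{lem:indMcDonalds} at the first vertex of the family met along that path. The gap is in the extremal scheme you wrap around it. In your minimization the witness $R$ is an arbitrary $X$--$Y$ path, so in the chosen minimal pair $R$ need not belong to $\zR$, and the assertion that ``$R'$ is disjoint from $R$ in the linkage $\zR$'' is unjustified; without it even the weak containment $V(R')\cap V(\zQ'')\subseteq V(R')\cap V(\zQ')$ fails, since $Q^*$ acquires vertices of $xRv$ that $R'$ may meet. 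More seriously, even if you restrict $R$ to lie in $\zR$ so that disjointness holds, the inequality you get compares $|V(R')\cap V(\zQ'')|$ with $|V(R')\cap V(\zQ')|$, whereas the quantity you minimized is $|V(R)\cap V(\zQ')|$; minimality gives, if anything, $|V(R)\cap V(\zQ')|\le |V(R')\cap V(\zQ')|$, which is the wrong direction, so neither a contradiction nor a decrease in the first lexicographic coordinate follows once the witness path is forced to change from $R$ to $R'$. The secondary coordinate $|V(\bigcup\zR)\cap V(\bigcup\zQ')|$ also need not drop: the rerouted path adds vertices of $xRv$, which can lie on $\bigcup\zR$ (certainly when $R\in\zR$), while the discarded subpath of $Q$ may avoid $\bigcup\zR$ entirely; and in that case your fallback claim that $(\zQ'',R')$ ``attains the zero minimum immediately'' is a non sequitur, because $R'$ may still intersect the untouched members of $\zQ'$, all of which remain in $\zQ''$.

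The paper closes the argument by a different global device, and that is the repair you need: take a counterexample minimizing $|E(G)|$, so that $E(G)=E(\zR)\cup E(\zQ)$. After one rerouting step the configuration $(\zQ-Q)\cup Q'$ together with the unchanged $\zR$ lives in a subgraph with strictly fewer edges (the discarded subpath of $Q$ begins at an interior vertex of $R_i$, and since the members of $\zR$ are pairwise disjoint $X$--$Y$ paths its edges cannot all be re-covered by $\zR$), so by minimality the lemma holds for that subgraph; its conclusion --- a diverse family majority equivalent to $(\zQ-Q)\cup Q'$, hence to $\zQ$, together with an $X$--$Y$ path disjoint from it --- transfers verbatim back to $G$, a contradiction. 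Inducting on the ambient edge set in this way avoids ever having to compare intersection counts across different witness paths, which is exactly where your potential-function bookkeeping breaks down.
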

\begin{proof}
Assume the lemma is false, and pick a counterexample which minimizes $|E(G)|$.  Let the elements of $\zR$ be labeled $R_1, \dots, R_{2k-1}$.  It follows that $E(G) = \bigcup_1^{2k-1} E(R_i) \cup \bigcup_{Q \in \zQ} E(Q)$.  Fix $i$ such that the endpoint of $R_i$ in $X$ is disjoint from $V(\zQ)$ and let $u = V(R_i) \cap X$.  Given that $G$ forms a counterexample, $R_i$ must intersect some $Q \in \zQ$.  Let $v$ be the first vertex of $V(\zQ)$ we encounter traversing $R_i$ from $u$ to its endpoint in $Y$.  Let $Q \in \zQ$ be such that $v \in V(Q)$.  Note that $v$ must be an internal vertex of $Q$ by the fact that $Y$ is disjoint from $X$.  By Lemma \ref{lem:indMcDonalds}, $uR_iv \cup Q$ contains an $\zS$-path $Q'$ which is distinct from $Q$ such that $(\zQ - Q) \cup Q'$ is $\zS$-diverse and majority equivalent to $\zQ$.  The subgraph $\bigcup_1^{2k-1}R_i \cup (\zQ - Q) \cup Q'$ has strictly fewer edges, contradicting our choice of counterexample.
\end{proof}

We now present a result on disentangling a leap transaction from a separate linkage in the graph.

\begin{LE}
\label{lem:augmentleap}
Let $k, l \ge 1$ be integers and $(G, \Omega)$ a society.  Let $(\zP, \zQ)$ be a $(k-1, 2kl)$-leap pattern in $(G, \Omega)$.
Let $Z \subseteq V(G)$,
and let $\zR$ be a set of $2k-1$ pairwise disjoint paths from $V(\Omega)$ to $Z$.
Then there exist $R$, $\zQ'$, $\zP'$ such that
\begin{itemize}
\item $\zP' \subseteq \zP$
and $(\zP', \zQ')$ is a $(k-1, l)$-leap pattern
\item  $R$ is a path from $V(\Omega)$ to $Z$ and is disjoint from $V(\zQ') \cup V(\zP')$.
 \end{itemize}
\end{LE}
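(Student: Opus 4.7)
The plan is to reduce the problem to an application of Lemma~\ref{lem:Spaths1} by encoding the leap structure as a partition of $V(\Omega)$. First I would verify that on $V(\Omega)$ the function $d_\zP$ satisfies the triangle inequality: for each $P\in\zP$ with endpoints $a,b$ and each triple $x,y,z$, a short case analysis on the cyclic order of $\{a,b,x,y,z\}$ shows that the indicator ``$P$ separates $x$ from $y$'' is bounded by the sum of the corresponding indicators for $(x,z)$ and $(z,y)$; summing over $P\in\zP$ yields $d_\zP(x,y)\le d_\zP(x,z)+d_\zP(z,y)$.

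Letting $t_1,\dots,t_{k-1}$ denote the major endpoints of $\zQ$, I would define $S_i:=\{v\in V(\Omega):d_\zP(v,t_i)\le l-1\}$. Since $d_\zP(t_i,t_j)\ge 2kl\ge 2l$, the triangle inequality makes the sets $S_i$ pairwise disjoint, so $\zS:=\{V(\Omega)\setminus\bigcup_i S_i,S_1,\dots,S_{k-1}\}$ is a partition of $V(\Omega)$. The hypothesis $d_\zP(s_i,t_i)\ge 2kl>l-1$ places each minor endpoint $s_i$ outside $S_i$, so $\zQ$ is an $\zS$-diverse set of $\zS$-paths with major endpoint $t_i$ for $Q_i$. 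Augmenting $G$ with a new vertex $z^*$ adjacent to every vertex of $Z$, the linkage $\zR$ becomes a set of $2k-1$ pairwise disjoint paths from $V(\Omega)$ to $\{z^*\}$. Applying Lemma~\ref{lem:Spaths1} with $X=V(\Omega)$ and $Y=\{z^*\}$ produces an $\zS$-diverse set $\zQ'=\{Q_1',\dots,Q_{k-1}'\}$ majority equivalent to $\zQ$ and a path $R^*$ from $V(\Omega)$ to $z^*$ disjoint from $\zQ'$; removing the vertex $z^*$ from $R^*$ yields the desired $V(\Omega)$-to-$Z$ path $R$.

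Setting $\zP':=\{P\in\zP:V(P)\cap(V(\zQ')\cup V(R))=\emptyset\}$, majority equivalence places the major endpoint $t_i'$ of $Q_i'$ inside $S_i$, so $d_\zP(t_i',t_i)\le l-1$. The triangle inequality then gives $d_\zP(t_i',t_j')\ge 2kl-2(l-1)$ for $i\ne j$ and analogously $d_\zP(s_i',t_i')\ge 2kl-2(l-1)$, where $s_i'$ is the minor endpoint of $Q_i'$. Because $V(\zQ)\cap V(\zP)=\emptyset$ by the leap-pattern hypothesis, every path of $\zP$ destroyed when passing to $\zP'$ must meet $V(\zR)$, so the leap condition $d_{\zP'}(\cdot,\cdot)\ge l$ reduces to the bound $|\zP\setminus\zP'|\le(2k-3)l+2$.

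The main obstacle is establishing that bound. I would handle it by a preprocessing step that occurs before applying Lemma~\ref{lem:Spaths1}: replace $\zR$ by an optimal set of $2k-1$ pairwise disjoint $V(\Omega)$-to-$Z$ paths, chosen via Menger's theorem in $G$ to minimise the number of distinct paths of $\zP$ met by $\zR$; a standard shortest-path / rerouting argument shows that such an optimal $\zR$ meets at most $2k-1$ distinct paths of $\zP$, and $2k-1\le(2k-3)l+2$ whenever $k\ge 2$ and $l\ge 1$. The boundary case $k=1$ is immediate: $\zQ=\emptyset$, $\zR$ contains a unique path $R$, and setting $\zP':=\{P\in\zP:V(P)\cap V(R)=\emptyset\}$ gives a trivial $(0,l)$-leap pattern.
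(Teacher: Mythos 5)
Your reduction to Lemma~\ref{lem:Spaths1} has a genuine gap at the point where you verify the leap-distance conditions for the rerouted paths. Majority equivalence controls only the \emph{major} endpoints: you know $t_i'\in S_i$, but about the minor endpoint $s_i'$ of $Q_i'$ you know only that it lies in a block of $\zS$ different from $S_i$. If $s_i'$ falls in the residual block $V(\Omega)\setminus\bigcup_j S_j$ (which is exactly what the rerouting in Lemmas~\ref{lem:indMcDonalds} and~\ref{lem:Spaths1} produces: the new endpoint is the $\Omega$-endpoint of a path of $\zR$, which is completely unconstrained and may even be an endpoint of a path of $\zP$, where $d_\zP$ is not defined and the triangle inequality can fail), then all you can conclude is $d_\zP(s_i',t_i)\ge l$, hence $d_\zP(s_i',t_i')\ge l-(l-1)=1$ — not the claimed $2kl-2(l-1)$. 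After discarding the paths of $\zP$ that meet $V(\zQ')\cup V(R)$, the condition $d_{\zP'}(s_i',t_i')\ge l$ is therefore not established, so $(\zP',\zQ')$ need not be a $(k-1,l)$-leap pattern. A radius-$(l-1)$ ball partition around the $t_i$ cannot certify this: two points in distinct blocks of your $\zS$ can be $d_\zP$-close. There are also two secondary problems: your key counting claim, that an optimal $\zR$ meets at most $2k-1$ distinct paths of $\zP$, is asserted rather than proved and is precisely the nontrivial step (minimizing the number of met paths does not support the exchange argument; one needs an edge-union potential as in the paper, and even then the bound can be about $2(2k-1)$ when $Z$ meets $V(\Omega)$, which already breaks your margin $(2k-3)l+2$ at $l=1$); and your deduction that every discarded path of $\zP$ meets $V(\zR)$ requires $V(\zQ')\cup V(R)\subseteq V(\zR)\cup V(\zQ)$, i.e.\ applying Lemma~\ref{lem:Spaths1} to the subgraph $\zR\cup\zQ$ rather than to $G$ plus $z^*$, which you do not arrange.

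The paper's proof is organized exactly so as to avoid the minor-endpoint problem: it first replaces $\zR$ by an edge-union-minimal linkage (so that at most roughly $2k-1$ paths of $\zP$ meet $\zR$, each sharing an $\Omega$-endpoint with $\zR$), deletes those paths, and then runs an iterated collapse procedure on the remaining transaction, repeatedly deleting the at most $l-1$ paths separating a close pair of relevant endpoints; a counting argument shows this happens at most $k+t$ times, so at most $(2k-1)l$ paths of $\zP$ are ever deleted, which is the reason the hypothesis carries $2kl$ rather than $l$. The partition $\zS$ is then defined as the ``distance-zero'' equivalence classes with respect to the \emph{final} transaction $\zP_a$, so that any two relevant endpoints in distinct blocks are automatically at $d_{\zP_a}$-distance at least $l$; since $\zP_a$ is disjoint from $\zR$ and from $\zQ$, no further deletions are needed after Lemma~\ref{lem:Spaths1} is applied, and the $\zS$-path property of each $Q_i'$ directly certifies all required distances, including for the minor endpoints. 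Repairing your argument would in effect require importing this collapse-and-repartition machinery, so the missing piece is the substance of the paper's proof rather than a routine technicality.
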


\begin{proof}
As a first step, we fix $\zR$ to minimize $E(\zR) \cup E(\zQ) \cup E(\zP)$.  As a result, if $R \in \zR$ intersects an element $P \in \zP$ or $Q \in \zQ$, it follows that at least one endpoint of $P$ or $Q$, respectively, is also an endpoint of an element of $\zR$.

Let the elements of $\zQ$ be labeled $Q_1, \dots, Q_{k-1}$ and label the endpoints of $Q_i$ as $s_i, t_i$ such that $d_\zP(s_i, t_i) \ge 2kl$ and $d_\zP(t_i, t_j) \ge 2kl$ for all $1 \le i \le j \le k-1$, $i \neq j$.  Assume that exactly $t$ elements of $\zQ$ have an endpoint in common with an element of $\zR$.  Fix $\zR'$ to be a subset of $2t+1$ elements of $\zR$ such that $\zR'$ contains every element of $\zR$ which has a common endpoint with an element of $\zQ$.  Assume that the elements of $\zQ$ are labeled $Q_1, \dots, Q_{k-1}$ such that $Q_1, \dots, Q_t$ have an endpoint in common with an element of $\zR$.  As we have already observed, the paths $Q_{t+1}, \dots, Q_{k-1}$ are disjoint from $\zR$.

Let $\zP_0$ be the elements of $\zP$ which are disjoint from $\zR$.  Note that $\zP_0$ is obtained from $\zP$ by deleting at most $(2k-1) - t$ elements.  Let $X = (V(\zR') \cap V(\Omega)) \cup (V(\zQ) \cap V(\Omega))$, i.e. the set of endpoints of $\zR' \cup \zQ$ in $V(\Omega)$.  Note that $|X| \le 2(k-1) + (t+1)$.

We define a sequence $\zP_0, \zP_1, \dots, \zP_a$ for some $a \ge 0$ such that for $i \ge 0$, the linkage $\zP_{i+1}$ is obtained from $\zP_{i}$ by fixing $x, y \in X$ such that $d_{\zP_{i}}(x,y) > 0$ and $d_{\zP_{i}}(x,y) \le l-1$ and deleting the elements of $\zP_{i}$ which separate $x$ and $y$.  Fix such a sequence $\zP_0, \zP_1, \dots, \zP_a$ to maximize $a$.  By the choice to maximize $a$, we have that for all $x, y \in X$, if $d_{\zP_a}(x,y) > 0$, then $d_{\zP_a}(x,y) \ge l$.  Note as well that if $d_{\zP_i}(x,y) = 0$ for some $i \ge 0$, then for all $i' > i$, $d_{\zP_{i'}}(x,y) = 0$.

While $d_{\zP_i}$ does not typically satisfy the triangle inequality on $V(\Omega)$, when restricted to the set $X$ which is disjoint from the endpoints of $\zP_i$, the triangle inequality does hold.  Define $x \sim_i y$ if $d_{\zP_i}(x,y)= 0$.  Given the triangle inequality, $\sim_i$ is an equivalence relation on $X$ for all $0 \le i \le a$.  The definition of $\zP_{i+1}$ from $\zP_i$ implies that $\sim_{i+1}$ has strictly fewer equivalence classes than $\sim_i$.  Thus, $a \le |X| \le 2(k-1) + (t+1) = 2k+t-1$.  We give a stronger bound in the next claim.

\begin{claim}
$a \le k+t$.
\end{claim}

\begin{cproof}
Assume the claim is false and that $a > k+t$.  Consider the linkage $\zP_{k+t}$ which is obtained from $\zP$ by deleting at most
\begin{align*}
(l-1)(t+k) + (2k-1 -t) &= (t+k)l - t - k + 2k -1 -t \\
&= (2k-1)l - (k-t)l + (k-t-1) - t \\
& \le (2k-1) l.
\end{align*}
Thus, for all $i, j$, $d_{\zP_{k+t}}(t_i, t_j) \ge 2kl - (2k-1)l = l$ and the relation $\sim_{k+t}$ has at least $k-1$ distinct equivalence classes.  As $\sim_{k+t}$ also has at most
\begin{align*}
|X| - (k+t) & \le (2k-2) + (t+1) - (k+t) \\
&= k-1
\end{align*}
equivalence classes, we conclude that there are exactly $k-1$ equivalence classes, each of which contains $t_i$ for some index $i$.  As any $x, y \in X$ such that $d_{\zP_{k+t}}(x,y) > 0$ must be contained in distinct equivalence classes, we have that $d(x,y) \ge d(t_i, t_j) \ge l$ for some indices $i$ and $j$.  We conclude that $\zP_{s+t+1}$ is not defined, a contradiction.
\end{cproof}
Define $\zS$ to be the partition of $X$ defined by the equivalence classes of $\sim_a$.  It follows that the set $\zQ$ of paths is $\zS$-diverse and for any pair of points $x, y \in X$ in distinct sets of $\zS$, it holds that $d_{\zP_a}(x,y) \ge l$.

We apply Lemma \ref{lem:Spaths1} to the subgraph formed by $\zR' \cup \bigcup_1^t Q_i$ and the partition $\zS$ of the set $X$.  Thus, there exists a path $R$ from $X$ to $Y$ and a set of $\zS$-diverse $X$-paths $Q_1', \dots, Q_t'$ which is majority equivalent to $Q_1, \dots, Q_t$.  We may assume the majority endpoint of $Q_i'$ is $t_i'$ and that $t_i$ and $t_i'$ are contained in the same set of $\zS$.  Note that for $1 \le i \le t < j \le k-1$, we have that $l \le d_{\zP_a}(t_i, t_j) \le d_{\zP_a}(t_i, t_i') + d_{\zP_a}(t_i', t_j) = d_{\zP_a}(t_i', t_j)$, as $\zP_a$ is disjoint from the endpoints of $\zR$ and $\zQ$.  It follows that $(\zP_a, \{Q_1', \dots, Q_t', Q_{t+1}, \dots, Q_{k-1}\})$ is a $(k-1,l)$-leap pattern disjoint from the path $R$ as desired.
\end{proof}

We finish this section with one final lemma which will be necessary when we attempt to increase the order of a leap pattern.

\begin{LE}\label{lem:sep1-2}
Let $G$ be a graph and $X$ and $Y$ two nonempty subsets of vertices of $G$.
Let $k \ge 0$ be an  integer.
Assume that there
do not exist $k+1$ vertex
disjoint $X-Y$ paths in $G$.  Then there exists a separation $(A, B)$ of $G$ with
$X \subseteq A$ and $Y \subseteq B$ such that for every $v \in A \cap B$, there exists a path from $v$ to $X$ in $G[A]$ and moreover, exactly one of the following holds:
\begin{itemize}
\item[{\rm(i)}] $|A \cap B| = k$ and there exist $k$ disjoint $(A \cap B) - Y$ paths in $G[B]$, or
\item[{\rm(ii)}] $|A \cap B| < k$ and $A \cap B \subseteq Y$.
\end{itemize}
\end{LE}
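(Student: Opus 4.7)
The proof plan is to apply Menger's theorem and do a case analysis on $\mu$, the maximum number of pairwise vertex-disjoint $X$-$Y$ paths in $G$; by hypothesis $\mu \le k$.

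If $\mu = k$: Menger's theorem produces an $X$-$Y$ separation $(A, B)$ of order exactly $k$; among all such separations I would choose one with $A$ minimal under inclusion. A standard shrinking argument then forces the reachability condition: if some $v \in A \cap B$ had no $X$-path in $G[A]$, its component $C$ in $G[A]$ would be disjoint from $X$, and the pair $(A \setminus C,\, B \cup C)$ would be an $X$-$Y$ separation of strictly smaller order (since $v \in C \cap A \cap B$), contradicting $\mu = k$. The $k$ disjoint Menger paths each cross $A \cap B$ at exactly one vertex, so their suffixes from the cut to $Y$ give $k$ disjoint $(A \cap B)$-$Y$ paths lying entirely in $G[B]$, establishing (i).

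If $\mu < k$: I would first construct a separation pushed toward $Y$. Let $W = X \cup \{v \in V(G) : v$ is reachable from $X$ in $G - (Y \setminus X)\}$ and $S = \{v \in Y : v \in W$ or $v$ has a neighbor in $W\}$, and set $A_0 = W \cup S$ and $B_0 = (V(G) \setminus W) \cup (X \cap Y)$. Direct verification shows that $(A_0, B_0)$ is an $X$-$Y$ separation with $A_0 \cap B_0 = S \subseteq Y$, and every $v \in S$ is reachable from $X$ in $G[A_0]$ (either trivially if $v \in X \cap Y$, or via a neighbor in $W$ followed by a path inside $G[W]$). If $|S| \le k - 1$, the pair $(A_0, B_0)$ witnesses (ii); if $|S| = k$, it witnesses (i) using trivial length-zero paths from each $v \in S \subseteq Y$ to itself in $G[B_0]$.

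The remaining sub-case $|S| > k$ is the principal obstacle. To handle it, I would apply Menger's theorem a second time inside $G[A_0]$ between $X$ and $S$: every $X$-$Y$ path in $G$ restricts to an $X$-$S$ path in $G[A_0]$ (by the definition of $W$), so the two max flows coincide and equal $\mu$. A min $X$-$S$ separator $T$ in $G[A_0]$ of size $\mu$ can then be extended to a $k$-element set $T^\star$ by adjoining $k - \mu$ vertices of $S \setminus T$, and the candidate separation is obtained by taking $A$ to be the $X$-side of $T^\star$ in $G$ together with $T^\star$, with $B$ its complement. The most delicate step is verifying that $T^\star$ remains an $X$-$Y$ separator in $G$, that reachability of each $v \in T^\star$ in $G[A]$ is preserved (for $v \in T$ via the $\mu$ disjoint $X$-$S$ paths in $G[A_0]$, and for $v \in T^\star \setminus T$ via its $W$-neighbor), and that the $k$ disjoint $(A \cap B)$-$Y$ paths in $G[B]$ obtained as the $\mu$ min-cut paths extended to $Y$ together with $k - \mu$ trivial paths at the added $S$-vertices are genuinely pairwise disjoint.
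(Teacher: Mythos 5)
Your handling of the case $\mu=k$ and of the subcases $|S|\le k$ is sound (trivial one-vertex $(A\cap B)$--$Y$ paths are legitimate under the standard convention, which the paper's own argument also implicitly uses when $A\cap B$ meets $Y$). The genuine gap is in the subcase $|S|>k$, exactly where you put the weight of the proof. The claim that each adjoined vertex $v\in T^\star\setminus T$ is reachable from $X$ in $G[A]$ ``via its $W$-neighbor'' is false in general: the $W$-neighbor $u$ of $v$, together with the whole $X$--$u$ path inside $W$, may be cut off from $X$ by $T^\star$ itself, so that $u\in B\setminus A$ and $v$ has no neighbor in $A$ at all. Concretely, take $X=\{x\}$, $Y=\{v_1,\dots,v_5\}$, edges $xt$, $tu$, and $uv_i$ for all $i$, and $k=3$. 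Then $\mu=1$, $W=\{x,t,u\}$, $S=Y$, so $|S|=5>k$, and $T=\{t\}$ is a legitimate minimum $X$--$S$ separator in $G[A_0]$. Whatever two vertices of $S$ you adjoin, say $T^\star=\{t,v_1,v_2\}$, the $X$-side of $T^\star$ is just $\{x\}$, so $A=\{x,t,v_1,v_2\}$ and $v_1,v_2$ are isolated in $G[A]$: the reachability requirement fails, and no choice of adjoined vertices repairs it (each $v_i$'s only neighbor is $u\notin S$, which lies behind $t$). The construction does work here if one instead takes $T=\{u\}$, so the missing ingredient is a careful choice of the separator (e.g.\ a minimum separator ``pushed toward $S$''), and that choice is a missing idea, not a routine verification. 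A secondary unresolved point in the same step is the disjointness of your final linkage: a suffix of one of the $\mu$ Menger paths may pass through or end at an adjoined $S$-vertex, clashing with the trivial path placed there, and when $\mu\ge 2$ the count $|S|>k$ alone does not obviously leave $k-\mu$ vertices of $S$ avoiding both $T$ and all the suffixes.

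For contrast, the paper sidesteps both difficulties with an extremal ``pushing'' argument rather than a direct construction: a minimum-order $X$--$Y$ separation automatically has the reachability property and carries $|A\cap B|$ disjoint $(A\cap B)$--$Y$ paths in $G[B]$; among all separations of order at most $k$ with these two properties one chooses $B\setminus A$ minimal, and if $|A\cap B|<k$ and $A\cap B\not\subseteq Y$ one absorbs into $A$ a vertex of $B\setminus A$ adjacent to $A\cap B$ (taken from the linkage), verifying via a small sub-separation argument inside $G[B]$ that both properties persist, contradicting minimality. Some such iteration, or an equally careful one-shot choice of $T$, is what your plan still has to supply.
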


\begin{proof}
By assumption, there do not exist $k+1$ disjoint $X-Y$ paths, and hence
there exists a separation $(A, B)$ of order at most $k$ such that
\begin{itemize}
\item[(1)]
$X \subseteq A$ and $Y \subseteq B$.
\end{itemize}
If we choose such a separation of minimum order,
then it furthermore satisfies
\begin{itemize}
\item[(2)] for every $v\in A\cap B$  there exists a path from $v$ to $X$
in $G[A]$,  and
\item[(3)]there exist $|A \cap B|$ disjoint $(A \cap B) - Y$ paths in $G[B]$.
\end{itemize}

Let us  choose a separation $(A, B)$ of order at most $k$ that satisfies (1) -- (3),
and, subject to that
\begin{itemize}
\item[(4)]$B\setminus A$ is minimal.
\end{itemize}

If the order of $(A, B)$ is $k$, then $(A, B)$  satisfies (i).
We may therefore assume that $|A\cap B|<k$.
 We may assume that $A \cap B \nsubseteq Y$, lest the
separation $(A, B)$ satisfies (ii).
Let $x$ be a vertex of $B\setminus A$ which is adjacent to a vertex of $A \cap B$.  Such a
vertex exists because there exist $|A \cap B|$ disjoint paths from
$A \cap B$ to $Y$ by (3).
Given that $A \cap B \nsubseteq Y$, we may simply pick the next
vertex on one of these paths.
Let $A':=A \cup \{x\}$.
Then $(A', B)$ is a separation of $G$ of order at most $k$ that clearly  satisfies (1) and (2).
It also satisfies (3), for otherwise there exists a separation $(A^*,B^*)$ of $G[B]$
of order at most $|A \cap B|$ such that $A'\cap B\subseteq A^*$ and $Y\subseteq B^*$,
in which case the separation $(A\cup A^*,B^*)$ contradicts (4).
Thus the separation $(A', B)$ satisfies (1) -- (3), contrary to (4).
\end{proof}

\section{Finding a planar or crosscap transaction}

A classic result of Erd\H{o}s and Szekeres \cite{ES} states that every sequence of length at least $(s-1)(t-1) + 1$ has either a monotonic decreasing subsequence of length $s$ or a monotonic increasing subsequence of length $t$.  An immediate consequence of this is the following lemma.

\begin{LE}\label{lem:transES}
Let $s, t \ge 1$ be positive integers.  Let $\zP$ be a transaction in a society $(G, \Omega)$ of order at least $(s-1)(t-1)+1$.
Then there exists a transacton $\zP' \subseteq \zP$ such that $\zP'$ is either a crosscap transaction of order $s$ or a planar transaction of order $t$.
\end{LE}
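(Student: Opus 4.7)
The plan is to reduce the statement directly to the Erd\H{o}s--Szekeres theorem via a permutation naturally associated to $\zP$. Let $A$ and $B$ be the disjoint segments of $\Omega$ provided by the definition of a transaction, so that every path in $\zP$ has one endpoint in $A$ and one endpoint in $B$, and set $n = |\zP| \ge (s-1)(t-1)+1$. I would label the paths $P_1, \ldots, P_n$ so that the endpoints $a_i \in V(P_i) \cap A$ appear in the order $a_1, a_2, \ldots, a_n$ when traversing $A$ in the direction induced by $\Omega$, and write $b_i$ for the other endpoint of $P_i$. Let $c_i \in \{1, \ldots, n\}$ denote the position of $b_i$ when $B$ is traversed in the direction induced by $\Omega$; this yields a permutation $(c_1, \ldots, c_n)$ of $\{1, \ldots, n\}$.

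The key observation is that for $i < j$, the four endpoints of $P_i$ and $P_j$ appear on $\Omega$ in the cyclic order $a_i, a_j$ followed by $b_i, b_j$ in an order determined by whether $c_i < c_j$ or $c_i > c_j$. Since two chords of a circle cross if and only if their endpoints alternate around the boundary, $P_i$ and $P_j$ cross precisely when $c_i < c_j$ and do not cross precisely when $c_i > c_j$. Consequently, indices $i_1 < i_2 < \cdots < i_m$ pick out a crosscap subtransaction exactly when $c_{i_1} < c_{i_2} < \cdots < c_{i_m}$, and pick out a planar subtransaction exactly when $c_{i_1} > c_{i_2} > \cdots > c_{i_m}$. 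In each case the resulting ordering matches the labeling convention stipulated in the paper's definitions of crosscap and planar transactions (the latter being the one given in Definition~\ref{def:planarstrip}, where the $b_i$'s appear in reverse order).

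To finish, I would apply the Erd\H{o}s--Szekeres theorem to the sequence $c_1, \ldots, c_n$ of length at least $(s-1)(t-1)+1$. This yields either an increasing subsequence of length $s$, which by the above correspondence provides the desired crosscap subtransaction $\zP' \subseteq \zP$ of order $s$, or a decreasing subsequence of length $t$, which provides the desired planar subtransaction of order $t$. I do not foresee any substantive obstacle; the only care required is to verify the cyclic-order dictionary between ``crossing'' and ``monotonicity of $(c_i)$'' and to match my labeling convention to the ones in the definitions.
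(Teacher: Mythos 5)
Your proof is correct and is essentially the paper's argument: the paper simply declares the lemma an immediate consequence of Erd\H{o}s--Szekeres, and your permutation $(c_1,\ldots,c_n)$ together with the observation that, for $i<j$, the paths $P_i$ and $P_j$ cross exactly when $c_i<c_j$ is precisely the intended reduction. The only conventions to watch (which increasing/decreasing direction corresponds to crosscap/planar) are immaterial since Erd\H{o}s--Szekeres is symmetric under reversing the sequence.
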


In the applications to come, we will often be happy to either find a large $(k, l)$-leap transaction or a large nested crosses transaction.  In this section, we give a version of Lemma \ref{lem:transES} which takes this into account. 

 We first need the following definition.  Let $(G, \Omega)$ be a society, and let $X$ be a segment.  Let the vertices of $X$ be $x_1, \dots, x_n$ and assume we encounter them in that order following $\Omega$.  We define a new cyclic order $\Omega^*$ on $V(\Omega)$ as follows.  Let $y_1$ be the vertex of $\Omega$ immediately preceding $x_1$ and $y_n$ the vertex immediately following $x_n$.  $\Omega^*$ orders $V(\Omega)$ by $y_n \Omega y_1, x_n, x_{n-1}, \dots, x_2, x_1$.  We say that the society $(G, \Omega^*)$ is obtained by \emph{flipping $X$}.

\begin{LE}\label{lem:strongES}
Let $k, l, k^*, l^*, q, q^*, s, s^*$ be positive integers, and let $(G, \Omega)$ be a society.  Let $s'$ be the max $\{s, s^*\}$.  Let $\zP$ be a transaction of order $(k+2q)l(k^*+2q^*)l^*(s')$ in $(G, \Omega)$, and let $X_1$, $X_2$ be disjoint segments of $\Omega$ such that each element of $\zP$ has one endpoint in $X_1$ and the other endpoint in $X_2$.  Let $(G, \Omega^*)$ be the society obtained by flipping $X_1$.  There exists a sublinkage $\zP'$ of $\zP$ satisfying one of the following.
\begin{itemize}
\item[\rm(i)] either $\zP'$ is a $(k, l)$-leap pattern in $(G, \Omega)$ or $\zP'$ is a $(k^*, l^*)$-leap pattern in $(G, \Omega^*)$;
\item[\rm(ii)] either $\zP'$ is a $q$-nested crosses transaction in $(G, \Omega)$ or $\zP'$ is a $q^*$-nested crosses transaction in $(G, \Omega^*)$;
\item[\rm(iii)] either $\zP'$ is a planar transaction of order $s$ in $(G, \Omega)$ or $\zP'$ is a planar transaction of order $s^*$ in $(G, \Omega^*)$.
\end{itemize}
\end{LE}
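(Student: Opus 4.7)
The strategy is to reduce the problem to a permutation-pattern question and apply Lemma~\ref{lem:transES} iteratively. Label the paths of $\zP$ as $P_1, \ldots, P_N$ in the order their endpoints appear in $X_1$, and let $\sigma$ be the induced permutation on $[N]$ determined by the $X_2$ order of the other endpoints. Then a planar sub-transaction in $(G, \Omega)$ corresponds to a descending subsequence of $\sigma$; a planar sub-transaction in the flipped society $(G, \Omega^*)$ corresponds to an ascending subsequence of $\sigma$; nested crosses and twisted nested crosses correspond to specific block-permutation patterns; and a $(k,l)$-leap pattern corresponds to a descending subsequence together with $k$ additional paths each crossing at least $l$ elements of the subsequence, with the further property that their major endpoints are spread so that at least $l$ subsequence endpoints lie between any pair. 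This reframing allows a systematic application of the Erd\H{o}s--Szekeres-type Lemma~\ref{lem:transES}.

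The plan is to apply Lemma~\ref{lem:transES} iteratively with target parameters $(k^*+2q^*)l^*$ and $(k+2q)l$, extracting $s'$ pairwise disjoint monotone sub-transactions from $\zP$ (each iteration requires only $((k^*+2q^*)l^*-1)((k+2q)l-1)+1$ paths, and the hypothesis $|\zP|\ge (k+2q)l(k^*+2q^*)l^*s'$ allows $s'$ iterations). By pigeonhole, at least $\lceil s'/2 \rceil$ are of the same type, and after possibly flipping $X_1$ (which exchanges planar and crosscap between the two societies), assume they are planar sub-transactions $\zP_1, \dots, \zP_{\lceil s'/2\rceil}$ of order $(k+2q)l$ in $(G, \Omega)$. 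I would then select a representative path from each $\zP_i$, forming a transaction of order $\lceil s'/2 \rceil$, and reapply Lemma~\ref{lem:transES}. A planar sub-transaction of order $s$ among the representatives, combined with the internal $\zP_i$-structure, gives outcome (iii) directly. A crosscap sub-transaction among the representatives, combined with the planar structure of the $\zP_i$'s, allows extraction of either outcome (ii) (pair up two paths per relevant $\zP_i$ to get nested crosses of order $q$: the $P$-paths from distinct $\zP_i$'s form a planar sub-transaction, the $Q$-paths likewise, and matched pairs from the same $\zP_i$ produce the required crosses) or outcome (i) (use one $\zP_i$ as the planar backbone $\zP_1$ and choose $k$ leap paths from other $\zP_j$'s whose endpoints in $X_1$ are distributed across $\zP_1$'s endpoints with at least $l$ endpoints between each consecutive pair of major endpoints).

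The main obstacle is the combinatorial bookkeeping required for outcomes (i) and (ii). For (i) one must ensure the $k$ leap paths are disjoint from $\zP_1$, that each crosses at least $l$ paths of $\zP_1$, and that the major endpoints $t_1, \dots, t_k$ satisfy $d_{\zP_1}(t_i, t_j) \ge l$ for every pair; the factors $l$ and $l^*$ in the hypothesis provide the slack needed to guarantee this spread, while $(k+2q)$ (respectively $(k^*+2q^*)$) simultaneously accounts for the $k$ leap paths and for the $2q$ paths making up a nested crosses of order $q$. Handling the symmetric case in $(G, \Omega^*)$ via the flip, and verifying that the parameter bounds propagate correctly through the two rounds of Erd\H{o}s--Szekeres and the pigeonhole step, yields exactly the hypothesized bound $(k+2q)l(k^*+2q^*)l^* s'$.
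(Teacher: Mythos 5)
There is a genuine gap: your constructions for outcomes (i) and (ii) do not follow from the structure you extract. After you split $\zP$ into pairwise disjoint monotone blocks $\zP_1,\dots,\zP_{\lceil s'/2\rceil}$ and run Erd\H{o}s--Szekeres on one representative per block, the only crossing information you control is between the representatives; the interaction between a non-representative path of $\zP_j$ and the paths of $\zP_i$ is completely unconstrained. Concretely: for outcome (ii), if the blocks are planar then two paths ``matched'' from the same $\zP_i$ never cross, so they cannot play the roles of $P_i,Q_i$ in a nested crosses transaction, and if the blocks are crosscap then paths chosen from distinct blocks may cross each other even when the representatives do not. For outcome (i), a path of $\zP_j$ crossing the representative of $\zP_i$ only guarantees $d_{\zP_i}\ge 1$, not $\ge l$, and nothing forces the major endpoints of $k$ such paths to be pairwise at $d_{\zP_i}$-distance at least $l$; the factors $l,l^*$ in the hypothesis do not by themselves create this spread. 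The counting for outcome (iii) also fails: when $s=s^*=s'$ you have only $\lceil s'/2\rceil<s$ representatives, and each block has order $(k+2q)l$, which can be smaller than $s$, so neither source yields a planar transaction of order $s$.

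The paper's proof rests on a different key idea that your proposal is missing: define two paths of $\zP$ to be equivalent when every third path crosses both or neither. If some class has more than $s'$ elements it is itself a planar or crosscap transaction, giving (iii) at once; otherwise there are at least $(2q+k)l(2q^*+k^*)l^*$ pairwise non-equivalent paths, and Erd\H{o}s--Szekeres (Lemma~\ref{lem:transES}) gives a monotone transaction among \emph{non-equivalent} paths. Non-equivalence of the consecutive paths $P_{al+1},P_{al+2}$ then supplies, for each of the $2q+k$ blocks of $l$ consecutive paths, a witness path $Q_a$ crossing exactly one of them; each $Q_a$ is either ``long'' (crosses $\ge l$ paths of the monotone transaction, and $k$ long witnesses assemble into a $(k,l)$-leap pattern, the spread condition coming for free from the disjointness of the blocks of $l$) or ``short'' going left or right (and $q$ short witnesses of the same direction give a $q$-nested crosses transaction). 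It is precisely these witness paths, produced by the equivalence relation, that your two-round Erd\H{o}s--Szekeres-plus-pigeonhole scheme has no substitute for; without them the leap and nested-crosses outcomes cannot be certified.
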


\begin{proof}
Assume the lemma is false.  We define an equivalence relation on the elements of $\zP$.  We say two elements $P_1, P_2 \in \zP$ are equivalent if for all $Q \in \zP$, $Q \neq P_1, P_2$, we have that $Q$ crosses $P_1$ in $(G, \Omega)$ if and only if $Q$ crosses $P_2$ in $(G, \Omega)$.  By the definition of the relation, the elements of an equivalence class will either all be pairwise crossing or pairwise non-crossing.  Lest we find a planar transaction or crosscap transaction satisfying  (iii), we immediately conclude that the following claim holds.

\begin{claim}\label{cl:es1}
Every equivalence class has size at most $s'$.
\end{claim}

Let $\zP_2$ be a maximal subset of pairwise non-equivalent elements of $\zP$.  By Claim \ref{cl:es1}, $|\zP_2| \ge (2q+k)l\cdot (2q^*+k^*)l^*$.  By Lemma \ref{lem:transES}, $\zP_2$ contains a transaction $\zP_3$ which is either a planar transaction of order $(2q+k)l$ or a crosscap transaction of order $(2q^*+k^*)l^*$.

Assume as a case that $\zP_3$ is a planar transaction.  Let $m = (2q+k)l$, and let the elements of $\zP_3$ be labeled $P_1, \dots, P_m$ with the endpoints of $P_i$ labeled $x_i$ and $y_i$ such that $x_1, x_2, \dots, x_m y_m, \dots, y_2, y_1$ occur on $\Omega$ in that order.  Fix $a \in\{ 0, \dots, (2q+k)-1\}$ and consider the paths $P_{al+1}$ and $P_{al+2}$.  As $P_{al+1}$ and $P_{al+2}$ are not equivalent, there exists a path $Q_a$ which crosses one of $P_{al+1}$ or $P_{al+2}$ but not the other.  Thus, $Q_a$ has exactly one endpoint in $x_{al+1}\Omega x_{al+2} \cup y_{al+2}\Omega y_{al+1}$.  We say that $Q_a$ is \emph{long} if it crosses at least $l$ elements of $\zP_3$, and \emph{short} otherwise. If there exist $k$ distinct indices $a$, $0 \le a \le 2q+k-1$ such that $Q_a$ is long, then $(G, \Omega)$ contains a $(k, l)$-leap pattern transaction, a contradiction.  Note that the endpoint contained in $x_{al+1}\Omega x_{al+2} \cup y_{al+2}\Omega y_{al+1}$ of such a $Q_a$ is the endpoint labeled $t_i$ as in the definition of a $(k,l)$-leap pattern.

Again, let $0 \le a \le 2q+k-1$.  If $Q_a$ is \emph{short}, we say it goes \emph{right} if it crosses $P_{al+2}$ and goes \emph{left} otherwise. Observe that if $Q_{a}$ is short and goes right, then it does not cross $P_{(a+1)l +1}$ and thus if $Q_{a+1}$ is also short and goes right, then $Q_a$ does not cross $Q_{a+1}$.  It follows that if there exist $q$ distinct indices $a$ such that $Q_a$ is short and goes right, then $(G, \Omega)$ contains a $q$-nested crosses transaction, a contradiction.  We conclude that there exist at least $q$ distinct indices $a$ such that $Q_a$ is short and goes left.  By the same argument as above, we see that there exists a $q$-nested crosses transaction, again a contradiction.

We have completed the analysis of the case when $\zP_3$ is planar.  If $\zP_3$ is a crosscap transaction of order $(2q^*+k^*)l^*$, the same arguments show that $(G, \Omega^*)$ contains either a $(k^*, l^*)$-leap pattern transaction or $q^*$-nested crosses transaction, again a contradiction to our choice of counterexample and completing the proof of the lemma.
\end{proof} 
\section{Crosses and leaps yield clique minors}\label{sec:crossesandleaps}

In this section, we show that large cross and leap patterns rooted on an orthogonal linkage to a sufficiently large nest yield a large clique minor grasped by the nest and orthogonal linkage. This is another key step toward our overall goal, and forms another place where we proceed differently than Robertson and Seymour.  In \cite{RS16}, Robertson and Seymour showed that if there are many ``long jumps", then either there exists the desired clique minor or there is one more vertex that yields a ``universal'' apex.  In our proof, the second outcome will not occur, because this case is already taken care of through the application of the results in \cite{flatwall} in Section \ref{sec:planarstrip}.

We first consider the case of large nested cross patterns.  Let $r \ge 1$ be a positive integer and define the graph $H_{2r}^1$ as follows.  Let $V(H_{2r}^1) = \{(i,j): 1 \le i \le 2r, 1 \le j \le 2r\}$.  The edges of $H_{2r}^1$ are added as follows.  Let $(i, j)$ be adjacent $(i', j')$ if $|i-i'| + |j - j'| = 1$.  Moreover, add the edge with endpoints $(i,r)$ and $(i+1, r+1)$ and the edge with endpoints $(i+1, r)$ and $(i, r+1)$ for $1 \le i \le 2r-1$.  Thus, $H_{2r}^1$ is the graph obtained from the $2r \times 2r$-grid by adding a pair of crossing edges to every face in the middle row of faces.

The following result was proven in \cite{flatwall}.

\begin{LE}[\cite{flatwall}, Lemma 3.2]\label{lem:gridwithcrosses}
Let $p \ge 2$ be a positive integer.  The graph $H_{p(p-1)}^1$ contains a $K_p$ which is grasped by the underlying grid.
\end{LE}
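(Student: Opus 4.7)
The plan is to construct an explicit model of $K_p$ with branch sets $X_1,\ldots,X_p$ in $H^1_n$, where $n:=p(p-1)=2r$ and $r=\binom{p}{2}$, and then to verify the grasping condition on the underlying $n\times n$ grid. The key observation driving the construction is that the $n-1=2\binom{p}{2}-1$ middle-row crossing pairs of $H^1_n$ are just enough to supply one dedicated adjacency for each of the $\binom{p}{2}$ unordered pairs $\{i,j\}\subseteq\{1,\ldots,p\}$, with a single crossing to spare. The case $p=2$ is immediate because $H^1_2$ is a $K_4$ on the four cells of the $2\times 2$ grid, from which a grasped $K_2$ is read off directly from the two crossing diagonals.

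For $p\ge 3$, I would assemble each $X_l$ from three pieces: a top-half staircase $T_l$ in rows $1,\ldots,r$ that visits $p$ cells with strictly increasing row and column indices (supplying the $p$ grid points required for grasping), a middle piece $M_l$ in rows $r,r+1$ that collects the pairwise adjacencies through the middle-row crossings, and a bottom-half piece $B_l$ (a mirror of $T_l$) that ensures connectedness. The staircases $T_l$ can be placed pairwise disjointly in the top half because $r=\binom{p}{2}\ge p$ for $p\ge 3$: I would allot each $T_l$ its own band of rows and position its $p$ steps inside that band, so that $T_l$ alone already contains $p$ grid cells with pairwise distinct rows and columns, verifying the grasping condition for $X_l$.

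The middle piece $M_l$ is the heart of the construction. I would view the $p$ entry points at row $r$ (the bottom ends of the $T_l$) as $p$ ``wires'' that must be routed to the $p$ entry points at row $r+1$ (the top ends of the $B_l$). The routing realizes a braid in the $2\times n$ middle strip in which every pair of wires crosses exactly once; each such pair-crossing takes place at a distinct column pair and consumes one of the two diagonal edges of the corresponding $K_4$. Since $\binom{p}{2}\le n-1$ there are enough crossings, and at the crossing dedicated to the pair $\{i,j\}$ the cell of $X_i$ is joined to the cell of $X_j$ by a single edge, furnishing the $K_p$-edge between $X_i$ and $X_j$.

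The main obstacle will be to actually realize the $p$-wire reversal braid using only the edges of $H^1_n$ in the $2\times n$ middle strip, while keeping all $p$ wires pairwise vertex disjoint. Because the strip has only two rows, the braid must be drawn with each wire following a nearly horizontal sequence of cells alternating between rows $r$ and $r+1$, and the $\binom{p}{2}$ crossings must be spread along the column axis so that no two wires share a cell. I would handle this by an explicit combinatorial schedule --- for instance by assigning each pair $\{i,j\}$ to a distinct column $c_{i,j}$ via a round-robin tournament schedule on $p$ teams, which packs the $\binom{p}{2}$ pair-crossings into pairwise disjoint column slots. A direct verification against the resulting layout then confirms pairwise vertex disjointness of the $X_l$, connectedness of each $X_l$, the presence of a $K_p$-adjacency for every pair $\{i,j\}$, and the grasping condition, completing the proof.
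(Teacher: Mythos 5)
You are proving a lemma that this paper does not actually prove itself (it is imported verbatim from [flatwall, Lemma~3.2]), so I can only judge your construction on its own merits, and it has two concrete problems, the second of which is fatal. First, the staircase packing does not fit. In $H^1_{p(p-1)}$ the two special rows are $r$ and $r+1$ with $2r=p(p-1)$, so the ``top half'' available to your pieces $T_l$ has only $r=\binom{p}{2}$ rows (and $p(p-1)$ columns). Each $T_l$ must contain $p$ cells with pairwise distinct rows, so a private band of rows for each $T_l$ needs at least $p$ rows per band and $p^2$ rows in total; but $p^2>\binom{p}{2}$ (and $p^2>p(p-1)$ even if you band along the other coordinate), so the inequality you invoke, $\binom{p}{2}\ge p$, is not the relevant one. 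This piece is repairable (parallel staircases offset in columns can share rows), but as written the step fails.

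The fatal gap is the braid confined to rows $r,r+1$. Any path lying in that $2\times p(p-1)$ strip changes its column coordinate by at most one per edge (horizontal, vertical and crossing edges all have this property), so the set of columns it meets is an interval and it occupies a vertex in every column of that interval. If wires $M_i$ and $M_j$ cross at the face between columns $c$ and $c+1$, both intervals contain $c$; pairwise intersecting intervals on a line have a common point (one-dimensional Helly), so some single column would have to be visited by all $p$ wires, while a column of the strip has only two vertices --- impossible for $p\ge3$. Even under the weakest reading, where the pair $\{i,j\}$ only needs one crossing edge with one endpoint in $M_i$ and the other in $M_j$, the same argument with intervals fattened by one forces all wires through at most two consecutive columns, i.e.\ at most four disjoint wires, so the scheme still collapses for $p\ge5$. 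No round-robin assignment of pairs to columns can rescue this: the obstruction is vertex capacity, not scheduling. A correct construction must let each branch set leave the strip between its crossings and do all horizontal travel in the top and bottom halves: the branch sets are full-height ``snakes'' sweeping left to right, realizing the reversal permutation by a bubble-sort sequence of adjacent transpositions, each transposition performed locally at one crossing face (two columns, two branch sets at a time); the $\binom{p}{2}$ swaps fit comfortably among the $p(p-1)-1$ crossing faces, and each snake then meets at least $p$ distinct rows and $p$ distinct columns, so the grasping condition comes for free and the separate staircases $T_l$, $B_l$ become unnecessary. Your pair-to-column round-robin idea is the right combinatorial core, but the geometric architecture built around it would not work.
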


\begin{CO}\label{cor:nonestedcrosses}
Let $p, s, t\ge 1$ be integers.  Let $(G, \Omega)$ be a society with a cylindrical rendition $\rho = (\Gamma, \zD, c_0)$.  Let $\zC = (C_1, \dots, C_s)$ be a nest around $c_0$ of order $s\ge 4p^2$ and let $\zP$ be an orthogonal radial linkage in $\rho$ of order $t\ge 8p^2$.  Let $\zQ$ be a $V(\Omega)$-linkage of order $4p^2$ which is either a $2p^2$-nested crosses or $2p^2$-twisted nested crosses transaction which is coterminal with $\zP$ up to level $C_1$.  Then $G$ contains a $K_p$ minor which is grasped by $\zC \cup \zP$.
\end{CO}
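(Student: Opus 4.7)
The plan is to build a subdivision of $H^1_{p(p-1)}$ inside $G$ whose $p(p-1) \times p(p-1)$ underlying grid is formed by $p(p-1)$ cycles of the nest $\zC$ serving as horizontal paths and $p(p-1)$ carefully chosen paths of the radial linkage $\zP$ serving as vertical paths, and whose $p(p-1)-1$ middle-row X-crossings are produced by crossing pairs of $\zQ$. Because $\zQ$ is coterminal with $\zP$ up to level $C_1$, each path of $\zQ$ agrees with two paths of $\zP$ outside $C_1$, so the X-crossings that $\zQ$ produces inside the disk bounded by $C_1$ naturally attach to the grid through those shadowing $\zP$-paths. Once the subdivision is in place, Lemma~\ref{lem:gridwithcrosses} yields a $K_p$ minor grasped by the underlying grid; since that grid lives inside $\zC \cup \zP$, the resulting minor is grasped by $\zC \cup \zP$, which is exactly the required conclusion.

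I would first handle the nested crosses case. Coterminality identifies, for each path $Q \in \zQ$, the two paths of $\zP$ it shadows outside $C_1$; call the resulting set $\zP'' \subseteq \zP$. Inside the disk bounded by $C_1$, the $2p^2$ crossing pairs of $\zQ$ realize $2p^2$ mutually non-crossing X's whose $8p^2$ endpoints lie on $C_1$ in a cyclic order dictated by $\Omega$. The combinatorial heart of the proof is to choose $p(p-1)$ paths from $\zP''$ and to assign $p(p-1)-1$ of the X's to the consecutive pairs of chosen paths so that each assigned X sits between its pair. Since $2p^2 \ge p(p-1)-1$ provides considerable slack and the arrangement of non-crossing X's in a disk has an underlying tree structure, I would carry this out by iteratively peeling off extremal X's together with the columns they occupy.

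After the assignment, I would pick $p(p-1)$ cycles from $\zC$ (possible since $|\zC| = 4p^2 \ge p(p-1)$), take the subpath of each cycle between the outermost and innermost chosen columns to form horizontal paths, and route the matched X's, together with their $\zP''$-tails, as the middle-row X-crossings. The resulting subgraph is a subdivision of $H^1_{p(p-1)}$ whose underlying grid is a subgraph of $\zC \cup \zP$, so Lemma~\ref{lem:gridwithcrosses} finishes this case.

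The twisted $2p^2$-nested crosses case is handled by an analogous construction: its denser crossing pattern inside the disk bounded by $C_1$ still yields enough X-like crossings between pairs of chosen columns to realize $H^1_{p(p-1)}$, possibly after a preliminary rerouting of tails through the grid $\zC \cup \zP$ that converts the twisted pattern into non-crossing X-crossings between adjacent columns. The main obstacle I anticipate is the combinatorial matching step in the nested case, namely proving that any arrangement of $2p^2$ non-crossing X's on $C_1$ admits a selection of $p(p-1)$ columns and $p(p-1)-1$ X's each lying between a consecutive pair of chosen columns; I would handle this by induction on the tree structure of the X-arrangement, using the slack $2p^2 \gg p(p-1)-1$ as a safety margin.
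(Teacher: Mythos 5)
There is a genuine gap here, and it is geometric rather than combinatorial. In your grid the horizontal paths are cycles of $\zC$ and the vertical paths are radial paths of $\zP$; but every cycle of the nest lies outside the disk bounded by the track of $C_1$, while all of the crossing behaviour of $\zQ$ takes place inside that disk (outside $C_1$ the paths of $\zQ$ simply follow disjoint paths of $\zP$). Hence, relative to your grid, the X's sit at the extreme inner end of the columns, radially below \emph{every} row, not in a middle row of faces; no assignment of X's to consecutive pairs of chosen $\zP$-columns can convert them into the middle-row crossings that $H^1_{p(p-1)}$, and hence Lemma~\ref{lem:gridwithcrosses}, requires. A second symptom of the same misreading: each crossing pair of a $2p^2$-nested crosses transaction has two endpoints in each of the two segments $X_1,X_2$ spanned by $\zQ$, and since $\zQ$ is a transaction the crosses are forced to be linearly ordered side by side between those segments; an individual cross therefore cannot be ``localized between'' two consecutive radial columns whose ends lie in the same segment. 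So the matching step you single out as the heart of the argument (choosing columns and placing one X between each consecutive pair, by induction on a tree structure) does not reflect the actual endpoint pattern, and even if it were carried out the resulting subgraph would not be a subdivision or model of $H^1_{p(p-1)}$.

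The paper's proof avoids this by a different choice of mesh: the vertical paths are the elements of $\zQ$ themselves --- which, by coterminality up to level $C_1$, meet each cycle $C_2,C_3,\dots$ exactly where the corresponding paths of $\zP$ do, and this is precisely what transfers the grasping from the mesh to $\zC\cup\zP$ --- while the horizontal paths are arcs of \emph{distinct} nest cycles taken separately on the two sides of the transaction: for $1\le i\le p(p-1)$, a subpath $Z^1_i$ of $C_{1+i}$ avoiding the $X_2$-side tails and a subpath $Z^2_i$ of $C_{p^2+i}$ avoiding the $X_1$-side tails. Each column then traverses all the $Z^1$-rows, performs its crossing inside the $C_1$-disk, and traverses all the $Z^2$-rows, so the crossings of the pairs land between the two families of rows, i.e.\ in the middle of the mesh; this is what yields a model of $H^1_{p(p-1)}$ (using $4p^2\ge 2p(p-1)$ columns), and the same union $\zQ\cup\bigcup_i (Z^1_i\cup Z^2_i)$ serves in the twisted case as well, so no separate rerouting argument is needed. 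If you rebuild your construction with the $\zQ$-paths as columns and the rows split into these two families, the remaining parts of your outline (invoking Lemma~\ref{lem:gridwithcrosses} and deducing the grasping of $\zC\cup\zP$ from coterminality) do go through.
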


\begin{proof}
We construct a $H_{p(p-1)}^1$ minor in $G$ using $\zC$ and $\zQ$, and the claim will follow by Lemma \ref{lem:gridwithcrosses}.  In order to define the model of the $H_{p(p-1)}^1$ minor, we will follow the ideas in the proof of Theorem \ref{thm:RTplanarstrip} to ensure that if the $K_p$-minor grasps the underlying grid of the $H_{p(p-1)}^1$-minor, then it grasps $\zC \cup \zP$ as well.

Let $X_1$ and $X_2$ be minimal (by inclusion) segments of $\Omega$ such that $\zQ$ is an $X_1$-$X_2$-linkage.  For $i = 1, 2$, let $\zP_i$ be the linkage formed by the elements of $\zP$ which both have an endpoint in $X_i$ and intersect $H \cap \zQ$ where $H$ is the outer graph of $C_1$.  Label the elements of $\zP_i$ as $P_1^i, \dots, P_{4p^2}^i$ so that when traversing $X_i$ in the order given by $\Omega$, we encounter the endpoints of $P^i_1, \dots, P^i_{4p^2}$ in that order.

For $1 \le i \le p(p-1)$, let $Z_i^1$ be the minimal subpath in $C_{1+i}$ with one endpoint in $P_1^1$, the other endpoint in $P_{4p^2}^1$, and no internal vertex in $P_1^2$.  Similarly,  let $Z_i^2$ be the minimal subpath of $C_{p^2+i}$ with one endpoint in $P_1^2$, the other endpoint in $P_{4p^2}^2$, and no internal vertex in $P_1^1$.  Thus, for all $i$ and $j$, $Z_i^1$ and $Z_j^2$ are not contained in the same cycle of the nest $\zC$.

We conclude that $\zQ \cup \bigcup_1^{p(p-1)} Z_i^1 \cup Z_i^2$ contains a model of $H_{p(p-1)}^1$ where every horizontal path arises from a subpath of a distinct cycle of the nest $\zC$ and every vertical path arises from an element of $\zQ$.  Lemma \ref{lem:gridwithcrosses} implies that $G$ contains a $K_p$ minor which is grasped by $\zC \cup \zP$ by the construction of the minor model.  Note that the graph $\zQ \cup \bigcup_1^{p(p-1)} Z_i^1 \cup Z_i^2$ is the same when $\zQ$ is a nested crosses or a twisted nested crosses transaction.  This completes the proof of the lemma.
\end{proof}

The next lemma is a restatement of Lemma 4.5 of \cite{flatwall}, using the notation we established in Section \ref{sec:planarstrip}.

\begin{LE}[Lemma 4.5, \cite{flatwall}]\label{lem:meshandpaths}
Let $p \ge 1$ be an integer, $k_0 = 12288(p(p-1)^{12}$, $M$ a mesh in a graph $G$ with vertical paths $Q_1, \dots, Q_s$ for $s \ge 1$, and assume $G$ contains a linkage $\zP$ of $M$-paths of order $k_0$ such that the endpoints of every $P \in \zP$ can be labeled $x(P), y(P)$ so that the following holds:
\begin{itemize}
\item $x(P)$ is at distance $10(p(p-1))$ from the boundary of $M$;
\item for all $P \in \zP$, if $i$ and $j$ be indices such that $x(P)$ is in the vicinity of $Q_i$ and $y(P)$ is in the vicinity of $Q_j$, then $|i-j| \ge 10(p(p-1))$;
\item for all $P, P' \in \zP$ with $P \neq P'$, if $i$ and $j$ are indices such that $x(P)$ is in the vicinity of $Q_i$ and $x(P')$ is in the vicinity of $Q_j$, then $|i - j| \ge 10p(p-1)$.
\end{itemize}
Then $G$ contains a $K_p$ minor grasped by $M$.
\end{LE}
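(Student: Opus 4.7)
The plan is to extract from $\zP$ a structured sublinkage by iterated pigeonhole and then use it together with pieces of $M$ to exhibit a model of $H^1_{p(p-1)}$; applying Lemma \ref{lem:gridwithcrosses} then hands us a $K_p$ minor, and the construction will automatically make it grasped by $M$. For each $P \in \zP$ I associate four indices: $i(P)$ and $j(P)$ defined so that $x(P)$ lies in the vicinity of $Q_{i(P)}$ and $y(P)$ lies in the vicinity of $Q_{j(P)}$, and analogous row indices $i'(P),j'(P)$ coming from the horizontal paths of $M$. The hypotheses guarantee that $|i(P)-j(P)| \ge 10p(p-1)$, that the indices $\{i(P) : P \in \zP\}$ are pairwise $10p(p-1)$-separated, and that each $x(P)$ lies at distance at least $10p(p-1)$ from the boundary of $M$.

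Classify each $P \in \zP$ by the signs of $j(P)-i(P)$ and $j'(P)-i'(P)$; by pigeonhole some sublinkage $\zP_1 \subseteq \zP$ of size at least $k_0/4$ shares a single classification, so all of its jumps run in a common direction in both the column and row coordinates. Order $\zP_1$ by $i(P)$ and apply Erd\H{o}s--Szekeres in sequence to the $j$-coordinates and then to the target row coordinates. This produces a sublinkage $\zP_2$ on which all four coordinate sequences are monotone. The large exponent $12$ in the bound $k_0 = 12288(p(p-1))^{12}$ is chosen exactly so that, even after these reductions and the further losses absorbed in the routing step below, $\zP_2$ still has size at least $p(p-1)$.

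Using $\zP_2$ I construct a model of $H^1_{p(p-1)}$ inside $M \cup \zP_2$: the horizontal and vertical paths of the $p(p-1) \times p(p-1)$ skeleton grid come from subpaths of horizontal and vertical paths of $M$ taken between and around the endpoints of selected members of $\zP_2$, while the middle-row crossings of $H^1_{p(p-1)}$ are realized by pairs of selected members of $\zP_2$ themselves. All three hypotheses of the lemma enter here: the pairwise $10p(p-1)$-separation of the $x$-coordinates leaves room for $p(p-1)$ disjoint column paths of the skeleton to fit between consecutive selected jumps, the distance-from-boundary condition provides $p(p-1)$ row paths on either side of the middle band, and the jump length $|i(P)-j(P)| \ge 10p(p-1)$ ensures that each selected member of $\zP_2$ actually traverses the full middle band. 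The monotonicity extracted in the second step is what makes it possible to combine two neighbouring selected jumps into the pair of crossing edges of a single cross of $H^1_{p(p-1)}$. Invoking Lemma \ref{lem:gridwithcrosses} on this model yields a $K_p$ minor, and the construction guarantees that each branch set contains intersections $V(P_{i_l} \cap Q_{j_l})$ for $p$ distinct horizontal and $p$ distinct vertical paths of $M$, so the minor is grasped by $M$.

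The main obstacle is the disjoint routing in the final step: carving $M$ into the skeleton grid while threading the selected members of $\zP_2$ through the middle band without collisions among themselves or with the skeleton. The generous spacing hypotheses and the monotonicity obtained from Erd\H{o}s--Szekeres are exactly what let this routing go through cleanly, but making the bookkeeping precise--- particularly arranging for the skeleton cells to coincide with $p$ distinct horizontal and $p$ distinct vertical paths of $M$ so that the grasping condition is inherited--- accounts for the bulk of the $(p(p-1))^{12}$ factor in $k_0$.
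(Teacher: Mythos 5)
You should first note what the target of comparison is: this paper never proves Lemma~\ref{lem:meshandpaths} at all --- it is imported verbatim as Lemma~4.5 of \cite{flatwall} (the text immediately above the statement says so), and the clique minor is only used here as a black box. So your attempt can only be measured against the proof in \cite{flatwall}, which also works with grid-with-crossings auxiliary graphs in the spirit of Lemma~\ref{lem:gridwithcrosses}, but whose substance is a case analysis on where the uncontrolled endpoints $y(P)$ land; that case analysis is precisely what your sketch omits.

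The genuine gap is the assertion that the middle-row crosses of $H^1_{p(p-1)}$ can be realized by ``pairs of selected members of $\zP_2$.'' Two jumps form a cross only if their four endpoints interleave appropriately in both the column and the row structure of the model, and your pigeonhole/Erd\H{o}s--Szekeres extraction does not guarantee this. After monotonization the selected family can be pairwise non-crossing: the column intervals $[i(P),j(P)]$ may be pairwise disjoint (``serial'' jumps), and even when they interleave, a family monotone in both coordinates with all $x$-endpoints in one row band and all $y$-endpoints in another is the \emph{parallel} pattern, not the crossing one --- for a cross you would need one coordinate increasing and the other decreasing. In those configurations no two jumps can serve as the two crossing edges of a face of $H^1_{p(p-1)}$, and one must instead manufacture a cross from a \emph{single} long jump together with a vertical path of $M$ that it flies over (using that $x(P)$ is deep), or aim at a different auxiliary structure; your sketch never mentions this. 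Compounding the problem, the hypotheses give no control whatsoever over the $y$-endpoints: they may sit near the boundary, at arbitrary rows, and arbitrarily many of them may lie in the vicinity of one and the same vertical path $Q_j$. Your routing plan absorbs each $y(P)$ into a skeleton branch set by a tail in the mesh, and the disjointness of those tails is exactly what fails when the $y$-endpoints cluster; the spacing hypotheses you invoke (distance from the boundary, pairwise spread) concern only the $x$-endpoints. Since you explicitly defer the routing and bookkeeping to those spacing hypotheses, the hard core of the lemma --- the part responsible for the $(p(p-1))^{12}$ loss --- is missing rather than merely tedious, so the proposal as written is not a proof.
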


We now use Lemma \ref{lem:meshandpaths} to show that a large $(k,l)$-leap transaction yields a clique minor grasping a nest and orthogonal linkage.  We first give a definition of twisted leap patterns analogously to that of twisted nested crosses.

\begin{DEF}
Let $(G, \Omega)$ be a society, $k,l \ge 0$ integers, and $(\zP, \zQ)$ a pair of linkages of $\Omega$-paths.  The pair $(\zP, \zQ)$ is a \emph{$(k,l)$-twisted leap pattern} if there exists a segment $X$ of $\Omega$ such that every element of $\zP$ has one endpoint in $X$ and one endpoint not in $X$ such that $(\zP, \zQ)$ is a $(k,l)$-leap pattern in the society $(G, \Omega^*)$ obtained by flipping $X$.
\end{DEF}

\begin{LE}\label{lem:nolargeleap}
Let $p, s, t\ge 1$ be integers.  Let $(G, \Omega)$ be a society with a cylindrical rendition $\rho = (\Gamma, \zD, c_0)$.  Let $\zC = (C_1, \dots, C_s)$ be a nest around $c_0$ of order $s\ge 20p^2$ and let $\zP$ be an orthogonal radial linkage in $\rho$ of order $t$.  Let $(\zR, \zQ)$ be a pair of $V(\Omega)$-linkages forming an $(36864p^{24}, 10p^2)$-leap pattern or twisted leap pattern such that $\zR$ and $\zQ$ are coterminal with $\zP$ up to level $C_1$.  Then $G$ contains a $K_p$ minor which is grasped by $\zC \cup \zP$.
\end{LE}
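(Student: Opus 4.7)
My plan is to reduce to Lemma \ref{lem:meshandpaths} by building a rectangular mesh $M$ from the nest $\zC$ and the radial linkage $\zP$, and extracting from each jump in $\zQ$ an $M$-path that skips many vertical paths. The twisted case follows by applying the same argument to the society $(G,\Omega^*)$ obtained by flipping the relevant segment, so I focus on the standard $(k,l)$-leap pattern with $k=36864p^{24}$ and $l=10p^2$.

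By coterminality of $\zR$ and $\zQ$ with $\zP$ up to level $C_1$, there are subsets $\zP_R,\zP_Q\subseteq\zP$ with $\zP_R\cap\zP_Q=\emptyset$ (since $V(\zR)\cap V(\zQ)=\emptyset$) such that in the outer graph of $C_1$, $\zR$ coincides with $\zP_R$, and each jump $Q\in\zQ$ has its two arms as subpaths of two distinct elements of $\zP_Q$, so that $|\zP_Q|=2k$. The mesh $M$ has the $s\ge 20p^2$ cycles of $\zC$ as horizontal paths and the elements of $\zP_R\cup\zP_Q$ as vertical paths, after cutting the cylindrical structure along one radial path not in $\zP_Q$; orthogonality guarantees that each vertical path meets each horizontal path in exactly one vertex, and the cyclic order of $V(\Omega)$-endpoints gives a consistent ordering of vertical paths. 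The leap condition $d_\zR(s_Q,t_Q)\ge 10p^2$, combined with the identification of $\zR$-endpoints with $\zP_R$-endpoints on $V(\Omega)$, translates into a separation of at least $10p^2$ positions between the two vertical paths hosting the arms of each $Q$, and the pairwise condition on major endpoints gives analogous pairwise separation across distinct jumps.

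For each jump $Q\in\zQ$, the subpath between its first and last meeting with $C_1$ lies inside the disk bounded by $C_1$. I extend this inner subpath along the two arms outward to a fixed interior horizontal row $C_j$, with $j\in[10p(p-1)+1,\,s-10p(p-1)]$ chosen in advance, obtaining a candidate $M$-path with endpoints on $C_j$ in vicinity of the two arm vertical paths $V_Q^1,V_Q^2\in\zP_Q$. To certify it as a genuine $M$-path, I remove from $M$ the short local tails of $V_Q^1$ and $V_Q^2$ between $C_1$ and $C_j$; different jumps use different arms in $\zP_Q$, so these deletions are mutually non-interfering, and the resulting mesh retains all $s$ horizontal paths and all $|\zP_R|$ vertical paths from $\zP_R$. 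Since $|\zQ|=3k_0$ for $k_0=12288(p(p-1))^{12}$, I can afford to discard jumps whose $x$-endpoints fall within $10p(p-1)$ of the first or last surviving vertical path, retaining $k_0$ $M$-paths satisfying all the hypotheses of Lemma \ref{lem:meshandpaths}: the $x$-endpoints lie at distance at least $10p(p-1)$ from the boundary of $M$ (horizontally by the choice of $j$, vertically by the discarding), each $M$-path skips at least $10p(p-1)$ vertical paths by the leap condition, and the $x$-endpoints occupy pairwise-far vertical paths by the major-endpoint condition. Lemma \ref{lem:meshandpaths} then yields a $K_p$ minor grasped by $M$, and because each horizontal path of $M$ is a distinct cycle of $\zC$ and each vertical path is a distinct element of $\zP$, this minor is grasped by $\zC\cup\zP$, as required. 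The main technical obstacle is the arm-rerouting step, which moves the $x$-endpoints off the boundary cycle $C_1$ into the interior row $C_j$ while preserving both the mesh structure and the separation conditions required by Lemma \ref{lem:meshandpaths}.
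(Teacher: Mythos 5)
Your high-level plan — reduce to Lemma \ref{lem:meshandpaths} via a suitable mesh — is the same as the paper's, but your choice of mesh forces a rerouting step that does not work, and you have correctly flagged it as the weak point. In your mesh the horizontal paths are the concentric cycles $C_1,\dots,C_s$ and the vertical paths are radial subpaths of $\zP$, so $C_1$ is necessarily an extreme row; the inner segment of each jump $Q$ therefore attaches to the mesh only on the boundary row, and your fix is to extend it along the two arm paths out to an interior row $C_j$ and delete the arm tails between $C_1$ and $C_j$ from $M$. The extended path is still not an $M$-path: its interior runs through the vertices where the arms cross $C_1,C_2,\dots,C_{j-1}$, and those crossing vertices remain vertices of $M$ because they lie on the horizontal rows, which you keep. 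Deleting them instead would disconnect the rows and destroy the mesh (and with $2k=2\cdot36864p^{24}$ arms this is not a small perturbation), so the hypotheses of Lemma \ref{lem:meshandpaths} are not met and the argument stalls exactly at the step you call the main obstacle. A second gap: a jump's minor endpoint $s(Q)$ is unconstrained by the leap-pattern definition and may lie outside the span of the $\zR$-endpoints altogether, so discarding jumps "near the first or last surviving vertical path" does not by itself restore the vicinity/separation conditions for those jumps.

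The paper sidesteps both problems by folding the mesh differently: the vertical paths of $M$ are the full (U-shaped) elements of $\zR$ themselves, and the horizontal paths are arcs $S_1,\dots,S_{10p(p-1)+2}$ of $C_2,\dots,C_{10p(p-1)+3}$ on the $X^1$ side together with arcs $S_{10p(p-1)+3},\dots,S_{20p(p-1)+4}$ of the next cycles on the $X^2$ side. With this unfolding the arcs on the innermost cycles become the two \emph{middle} rows of $M$, so for any jump whose relevant endpoints lie well inside the strip the segment $Q^*$ between its last crossing of $S_1$ and its first crossing of $S_{10p(p-1)+3}$ is automatically an $M$-path whose endpoints sit on the middle rows, at distance at least $10p(p-1)$ from the boundary — no rerouting and no deletion from $M$ is needed; the leap distances in $d_{\zR}$ translate directly into the vertical-index separations required by Lemma \ref{lem:meshandpaths} because the verticals \emph{are} the paths of $\zR$. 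Jumps whose minor endpoint escapes the strip are handled in a separate case by pairing them up through the two components of $C_1-V(\zQ)$ to form $M$-paths with both endpoints on the middle rows. If you want to salvage your write-up, adopting this mesh (and adding the pairing argument for escaping jumps) is the essential repair; the grasping of $\zC\cup\zP$ then follows as in your last paragraph, since the horizontal arcs lie in the outer graph of $C_1$ where $\zR$ coincides with $\zP$.
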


\begin{proof}
In order to apply Lemma \ref{lem:meshandpaths}, we first define the appropriate mesh.  Fix $X_1, X_2$ disjoint segments such that every element of $\zR$ has one endpoint in $X_1$ and one endpoint in $X_2$.  Moreover, in the case that $(\zR, \zQ)$ is a twisted leap pattern, then $(\zR, \zQ)$ is a $(k,l)$-leap pattern in the society obtained by flipping $X_1$.

As in the proof of Corollary \ref{cor:nonestedcrosses}, for $i = 1, 2$, we define $\zP_i$ to be the elements of $\zP$ with an endpoint in $X_i$ and which are contained in $\zR \cap H$ where $H$ is the outergraph of $C_1$.  Let the order of $\zR$ be $T$, and let $P_1^1, \dots, P_T^1$ be the elements of $\zP_1$ ordered by their endpoints in $X_1$ and the ordering given by $\Omega$.  Similarly, let $P_1^2, \dots, P_T^2$ be the elements of $\zP_2$ ordered by their endpoints in $X_2$ via $\Omega$.  Let $S_i$, $i = 1, \dots, 10p(p-1) + 2$, be the minimal subpath of $C_{1+i}$ with one endpoint in $P_1^1$, one endpoint in $P_T^1$, and no internal vertex in $P_1^2$.  Similarly, let $S_i$, $i = 10p(p-1) + 3, \dots, 20p(p-1) + 4$, be the minimal subpath of $C_{1 + i}$ with one endpoint in $P_1^2$, the other endpoint in $P_T^2$, and no internal vertex in $P_1^1$.  Let $M$ be the (uniquely determined) mesh obtained by repeatedly deleting vertices of degree one in $\zR \cup \bigcup_1^{20p(p-1) + 4} S_i$.

Fix $(G', \Omega')$ to be $(G, \Omega)$ in the case that $(\zR, \zQ)$ is a leap pattern and the society obtained by flipping $X_1$ in the case that $(\zR, \zQ)$ is a twisted leap pattern.  Thus, $(\zR, \zQ)$ is a leap pattern in $(G', \Omega')$, and we fix a labeling $s(Q)$, $t(Q)$ of the endpoints of $Q \in \zQ$ in $V(\Omega')$ as in the definition of a leap pattern. For $i = 1, 2$, let $X_i'$ be the minimal subset of $X_i$ forming a segment of $\Omega'$ such that the endpoints of $P_{10p^2+1}^i$ and $P_{T-10p^2}^i$ are contained in $X_i'$.  By the definition of a leap pattern, there are at most two elements $Q, Q'$ of $\zQ$ which have $t(Q)$ and $t(Q')$ in $V(\Omega') \setminus (X_1' \cup X_2')$.  Let $\zQ' = \{Q \in \zQ: t(Q) \in X_1' \cup X_2'\}$.  Thus, $|\zQ'| \ge 12288p^{24} -2$.  There are now two cases to consider: either many of the endpoints $s(Q)$ for $Q \in \zQ'$ are contained in $X_1' \cup X_2'$, or many $s(Q)$ are contained in $V(\Omega') \setminus (X_1' \cup X_2')$.  Fix $\zQ^* = \{Q \in \zQ' : s(Q) \in X_1' \cup X_2'\}$.

We first consider the case that $|\zQ^*| \ge 12288(p(p-1))^{12}$.  For every $Q \in \zQ^*$, $Q$ contains a subpath $Q^*$ which is an $M$-path and has endpoints in $S_1 \cup S_{10p(p-1) + 3}$ (whether the endpoints of $Q^*$ are in $S_1$ or in $S_{10p(p-1) + 3}$ will depend on whether the endpoints of $Q$ are in $X_1'$ or in $X_2'$).  Let $y(Q^*)$ be the endpoint of $Q^*$ closest to $s(Q)$ on the path $Q$ and define $x(Q^*)$ analogously to be the endpoint closest to $t(Q)$.  As $t(Q) \in X_1' \cup X_2'$ and $x(Q^*) \in V(S_1)\cup V(S_{10p(p-1)+3})$, it follows that the vertex $x(Q^*)$ is at distance at least $10p(p-1)$ from the boundary of $M$.  From the properties of a $(36864p^{24}, 10p^2)$-leap pattern and the construction of the paths $Q^*$, it follows that both the second and third conditions of Lemma \ref{lem:meshandpaths} hold and thus, $G$ contains a $K_p$ minor grasped by $M$ and, consequently, grasped by $\zC \cup \zP$.

We conclude that we may assume that $|\zQ' \setminus \zQ^*| \ge 24576p^{24}$.  Define $J$ to be the subgraph of $C_1$ defined by the union of the subpaths of $C_1$ with one endpoint in $\zP_1$, one endpoint in $\zP_2$, and no internal vertex in $\zP_1 \cup \zP_2$.  Note that by the rendition of $(G, \Omega)$, $J$ has exactly two connected components.  For every $Q \in \zQ' \setminus \zQ^*$, again let $Q^*$ be the (unique) subpath with one endpoint in $S_1 \cup S_{10p(p-1) +3}$, one endpoint in $J$, and no internal vertex in $J \cup M$.  Let $x(Q^*)$ be the endpoint of $Q^*$ in $S_1 \cup S_{10p(p-1) + 3}$.  As $J$ has exactly two components, there exists a linkage $\zL$ of order $(|\zQ' \setminus \zQ^*|-2) /2$ contained in $\{Q^*: Q \in \zQ' \setminus \zQ^*\} \cup J$ formed by linking up pairs of elements of $\zQ' \setminus \zQ^*$ with a subpath in $J$.  In this way, we can ensure that for every $L \in \zL$ there exist $Q_1, Q_2 \in \zQ' \setminus \zQ^*$ such that the endpoints of $L$ are $x(Q_1)$ and $x(Q_2)$.  It follows that $\zL$ is a set of disjoint $M$-paths such that for every $L \in \zL$, both endpoints of $L$ are at distance at least $10p(p-1)$ from the boundary of $M$ by the choice of $\zQ'$ and the position of $S_1$ and $S_{10p(p-1)+3}$ in the construction of $M$.  Moreover, the rendition and the fact that the endpoints in $X_1' \cup X_2'$ of the paths $Q \in \zQ' \setminus \zQ^*$ are separated by at least $10p^2$ elements of $\zR$, we conclude that $\zL$ satisfies the second and third conditions in the statement of Lemma \ref{lem:meshandpaths} and as a result, $G$ contains a $K_p$ minor grasped by $M$ and thus, grasped by $\zC \cup \zP$, as desired.
\end{proof}

\section{Forcing a rendition of bounded width and depth}

In Section \ref{sec:GM9},  we showed that a society either has a rendition of width one and bounded depth or it must have a large transaction.  In this section, we prove an extension of this result and quantitatively describe the canonical obstructions to the existence of a rendition of bounded width and depth.  This is the main technical step in the proof of Theorem \ref{thm:main}.

We first define the following parameters.  Given fixed non-negative integers $p, r \ge 1$ and values $l,k,p \ge 0$, let
$$f_{r,p}(l,k,q) = (5 \cdot 10^{21} p^{100})^{k+q}\left [ (2k)^{k+q}(l + k^2 l) + r + q + k + 1\right ].$$

\begin{theorem}\label{thm:maintransstrong}
Let $k, p, q, r \ge 0$ and $l \ge 1$ be integers
and let $s,t$ be integers satisfying $s,t\ge f_{r,p}(l,k,q)$.
Let $(G, \Omega)$ be a society and $\rho$ be a cylindrical rendition of $(G, \Omega)$.
Let $\zC = (C_1,C_2, \dots, C_s)$ be a nest in $\rho$ of cardinality $s$ and $\zP$ an orthogonal radial linkage in $\rho$ of cardinality $t$.
Then one of the following holds.
\begin{itemize}
\item[\rm{(i)}] There exists a handle or crosscap transaction $\zQ$ in $(G, \Omega)$ of thickness $r$;
\item[\rm{(ii)}]  there exists a  $q$-nested crosses transaction $\zQ$  in $(G, \Omega)$;
\item[\rm{(iii)}] there exists  a $(k, l)$-leap pattern  $(\zP_0,\zQ)$  in $(G, \Omega)$;
\item[\rm{(iv)}] $G$ contains a $K_p$ minor grasped by $\zC \cup \zP$;
\item[\rm{(v)}] there exists a set $Z \subseteq V(G)\setminus V(\Omega)$ with $|Z| \le qk\left [ 12288p^{24}+ 368650^{2}p^{52}r+4k\right]$
such that $(G - Z, \Omega )$ has a rendition in the disk of breadth at most $q-1$ and depth at most
$f_{r,p}(l,k,q)$.
\end{itemize}
Moreover, in outcomes \rm{(i) -- (iii)}, the transaction $\zQ$ or linkage $\zP_0\cup\zQ$ is coterminal with $\zP$ up to level $f_{r,p}(l,k,q)$.
\end{theorem}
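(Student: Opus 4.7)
The plan is to prove Theorem \ref{thm:maintransstrong} by induction on $q$. The inductive step iteratively extracts either a planar or crosscap strip from $(G, \Omega)$; each such strip becomes a new vortex cell of bounded depth in the final rendition, and we then recurse on the inner society of an appropriate cycle of the nest via Lemma \ref{lem:planarstriprestriction}. The principal ingredients are Lemma \ref{lem:GM9} (forcing either a crooked transaction of bounded cardinality or an already-bounded-depth cylindrical rendition), Lemma \ref{lem:strongES} (classifying a large transaction as a leap pattern, nested crosses, or monotone transaction), and Theorem \ref{thm:planarstripjump} (converting a monotone transaction into outcomes (ii), (iv), or an isolated rural strip with a small apex set).

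In detail, in the inductive step we first apply Lemma \ref{lem:GM9} to $(G, \Omega)$. If no sufficiently large crooked transaction exists, the resulting cylindrical rendition of depth at most $6r$ already gives outcome (v) with $c_0$ as the single vortex (valid for $q \ge 2$; the case $q=1$ forces rurality and is handled by Theorem \ref{thm:crossreduct} combined with an extraction of crosses into outcomes (ii) or (iv)). Otherwise, using Lemmas \ref{lem:crookedrooted} and \ref{lem:transrotation2} we arrange that the extracted transaction is orthogonal to $\zC$, unexposed in $\rho$, and coterminal with $\zP$ up to a high level in the nest. Applying Lemma \ref{lem:strongES} with parameters chosen appropriately from $(k, l, q, r)$ leaves us in one of the following cases: a $(k, l)$-leap pattern, which yields outcome (iii) directly or outcome (iv) via Lemma \ref{lem:nolargeleap} when the leap thickness exceeds $36864p^{24}$; a $q$-nested crosses transaction, yielding outcome (ii) directly or outcome (iv) via Corollary \ref{cor:nonestedcrosses}; a crosscap transaction of thickness $r$, giving outcome (i); or a planar transaction of cardinality at least the threshold $l'$ required by Theorem \ref{thm:planarstripjump}. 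In this last case, Theorem \ref{thm:planarstripjump} yields outcome (ii), outcome (iv), or an isolated rural strip with apex set $Z_0$ of size at most $\alpha = 12288(p(p-1))^{12}$; we add $Z_0$ to the cumulative set $Z$, treat the strip as a new vortex cell of bounded depth, and recurse on the inner society via Lemma \ref{lem:planarstriprestriction} with $q$ replaced by $q-1$.

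The principal obstacle is the quantitative bookkeeping. Each iteration consumes many cycles of $\zC$ and paths of $\zP$ to afford the orthogonality, unexposedness, and coterminality upgrades, and the accumulated apex set across all $q$ iterations must fit within the bound $qk[12288p^{24}+368650^{2}p^{52}r+4k]$ of outcome (v). The depth bound $f_{r,p}(l, k, q)$ is calibrated so that a factor of roughly $5 \cdot 10^{21} p^{100}$ is consumed at each of the $k+q$ iterations, while the $(2k)^{k+q}(l + k^2 l)$ term absorbs the upgrades of leap patterns via Lemma \ref{lem:augmentleap}, which merges a leap pattern found in the inner instance with additional $\Omega$-paths arising in the current iteration. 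A secondary subtlety is the symmetric handling of twisted leap patterns and crosscap transactions, which may require flipping a segment of $\Omega$ before applying the planar-setting lemmas; this is accommodated by Definition \ref{def:crosscapstrip} and the symmetric formulation of Lemma \ref{lem:strongES}. Finally, the handle-transaction part of outcome (i) typically arises when two crosscap structures extracted in successive iterations combine, via the alternating endpoint pattern of Lemma \ref{lem:augmentleap}, into the handle pattern of the defining conditions.
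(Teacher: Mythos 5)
Your opening moves match the paper's: Lemma~\ref{lem:GM9} to extract a crooked transaction, Lemma~\ref{lem:crookedrooted} for coterminality, Lemma~\ref{lem:strongES} to classify it, dispatching nested crosses and large leap patterns to outcome (iv) via Corollary~\ref{cor:nonestedcrosses} and Lemma~\ref{lem:nolargeleap}, and Theorem~\ref{thm:planarstripjump} to obtain an isolated rural strip with a small apex set. But the core of your inductive step is not viable as described, and it is also not what the paper does. You propose that the extracted strip ``becomes a new vortex cell of bounded depth'' and that one then recurses on the inner society of a nest cycle with $q$ replaced by $q-1$. The strip is a transaction running from $V(\Omega)$ through the central vortex and back to $V(\Omega)$; it is not localized inside any disk that could serve as a cell, and since its strip society is rural it should not be a vortex at all. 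The actual mechanism is that the rural strip \emph{splits} the society: after re-rendering, the strip is drawn vortex-free in the middle and the two flanking regions become two candidate vortices $\Delta_2',\Delta_2''$, each equipped with a \emph{new} nest built from strip paths $Q_i$ spliced with old nest cycles, plus a Menger cut $Z_2$ separating the two sides (whose failure is precisely where the handle/crosscap transaction of outcome (i) arises --- not, as you suggest, from combining crosscap structures across iterations via Lemma~\ref{lem:augmentleap}). The recursion is then applied to \emph{both} inner societies, and the breadth bound $q-1$ in outcome (v) comes from an additive argument: if both sides are non-rural with maximal nested-crosses parameters $q',q''$, one shows via Lemma~\ref{lem:transrotation2} that the two transactions combine into a $(q'+q'')$-nested crosses transaction in $(G,\Omega)$, forcing $q'+q''\le q-1$, and the two recursive renditions are glued. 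A single recursion with breadth budget $q-1$ per strip does not reproduce this accounting.

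The second missing piece is the case in which exactly one of the two sides is rural. Your induction is on $q$ alone, but in this case $q$ cannot be decreased (the rural side contributes nothing and the non-rural side may still exhibit all $q-1$ crosses); the paper's minimal counterexample is taken over $k+q$, and here the recursion is applied with $k$ replaced by $k-1$ and $l$ replaced by $2kl$. One then needs the whole apparatus of Steps 13--16 and 20--21: the vertex $z\in Z_1\cup Z_2$ (nonempty exactly because the strip came from a crooked transaction and one side is rural), the separations $(A',B')$, $(A'',B'')$ from Lemma~\ref{lem:sep1-2}, and Lemma~\ref{lem:augmentleap} together with a path through $z$ into the rural side to lift a $(k-1,2kl)$-leap pattern on the non-rural side to a $(k,l)$-leap pattern in $(G,\Omega)$. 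You cite Lemma~\ref{lem:augmentleap} only as absorbing constants in $f_{r,p}$, but without the $k$-decreasing branch of the induction and the jump through the deleted set there is no way to close this case, and without the two-sided splitting there is no way to obtain breadth $q-1$ or the $368650^2p^{52}r$ term (the Menger cut $Z_2$ of size $\beta r$) in the bound on $|Z|$. These are genuine gaps in the proposed argument rather than bookkeeping details.
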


\begin{proof}
Assume the theorem is false, and pick a counterexample to minimize $k+q$.  If $k  = 0$, $q = 0$, or $r = 0$, then the empty graph is a transaction satisfying one of (i) -- (iii).  Thus, we have that $k, q, r \ge 1$.  We also have that $p\ge 3$, for otherwise (iv) holds.  Fix the cylindrical rendition $\rho$ in the unit disk $\Delta$.

We fix the following values.  Let
\begin{align*}
\mu &= max\{f_{r,p}(2kl, k-1, q), f_{r,p}(l,k, q-1)\}, \text{ and} \\
\beta &= 368650^2p^{52}.
\end{align*}
Note that $\beta \ge \left[(36864p^{24} + 4p^2) 10p^2\right]^2$ and
$$\mu \le (5 \cdot 10^{21} p^{100})^{k+q-1}\left [ (2k)^{k+q-1}(2kl + k^2 2kl) + r + q + k + 1\right ].$$  Define
\begin{align*}
m_1&=\beta\cdot (2m_2+2);\\
m_2&= 24576p^{24}(49152p^{24} + 2p^{2})m_3;\\
m_3&=2 \beta r  + 4k + 11 + 2 \mu + 32k^2l + 16 q + l.
\end{align*}

Our first claim is simply a bound on $f_{r,p}(l,k,q)$.  We include the proof for completeness.
\begin{claim}
\label{cl:fm1}
We have $f_{r,p}(l,k,q) \ge 6m_1$. 
\end{claim}
\begin{cproof}
We first claim that $6m_1 \le 2 \cdot 10^{21}p^{100}m_3$.  To see this:
\begin{align*}
m_2 &= 24576p^{24}(49152p^{24} + 2p^2) m_3 \\
  &\le 24576p^{24}(49153p^{24})m_3 \\
  &\le 1.21\cdot10^9p^{48}m_3.
\end{align*}
Thus,
\begin{align*}
6m_1 &= 12 \cdot 368650^2p^{52} m_2 + 12 \beta\\
& \le 12 \cdot 368650^2\cdot1.21\cdot10^9p^{100}m_3 + 6m_3 \\
    &\le 2 \cdot 10^{21} p^{100}m_3,
\end{align*}
as claimed, where we used the inequality $2\beta \le m_3$.
Thus, to prove the claim, it suffices to show that $$f_{r,p}(l,k,q) \ge 2 \cdot 10^{21}p^{100}m_3.$$
Expanding out the definition of $m_3$ gives
\begin{align*}
2 \cdot 10^{21}p^{100}m_3 &\le \left[ (4 \cdot 10^{21}p^{100})(5 \cdot 10^{21}p^{100})^{k+q-1} + 4 \cdot 10^{21}p^{100}368650^2p^{52} \right ] r\\
& + \left[ (4 \cdot 10^{21}p^{100})(5 \cdot 10^{21}p^{100})^{k+q-1}(2k)^{k+q}  + 2 \cdot 10^{21}p^{100}
\right ] l \\
& + \left [ (4 \cdot 10^{21}p^{100})(5 \cdot 10^{21}p^{100})^{k+q-1} + 8 \cdot 10^{21}p^{100} \right ] k \\
& + \left [ (4 \cdot 10^{21}p^{100})(5 \cdot 10^{21}p^{100})^{k+q-1} (2k)^{k+q} + 64 \cdot 10^{21}p^{100} \right] k^2 l \\
& + \left [ (4 \cdot 10^{21}p^{100})(5 \cdot 10^{21}p^{100})^{k+q-1} + 32 \cdot 10^{21}p^{100} \right ] q \\
& + \left [ (4 \cdot 10^{21}p^{100})(5 \cdot 10^{21}p^{100})^{k+q-1} + 22 \cdot 10^{21}p^{100} \right ] \\
& \le f_{r,p}(l,k,q),
\end{align*}
where the final inequality follows by considering the definition of $f$ term by term.
\end{cproof}

We break the proof of the theorem into a series of steps.  In Steps 1--5, we find a large planar transaction which is orthogonal to a significant portion of the nest.  Moreover, later when we apply the minimality of our counterexample, it will be important that the planar transaction is contained in a larger crooked transaction.


\noindent{\bf Step 1.}
If $(G, \Omega)$ has no cross, then by Theorem~\ref{thm:crossreduct} it satisfies (v) with $Z=\emptyset$.  Given a cross in $(G, \Omega)$,
since $f_{r,p}(l,k,q)\ge14$,
by Lemma~\ref{lem:crookedrooted} there is a cross in  $(G, \Omega)$ that is coterminal with $\zP$ up to level $C_{11}$.  Thus, (ii) holds in the case $q \le 1$ and (i) holds in the case $r\le1$.  We conclude that $q, r \ge 2$.


\noindent{\bf Step 2.}  There exists a crooked transaction $\zQ_1$ of cardinality $m_1$.  To see this, by
Lemma~\ref{lem:GM9} the society $(G, \Omega)$ has either a cylindrical rendition of depth $6m_1$,
or a crooked transaction of cardinality $m_1$.  In the former case (v) holds by Claim~\ref{cl:fm1}, because $q\ge2$. Note we are using the fact that $m_1 \ge 4$.


\noindent{\bf Step 3.}
We have $s\ge f_{r,p}(l,k,q)\ge 2m_1+7$ and $t\ge f_{r,p}(l,k,q)\ge  4m_1+6$.
By Lemma~\ref{lem:crookedrooted} applied to the society $(G, \Omega)$, nest $\zC$, rendition $\rho$, linkage $\zP$
and transaction $\zQ_1$, with $r$ replaced by $m_1$, we may assume that $\zQ_1$ is coterminal with $\zP$
up to level $C_{2m_1+7}$.  Moreover, as $\zQ_1$ is a crooked transaction, it must be unexposed in the rendition $\rho$.


\noindent{\bf Step 4.}
Since $m_2\ge r$ and $m_1 \ge (36864p^{24} + 4p^2) 10p^2 (36864p^{24} + 4p^2)10p^2(2m_2+2)$, by Lemma~\ref{lem:strongES}
the transaction $\zQ_1$ contains a transaction $\zQ_2$ which is either
\begin{itemize}
\item a planar transaction of cardinality $2m_2+2$, or
\item a crosscap transaction of cardinality $r$, or
\item  a  $2p^2$-nested crosses transaction, or
\item a  $2p^2$-twisted crosses transaction, or
\item a $(36864p^{24}, 10p^2)$-leap pattern, or
\item a twisted $(36864p^{24}, 10p^2)$-leap pattern.
\end{itemize}
As $\zQ_1$ is coterminal with $\zP$ up to level $2m_1 + 7$ and since $f_{r,p}(l,k,q)\ge 2m_1+7$, the second case satisfies outcome (i) of the theorem.

Since $f_{r,p}(l,k,q)\ge2m_1+6+4p^2$ and $f_{r,p}(l,k,q)\ge8p^2$,
by Corollary \ref{cor:nonestedcrosses}
applied to the nest $(C_{2m_1+7}, C_{2m_1+8},\ldots ,C_s)$,
 if either the third or fourth case holds, we would satisfy outcome (iv) of the theorem.
Since $f_{r,p}(l,k,q)\ge2m_1+6+20p^2$,
by Lemma \ref{lem:nolargeleap}
applied to the nest $(C_{2m_1+7}, C_{2m_1+8},\ldots ,C_s)$,
 if the fifth or sixth case holds, we would again satisfy outcome (iv) of the theorem.  We conclude that
$\zQ_2$  is an unexposed planar transaction of cardinality~$2m_2+2$.


\noindent{\bf Step 5.}
Let $X_1$ and $X_2$ be disjoint segments of $\Omega$ such that $\zQ_2$ is a linkage from $X_1$ to $X_2$.  Consider the elements of $\zQ_2$ ordered by the ordering of their endpoints in $X_1$.  Let $\zQ_2'$ be the linkage obtained from $\zQ_2$ by deleting the first and last $\lceil m_2/2 \rceil$ elements of $\zQ_2$.  Thus, $\zQ_2'$ is an unexposed planar transaction of order $m_2$.  Moreover, let $X_i'$ be the minimal segment of $\Omega$ contained in $X_i$ containing the endpoints of elements of $\zQ_2'$.  Let $Y'$ and $Y''$ be the two segments comprising $\Omega - (X_1' \cup X_2')$.  Observe that there exist $m_2$ elements of $\zP$ with an endpoint in $Y'$ and $m_2$ elements of $\zP$ with an endpoint in $Y''$.


For the remainder of the proof, fix
\begin{align*}
\alpha &= 12288(p(p-1))^{12}, \hbox{ and}\\
\gamma &= 2m_1 + 2\alpha +20p(p-1) +  19.
\end{align*}
In Steps 6 - 12, we find a bounded set of vertices $Z$ such that
$(G-Z, \Omega - Z)$ has a rendition $\rho_1$ in the disk $\Delta$ such that:
\begin{itemize}
\item the decomposition has exactly two vortices,
\item the cycles $C_{\gamma+1}, \dots, C_s$ form a nest around both vortices, and
\item each vortex is itself surrounded by a large nest.
\end{itemize}


\noindent{\bf Step 6.} Apply Theorem \ref{thm:planarstripjump} to the transaction $\zQ_2'$ in $(G, \Omega)$ with the rendition $\rho$,
$l'=m_2$, $k=2p^2$, $l=m_3$ and the nest $(C_{2m_1+7}, C_{2m_1+8}, \dots, C_s)$.
We can do so because $s-2m_1-6\ge f_{r,p}(l,k,q)-2m_1-6\ge 3m_1\ge2\alpha+20p(p-1)+4$ by Claim~\ref{cl:fm1}.
If we find the clique minor in the first outcome of Theorem \ref{thm:planarstripjump}, then we satisfy (iv).
If we find a $2p^2$ nested crosses transaction or twisted $2p^2$ nested crosses transaction, we satisfy outcome (iv) by Corollary \ref{cor:nonestedcrosses},
because $s\ge f_{r,p}(l,k,q)\ge 2m_1+6+\alpha+8$.  Thus, we may assume that we find $Z_1$, $|Z_1| \le \alpha$ and disjoint from the outer graph of $C_{\gamma-8}$ and linkage $\zQ_3$ of order $m_3$ contained in $\zQ_2'$ such that $\zQ_3$ is disjoint from $Z_1$ and the $\zQ_3$-planar strip society in $(G-Z_1, \Omega)$ is rural and isolated.


\noindent{\bf Step 7.}
We have $s\ge f_{r,p}(l,k,q)\ge\gamma$.
Let $(H_1, \Omega_1^H)$ be the inner society of $C_{\gamma}$ in $(G-Z_1, \Omega)$ with respect to the rendition $\rho$ restricted to $G-Z_1$.  Let $\Delta_\gamma$ be the closed subdisk bounded by the track of $C_\gamma$.  Let $\zQ_3^H$ be the restriction of $\zQ_3$ to $(H_1, \Omega_1^H)$ as defined in
Definition~\ref{def:restriction-transaction}.
Let $(J_1, \Omega_1^J)$ be the $\zQ_3^H$-strip society in $(H_1, \Omega_1^H)$.

By Lemma \ref{lem:planarstriprestriction} applied to the society $(G-Z_1, \Omega)$, nest $(C_{\gamma-7}, \dots, C_{\gamma})$, linkage $\zQ_3$ and cycle $C_\gamma$,
the society $(J_1, \Omega_1^J)$ is rural and isolated in $(H_1, \Omega_1^H)$.  There exists
 a vortex-free rendition $\rho_{J_1}$ of $(J_1, \Omega_1^J)$ in $\Delta_\gamma$
 such that
$\pi_\rho^{-1}(v)=\pi^{-1}_{\rho_{J_1}}(v)$ for all $v\in V(\Omega _1^J)$.
  Note that we are using the fact that $\zQ_3$ is planar to ensure that the cyclic ordering $\Omega_1^J$ is the same as the cyclic ordering of $V(\Omega_1^J)$ in $\Omega_1^H$.

Let the elements of $\zQ_3$ be enumerated $Q_1, Q_2, \dots, Q_{m_3}$ by the order in which their endpoints occur on $X_1'$.  Let $T'$ ($T''$) be the track of $Q_2 \cap \zQ_3^H$ ($Q_{m_3 -1} \cap \zQ_3^H$) in $\rho_{J_1}$.   There is a unique connected component of $\Delta_\gamma \setminus(T' \cup T'')$ whose boundary includes both $T'$ and $T''$; let $\Delta^{**}$ be the closure of this connected component.   Let $J_2$ be the subgraph formed by
$\bigcup \{\sigma_{\rho_{J_1}}(c)\,:\,c \in C(\rho_{J_1}), c \subseteq \Delta^{**}\}$ along with any vertices $v$ of $J_1$ such that $\pi^{-1}_{\rho_{J_1}}(v)$ exists and belongs to $\Delta^{**}$.  Let $\Omega_2^J$ be the cyclically ordered set of vertices with $V(\Omega_2^J) = V(\Omega_1^J) \cap V(J_2)$ with the cyclic order induced by $\Omega_{1}^J$.  Let $\rho_{J_2}$ be the restriction of $\rho_{J_1}$ to the disk $\Delta^{**}$.


\noindent{\bf Step 8.} We define the society $(G_1, \Omega_1)$ as follows.  Let $G_1$ be the union of $J_2$ and the outer graph of $C_\gamma$ with respect to $\rho$.  Let $\Omega_1 = \Omega$.
Thus, the union of $\rho_{J_2}$ along with the restriction of $\rho$ to the complement of the
interior of $\Delta_\gamma$
gives us a vortex-free rendition $\rho^*$ of $(G_1, \Omega_1)$ in the  disk $\Delta$.

Consider $\Delta_\gamma \setminus \Delta^{**}$.  There is one connected component which contains $T'$ in its boundary and one which contains $T''$ in its boundary.  Let $U'$ be the set of vertices $u \in V(G_1)$ such that
$\pi_{\rho*}^{-1}(u)$
exists and is contained in the boundary of the connected component of $\Delta_\gamma \setminus \Delta^{**}$ with $T'$ in the boundary.  Similarly, define $U''$ to be the set of vertices $u \in V(G_1)$ such that  $\pi_{\rho^*}^{-1}(u)$
exists and is contained in the boundary of the connected component of $\Delta_\gamma \setminus \Delta^{**}$ with $T''$ in the boundary.  Thus by construction, ($U', U'')$ is a partition of the set of vertices $\left( V(\Omega_1^H) \setminus V(\Omega_2^J)\right) \cup \pi_{\rho_{J_2}}((T' \cup T'') \cap N(\rho_{J_2}))$.

Let $L$ be the minimal subgraph of $G-Z_1$ such that $G-Z_1 = G_1 \cup L$.  Note that $L$ is a subgraph of $H_1$.  We claim as well that $V(L) \cap V(G_1) \subseteq U' \cup U''$.  To see this, consider a vertex $x \in V(L) \cap V(G_1)$.  By the minimality of $L$, the vertex $x$ cannot be deleted from $L$ and still have the property that $L \cup G_1 = G-Z_1$.  Thus, there exists an edge $xy$ incident to $x$ which is not contained in $G_1$ and consequently, $xy$ is in $H_1$, the inner graph of $C_\gamma$.  The vertex $x$ is either in the outer graph of $C_\gamma$ or in $J_2$.  If $x$ is in the outer graph of $C_\gamma$, but not in $J_2$, then $x \in V(\Omega_1^H)\setminus V(\Omega_2^J)$
 and thus $x \in U' \cup U''$.  Thus, we may assume $x \in V(J_2)$.  However, in this case as $(J_2, \Omega_2^J)$ is isolated in $(H_1, \Omega_1^H)$, it follows that $x \in \pi_{\rho_{J_2}} \left ( ( T' \cup T'') \cap N(\rho_{J_2}) \right)$.  We conclude that $x \in U' \cup U''$ as claimed.

Finally, fix closed disks $\Delta'$ and $\Delta''$ in $\Delta_\gamma \setminus \Delta^{**}$ such that $\Delta' \cap( bd(\Delta_\gamma) \cup T' ) = \pi_{\rho^*}^{-1}(U')$ and $\Delta'' \cap (bd(\Delta_\gamma) \cup T'') = \pi_{\rho^*}^{-1}(U'')$.


\noindent{\bf Step 9.}  Observe that for all $i$, $3 \le i \le m_3 -2$, the path $Q_i$ is contained in $G_1$
and that $s\ge f_{r,p}(l,k,q)\ge \gamma +m_3-2$.  The subgraph $Q_i \cup C_{\gamma + i}$ contains exactly two cycles which are distinct from $C_{\gamma + i}$.  If we consider the track in $\rho^*$ of each these two cycles, one bounds a disk in $\Delta$ which contains $\Delta'$ and the other bounds a disk which contains $\Delta''$.  We define $C_i'$ to be the cycle contained in $Q_i \cup C_{\gamma + i}$
and distinct from $C_{\gamma+i}$ such that the disk bounded by the track of $C_i'$ in $\rho^*$ contains $\Delta'$.  Similarly, we define $C_i''$ to be the cycle contained in $Q_{m_3 -i} \cup C_{\gamma + i}$
and distinct from $C_{\gamma+i}$ such that the disk bounded by the track of $C_i''$ in $\rho^*$ contains $\Delta''$.  Observe that $(C_3', C_4', C_5', \dots, C_{m_3 - 2}')$ forms a nest around $\Delta'$ and similarly, $(C_3'', C_4'', \dots, C_{m_3 -2}'')$ forms a nest around $\Delta''$.

Consider the segments $Y'$ and $Y''$ defined in Step 5.
As $\zQ_3$ is coterminal with $\zP$ up to level $2m_1+7$, every element of $\zP$ with an endpoint in $Y' \cup Y''$ is disjoint from the intersection of $\zQ_3$ with the outer graph of $C_{\gamma-1}$.  Thus, without loss of generality, we may assume that every element of $\zP$ with an endpoint in $Y'$ intersects $U'$ and similarly, every element of $\zP$ with an endpoint in $Y''$ intersects $U''$.  Let $\zP_1'$ be the linkage formed by the $Y' - U'$ paths contained in $\zP$.  Similarly, let $\zP_1''$ be the linkage formed by the $Y'' - U''$ paths contained in $\zP$.
Both $\zP_1'$ and $\zP_1''$ have order at least $m_2$ by the choice of $\zQ_2'$ in Step 5.  Moreover, $\zP_1'$ is orthogonal to $(C_3', C_4', \dots, C_{m_3 - 2}')$ and $\zP_1''$ is orthogonal to $(C_3'', C_4'', \dots, C_{m_3 -2}'')$.


\noindent{\bf Step 10.} Let $H_2'$ ($H_2''$) be the inner graph of $C_{\beta r + 3}'$ ($C_{\beta r + 3}''$) in $(G_1, \Omega_1)$ with respect to the rendition $\rho^*$.   Consider the subgraph $L_2 = \zP_1' \cup \zP_1'' \cup L \cup H_2' \cup H_2''$.  We claim there exists a set $Z_2$ of size at most $\beta r-1$ intersecting all $Y' - Y''$ paths in this subgraph.  Assume no such set $Z_2$ exists.  Then there exists a linkage $\zP^*$ of $\beta r$ disjoint $Y_1-Y_2$ paths in $L_2$.  Note that $\zP^*$, by construction, is coterminal with $\zP$ up to level $\gamma + \beta r + 4$ for the nest $(C_1, C_2, \dots, C_s)$ in $(G, \Omega)$ and that $s\ge f_{r,p}(l,k,q)\ge\gamma+\beta r+3+20p^2$.
Recall  that $\beta \ge \left[(36864p^{24} + 4p^2) 10p^2\right]^2$.
 Apply Lemma~\ref{lem:strongES} to the linkage $\zP^*$.  Lest we have a $2p^2$-nested crosses transaction,
a $2p^2$-twisted nested crosses transaction, a $(36864p^{24},10p^2)$-leap pattern, or a
 $(36864p^{24},10p^2)$-twisted leap pattern,
  which via Corollary \ref{cor:nonestedcrosses} or Lemma \ref{lem:nolargeleap}
applied to the nest $(C_{\gamma+\beta r+4},C_{\gamma+\beta r+5},\ldots ,C_{s})$
would imply the graph satisfies (iv), we can assume that $\zP^*$ contains either a crosscap transaction of order $r$ or a planar transaction of order $r$.  If it is a crosscap transaction, we satisfy (i).
Since $\beta r +r+3<m_3-\beta r-3$,
if it is a planar transaction, then along with $Q_{\beta r+4}, Q_{\beta r + 5}, \dots, Q_{\beta r + r+3}$, we have a handle transaction again satisfying (i).  We conclude that no such linkage exists and therefore $L_2$ contains the desired set $Z_2$ hitting all $Y' - Y''$ paths.  Assume that we pick the set $Z_2$ to be minimal (by containment) with the property that it intersects all $Y' - Y''$ paths in the graph $L_2$.


\noindent{\bf Step 11.}  Fix $T_2'$ and $T_2''$ to be the track of $C_{\beta r + 3}'$ and $C_{\beta r + 3}''$
in $\rho^*$, respectively.  Define $U_2' =\pi_{\rho^*}(T_2'\cap N(\rho^*))$
and $U_2''= \pi_{\rho^*}(T_2''\cap N(\rho^*))$.
Then $U_2' \subseteq V(H_2')$ and $U_2'' \subseteq V(H_2'')$.    We claim that there does not exist a path from $U_2'$ to $U_2''$ in $(H_2' \cup H_2'' \cup L)$ with no internal vertex in $Z_2$.  Assume otherwise, and let $R$ be such a path.
As $2(\beta r+3)\le m_3$, the graph
$H_2'$ is vertex-disjoint from $H_2''$, and  it follows that the path $R$ uses at least one vertex of $L$.  Thus, $R$ intersects each of the cycles $C_{3}', \dots, C_{\beta r + 2}'$ in an internal vertex and similarly, each of the cycles $C_{3}'', \dots, C_{\beta r + 2}''$.  Given the size of $Z_2$, there exists an index $i'$, and $i''$ such that $C_{i'}'$ and $C_{i''}''$ are disjoint from $Z_2$ and $3 \le i', i'' \le \beta r + 2$.  Also,
since $m_2\ge \beta r$
there exist $P_1 \in \zP_1'$ and $P_2 \in \zP_1''$ which are disjoint from $Z_2$.  Thus, $P_1 \cup P_2 \cup R \cup C_{i'}' \cup C_{i''}''$ contains a path from $Y'$ to $Y''$ which is disjoint from $Z_2$, a contradiction.

It follows as well that $Z_2$ is contained in $(H_2' \cup H_2'' \cup L)- ( U_2' \cup U_2'' )$.  To see this, assume there exists $z \in Z_2 \setminus( V(H_2' \cup H_2'' \cup L) \setminus (U_2'\cup U''_2))$.  By minimality, there exists a path $R'$ from $Y'$ to $Y''$ in $H_2' \cup H_2'' \cup L \cup \zP_1' \cup \zP_1''$ which intersects $Z_2$ exactly in the vertex $z$.  It follows that $R'$ has a subpath in $H_2' \cup H_2'' \cup L$ from $U_2'$ to $U_2''$ with no internal vertex in $Z_2$, contrary to the argument above.


\noindent{\bf Step 12.}  We are now ready to define the desired rendition $\rho_1 = (\Gamma_1, \zD_1)$ of $(G - (Z_1 \cup Z_2), \Omega)$.  Let $\Delta_2'$ ($\Delta_2''$) be the closed disk bounded by $T_2'$ ($T_2''$).  To define $\rho_1$, we essentially let $\rho_1$ be the restriction of $\rho^*$ outside $\Delta_2'$ and $\Delta_2''$, and then partition the remaining components of $(H_2' \cup H_2'' \cup L )- Z_2$ between $\Delta_2'$ and $\Delta_2''$, depending on whether they attach to $U_2'$ or $U_2''$.  Explicitly, let $K'$ be the union of all components of $(H_2' \cup H_2'' \cup L) - Z_2$ which contain a vertex of $U_2'$ and let $K''$ be the union of all components of $(H_2' \cup H_2'' \cup L) - Z_2$ which do not contain a vertex of $U_2'$.
Let $\rho^*=(\Gamma^*,\zD^*)$.
Define $\zD_1 = \{D \in \zD^*: \hbox{int}(D) \subseteq \Delta \setminus (\Delta_2' \cup \Delta_2'')\} \cup \{\Delta_2', \Delta_2''\}$.  The drawing $\Gamma_1$ is obtained from the restriction of $\Gamma^*-Z_2$ to $\Delta \setminus (\hbox{int}(\Delta_2') \cup \hbox{int}(\Delta_2''))$ along with an arbitrary drawing of $K'$ in $\Delta_2'$ such that the only points on the boundary of $\Delta_2'$ are exactly the vertices of $U_2'\cap V(K')$ and similarly, an arbitrary drawing of $K''$ in $\Delta_2''$ such that the only points on the boundary of $\Delta_2''$ are the vertices of $U_2''\cap V(K'')$.

Thus $\rho_1$ is the desired rendition of $G- (Z_1 \cup Z_2)$ of breadth two with the two vortices formed by
$\Delta_2' \setminus N(\rho_1)$ and $\Delta_2'' \setminus N(\rho_1)$.  The ordered set of cycles $(C_{\gamma + \beta r + 4}, C_{\gamma + \beta r + 5}, \dots, C_s)$ forms a nest around both $\Delta_2'$ and $\Delta_2''$.  Moreover, $(C_{\beta r + 4}', C_{\beta r + 5}', \dots, C_{m_3- \beta r + 4}' )$ forms a nest around $\Delta_2'$ and
$(C_{\beta r + 4}'',\allowbreak C_{\beta r + 5}'', \dots, C_{m_3 - \beta r + 4}'')$ forms a nest around $\Delta_2''$.


We pause for a moment to discuss how the rest of the proof will proceed.  Consider the inner society $(G', \Omega')$ of $C_{\beta r + 4 + \mu}'$ under the rendition $\rho_1$ of $(G - (Z_1 \cup Z_2), \Omega)$.  Similarly, let $(G'', \Omega'')$ be the inner society of $C_{\beta r + 4 + \mu}''$.  First, observe that if both $(G', \Omega')$ and $(G'', \Omega'')$ are rural, then we have derived a contradiction as $(G-(Z_1 \cup Z_2), \Omega)$ is rural as well.  Thus, we may assume that one or the other (or both) has a cross.

If we restrict $\rho_1$ to $(G', \Omega')$ and $(G'', \Omega'')$, we get a cylindrical rendition of each society with a nest of order $\mu$.  Moreover, if we restrict $\zP_1'$ and $\zP_1''$ to $(G', \Omega')$ and $(G'',\Omega'')$, respectively, we get a large linkage orthogonal to the nest.  Consider the case when both $(G', \Omega')$ and $(G'', \Omega'')$ have a cross.  By the minimality of our counterexample, one of the outcomes (i) -- (v) holds for $(G', \Omega')$ with parameters $r, p,(q-1), k, l$.  If (i), (iii), or (iv) holds, then it holds for $(G, \Omega)$ as well, a contradiction.  If (ii) holds for $(G', \Omega')$, then along with a cross in $(G'', \Omega'')$, we can find a $q$-nested crosses transaction in $(G, \Omega)$ satisfying (ii), a contradiction.  We conclude that both $(G', \Omega')$ and $(G'', \Omega'')$ must contain the rendition in (v).  Although some care must be taken to ensure that the desired bounds on the parameters are satisfied, we are able to put the renditions together to get a rendition of the original society $(G, \Omega)$ after deleting a bounded set of vertices.

Thus, we are left with the case where one of $(G', \Omega')$ and $(G'', \Omega'')$ is rural and one is not.  Say $(G'', \Omega'')$ is rural.  It now follows that $Z_1 \cup Z_2$ must be non-empty; we specifically chose $\zQ_3'$ to be a sublinkage of crooked transaction and so if $(G'',\Omega'')$ is rural, it must be the case that in the original graph $(G, \Omega)$ there is some path which leaps ``over" the linkage $\zQ_3'$ and which intersects the set $Z_1 \cup Z_2$.  As above, by minimality, we find one of the outcomes for $(G', \Omega')$ which would also be a valid outcome for $(G, \Omega)$, or we find a $(k-1, l)$-leap pattern in $(G', \Omega')$.  To complete the proof, we use the results of Section \ref{sec:augmenting} to find a $(k, l)$-leap pattern in $(G, \Omega)$ by adding a subset of the paths in $\zQ_3'$ along with a path jumping from $V(G'')$ to $V(G')$ through the set $Z_1 \cup Z_2$.

However, in order for this outline to work, we will need to further refine our rendition $\rho_1$ to find a larger subset $Z \supseteq Z_1 \cup Z_2$ and rendition $\rho_2$ of $(G-Z, \Omega)$ which will allow us to extend a $(k-1, l)$-leap pattern on one side.  We define the set $Z$ of vertices to be deleted and the rendition $\rho_2$ in Steps 13 - 16, and then fix the definition of the societies $(G', \Omega')$ and $(G'', \Omega'')$ before we complete the proof following the steps in the outline above.


\noindent{\bf Step 13.}  Fix the vertex $z$ as follows for the remainder of the proof.  If $Z_2 \neq \emptyset$, let $z$ be an arbitrary vertex of $Z_2$.  If $Z_2 = \emptyset$ and $Z_1 \neq \emptyset$, let $z$ be an arbitrarily chosen vertex of $Z_1$.  If $Z_1 \cup Z_2 = \emptyset$, the vertex $z$ is not defined.

Let $T_3'$ ($T_3''$) be the track of $C_{\beta r + 2k + 4}'$ ($C_{\beta r + 2k + 4}''$) in the rendition $\rho_1$.
Let $J_3'$ be the inner graph of $C_{\beta r + 2k + 4}'$ and $J_3''$ the inner graph of $C_{\beta r + 2k + 4}''$ for the rendition $\rho_1$ of $(G-(Z_1 \cup Z_2), \Omega)$.
Since $m_3\ge2(\beta r+2k+4)$, the graphs $J_3'$ and $J_3''$ are vertex-disjoint.
Let $\zP_2'$ ($\zP_2''$) be the set of all $Y' - U_2'$ ($Y'' - U_2''$) paths contained in $\zP_1'$ ($\zP_1''$).  If the vertex $z$ is not defined, let $H_3'$ be the graph $J_3' \cup \zP_2'$.  If $z$ is defined, let $H_3'$ be the graph formed by $J_3' \cup \zP_2'$ along with all the edges of $G$ with one endpoint equal to $z$ and the other endpoint in $V(\sigma_{\rho_1}(\Delta_2'\setminus N(\rho_1)))$.  Similarly, let $H_3''$ be  the graph $J_3'' \cup \zP_2''$
 along with all the edges from $z$ to $V(\sigma_{\rho_1} (\Delta_2'' \setminus N(\rho_1)))$ in the case when $z$ is defined.


\noindent{\bf Step 14.}  Assume the vertex $z$ is defined.  We claim that there exists a path in $H_3'$ from $z$ to $Y'$.  Similarly, in $H_3''$, there exists a path from $z$ to $Y''$. It suffices to prove the claim that there exists a path from $z$ to $Y'$ in $H_3'$.  There are two possible cases to consider: $z \in Z_2$ or $z \in Z_1$.

Assume $z \in Z_2$.  Given the construction of the graph $L_2$ in Step 10, we observe that $L_2 - Z_2$ is a subgraph of $H_3' \cup H_3''$.  By the minimality of $Z_2$, there exists a path in $L_2$ from $Y'$ to $Y''$ in $L_2$ intersecting $Z_2$ exactly in the vertex $z$; let $P$ be the subpath from $Y'$ to $z$ contained in this path.  By construction, $P-z$ is contained in $H_3'$.  To see this, let $y$ be the vertex on $P$ adjacent to $z$.
Since $z\in V(H_2' \cup H_2'' \cup L)\setminus (U_2'\cup U_2'')$ by Step 11, the vertex $y$ is contained in a component of $(H_2' \cup H_2'' \cup L) -Z_2$ which intersects $U_2'$.  Consequently, $y \in V(\sigma_{\rho_1} (\Delta'_2\setminus N(\rho_1)))$ and $P$ is a subgraph of $H_3'$ as desired.

The more difficult case is when we assume $z \in Z_1$ and consequently, $Z_2 = \emptyset$.  Recall the definition of $Z_1$ from the application of Theorem \ref{thm:planarstripjump} in Step 6.  Let $(J_3, \Omega_3^J)$ be the $\zQ_3$-strip society in $(G, \Omega)$.  Outcome (iii) of Theorem \ref{thm:planarstripjump} ensures that there exists a path $P$ in $G-V(J_3)$ connecting the two segments of $\Omega \setminus V(\Omega ^J_3)$ which intersects the set $Z_1$ exactly in the vertex $z$.

Since $Z_1$ belongs to the inner graph of $C_{\gamma -8}$ in the rendition $\rho$ it follows by considering
the rendition $\rho $ that both components $P',P''$ of $P\setminus z$ intersect $C_{\gamma+\beta r+3}$
and that the neighbors of $z$ in $P$ belong to the inner graph of $C_{\gamma -7}$ in the rendition $\rho $.
Let $y'\in V(P')$ and $y''\in V(P'')$ be the neighbors of $z$ in $P$.
We may assume that $P'$ ends in the segment of $V(\Omega )\setminus V(\Omega _3^J)$ containing $Y'$ and
that $P''$ ends in the segment of $V(\Omega )\setminus V(\Omega_3^J)$ containing $Y''$.
Since $y',y''$  belong to the inner graph of $C_{\gamma -7}$ in the rendition $\rho $ and $P$ is
disjoint from the $\zQ_3$-strip society, and hence from $J_2$, it follows that $y',y''\in V(L)$.
Since $P'$  intersects $C_{\gamma+\beta r+3}$ and is disjoint from the $\zQ_3$-strip society in $(G,\Omega)$,
it follows that $P'$ intersects $U_2'$, and, since $Z_2=\emptyset$, that the subpath of $P'$ from $y'$ until the first intersection with
$U_2'$ is a subgraph of $H_2'\cup L$.
It follows that
$y'\in V(\sigma_{\rho_1} (\Delta '_2\setminus N(\rho_1)))$.
Thus $P\cup C_{\gamma+\beta r+3}\cup \zP_2'$ includes a path from $z$ to $Y'$ in $H_3'$, as desired.


\noindent{\bf Step 15.}
Define $U_3'$ to be $\pi_{\rho_1}(T_3' \cap N(\rho_1))$ and similarly, $U_3'' = \pi_{\rho_1}(T_3'' \cap N(\rho_1))$.
 Assume that the vertex $z$ is defined.
If there exist $2k$ paths in $H_3'$ from $z$ to $Y'$, disjoint except for $z$, then let $B'=V(H_3')$, $A'=\{z\}$,
and let $W'$ be a set of neighbors of $z$ in $H_3'$ of size $2k-1$ such that there exists a set $\zR'$ of  $2k-1$ disjoint $W'-Y'$ paths in
$H_3'$. Otherwise we proceed as follows.
We apply Lemma \ref{lem:sep1-2} to the graph $H_3'$ with $X$
the set of neighbors of $z$ and $Y = Y'$ to find a separation $(A', B')$ of order at most $2k-1$ satisfying either outcome (i) or (ii). We may assume that $z\in A'$.  Recall $J_3'$ is the inner graph of $C_{\beta r + 2k + 4}'$ and observe that by construction, $C_{\beta r + 4}', C_{\beta r + 5}', \dots, C_{\beta r + 2k + 3}'$ are contained in $J_3'$.  We claim that $A' \cap B' \subseteq V(J_3') \setminus U_3'$.  As $(A', B')$ has order at most $2k-1$, there exists an index $i$, $\beta r + 4 \le i \le \beta r + 2k + 3$ such that $C_i'$ is disjoint from $A' \cap B'$.  Moreover, as $\zP_2'$ has at least $m_2 > 2k$ disjoint paths from $Y'$ to $C_i'$, we conclude that $V(C_i') \subseteq B'\setminus A'$.  Since $V(C_i')$ forms a cutset separating $z$ from every vertex not contained in the inner graph of $C_i'$ with respect to $\rho_1$ and the fact that every vertex of $A' \cap B'$ is connected by a path in $H_3'[A']$ to $z$, we conclude that $A' \cap B'$ is contained in the vertex set of the inner graph of $C_i'$.  It follows that $(A', B')$ satisfies outcome (i) in Lemma \ref{lem:sep1-2} and $|A' \cap B'| = 2k-1$.  We fix $W'$ to be a set of $2k-1$ vertices in $B' \setminus A'$ such that every vertex of $W'$ has a neighbor in $A' \cap B'$ and a linkage $\zR'$ of order $2k-1$ from $W'$ to $Y'$ in $H_3'$.  The set $W'$ and linkage $\zR'$ can be found by taking a linkage of size $2k-1$ from $A \cap B'$ to $Y'$ in $H_3'[B']$ and deleting the vertices $A' \cap B'$.  By construction, $W' \subseteq V(J_3')$.

Similarly, we find $(A'', B'')$ a separation of $H_3''$ separating $z$ and $Y''$ of order either exactly $2k-1$ or exactly one such that every vertex of $A'' \cap B''$ is linked by a path to $z$ in $H_3''[A'']$ and $A''\cap B'' \subseteq V(J_3'') \cup \{z\})$, and we find a set $W'' \subseteq V(J_3'') \setminus A''$ of $2k-1$ vertices each adjacent to an element of $A'' \cap B''$ and a linkage $\zR''$ of order $2k-1$ from $W''$ to $Y''$ in $H_3''$.  Note that both $\zR'$ and $\zR''$ are coterminal with $\zP$ up to level $C_{\gamma + \beta r + 2k + 5}$ by the construction of $W_3'$ and $W_3''$.

When $z$ is not defined, fix $B' = V(J_3')$, $A' = \emptyset$ and $B'' = V(J_3'')$ and $A'' = \emptyset$.
The sets $W'$and $W''$ are not defined in this case.


\noindent{\bf Step 16.}   Define our new rendition $\rho_2=(\Gamma _2,\zD_2)$ in $\Delta$ of the society $(G-(Z_1 \cup Z_2 \cup A' \cup A''), \Omega)$ as follows.  Let $\Delta_3'$ and $\Delta_3''$ be the closed disks bounded by $T_3'$ and $T_3''$, respectively.  Let $\Gamma_2$ be formed by the restriction of $\Gamma_1$ to
$\Delta \setminus (\hbox{int}(\Delta_3') \cup \hbox{int}(\Delta_3''))$ along with an arbitrary drawing of $J_3'[B']$ in $\Delta_3'$ which intersects the boundary of $\Delta_3'$ exactly in the vertices $U_3'$ and, similarly, an arbitrary drawing of $J_3''[B'']$ in $\Delta_3''$ which intersects the boundary of $\Delta_3''$ exactly in the vertices $U''_3$.  Let $\zD_2 = \{D \in \zD_1 : D \subseteq \Delta \setminus  (\hbox{int}(\Delta_3') \cup \hbox{int}(\Delta_3''))\} \cup \{\Delta_3', \Delta_3''\}$.  Note that if $z$ is defined, then $W' \subseteq V(\sigma_{\rho_2}(\Delta_3' \setminus N(\rho_2)))$ and similarly, $W'' \subseteq V(\sigma_{\rho_2}(\Delta_3'' \setminus N(\rho_2))$.


\noindent{\bf Step 17.}  Define
$$\gamma_2 = \beta r + 2k + 4 + \mu + 4(4k^2l+2q) + 1.$$
Since $s\ge f_{r,p}(l,k,q)\ge6m_1\ge\gamma +\gamma_2$ by Claim~\ref{cl:fm1}, we may
fix $T_4'$ and $T_4''$ to be the track of $C_{\gamma_2}'$ and $C_{\gamma_2}''$, respectively in the rendition $\rho_2$.  Define $(H_4', \Omega_4')$ and $(H_4'', \Omega_4'')$ to be the inner society of $C_{\gamma_2}'$ and $C_{\gamma_2}''$, respectively, in the rendition $\rho_2$ of $(G-(Z_1 \cup Z_2 \cup A' \cup A''), \Omega)$.  Thus, $V(\Omega_4') = \pi_{\rho_2}(T_4' \cap N(\rho_2))$ and $V(\Omega_4'') = \pi_{\rho_2}(T_4'' \cap N(\rho_2))$.
Since $m_3=2\gamma_2 + l + 1$, the graphs $H_4'$ and $H_4''$ are vertex-disjoint.
Note that each of $(H_4', \Omega_4')$ and $(H_4'', \Omega_4'')$ has a cylindrical rendition obtained from a restriction of $\rho_1$.  Each society also has a nest of depth $\mu + 4(4k^2l+2q)$, namely $\zC' = (C_{ \beta r + 2k + 5}', C_{\beta r + 2k + 6}', \dots, C_{\gamma_2 - 1}')$ in $(H_4', \Omega_4')$ and $\zC'' = (C_{ \beta r + 2k + 5}'', C_{\beta r + 2k + 6}'', \dots, C_{\gamma_2 - 1}'')$ in $(H_4'', \Omega_4'')$.  Let $\zP_3'$ be the linkage of $V(\Omega_4')-U_3'$ paths contained in $\zP_2'$.  Similarly, let $\zP_3''$ be the linkage of $V(\Omega_4'') - U_3''$ paths contained in $\zP_2''$.  Note that $\zP_3'$ and $\zP_3''$ are orthogonal to the nests $\zC'$ and $\zC''$ and each have order at least $m_2$.  Finally, let $\Delta_4'$ be the closed disk bounded by $T_4'$ and $\Delta_4''$ the closed disk bounded by $T_4''$.


\noindent{\bf Step 18.} Assume as a case that $(H_4', \Omega_4')$ and $(H_4'', \Omega_4'')$ are both rural.  Fix vortex-free renditions $\rho'$ and $\rho''$ of $(H_4', \Omega_4')$ and $(H_4'', \Omega_4'')$ in $\Delta_4'$ and $\Delta_4''$, respectively.  By joining the renditions $\rho'$ and $\rho''$ to the restriction of $\rho_2$ to $\Delta \setminus (\hbox{int}(\Delta_4') \cup \hbox{int(}\Delta_4''))$, we get a vortex-free rendition of $(G-(A' \cup A'' \cup Z_1 \cup Z_2), \Omega)$.  Note that every vertex of $A' \setminus ((A' \cap B') \cup\{ z\})$ has every neighbor contained in the set $A' \cup Z_1 \cup Z_2$,
 and the analogous statement holds for $A'' \setminus ((A'' \cap B'') \cup \{z\})$.  Thus, by adding the isolated subgraphs $G[A' \setminus ((A' \cap B') \cup\{ z\})]$ and $G[A'' \setminus ((A'' \cap B'') \cup \{z\})]$ to an arbitrary cell of the rendition, we get a vortex-free rendition of $(G-(Z_1 \cup Z_2 \cup (A' \cap B') \cup (A'' \cap B'')), \Omega)$, satisfying (v).


\noindent{\bf Step 19.} Assume as a case that $(H_4', \Omega_4')$ and $(H_4'', \Omega_4'')$ are both not rural.  Fix $q'$ ($q''$) to be the largest value such that there exists a $q'$- ($q''$-)nested crosses transaction in $(H_4', \Omega_4')$ ($(H_4'', \Omega_4'')$) coterminal with $\zP_3'$ ($\zP_3''$) up to level $C_{\beta r + 2k + 4 + \mu}'$ ($C_{\beta r + 2k + 4 + \mu}''$).  Let $\zR'$ and $\zR''$, respectively, be the $q'$- and $q''$-nested crosses transactions.
Given that a cross is a crooked transaction of order two
and that $s\ge  f_{r,p}(l,k,q)\ge6m_1\ge\gamma + \beta r + 2k + 4+11$ and $m_3\ge14$,
Theorem \ref{thm:crossreduct} and Lemma \ref{lem:crookedrooted} imply that both $q', q'' \ge 1$.

We want to extend $\zR'$ and $\zR''$ to transactions in the original society $(G, \Omega)$.  Let $\overline{\zP_3'}$, ($\overline{\zP_3''}$) be the $Y' - V(C_{\gamma_2 -2}')$ ($Y' - V(C_{\gamma_2 - 2}'')$) linkage contained in $\zP_1'$, ($\zP_1''$).  There exists a (uniquely defined) $q'$- ($q''$)-nested crosses transaction $\overline{\zR'}$ ($\overline{\zR''}$)
in $(G,\Omega )$
 contained in $\zR' \cup \overline{\zP_3'}$ ($\zR'' \cup \overline{\zP_3''}$).  Note that we are using the fact that $\zR'$ ($\zR''$) is co-terminal with $\zP_3'$ ($\zP_3''$) up to level  $C_{\gamma_2 - 4(4k^2l+2q)}'$ and $C_{\gamma_2- 4(4k^2l+2q)}''$, respectively.  Thus, we see that both $q' < q$ and $q'' < q$,
for otherwise statement (ii) of the theorem holds, because $f_{r,p}(l,k,q)\ge \gamma +\beta r + 2k + 4 + \mu$.

It need not be the case that $\overline{\zR'} \cup \overline{\zR''}$ is a $q' + q''$-nested crosses transaction.
In fact, it need not be a transaction.
However, given that both $\zR'$ and $\zR''$ have at most $2(q -1)$ elements,
$4(4k^2l+2q)\ge8(q-1)+2$ and $m_2\ge8(q-1)$,
by applying Lemma \ref{lem:transrotation2} to
the nest $(C'_{\beta r + 2k + 4 + \mu},C'_{\beta r + 2k + 5 + \mu},\allowbreak\ldots ,C'_{\gamma_2-1})$, linkage $\zP_3'$
and transaction
$\zR'$
in $(H_4', \Omega_4')$ and the analogous quantities in $(H_4'', \Omega_4'')$, we see that there exists a $q'$-nested cross transaction $\zQ_5'$ in $(H_4', \Omega_4')$ coterminal with $\zP_3'$ up to level $C_{\gamma_2 - 4(4k^2l) - 7}$ and a $q''$-nested crosses transaction $\zQ_5''$  in $(H_4'', \Omega_4'')$ which is coterminal with $\zP_3'$ up to level $C_{\gamma_2 -  4(4k^2l) - 7}$ such that the transactions $\overline{\zQ_5'}$ and $\overline{\zQ_5''}$ in $(G,\Omega )$
defined analogously to the previous paragraph have the property that $\overline{\zQ_5'} \cup \overline{\zQ_5''}$ is a $q' + q''$-nested crosses transaction.
Furthermore, $\overline{\zQ_5'} \cup \overline{\zQ_5''}$ is coterminal with $\zP$ up to level
$\gamma +\beta r + 2k + 4 + \mu + 8q-6\le 6m_1\le f_{r,p}(l,k,q)$. Since we are assuming that (ii) of the present theorem does not hold,
we conclude that that $2 \le q' + q'' \le q-1$ and $q' + 1, q'' + 1 < q$.

By the choice of our counterexample to minimize $q+k$, it follows that one of (i) -- (v) must hold for $(H_4', \Omega_4')$ with the parameters $p, r, q'+1, k, l$ and for $(H_4'', \Omega_4'')$ with the parameters $ p, r, q''+1, k, l$.  Note that we are using the choice of $\mu \ge f_{r,p}(l,k,q-1)$.  Observe that any transaction satisfying (i) -- (iii) can be extended to a transaction in $(G, \Omega)$ coterminal with $\zP$
up to level $\gamma +f_{r,p}(l,k,q-1)\le \gamma +\mu\le 6m_1\le f_{r,p}(l,k,q)$
by extending along subpaths of $\zP$.    Moreover, a clique minor satisfying (iv) will satisfy (iv) for $(G, \Omega)$ with the nest $\zC$ and linkage $\zP$.  Thus, by the definition of $q'$ and $q''$, we conclude that outcome (v) must hold for each of $(H_4', \Omega_4')$ and $(H_4'', \Omega_4'')$.  Thus, there exist $Z' \subseteq V(H_4') \setminus V(\Omega_4')$ and $Z'' \subseteq V(H_4'') \setminus V(\Omega_4'')$ such that
$|Z'|\le (12288p^{24} + \beta r + 4k)q'k$.  $|Z''|\le (12288p^{24} + \beta r + 4k)q''k$ and
$(H_4' - Z', \Omega_4')$ ($(H_4'' - Z'', \Omega_4'')$) has a rendition $\rho'$ in $\Delta_4'$ ($\rho''$ in $\Delta_4''$) of breadth at most $q'$ ($q''$) and depth at most $f_{r,p}(l,k,q-1))$.  Note that $|Z' \cup Z''| \le (12288p^{24} + \beta r + 4k)(qk-1)$.  Fix $Z = Z' \cup Z'' \cup Z_1 \cup Z_2 \cup (A' \cap B') \cup (A'' \cap B'')$.  We conclude that $(G - Z, \Omega)$ has a rendition in $\Delta$ satisfying (v) by restricting $\rho_2$ to $\Delta \setminus (\hbox{int}(\Delta_4') \cup \hbox{int}(\Delta_4''))$ and using $\rho'$ and $\rho''$ in the disks $\Delta_4'$ and $\Delta_4''$, respectively.  Note, that as in Step 18, we must add the isolated subgraphs $G[A' \setminus ((A' \cap B') \cup \{z\})]$ and $G[A'' \setminus ((A'' \cap B'') \cup \{z\})]$ to an arbitrarily chosen cell of the rendition.  This contradiction completes the analysis of the case.


\noindent{\bf Step 20.}
In preparation for the last step of the proof we claim that if $(H_4', \Omega_4')$ or $(H_4'', \Omega_4'')$ is rural,
then $Z_1\cup Z_2\ne\emptyset $. Indeed, to prove the claim we may assume from the symmetry that
$(H_4'', \Omega_4'')$ is rural and we may assume for a contradiction that $Z_1=Z_2=\emptyset$.
 Let $X_3'$ and $X_3''$ be the minimal segments
of $\Omega$ such that $Q_2,Q_3,\ldots,Q_{m_3-1}$ is a linkage from $X_3'$ to $X_3''$.
Let $Y_3',Y_3''$ be the two maximal segments of $\Omega - (X_3' \cup X_3'')$ such that $Y'\subseteq Y_3'$
and $Y''\subseteq Y_3''$.
Since $\zQ_3$ is a subset of the crooked transaction $\zQ_1$, there exists a path $P_1\in \zQ_1$ with an endpoint in
$Y''_3$ that is crossed by another path  $P_2\in\zQ_1$.
If $P_1$ crosses $Q_i$ for some $i\in\{1,2,\ldots ,m_3\}$, then, since the $\zQ_3$-strip society in $(G,\Omega )$
is isolated, it follows that the other endpoint of $P_1$ is in $Y_3'$.
Consequently, by considering the rendition $\rho_1$, we deduce that $P_1$ intersects both $U_2'$ and $U_2''$.
Thus some subpath of $P_1$ joins $U_2'$ to $U_2''$ in $H_2'\cup H_2''\cup L$, contrary to Step~11.
This proves that $P_1$ crosses no $Q_i$ and the same holds for $P_2$. It follows that $P_1$ and $P_2$
have both endpoints in $Y_3''$. Since $Z_1=Z_2=\emptyset$ it follows that $\rho_2$ is a rendition of $(G,\Omega )$.
Thus some subpaths of $P_1$ and $P_2$ form a cross in $(H_4'', \Omega_4'')$, a contradiction.
This proves the claim  that if $(H_4', \Omega_4')$ or $(H_4'', \Omega_4'')$ is rural,
then $Z_1\cup Z_2\ne\emptyset $.


\noindent{\bf Step 21.}  The final case to consider, up to symmetry, is when $(H_4', \Omega_4')$ is not rural and $(H_4'', \Omega_4'')$ is rural.  By the minimality of $k+q$, one of the outcomes (i) -- (v) must hold for $(H_4', \Omega_4')$, $\zP_3'$, the nest $(C_{\beta r + 2k +5,}' \dots, C_{\gamma_2 - 1}')$, and parameters $p$, $r$, $k-1$, $2kl$, and $q$.
Let us note that  $\gamma +f_{r,p}(2kl,k-1,q)\le \gamma +\mu\le 6m_1\le f_{r,p}(l,k,q)$.
 By extending $\zP_3'$ along $\zP_1'$, if we find a transaction satisfying (i) or (ii), then there exists a transaction in the original society $(G, \Omega)$ satisfying  the same outcome.  If there exists a model of a clique minor satisfying (iv), then the same outcome will be satisfied for $(G, \Omega)$ and the original nest and $\zP$ due to the construction of the cycles $C_i'$ and the linkage $\zP_1'$.
If there exists a rendition satisfying outcome (v), then outcome (v) is satisfied for the original society as well.  The construction is very similar to the previous case.  There exists $Z' \subseteq V(H_4') \setminus V(\Omega_4')$ such that $(H_4' - Z', \Omega_4')$ has a rendition $\rho'$ in $\Delta_4'$ of breadth at most $q$ and depth at most $f_{r,p}(2kl, k-1, q) \le f_{r,p}(l,k,q)$.  We have that $|Z'| \le (12288p^{24} + \beta r + 4k)(qk-1)$.  Fix $Z = Z' \cup Z_1 \cup Z_2 \cup (A' \cap B') \cup (A'' \cap B'')$.  We conclude that $(G - Z, \Omega)$ has a rendition in $\Delta$ satisfying (v) by restricting $\rho_2$ to $\Delta \setminus (\hbox{int}(\Delta_4') \cup \hbox{int}(\Delta_4''))$, using $\rho'$  in the disk $\Delta_4'$ and fixing a vortex-free rendition of $(H_4'', \Omega_4'')$ in $\Delta_4''$.  And again, as in the previous steps of the proof, we add the isolated subgraphs $G[A' \setminus ((A' \cap B') \cup \{z\})]$ and $G[A'' \setminus ((A'' \cap B'') \cup \{z\})]$ to an arbitrarily chosen cell of the rendition.

The final case to consider is when we get outcome (iii) and find a $(k-1, 2kl)$-leap pattern $(\zR_1, \zS_1)$ in $(H_4', \Omega_4')$ which is coterminal with $\zP_3'$ up to level $C_{\gamma_2 - 16k^2l - 8q}'$.  We may assume that for all $R \in \zR_1$, $(\zR_1 - R, \zS_1)$ is not a $(k-1, 2kl)$-leap pattern and by Lemma \ref{lem:leapsandbounds}, we conclude that $|\zR_1 \cup \zS_1| \le 4k^2l$.

As in Step 19, we can extend $(\zR_1, \zS_1)$ to a transaction in $(G, \Omega)$ via the linkage $\overline{\zP_3'}$ to get a linkage $(\overline{\zR_1}, \overline{\zS_1})$.  However, we want to ensure that $\overline{\zR_1}$ forms a planar transaction along with $Q_{\gamma_2 + 1}$, $Q_{\gamma_2+2}, \dots, Q_{\gamma_2 + l}$ from the original linkage $Q_1, \dots, Q_{m_3}$ defined in Step 7 via Lemma \ref{lem:transrotation2}.  Note that $Q_{\gamma_2 + 1}, \dots, Q_{\gamma_2 + l}$ are disjoint from $H_4'$ and $H_4''$ by the fact that they are the inner graph of $C_{\gamma_2}'$ and $C_{\gamma_2}''$ and the choice of the parameter $m_3$.  By applying Lemma \ref{lem:transrotation2} to the society $(H_4', \Omega_4')$, the linkage $\zR_1 \cup \zS_1$ and the nest $(C_{\gamma_2 - 16k^2l - 8q}', \dots, C_{\gamma_2 -1}')$, we see that there exists a $(k-1, 2kl)$-leap pattern $(\zR_2, \zS_2)$ in $(H_4', \Omega_4')$ which is coterminal with $\zP_3'$ up to $C_{\gamma_2 - 8q + 2}'$ such that if we let $\overline{\zR_2}$ and $\overline{\zS_2}$ be the continuations of $\zR_2$ and $\zS_2$, respectively, along $\overline{\zP_3'}$, we have that $\overline{\zR_2} \cup Q_{\gamma_2+1} \cup \dots \cup Q_{\gamma_2 + l}$ is a planar linkage.  Note that by construction, $\overline{\zR_2} \cup \overline{\zS_2} \cup Q_{\gamma_2+1} \cup \dots \cup Q_{\gamma_2 + l}$ is coterminal with $\zP$ in the original society $(G, \Omega)$ up to level $C_{\gamma + \gamma_2 + 1}$.

Recall the set $W'$ and subgraph $H_3' = J_3' \cup \zP_1'$ along with the separation $(A', B')$ were defined in Step 15.
By Step 20, the vertex $z$ is defined along with the set $W'$ and linkage $\zS^*$  of order $2k-1$ from $W'$ to $Y'$ in $H_3'[B']$.  Note that $\zR'$ is orthogonal to $C_{\beta r  + 2k + 5}', \dots, C_{\gamma_2}'$ in $(H_4', \Omega_4')$, and in the outergraph of $C_{\gamma+\gamma_2}$ in the original rendition of $(G, \Omega)$, the linkage $\zS^*$ is a subgraph of $\zP_1'$.
Apply Lemma~\ref{lem:augmentleap} to the linkage $\zS^*$ and the $(k-1, 2kl)$-leap pattern $(\overline{\zR_2}, \overline{\zS_2})$ to get a $(k-1, l)$-leap pattern $(\zR_3, \zS_3)$ in $(G, \Omega)$ and a $W' - Y'$ path $R$ such that $R$ is disjoint from $\zR_3 \cup \zS_3$ and $\zR \cup \zS \cup \{ R\}$ are contained in $\overline{\zR_2} \cup \overline{\zS_2} \cup \zR^*$.
Moreover, the lemma ensures that $\zR_3$ is contained in $\overline{\zR_2}$.  By construction, $ \zR \cup \zS \cup \{ R \}$ are coterminal with $\zP_1'$ up to level $C_{\gamma+\gamma_2 + 1}$ with respect to the original nest $(C_1, \dots, C_s)$ in the rendition of $(G, \Omega)$.  Let $w\in W'$ be the endpoint of $R$.  There exists a path $R'$ through the vertex $z$ linking $w$ and a vertex of $W''$ with all internal vertices in $A' \cup A''$.  There exists a path in $H_3''$ linking the endpoint of $R'$ in $W''$ and a vertex of $Y''$.  We conclude that
$(\zR_3 \cup \{Q_{\gamma_2 + 1} , \ldots ,Q_{\gamma_2+ l}\}, \zS_3 \cup \{R \cup R' \cup R''\})$ is a $(k,l)$-leap pattern which is coterminal with $\zP$ up to level $C_{\gamma + \gamma_2 +1 }$, completing the analysis of the case.  This completes the proof of the theorem.
\end{proof}

\section{A Refinement of Theorem \ref{thm:maintransstrong}}

We saw that in Section \ref{sec:crossesandleaps} that a sufficiently large nested crosses transaction or leap pattern orthogonal to a nest yields a large clique minor.  Using these results and by appropriately choosing values for the parameters, we can eliminate the possible outcomes of a large nested crosses transaction or large leap transaction in Theorem \ref{thm:maintransstrong}.  By applying the work of Section \ref{sec:planarstrip}, we can furthermore ensure that the handle or crosscap transactions which may arise as an outcome are also isolated and rural.  We conclude the section by showing how such an isolated and rural crosscap transaction leads naturally to a rendition in the disk plus a crosscap and similarly, how a handle transaction comprised of isolated and rural constituent transactions yields a rendition in the disk plus a handle.

\begin{theorem}
\label{thm:maintrans2}
For all $p, \theta \ge 1$, let $K:= p^{10^{7}p^{24}}\theta$, $\alpha:=12288(p(p-1))^{12} $, and $s,t \ge K$.
Then the following holds.
Let $(G, \Omega)$ be a society and $\rho$ a cylindrical rendition of $(G, \Omega)$.
Let $\zC = (C_1, \dots, C_{s})$ be a nest in $\rho$ of cardinality $s$ and $\zP$ an orthogonal linkage in $\rho$ of cardinality $t$.  Then one of the following holds.
\begin{itemize}
\item[\rm{(i)}] There exist a set $Z\subseteq V(G)$, $|Z|\le \alpha$, which is disjoint from the outer graph of $C_K$
and a transaction $\zQ$ in $(G - Z, \Omega )$ such that
$\zQ$ is  a crosscap transaction of thickness $\theta$ coterminal with $\zP$ to level $C_K$ and the $\zQ$-strip in $(G - Z, \Omega)$ is isolated and rural.
\item[\rm{(ii)}] There exist a set $Z\subseteq V(G)$, $|Z|\le 2\alpha$, which is disjoint from the outer graph of $C_K$
and a transaction $\zQ$ in $(G - Z, \Omega)$ such that
$\zQ$ is  a handle transaction of thickness $\theta$ which is cotermianl with $\zP$ up to level $C_K$ and if $\zQ_1,\zQ_2$ are the constituent planar transactions of $\zQ$, then for $i=1,2$ the $\zQ_i$-strip in $(G - Z, \Omega)$ is isolated and rural.
\item[\rm{(iii)}] There exists a $K_p$ minor in $G$ which is grasped by $\zC \cup \zP$.
\item[\rm{(iv)}] There exists a set $Z \subseteq V(G) \setminus V(\Omega)$ with
$|Z| \le 5\cdot 10^{20}p^{102}(\theta+\alpha)$ such that $(G - Z, \Omega)$ has a rendition in the disk of depth  at most $K$ and breadth at most $2p^2$.
\end{itemize}
\end{theorem}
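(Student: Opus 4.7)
The plan is to apply Theorem~\ref{thm:maintransstrong} with parameter choices designed so that the nested-crosses and leap-pattern outcomes are immediately converted into the desired $K_p$ minor via the results of Section~\ref{sec:crossesandleaps}, and then to post-process the crosscap or handle transaction produced by the remaining outcome using the planar-strip Theorem~\ref{thm:planarstripjump} in order to guarantee the rurality and isolation demanded by outcomes (i) and (ii).

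Concretely, I will set $k_0 = 36864p^{24}$, $l_0=10p^2$, $q_0=2p^2$, and choose $r_0$ large enough that Theorem~\ref{thm:planarstripjump} with output thickness $\theta$ can still be applied after losing up to $\alpha$ paths; the condition on the input size coming from \ref{thm:planarstripjump} is $r_0\ge 24576p^{24}(49152p^{24}+2p^2)\theta+\alpha$. Apply Theorem~\ref{thm:maintransstrong} to $(G,\Omega)$, $\rho$, $\zC$ and $\zP$ with these parameters. Outcome (iv) of \ref{thm:maintransstrong} is outcome (iii) of the present theorem. Outcome (ii) gives a $q_0$-nested crosses transaction of cardinality $4p^2$, coterminal with $\zP$ up to level $C_{f_{r_0,p}(l_0,k_0,q_0)}$, so by Corollary~\ref{cor:nonestedcrosses} there is a $K_p$ minor grasped by $\zC\cup\zP$; outcome (iii) gives a $(k_0,l_0)$-leap pattern, and Lemma~\ref{lem:nolargeleap} again produces the required $K_p$ minor. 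Outcome (v) of \ref{thm:maintransstrong} gives a rendition of breadth at most $q_0-1\le 2p^2$ and depth at most $f_{r_0,p}(l_0,k_0,q_0)$; a direct (if tedious) estimation of $f$ shows that for this choice of parameters the depth is bounded by $K=p^{10^7p^{24}}\theta$, which is outcome (iv).

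It remains to process outcome (i) of \ref{thm:maintransstrong}: a handle or crosscap transaction $\zQ_0$ of thickness $r_0$, coterminal with $\zP$ up to level $C_{f_{r_0,p}(l_0,k_0,q_0)}$. In the crosscap case, I apply Theorem~\ref{thm:planarstripjump} to $\zQ_0$ using the outer nest $(C_{f_{r_0,p}+1},\ldots,C_s)$ (on which $\zQ_0$ coincides with a subset of $\zP$ and is therefore orthogonal and unexposed), with output parameter $l=\theta$ and nested-crosses parameter $k=2p^2$. The first two outcomes of \ref{thm:planarstripjump} yield a $K_p$ minor either directly or via Corollary~\ref{cor:nonestedcrosses}, and the third outcome is exactly what outcome (i) of the present theorem asserts: a set $Z$ of size at most $\alpha$ disjoint from the outer graph of $C_K$, together with a sub-transaction of $\zQ_0$ of cardinality $\theta$ whose strip is rural and isolated. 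Since the output is coterminal with $\zP$ up to level $C_{f_{r_0,p}+\alpha+8}$, which by our choice of parameters is at most $K$, it is also coterminal up to level $C_K$, as required.

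The handle case is the most delicate. The handle transaction $\zQ_0$ has two constituent planar transactions $\zQ_0^1,\zQ_0^2$, each of order $r_0$, whose endpoints occupy four pairwise-disjoint segments of $\Omega$. Apply Theorem~\ref{thm:planarstripjump} first to $\zQ_0^1$, obtaining (modulo the $K_p$-minor cases handled as above) a set $Z_1$ with $|Z_1|\le\alpha$ and a subset $\zQ^1\subseteq\zQ_0^1$ of size $\theta$ whose strip is rural and isolated in $(G-Z_1,\Omega)$. At most $\alpha$ paths of $\zQ_0^2$ meet $Z_1$, and the surviving sub-transaction is still planar, unexposed, and orthogonal to the outer nest in $(G-Z_1,\Omega)$, so Theorem~\ref{thm:planarstripjump} applies to it there and produces an analogous $Z_2$ and $\zQ^2$ of size $\theta$. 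The hard part, and the place where parameter bookkeeping matters most, is to verify that the $\zQ^1$-strip remains rural and isolated in $(G-Z_1-Z_2,\Omega)$; this relies on the fact that $Z_2$ is disjoint from the outer graph of a deep cycle in the outer nest, and on invoking Lemma~\ref{lem:inhereitedrurality} to pass to an appropriate sub-transaction of $\zQ^1$ of size $\theta$ (absorbed into the factor $r_0\ge l'+\alpha$ in the initial choice of $r_0$) that avoids $Z_2$. Setting $Z=Z_1\cup Z_2$ with $|Z|\le 2\alpha$, and $\zQ=\zQ^1\cup\zQ^2$, yields outcome (ii); the main obstacle throughout is ensuring that a single choice of $r_0$ simultaneously satisfies the input-size requirement of \ref{thm:planarstripjump} after losing $\alpha$ paths, keeps $f_{r_0,p}(l_0,k_0,q_0)\le K$, and makes the explicit $|Z|$-bound from \ref{thm:maintransstrong}(v) match the bound claimed in outcome (iv).
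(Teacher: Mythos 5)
Your overall route is the same as the paper's: apply Theorem~\ref{thm:maintransstrong} with $k=36864p^{24}$, $l=10p^2$, $q=2p^2$ and an $r$ sized for a subsequent planar-strip application, convert the nested-crosses and leap outcomes into a $K_p$ minor via Corollary~\ref{cor:nonestedcrosses} and Lemma~\ref{lem:nolargeleap}, post-process the crosscap/handle transaction with a strip theorem on the unused outer portion of the nest (two sequential applications plus Lemma~\ref{lem:inhereitedrurality} in the handle case), and read outcome (iv) off outcome (v) of Theorem~\ref{thm:maintransstrong}. However, two of your parameter choices do not deliver the statement as claimed. First, in the handle case your first strip application outputs a transaction $\zQ^1$ of size only $\theta$, and the set $Z_2$ produced by the second application is not guaranteed to avoid $\zQ^1$; passing to a sub-transaction of $\zQ^1$ of size $\theta$ avoiding $Z_2$ is then impossible in general, and adding $\alpha$ to the \emph{input} threshold $r_0$ does not help, since the strip theorems output exactly $l$ paths regardless of how much the input exceeds the threshold. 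You must run the first application with output parameter $\theta+\alpha$ (this is exactly why the paper takes $r=(49152p^{24}+2p^2)\bigl((\theta+\alpha)+55p^2\bigr)+\alpha$ and extracts $\theta+\alpha$ paths, keeping $\theta$ of them after deleting those met by $Z_2$).

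Second, your decision to use Theorem~\ref{thm:planarstripjump} rather than Theorem~\ref{thm:RTplanarstrip} forces $r_0\ge 24576p^{24}(49152p^{24}+2p^2)\theta+\alpha\approx 1.2\cdot10^{9}p^{48}\theta$, an extra factor of roughly $24576p^{24}$ over the paper's choice. The depth estimate $f_{r_0,p}(l_0,k_0,q_0)\le K$ survives this (there is slack in the exponent $10^7p^{24}$), but the size bound in outcome (v) of Theorem~\ref{thm:maintransstrong}, namely $qk\bigl[12288p^{24}+368650^2p^{52}r+4k\bigr]$, then evaluates to about $10^{25}p^{126}\theta$, which exceeds the bound $5\cdot10^{20}p^{102}(\theta+\alpha)$ that outcome (iv) of the present theorem asserts. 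The extra $\Omega$-path property supplied by Theorem~\ref{thm:planarstripjump} is not needed here, so the fix is simply to use Theorem~\ref{thm:RTplanarstrip} (whose threshold is $(49152p^{24}+k)(55p^2+l)$) with output $\theta+\alpha$, i.e.\ exactly the paper's parameter choice; with your $r_0$ the argument proves only a weaker version of (iv) with a larger constant.
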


\begin{proof}
We first observe that if $p =1$, then $K \ge 1$ and the intersection of an arbitrary $P \in \zP$ and $C \in \zC$ satisfies outcome (iii).  Thus, we may assume going forward that $p \ge 2$.

We apply Theorem~\ref{thm:maintransstrong} to $(G, \Omega)$ with nest $\zC$ and linkage $\zP$ with parameters $p$, $r= (49152p^{24}+2p^2)\left ((\theta + \alpha) +55p^2 \right) + \alpha$ (as in Theorem~\ref{thm:RTplanarstrip}),
$k=36864p^{24}$, $l=10p^2$, $q=2p^2$,
$s$ and $t$.  We begin with the following claim.

We include the following calculation for the sake of completeness.
\begin{claim}
$ f_{r,p}(l,k,q) \le K - 4\alpha - 40p(p-1) - 10$. 
\end{claim}
\begin{cproof}
Let $k, l, q, p, r$ be as stated.  Define
\begin{align*}
T_1 &:= \left( 5 \cdot 10^{21}p^{100}\right)^{k+q}\\
T_2 &:=  \left ( 2k \right) ^{k+q} \left ( l + k^2 l \right) \\
T_3 &:= (r + q + k + 1).
\end{align*}
Thus, $f_{r,p} = T_1 ( T_2+ T_3)$.  We bound each of the terms $T_i$ individually.  Note that $k+q = 36963p^{24} + 2p^2 \le 36865p^{24}$.  Thus, $T_1 \le \left(5 \cdot 10^{21} \right)^{36865p^{24}} p^{3686500p^{24}}$.
Considering the next term,
$$T_2 \le \left( 2 \cdot 36864p^{24}\right)^{36865p^{24}}\left(10p^2 +  \left( 36864p^{24}\right)^2 \cdot 10p^2\right)$$.
We have that $10p^2 + \left( 36864p^{24}\right) ^2 \cdot 10p^2 \le 1.36\cdot 10^{10}p^{50}$ as $36864^2 \le 1.359 \cdot 10^9$.  Thus,
\begin{align*}
T_2 & \le \left [ \left ( 2 \cdot 36864\right)^{36865p^{24}}\cdot 1.36 \cdot 10^{10}\right ]p^{884760p^{24}+50}.
\end{align*}

We now consider $T_3$.  Substituting in, we have that
\begin{align*}
T_3 \le \left( 49152p^{24} + 2p^2\right)\left( 55p^2 + \theta + 12288p^{24} \right)+ 12288p^{24} + 36864p^{24} + 2p^2 + 1.
\end{align*}
Using the fact that $49152p^{24} + 2p^2 \le 49153p^{24}$ and multiplying everything out, we get that:
\begin{align*}
T_3 &\le \theta\left( 49153p^{24}\right) + 110p^4 + 2703360p^{26} + 24576p^{26} + 603979776p^{48} + 49152p^{24} + 2p^2 + 1 \\
&= \theta \left (49153p^{24}\right) + 603979776p^{48} + 2727936p^{26} + 49152p^{24} + 110p^4 + 2p^2 + 1.
\end{align*}
Given that $49152p^{24} + 110p^4 + 2p^2 + 1 \le 49153p^{24} \le 12289p^{26}$ as $p \ge 2$, we have:
\begin{align*}
T_3 &\le (49153p^{24})\theta + 603979776p^{48} + 2740225p^{26} \\
&\le \left( 49153p^{24}\right) \theta + 603979777p^{48}
\end{align*}
given that $p^{22} \ge 4194304$.

Thus,
\begin{align*}
T_2 + T_3 & \le \left [ \left (2 \cdot 36864 \right )^{36865p^{24}} \cdot 1.36 \cdot 10^{10} \right] p^{884760p^{24} + 50} + 603979777p^{48} + 49153p^{24} \theta
\end{align*}
and putting it together, we get
\begin{align*}
T_1(T_2 + T_3)
&\le p^{4571261p^{24}}\theta \left[ \left( 5 \cdot 10^{21}\cdot 2 \cdot 36864 \right)^{36865p^{24}}\cdot 1.36 \cdot 10^{10} + \right . \\
& \left . \qquad + 604028930\left( 5 \cdot 10^{21}\right)^{36865p^{24}}\right].\\
\end{align*}
We convert all constants to powers of two to help combine terms.
\begin{align*}
\left( 5 \cdot 10^{21}\cdot 2 \cdot 36864 \right)^{36865p^{24}}\cdot 1.36 \cdot 10^{10} +&  \\ + 604028930\left( 5 \cdot 10^{21}\right)^{36865p^{24}} &
 \le p^{3280987p^{24}}.
\end{align*}
Substituting this into the previous equation, we get:
\begin{align*}
f_{r,p}(l,k,q) &\le p^{4571261p^{24}}\theta\cdot p^{3280987p^{24}}\\
&\le p^{10^{7}p^{24}}\theta - 4\alpha - 40p(p-1) - 10
\end{align*}
as desired.
\end{cproof}

We consider separately the possible outcomes of applying Theorem \ref{thm:maintransstrong}.

The first possible outcome has two distinct cases: either a crosscap transaction or a handle transaction.  Assume we are in the first case and there exists a crosscap transaction $\zQ$ in $G$ of thickness $r$ which is coterminal with $\zP$ up to level $K - 4\alpha \ 40p(p-1) - 10$.  Apply Theorem~\ref{thm:RTplanarstrip} to $k=q$, $l=\theta$, $p$, the society $(G, \Omega)$, rendition $\rho$, nest $(C_{K-4\alpha-40p(p-1)-9}, \dots, C_{s})$ and transaction $\zQ$ to obtain one of the outcomes therein.  If we get outcome (i) of Theorem \ref{thm:RTplanarstrip}, then we satisfy outcome (iii) of the present theorem.  As the next case, assume we get outcome (ii) of Theorem \ref{thm:RTplanarstrip} and let $\zQ'$ be a $2p^2$ nested crosses or twisted nested crosses transaction which is coterminal with $\zQ$ up to level $C_{K - 3\alpha - 40p(p-1)}$.  By Corollary \ref{cor:nonestedcrosses} applied to the transaction $\zQ'$ and nest $(C_{K - 3\alpha - 40p(p-1)}, \dots, C_s)$, we see that $G$ contains a $K_p$ minor grasped by $\zC \cup \zQ$, satisfying outcome (iii) of the present theorem.  The final possible outcome of Theorem \ref{thm:RTplanarstrip} is that there exists a set $Z \subseteq V(G)$, $|Z| \le \alpha$, and a set $\zQ' \subseteq \zQ$ of size at least $\theta$ (actually $\theta + \alpha$, however we will only need the additional $\alpha$ paths in the next case) such that $Z$ is disjoint from the outergraph of $C_{K - 2 \alpha -  20p(p-1) - 5}$ and the $\zQ'$-strip is rural and isolated in $(G-Z, \Omega)$, satisfying outcome (i).

The other possibility in outcome (i) of Theorem \ref{thm:maintransstrong} is that there exist $\zQ_1, \zQ_2$, each of order $r$ forming an $r$-handle transaction which is coterminal with $\zP$ up to level $C_{K- 4\alpha - 40p(p-1) - 10}$.  The analysis is similar to the previous case. Apply Theorem \ref{thm:RTplanarstrip} to the transaction $\zQ_1$ and nest $(C_{K- 4\alpha - 40p(p-1)-9}, \dots, C_{s})$.  As above, if the result is either outcome (i) or (ii) of Theorem \ref{thm:RTplanarstrip}, then $G$ contains a $K_p$ minor grasped by $\zC \cup \zP$, satisfying outcome (iii) of the present theorem.  Thus, we may assume there exists $Z_1 \subseteq V(G)$, $|Z_1| \le \alpha$ and $Z_1$ disjoint from the outer graph of $C_{K + 2\alpha + 20p(p-1) + 4}$, and $\zQ_1' \subseteq \zQ$, $|\zQ_1'| = \theta + \alpha$ such that $\zQ_1'$ is rural and isolated in $(G-Z_1, \Omega)$.  We now turn our attention to $\zQ_2$.  Let $\bar{\zQ_2}$ be the elements of $\zQ_2$ which are disjoint from $Z_1$.  Note that $|\bar{\zQ_2}| \ge |\zQ_2| - \alpha \ge (49152p^{24}+2p^2)\left ((\theta + \alpha) +55p^2 \right)$.  Again, we apply Theorem \ref{thm:RTplanarstrip} to $\bar{\zQ_2}$ in the society $(G-Z_1, \Omega)$ with nest $(C_{K - 2\alpha - 20p(p-1) - 4}, \dots, C_s)$.  Again, if we get either outcome (i) or (ii) of Theorem \ref{thm:RTplanarstrip}, there exists a clique minor satisfying outcome (iii) of Theorem \ref{thm:maintrans2} as in the previous cases.  Thus, we may assume there exists $Z_2 \subseteq V(G) \setminus Z_1$ which is disjoint from the outer graph of $C_{K}$ and $\bar{\zQ_2'} \subseteq \bar{\zQ_2}$ of order $\theta$ such that $\bar{\zQ_2'}$ is isolated and rural in $(G-(Z_1 \cup Z_2), \Omega)$.  Let $\bar{\zQ_1'} \subseteq \zQ_1'$ be the elements of $\zQ_1'$ which are disjoint from $Z_2$.  Then $|\bar{\zQ_1'}| \ge \theta$.  By Lemma \ref{lem:inhereitedrurality}, $\bar{\zQ_1'}$ is rural and isolated in $(G-Z_1, \Omega)$ and is thus also rural and isolated in $(G-(Z_1 \cup Z_2), \Omega)$.  We conclude that $(\bar{\zQ_1'}, \bar{\zQ_2'})$ is a handle transaction satisfying (ii) of Theorem \ref{thm:maintrans2}.

In outcome (ii) Theorem \ref{thm:maintransstrong}, we have a either a $2p^2$-nested crosses transaction or twisted nested crosses transaction which is coterminal with $\zP$ up to level $K - 4\alpha - 40p(p-1) -10$.  By Corollary \ref{cor:nonestedcrosses}, $G$ has a $K_p$ minor grasped by $\zC \cup \zP$, as desired in outcome (iii) of Theorem \ref{thm:maintrans2}.  Similarly, in outcome (iii) of Theorem \ref{thm:maintransstrong}, we have a $(36864p^{24}, 10p^2)$-leap pattern.  By Lemma \ref{lem:nolargeleap}, $G$ has a $K_p$ minor grasped by $\zC \cup \zP$.

Outcome (iv) of Theorem \ref{thm:maintransstrong} is the same as outcome (iii) of Theorem \ref{thm:maintrans2}.

As the final case, assume we get outcome (v) in the application of Theorem \ref{thm:maintransstrong} and there exists $Z \subseteq V(G) \setminus V(\Omega)$ such that $(G-Z, \Omega)$ has a rendition in the disk of depth at most $f_{r,p}(l,k,q) \le K$ and breadth at most $q = 2p^2$.  Moreover,
\begin{align*}
|Z| & \le (qk)\left [12288p^{24} + 368650^2p^{52}r + 4k \right ] \\
& \le 36865p^{24} \left [ (368650^2 + 1) p^{52}r \right ] \\
& \le 5 \cdot 10^{20}p^{102}( \theta + \alpha)\\
\end{align*}
and we see that outcome (iv) of Theorem \ref{thm:maintrans2} is satisfied.  This completes the analysis of the various cases and the proof of the theorem.
\end{proof}

We conclude this section by examining how the large rural and isolated crosscap and handle transactions in the previous theorem naturally allow one to embed the society in either the projective plane minus a disk or the torus minus a disk, respectively.  These are somewhat technical lemmas which we will need in the next section as we inductively find a $\Sigma$-decomposition in progressively higher and higher genus surfaces.  As such, the statements of the lemmas include the technical nests and systems of paths we will need to maintain in the coming proofs.

\begin{LE}
\label{lem:xcaptrans}
Let $s$ and $\theta$ be non-negative integers.
Let $(G, \Omega)$ be a society and $\rho = (\Gamma, \zD, c_0)$ a cylindrical rendition of $(G, \Omega)$ in the disk $\Delta$.
Let $X_1,X_2$ be disjoint segments of $V(\Omega)$.
Let $\zC = (C_1, \dots, C_{s+8})$ be a nest in $\rho$ of cardinality $s+8$ and $\zP$ a linkage from $X_1$
to $\widetilde{c_0}$ orthogonal to $\zC$.  Let $\Sigma^*$ be a surface homeomorphic to the projective plane minus an open disk obtained from $\Delta$ by adding a crosscap to the interior of $c_0$.
If there exists a crosscap transaction $\zQ$ in $(G , \Omega )$ orthogonal to $\zC$ and of thickness $\theta+2s+7$ disjoint from
$\zP$ such that every member of $\zQ$ has both endpoints in $X_2$ and
  the $\zQ$-strip in $(G , \Omega )$ is isolated and rural, then there exist a subset $\zQ'$ of $\zQ$
of cardinality $\theta$ and a rendition $\rho'$ of $(G, \Omega)$
in $\Sigma^*$ such that there exists a unique vortex $c_0'\in C(\rho')$ and the following hold:
\begin{itemize}
\item[\rm{(i)}] $\zQ'$ is disjoint from $\sigma (c_0')$,
\item [\rm{(ii)}] the vortex society of $c_0'$ in $\rho'$ has a cylindrical rendition $\rho''=(\Gamma'',\zD'',c_0'')$,
\item [\rm{(iii)}] every element of $\zP$ has an endpoint in $V(\sigma_{\rho''}(c_0''))$,
\item [\rm{(iv)}] $\rho''$ has a nest $\zC'' = (C_1'', \dots, C_s'')$ of cardinality $s$ such that $\zP$ is orthogonal to $\zC''$ and for every $i$, $1 \le i \le s$, and for all $P \in \zP$, $C_i'' \cap P = C_{i+7} \cap P$.  Moreover,
\item [\rm{(v)}] let $\zP=\{P_1,P_2,\ldots ,P_m\}$. For $i=1,2,\ldots ,m$ let $x_i$ be the endpoint 
of $P_i$ in $X_1$, and let $y_i$ be the last entry of $P_i$ into $c_0'$;
then if $x_1,x_2,\ldots ,x_m$ appear in $\Omega$ in the order listed,
then $y_1,y_2,\ldots ,y_m$ appear on $\widetilde{c_0'}$ in the order listed.
\item [\rm{(vi)}] Let $\Delta'$ be the open disk bounded by the track of $C_{s+8}$.  Then $\rho$ restricted to $\Delta \setminus \Delta'$ is equal to $\rho'$ restricted to $\Delta \setminus \Delta'$.
\end{itemize}
\end{LE}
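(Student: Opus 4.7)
The plan is to construct $\rho'$ by modifying $\rho$ only inside the closed disk $\Delta_{s+8}$ bounded by the track of $C_{s+8}$, leaving $\rho$ unchanged outside; this makes condition (vi) automatic. The added crosscap is placed in the interior of the original vortex $c_0$, and the middle portion of $\zQ$ will be drawn through it, while the outer portion of $\zQ$ together with the cycles $C_8,\ldots,C_{s+7}$ will be used to build the new nest $\zC''$ around a new vortex $c_0'$.

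First I would enumerate the $N := \theta+2s+7$ members of $\zQ$ as $Q_1,\ldots,Q_N$ with endpoints $u_i,v_i\in X_2$ such that $u_1,\ldots,u_N,v_1,\ldots,v_N$ appear in $\Omega$ in the crosscap order, and set $\zQ' := \{Q_{s+4},\ldots,Q_{s+3+\theta}\}$, reserving $s+3$ buffer paths on each side. Let $(G_\zQ,\Omega_\zQ)$ be the $\zQ$-strip society. By hypothesis it is isolated in $(G,\Omega)$ and rural, so I fix a vortex-free rendition $\rho_\zQ$ in a disk $D_\zQ$ in which $V(\Omega_\zQ)$ appears on $\partial D_\zQ$ in the order $u_1,\ldots,u_N,v_N,v_{N-1},\ldots,v_1$ dictated by Definition~\ref{def:crosscapstrip}. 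Because $\zQ$ is orthogonal to $\zC$, each $Q_i$ meets $C_{s+8}$ in two components, so the portions of $\zQ$ that live outside $\Delta_{s+8}$ are already drawn correctly by $\rho$; only the middle subpaths need to be redrawn inside $\Delta_{s+8}$.

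The key topological step is to glue $\rho_\zQ$ into the modified interior of $\Delta_{s+8}$. The crosscap in the interior of $c_0$ identifies two antipodal arcs of a small circle, and this identification is precisely what is needed to reverse the $v$-boundary of $\rho_\zQ$ back to the $\Omega$-order. Concretely, after routing each $Q_i \in \zQ'$ through the crosscap, I would construct the cycles of $\zC''$ by, for $i=1,\ldots,s$, combining the portion of $C_{i+7}$ lying outside the $\zQ$-strip with two subpaths of the strip rendition that use $Q_i$ and $Q_{N+1-i}$ (together with short arcs along $X_2$ between the strip boundary paths) as rails to form a closed cycle $C_i''$. Since the $\zQ$-strip is isolated and disjoint from $\zP$, each $C_i''$ intersects every $P\in\zP$ in exactly the same set as $C_{i+7}$, giving condition (iv) and the orthogonality of $\zP$ to $\zC''$. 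The new vortex $c_0'$ is the union of the original $c_0$, the attached crosscap, and the tubular region between the crosscap and $C_1''$; restricting $\rho'$ to $c_0'$ yields the cylindrical rendition $\rho''=(\Gamma'',\zD'',c_0'')$, with $c_0''$ a small disk around the crosscap. By construction $\zQ'$ runs through the crosscap and so is disjoint from $\sigma(c_0')$, giving (i)--(iii).

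The main obstacle will be verifying condition (v), namely that the cyclic order of the $\zP$-endpoints is preserved under the passage to $\widetilde{c_0'}$. Because $\zP$ is orthogonal to $\zC$ and disjoint from $\zQ$, the elements of $\zP$ enter the disk-side of $c_0'$ (not the crosscap side), and within $\rho''$ they cross each $C_i''$ once, so their relative cyclic order around $\widetilde{c_0'}$ is determined by their cyclic order on the outer boundary, which by hypothesis is the order of $x_1,\ldots,x_m$ in $\Omega$. Carefully tracking how $X_1$ sits relative to $X_2$ in $\Omega$ (the disjointness of $X_1$ and $X_2$ is crucial here) and how the crosscap identification preserves orientation on the disk-side of $c_0'$ will complete the verification. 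The remaining conditions follow from the construction: $\rho''$ is cylindrical with $c_0''$ as its unique vortex, $\zC''$ is indeed a nest in $\rho''$ by planarity of the modified drawing outside $c_0''$, and equality of $\rho$ and $\rho'$ outside $\Delta'$ holds by design.
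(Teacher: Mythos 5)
Your overall strategy---redraw the strip through a crosscap added inside $c_0$, keep buffer paths of $\zQ$ on each side as rails, and complete the new nest with the old cycles $C_8,\ldots,C_{s+7}$---is the same as the paper's, but two of your steps do not work as described. The more serious one is your description of the new vortex: you take $c_0'$ to be ``the union of the original $c_0$, the attached crosscap, and the tubular region between the crosscap and $C_1''$.'' Such a region is not a closed disk, so it cannot be a cell of a $\Sigma^*$-decomposition at all; and since you route $\zQ'$ through the crosscap, $\zQ'$ would then lie in $\sigma(c_0')$, contradicting (i), while (ii) also becomes unobtainable because the vortex society must admit a cylindrical rendition in a disk. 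In the correct construction the crosscap ends up \emph{outside} the new vortex: cutting the disk-plus-crosscap along the tracks of two buffer paths (each passing through the crosscap once) splits it into two disks; the new vortex is the disk piece containing the old vortex material, the leftover attachments in $X_2$, and the two extreme paths of $\zQ$, while the middle paths $\zQ'$ and the crosscap lie in the other piece. Correspondingly, each new nest cycle is closed up through the crosscap by its two $Q$-rails together with a piece of an old $C_{i+7}$ (a curve crossing the crosscap twice bounds a disk), not by ``short arcs along $X_2$''; and the inner vortex $c_0''$ of $\rho''$ is the cell that swallowed the old vortex and the leftover bridges, not ``a small disk around the crosscap,'' which is also what makes (iii) verifiable.

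The second gap is the gluing step. You fix a vortex-free rendition of the full $\zQ$-strip society of $(G,\Omega)$ and propose to insert it inside $\Delta_{s+8}$ while keeping $\rho$ untouched outside; but that strip society reaches all the way to $\Omega$ and carries all its bridges, so its rendition cannot simply be placed in a smaller disk, and rurality and isolation of the full strip do not by themselves give a drawing of the inner portion compatible with the old drawing along the boundary circle. What is needed is that the restriction of the strip to the inner society of a small nest cycle (the paper uses $C_7$) is again rural and isolated; this is precisely Lemma~\ref{lem:planarstriprestriction}, whose hypotheses are supplied by sacrificing one path of $\zQ$ (the two $V(\Omega)$--$\widetilde{c_0}$ subpaths of that path play the role of the auxiliary two-path linkage there)---this is why the thickness is $\theta+2s+7$ rather than $\theta+2s+6$, a role your budget of $s+3$ buffers per side does not account for. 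One must also cut away the outermost strip paths into the big cell, because vertices that the strip shares with the rest of the graph along those paths need not appear as nodes of the strip rendition; your proposal addresses neither point, and without them the claimed rendition $\rho'$ and conditions (i)--(iv) are not established.
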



\begin{proof}
Let $(G_1, \Omega_1)$ be the vortex society of the cycle $C_7$ and let $\Delta_7$ be the closed disk bounded by the track of $C_7$ in $\Delta$.  Let $t = \theta + 2s + 6$ and let $\zQ = \{Q_0, \dots, Q_t\}$ with the elements of $\zQ$ ordered according to the first time we encounter an endpoint traversing $X_2$ in the order given by $\Omega$.  For every $i$, let $\bar{Q}_i$ be the $V(\Omega_1)$-path in $Q_i$ which contains at least one edge of $\sigma_\rho(c_0)$.  Note that $\bar{Q}_i$ is uniquely defined as $\zQ$ is unexposed and orthogonal to $\zC$.

Apply Lemma \ref{lem:planarstriprestriction} to the society $(G, \Omega)$, $\rho$, nest $\zC$, and transaction $\{Q_1, Q_2, \dots, Q_t\} $.  Note that the two distinct $V(\Omega) - \widetilde{c_0}$ paths contained in $Q_0$ serve as the linkage $\zP$ in the statement of Lemma \ref{lem:planarstriprestriction}.  Let $\zQ_1 = \{\bar{Q}_1, \dots, \bar{Q}_t\}$.  The lemma implies that the $\zQ_1$-strip society $(H, \Omega_H)$ in $(G_1, \Omega_1)$ is rural and isolated.  Moreover, $\zQ_1$ is a crosscap transaction.  Let $(J_1, \Omega_{J_1})$ be the society with $J_1$ equal to the union of $H$ and the graph with vertices $V(\Omega_1)$ and no edges; let $\Omega_{J_1} = \Omega_1$.  Let $\Delta^*$ be a surface obtained from $\Delta_7$ by adding a crosscap to the interior of $c_0$.  As $(H, \Omega_H)$ is rural and isolated and $\zQ_1$ is a crosscap transaction, there exists a rendition $\delta_1 = (\Gamma_1, \zD_1)$ of $(J_1, \Omega_{J_1})$ in $\Delta^*$.  Moreover, we may assume that $\pi_{\delta_1}^{-1}(v) = \pi_{\rho}^{-1}(v)$ for all $v \in V(\Omega_{J_1}) = V(\Omega_1)$.

Let $L_1$ be the minimum subgraph of $G_1$ such that $G_1 = J_1 \cup L_1$.  By the definition of the $\zQ_1$-strip society, $V(L_1) \cap V(J_1) \subseteq (V(\Omega_1) \setminus V(\Omega_{H})) \cup V(\bar{Q}_1) \cup V(\bar{Q}_t)$.  Note, however, that there may be vertices $V(L_1) \cap V(J_1)$ contained in either $\bar{Q}_1$ or $\bar{Q}_t$ for which the corresponding point in the drawing of $\delta_1$ is not an element of $N$ (and instead drawn in the interior of some cell).  Thus, we cannot easily add a single cell containing $L_1$ to get a rendition of $(G_1, \Omega_1)$ in $\Delta^*$.  To resolve this issue, we will slightly reduce the subgraph $J_1$.

Fix $T_1$ and $T_2$ to be the track of $\bar{Q}_2$ and $\bar{Q}_{t-1}$, respectively, in $\rho_1$.  The surface $\Delta^* \setminus (T_1 \cup T_2)$ has exactly two connected components; let $\Delta_1'$ and $\Delta_2'$ be the closures of these two components.  Each of $\Delta_i'$ is homeomorphic to the closed unit disk for $i = 1, 2$.  One of the components, say $\Delta_1'$, contains the images of the endpoints of $\bar{Q}_3, \dots, \bar{Q}_{t-2}$ under the map $\pi^{-1}_{\rho_1}$.  It follows that the other, $\Delta_2'$, contains the drawing of $\bar{Q}_1$, $\bar{Q} _t$, and the vertices of $V(\Omega_1) \setminus V(\Omega_{H})$.

Let $J_2$ be the subgraph formed by the union of $\bigcup_{\{c \in C(\rho_1): c \subseteq \Delta_1'\} }\sigma(c)$ and the set of vertices $\pi_{\rho_1}(N(\rho_1) \cap \partial(\Delta_1'))$ included as isolated vertices.  Let $L_2$ be the minimal subgraph of $G_1$ such that $G_1 = L_2 \cup J_2$.
Note that the drawings of the paths $\bar{Q}_3, \dots, \bar{Q}_{t-2}$ are disjoint from the disk $\Delta_2'$.

Define $\Sigma^*$ to be the surface $(\Delta \setminus int(\Delta_7) ) \cup \Delta^*$.  We now define a rendition $\rho_2 = (\Gamma_2, \zD_2, c_2)$ of $(G, \Omega)$ in $\Sigma^*$.  Let $\Gamma_2$ be equal to the restriction of $\Gamma$ in $\Delta \setminus int(\Delta_7)$, equal to the restriction of $\Gamma_1$ in $\Delta_1'$, and equal to an arbitrary drawing of $J_2$ in the disk $\Delta_2'$.  Note that we may assume that the drawing of $J_2$ is consistent with the drawing of $L_2$ and the outer graph of $C_7$ on the boundary of $\Delta_2'$ and that the drawing intersects the boundary of $\Delta_2'$ only in points corresponding to the vertices in the intersection of $J_2$ and the union of $L_2$ and the outer graph of $C_7$.  Let $\zD_2 = \{D \in \zD: D \nsubseteq \Delta_7\} \cup \{D \in \zD_1 : D \subseteq \Delta_1'\} \cup \{\Delta_2'\}$.  Finally, let $c_2$ be the cell of the $\Sigma^*$-decomposition corresponding to  $\Delta_2'$.

As every vertex of $\widetilde{c_2}$ is contained in either $Q_2 \cup Q_{t-1}$ or in $C_7$, we have that the paths $Q_3, Q_4, \dots, Q_{t-2}$ are all disjoint from $\sigma_{\rho_2}(c_2)$.  For every $i$, $1 \le i \le s+1$, define $C_i''$ to be the unique cycle contained in $Q_{2+i} \cup Q_{t-1-i} \cup C_{7+i}$ which is disjoint from $Q_{3+i}, Q_{4+i}, \dots, Q_{t-2-i}$.  Note that each $C_i''$ bounds a disk in $\Delta^*$ and the inner graph of $C_i''$ contains $\sigma_{\rho_2}(c_2)$ and is disjoint from $Q_{3+i}, Q_{4+i}, \dots, Q_{t-2-i}$.

We are now in a position to define our rendition $\rho' = (\Gamma', \zD', c_0')$.  Let $\Gamma' = \Gamma_2$.  Let $D'$ be the closed disk bounded by the track of $C_{s+1}''$.  Let $\zD' = \{D \in \zD_2: D \nsubseteq D'\} \cup \{D'\}$ and $c_0' = D' \setminus ( N(\rho_2) \cap \partial(D'))$.  Let $\zQ' = \{Q_{4+s}, \dots, Q_{3+s+\theta}\}$.

By construction, the vortex society of $c_0'$ in $\rho'$ is simply the inner society of $C_{s+1}''$ with respect to the rendition $\rho_2$.  Given the order of the original linkage $\zQ$ and the construction of the $C_i''$, we have that $\zQ'$ is disjoint from $\sigma_{\rho'}(c_0')$, satisying \rm{(i)}.  The restriction of $\rho_2$ to $D'$ implies that the vortex society of $c_0'$ has the desired cylindrical rendition $\rho'' = (\Gamma'', \zD'', c_0'')$ in (ii).  Moreover, the cycles $(C_1'', \dots, C_s'')$ form a nest of order $s$ in $\rho''$.

We now turn our attention to the linkage $\zP$.  As the $\zQ$-strip is rural and isolated, and since $\zP$ is disjoint from $Q_2$ and $Q_{t-1}$, we have that each element of $\zP$ has an endpoint in $V(L_2) = V(\sigma_{\rho_2}(c_2))$ and thus (iii) is satisfied.  Moreover, by construction of the $C_i''$, for every $1 \le i \le k$ and $P \in \zP$, $C_i'' \cap P = C_{7+i} \cap P$.   We conclude that $\zP$ is orthogonal to the nest $(C_1'', \dots, C_s'')$ and that (iv) holds.

Conclusion (v) holds by the fact that $\zP$ and $\zQ$ have their endpoints in disjoint segments of the original society vertices $\Omega$ and the construction of the cycles $C_i''$.  Conclusion (vi) holds by the construction of $\rho'$, completing the proof of the lemma.
\end{proof}

The next lemma is the analog of the previous in the case of a large handle transaction.  The proof is much the same, however there is one additional technicality to consider.  In the proof of Lemma \ref{lem:xcaptrans}, after finding the planar strip in the crosscap transaction, the definition of the cycles $C_i''$ followed naturally.  In the variant of Lemma \ref{lem:xcaptrans} for handle transactions, when we find the planar strips in the constituent transaction of handle transaction, we will need an additional property of the handle transaction in order to define the cycles $C_i''$ which give the desired nest.  This leads to the following definition.

\begin{DEF}
Let $k \ge 1$ be a positive integer, $(G, \Omega)$ be a society, $X$ a segment of the society, and $\zP$ be a $k$-handle transaction such that every element of $\zP$ has both endpoints contained in $X$.  We say that $\zP$ is \emph{consistent} with $X$ if the elements of $\zP$ can be labeled $P_1, \dots, P_{2k}$ and the endpoints of $P_i$ as $s_i, t_i$ for $1 \le i \le 2k$ such that $s_1, s_2, \dots, s_{2k}, t_k, t_{k-1}, \dots, t_1, t_{2k}, t_{2k-1}, \dots, t_{k+1}$ occur on $X$ in the linear order induced on $X$ by $\Omega$.
\end{DEF}

\begin{LE}\label{lem:consistent}
Let $k \ge 1$ be a positive integer.  Let $(G, \Omega)$ be a society, $X$ a segment of the society, and $\zP$ a $2k$x-handle transaction such that every element has both endpoints contained in $X$.  Then there exists a $k$-handle transaction $\zP'$ comprised of elements of $\zP$ such that $\zP'$ is consistent with $X$.
\end{LE}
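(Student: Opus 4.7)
Label the paths of $\zP$ as $P_1,\dots,P_{4k}$ with endpoints $u_i,v_i$ so that on $\Omega$ they appear in the cyclic order
$$u_1,\dots,u_{4k},\; v_{2k},v_{2k-1},\dots,v_1,\; v_{4k},v_{4k-1},\dots,v_{2k+1}.$$
This partitions the $8k$ endpoints into four consecutive $2k$-blocks $U_1,U_2,V_1,V_2$, with the two constituents of $\zP$ linking $U_1$ to $V_1$ and $U_2$ to $V_2$ as planar linkages. Since $X$ is a segment of $\Omega$ containing every endpoint of $\zP$, the linear order that $X$ induces on these endpoints is some cyclic rotation of the above $\Omega$-cyclic order.

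The plan rests on the following observation, which I would establish first. Any $\zP'\subseteq \zP$ consisting of $k$ paths taken from each constituent is automatically a $k$-handle transaction in $(G,\Omega)$; moreover each such $\zP'$ admits exactly four valid labelings as a $k$-handle transaction, differing only in which of four ``sub-block starts'' plays the role of $u_1^{\mathrm{new}}$---namely the smallest first-constituent index chosen, the smallest second-constituent index chosen, the largest first-constituent index (read as a $V_1$-endpoint), and the largest second-constituent index (read as a $V_2$-endpoint). Consistency of $\zP'$ with $X$ then amounts to the single requirement that the first endpoint of $\zP'$ in the $X$-linear order coincides with the chosen $u_1^{\mathrm{new}}$.

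With this in hand, I would carry out a case analysis based on which of the four blocks contains the first endpoint of $X$ and where within that block it lies. Up to the natural rotation-by-$2k$ symmetry of the handle structure (which interchanges $U_1\leftrightarrow V_1$ and $U_2\leftrightarrow V_2$ up to swapping $u$-labels and $v$-labels), it suffices to treat the case $X$ beginning in $U_1$, say at $u_{j+1}$. When $0\le j\le k$ the choice $\zP':=\{P_{j+1},\dots,P_{j+k}\}\cup\{P_{2k+j+1},\dots,P_{3k+j}\}$ with its natural labeling places $u_1^{\mathrm{new}}=u_{j+1}$, which is indeed the first endpoint of $\zP'$ on $X$. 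When $k<j<2k$, the first-constituent indices would wrap past the end, so I would instead take $\zP':=\{P_{j-k+1},\dots,P_j\}\cup\{P_{3k+1},\dots,P_{4k}\}$ together with the ``swap-constituents'' labeling, so that $u_1^{\mathrm{new}}=u_{3k+1}$; the point is that all first-constituent endpoints of this $\zP'$ now lie at the very end of $X$'s order, so $u_{3k+1}$ really is the first endpoint of $\zP'$ on $X$. The sub-cases in which $X$ begins inside $U_2$ are handled by exactly analogous constructions, and the $V_1,V_2$ cases reduce to these via the rotational symmetry.

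What is left to verify in each case is purely bookkeeping: that the relabeled $\zP'$ is genuinely a $k$-handle transaction in $(G,\Omega)$ (a check on the cyclic order of its sub-block starts) and that its endpoints on $X$ appear in the consistency pattern $s_1,\dots,s_{2k},t_k,\dots,t_1,t_{2k},\dots,t_{k+1}$ (a check on the linear order). Both reductions are immediate from the explicit indexing. The hard part is really conceptual rather than computational: recognizing that the doubled thickness $2k$ (rather than something like $k+1$) is exactly what supplies enough slack inside each constituent to shift the chosen index set so that one of the four permitted sub-block starts can be aligned with the first vertex of $X$, no matter where in the $\Omega$-cyclic order that vertex lies.
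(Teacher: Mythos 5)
Your proposal is correct and takes essentially the same route as the paper: in both arguments the start of $X$ splits the endpoints of the constituent encountered first, and one selects $k$ paths of that constituent lying entirely on one side of the split together with $k$ paths of the other constituent, then relabels to get consistency. The paper packages this as the minimal segment $Y_2$ containing the second constituent plus a ``majority side'' pigeonhole, whereas you make the relabeling explicit via the four sub-block-start observation and an index case analysis, but the underlying construction is the same.
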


\begin{cproof}
Let $\zP_1, \zP_2$ be the constituent transactions of $\zP$, and assume that traversing $X$ according to the linear order induced by $\Omega$, we first encounter an element of $\zP_1$.  Fix $Y_2$ to be the minimal segment of $\Omega$ contained in $X$ such that every element of $\zP_2$ has both endpoints contained in $Y_2$.  It follows that every element of $\zP_1$ has one endpoint contained in $Y_2$ and one endpoint in $X \setminus Y_2$.  The set $X \setminus Y_2$ is comprised of two segments; we fix $Y_1$ to be one of the two segments of $X \setminus Y_2$ which contains an endpoint of at least half of the elements of $\zP_1$.  Let $\zP_1'$ be elements of $\zP_1$ with an endpoint in $Y_1$ and let $\zP_2'$ be a subset of $\zP_2$ of order $|\zP_1'|$.  Then $\zP' = \zP_1 \cup \zP_2$ is a handle transaction of order at least $k$ which is consistent with $X$, as claimed.
\end{cproof}

We now present the analog of Lemma \ref{lem:xcaptrans} for handle transactions, with the additional factor two in the parameters arising from an application of Lemma \ref{lem:consistent}.

\begin{LE}
\label{lem:handletrans}
Let $s$ and $\theta $ be non-negative integers.
Let $(G, \Omega)$ be a society and $\rho = (\Gamma, \zD, c_0)$ a cylindrical rendition of $(G, \Omega)$ in a disk $\Delta$.
Let $X_1,X_2$ be disjoint segments in $V(\Omega)$.
Let $\zC = (C_1, \dots, C_{s+8})$ be a nest in $\rho$ of cardinality $s+8$ and $\zP$ a linkage from $X_1$
to $\widetilde{c_0}$ orthogonal to $\zC$.  Let $\Sigma^+$ be a surface homeomorphic to the torus minus an open disk obtained by adding a handle to $\Delta$ in the interior of $c_0$.
Let $\zQ$ be a handle transaction in $(G , \Omega )$ orthogonal to $\zC$ and of thickness $2\theta+4s+12$ which is disjoint from
$\zP$ such that every member of $\zQ$ has both endpoints in $X_2$.  Let $\zQ_1$ and $\zQ_2$ be the constituent planar transactions of $\zQ$ and let $\zQ_i$ have the property that the $\zQ_i$-strip in $(G , \Omega )$ is isolated and rural for $i = 1,2$.
Then for $i = 1,2$, there exists a subset $\zQ_i' \subseteq \zQ_i$
of cardinality $\theta$ and a rendition $\rho'$ of $(G, \Omega)$
in $\Sigma^+$ with a unique vortex $c_0'\in C(\rho')$ which satisfy the following:
\begin{itemize}
\item[\rm{(i)}] $\zQ' = \zQ_1' \cup \zQ_2'$ is disjoint from $\sigma (c_0')$,
\item [\rm{(ii)}] the vortex society of $c_0'$ in $\rho'$ has a cylindrical rendition $\rho''=(\Gamma'',\zD'',c_0'')$,
\item [\rm{(iii)}] every element of $\zP$ has an endpoint in $\sigma_{\rho''}(c_0'')$,
\item [\rm{(iv)}] $\rho''$ has a nest $\zC'' = (C_1'', \dots, C_s'')$ of cardinality $s$ such that $\zP$ is orthogonal to $\zC''$ and for every $i$, $1 \le i \le s$, and for all $P \in \zP$, $C_i'' \cap P = C_{i+7} \cap P$.  Moreover,
\item[\rm{(v)}] let $\zP=\{P_1,P_2,\ldots ,P_m\}$. For $i=1,2,\ldots ,m$ let $x_i$ be the endpoint 
of $P_i$ in $X_1$, and let $y_i$ be the last entry of $P_i$ into $c_0'$;
if $x_1,x_2,\ldots ,x_m$ appear in $\Omega$ in the order listed,
then $y_1,y_2,\ldots ,y_m$ appear on $\widetilde{c_0'}$ in the order listed.
\item [\rm{(vi)}] Let $\Delta'$ be the open disk bounded by the track of $C_{s+8}$.  Then $\rho$ restricted to $\Delta \setminus \Delta'$ is equal to $\rho'$ restricted to $\Delta \setminus \Delta'$.
\end{itemize}
\end{LE}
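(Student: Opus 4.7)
The approach will closely follow the proof of Lemma~\ref{lem:xcaptrans}, with one extra preparatory step to exploit the handle structure topologically, and a parallel construction carried out on both constituent planar transactions rather than on a single crosscap transaction. The factor of two and the extra summand in the thickness of $\zQ$ exist precisely to absorb the cost of first extracting a \emph{consistent} sub-handle-transaction via Lemma~\ref{lem:consistent} and then performing the same ``trim by two'' operation on each constituent that was used in the crosscap case.

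First I would apply Lemma~\ref{lem:consistent} to $\zQ$ with segment $X_2$ to obtain a sub-handle-transaction $\bar \zQ\subseteq\zQ$ of thickness $\theta+2s+6$ that is consistent with $X_2$. Let $\bar\zQ_1,\bar\zQ_2$ be the constituent planar transactions. By Lemma~\ref{lem:inhereitedrurality}, the $\bar\zQ_i$-strip in $(G,\Omega)$ remains isolated and rural for $i=1,2$. Next, I would pass to the vortex society $(G_1,\Omega_1)$ of $C_7$; using Lemma~\ref{lem:planarstriprestriction} applied separately to $\bar\zQ_1$ and $\bar\zQ_2$, the corresponding restricted strip societies $(H_i,\Omega_{H_i})$ in $(G_1,\Omega_1)$ are each rural and isolated. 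Fix vortex-free renditions $\delta_i$ of each $(H_i,\Omega_{H_i})$ in a disk, together with vortex-free renditions of the associated ``societies with isolated boundary vertices'' in the same way as in Lemma~\ref{lem:xcaptrans}.

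The main topological step is to glue these two disks together via the handle added inside $c_0$. Concretely, start from the restriction of $\rho$ to $\Delta\setminus\mathrm{int}(\Delta_7)$, where $\Delta_7$ is bounded by the track of $C_7$. Form $\Sigma^+$ by attaching a handle inside $c_0$, which can be viewed as two disjoint disks in $\Delta^*:=\Delta_7\cup(\text{handle})$ whose boundaries are the two ends of the handle. Because $\bar\zQ$ is consistent with $X_2$, the cyclic orderings of the endpoints of $\bar\zQ_1$ and $\bar\zQ_2$ along $X_2$ are compatible with routing one planar strip around the ``front'' of the handle and the other around the ``back''; this is the key place where consistency is needed and where a non-consistent handle transaction would fail to produce a genuine toroidal embedding. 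Draw each $(H_i,\Omega_{H_i})$ using $\delta_i$ in one of the two attaching disks of the handle, and stuff the leftover subgraphs (the small ``remainders'' of $L$ not covered by the strip renditions, analogous to $J_2$ in the proof of Lemma~\ref{lem:xcaptrans}) into two small auxiliary cells near the two outermost paths of each constituent. This gives a rendition $\rho_2$ of $(G,\Omega)$ in $\Sigma^+$ with at most two small auxiliary cells and no other vortex.

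Finally, to produce the single vortex $c_0'$ and the nest $\zC''$, I would set up the cycles $C_i''$ exactly as in Lemma~\ref{lem:xcaptrans}, but using a pair consisting of one path from $\bar\zQ_1$ and one from $\bar\zQ_2$ (symmetrically chosen from the outermost remaining paths on each constituent) together with $C_{7+i}$, so that $C_i''$ bounds a disk in $\Sigma^+$ containing both auxiliary cells and all previously-used paths. After $s+1$ such layers we let $\zQ_i'$ consist of the $\theta$ innermost remaining elements of $\bar\zQ_i$; these are disjoint from the cell $c_0'$ obtained by letting $c_0'$ be the region bounded by the track of $C_{s+1}''$. Properties (i)--(vi) then follow by the same verification as in Lemma~\ref{lem:xcaptrans}: (i)--(iii) from the construction of $c_0'$ and $\zQ_i'$; (iv) from $C_i''\cap P=C_{7+i}\cap P$ for $P\in\zP$ (since $\zP$ is disjoint from $\zQ$); (v) from the fact that $\zP$ and $\zQ$ have endpoints in the disjoint segments $X_1$ and $X_2$, so the combinatorial order of $\zP$'s terminations on $\widetilde{c_0'}$ is inherited from $\widetilde{c_0}$; and (vi) directly from the construction. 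The main subtlety, and in my view the only real obstacle, is ensuring that the two strip renditions can be glued across the handle in a way that is compatible with the cyclic order on the outer boundary --- this is exactly what the \emph{consistency} of $\bar\zQ$ with $X_2$ guarantees, which is why Lemma~\ref{lem:consistent} is the essential new ingredient compared to the crosscap case.
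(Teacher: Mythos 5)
Your high-level plan is the same as the paper's (extract a consistent sub-handle-transaction via Lemma~\ref{lem:consistent}, restrict to the inner society of $C_7$ and apply Lemma~\ref{lem:planarstriprestriction} to each constituent, glue the two rural strips over the handle, then build a nest around what is left and verify (i)--(vi)), but there is a genuine gap at the nest-construction step. You propose to form each $C_i''$ from \emph{one} path of $\bar\zQ_1$, one path of $\bar\zQ_2$ and $C_{7+i}$, in analogy with the crosscap case. On the torus-minus-a-disk this fails: the middle portion of a constituent-1 path, closed up along an arc of $C_{7+i}$, is homotopic to one of the two handle curves, a constituent-2 path to the transverse one, and every cycle contained in the union of one path from each constituent with $C_{7+i}$ lies in a nontrivial homotopy class (roughly $a$, $b$ or $ab^{\pm1}$), so it bounds no disk at all, let alone a disk containing the new vortex and avoiding the remaining, more central paths of $\zQ$. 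The leftover region (the third component of $\Delta^+$ after cutting along the tracks of $\bar Q^1_2,\bar Q^1_{t-1},\bar Q^2_2,\bar Q^2_{t-1}$) has portions of all four of these tracks on its frontier, so any graph cycle enclosing it must run parallel to all four; this is why the paper takes $C_i''$ to be the unique suitable cycle in $Q^1_{2+i}\cup Q^1_{t-1-i}\cup Q^2_{2+i}\cup Q^2_{t-1-i}\cup C_{7+i}$, consuming $2(s+1)$ paths from \emph{each} constituent — exactly the reason the thickness per constituent must be $\theta+2s+6$ after Lemma~\ref{lem:consistent}. With your two-paths-per-level cycles the definition of $c_0'$ and the nest required in (iv) collapses.

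Two smaller corrections. First, the leftover graph is drawn in a \emph{single} cell (that third region becomes the one large vortex around which the nest is built); your ``two small auxiliary cells'' are not justified, since the leftover may be a connected graph attaching to boundary vertices of both constituents' outer paths and to the unused part of $V(\Omega_1)$, all of which lie on the frontier of that one region, and an edge of it cannot be split between two cells. Second, consistency with $X_2$ is not what makes the gluing over the handle possible — any handle transaction with all endpoints in $X_2$ can be drawn over the handle, since only the cyclic order matters — rather it is what guarantees that every $C_i''$ meets every element of $\zP$, which is what conclusions (iii)--(v) need; attributing it solely to the embedding step leaves that part of the verification unsupported.
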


\begin{proof}
First, by Lemma \ref{lem:consistent}, we may assume that $\zQ$ has thickness $t = \theta + 2s + 6$ and is consistent with $X_2$.  Let the elements of $\zQ_i$ be labeled $Q_1^i, \dots, Q_t^i$ for $i = 1, 2$ and assume that traversing $X_2$ in the order induced by $\Omega$ we encounter them in that order.  Note that we are relying on Lemma \ref{lem:inhereitedrurality} to maintain that the constituent transactions are rural and isolated even when we restrict to a subset of the original transaction.

Let $(G_1, \Omega_1)$ be the vortex society of the cycle $C_7$ and let $\Delta_7$ be the closed disk bounded by the track of $C_7$ in $\Delta$.  Let $t = \theta + 2s + 6$ and for $i = 1, 2$ let $\zQ_i = \{Q^i_1, \dots, Q^i_t\}$.  For every $i=1,2$ and $1 \le j \le t$, let $\bar{Q}^i_j$ be the $V(\Omega_1)$-path in $Q^i_j$ which contains at least one edge of $\sigma_\rho(c_0)$.  Note that $\bar{Q}^i_j$ is uniquely defined as $\zQ$ is unexposed and orthogonal to $\zC$.

Fix $i \in \{1,2\}$.  As in the previous lemma, we apply Lemma \ref{lem:planarstriprestriction} to the society $(G, \Omega)$, $\rho$, nest $\zC$, and transaction $\{Q^i_1, Q^i_2, \dots, Q^i_t\}$.  Note that the two distinct $V(\Omega) - \widetilde{c_0}$ paths contained in $Q^{3-i}_1$ serve as the linkage $\zP$ in the statement of Lemma \ref{lem:planarstriprestriction}.  Let $\bar{\zQ}_i= \{\bar{Q}^i_1, \dots, \bar{Q}^i_t\}$.  The lemma implies that the $\bar{\zQ}_i$-strip society $(H_i, \Omega_{H_i})$ in $(G_1, \Omega_1)$ is rural and isolated.  Moreover, $\bar{\zQ}_i$ is a planar transaction.

Let $(J_1, \Omega_{J_1})$ be the society with $J_1$ equal to the union of $H_1$, $H_2$, and the graph with vertices $V(\Omega_1)$ and no edges; let $\Omega_{J_1} = \Omega_1$.  Let $\Delta^+$ be a surface obtained from $\Delta_7$ by adding a handle to the interior of $c_0$.  As $(H_i, \Omega_{H_i})$ is rural and isolated and the order of the vertices in $\Omega_{H_i}$ coincides with the ordering in $\Omega_{J_1}$, we conclude that there exists a rendition $\delta_1 = (\Gamma_1, \zD_1)$ of $(J_1, \Omega_{J_1})$ in $\Delta^+$.  Moreover, we may assume that $\pi_{\delta_1}^{-1}(v) = \pi_{\rho}^{-1}(v)$ for all $v \in V(\Omega_{J_1}) = V(\Omega_1)$.

Let $L_1$ be the minimum subgraph of $G_1$ such that $G_1 = J_1 \cup L_1$.  By the definition of the $\bar{\zQ}_i$-strip society, $V(L_1) \cap V(J_1) \subseteq (V(\Omega_1) \setminus (V(\Omega_{H_1})\cup V(\Omega_{H_2}))) \cup V(\bar{Q}^1_1) \cup V(\bar{Q}^1_t)\cup V(\bar{Q}^2_1) \cup V(\bar{Q}_t^2)$.  Note that, as in the previous lemma, there may be vertices $V(L_1) \cap V(J_1)$ which are drawn in the interior of some cell and do not correspond to points in $N(\rho_1)$.

For $i = 1, 2$, fix $T^i_1$ and $T^i_2$ to be the track of $\bar{Q}^i_2$ and $\bar{Q}^i_{t-1}$, respectively, in $\rho_1$.  The surface $\Delta^+ \setminus (T^1_1 \cup T^1_2 \cup T^2_1 \cup T^2_2)$ has exactly three connected components.  Denote the  closures of the three components as $\Delta_1'$, $\Delta_2'$, and $\Delta_3'$; each $\Delta_i'$ is homeomorphic to the closed unit disk for $i = 1, 2, 3$.  For $i = 1, 2$, we may assume that $\Delta_i'$ contains the images of the endpoints of $\bar{Q}^i_3, \dots, \bar{Q}^i_{t-2}$ under the map $\pi^{-1}_{\rho_1}$.  Thus, the disk $\Delta_3'$ contains the drawing of $\bar{Q}^i_1$, $\bar{Q}^i_t$ for $i = 1,2$, as well as the points corresponding to the vertices of $V(\Omega_1) \setminus (V(\Omega_{H_1}) \cup V(\Omega_{H_2}))$.

Let $J_2$ be the subgraph formed by the union of $\bigcup_{\{c \in C(\rho_1): c \subseteq \Delta_1' \cup \Delta_2'\} }\sigma(c)$ and the set of vertices $\pi_{\rho_1}(N(\rho_1) \cap (\partial(\Delta_1') \cup \partial(\Delta_2')))$ included as isolated vertices.  Let $L_2$ be the minimal subgraph of $G_1$ such that $G_1 = L_2 \cup J_2$.
Note that the drawings of the paths $\bar{Q}^i_3, \dots, \bar{Q}^i_{t-2}$ are disjoint from the disk $\Delta_3'$.

Define $\Sigma^+$ to be the surface $(\Delta \setminus int(\Delta_7) ) \cup \Delta^+$.  We now define a rendition $\rho_2 = (\Gamma_2, \zD_2, c_2)$ of $(G, \Omega)$ in $\Sigma^+$.  Let $\Gamma_2$ be equal to the restriction of $\Gamma$ in $\Delta \setminus int(\Delta_7)$, equal to the restriction of $\Gamma_1$ in the union of $\Delta_1'$ and  $\Delta_2'$, and equal to an arbitrary drawing of $J_2$ in the disk $\Delta_3'$.  Note that we may assume that the drawing of $J_2$ is consistent with the drawing of $L_2$ and the outer graph of $C_7$ on the boundary of $\Delta_3'$ and that the drawing intersects the boundary of $\Delta_3'$ only in points corresponding to the vertices in the intersection of $J_2$ and the union of $L_2$ and the outer graph of $C_7$.  Let $\zD_2 = \{D \in \zD: D \nsubseteq \Delta_7\} \cup \{D \in \zD_1 : D \subseteq \Delta_1' \cup \Delta_2'\} \cup \{\Delta_3'\}$.  Finally, let $c_2$ be the cell of the $\Sigma^+$-decomposition corresponding to  $\Delta_3'$.

As every vertex of $\widetilde{c_2}$ is contained in either $Q^1_2 \cup Q^1_{t-1} \cup Q^2_2 \cup Q^2_{t-1}$ or in $C_7$, we have that the paths $Q^i_3, Q^i_4, \dots, Q^i_{t-2}$ are all disjoint from $\sigma_{\rho_2}(c_2)$ for $i = 1, 2$.  For every $i$, $1 \le i \le s+1$, define $C_i''$ to be the unique cycle contained in $Q^1_{2+i} \cup Q^1_{t-1-i} \cup Q^2_{2+i} \cup Q^2_{t-1-i} \cup C_{7+i}$ which is disjoint from $Q^1_{3+i}, Q^1_{4+i}, \dots, Q^1_{t-2-i}, Q^2_{3+1}, Q^2_{4+i}, \dots, Q^2_{t-2-i}$.  Note that each $C_i''$ bounds a disk in $\Delta^+$ and the inner graph of $C_i''$ contains $\sigma_{\rho_2}(c_2)$ and is disjoint from $Q^i_{3+i}, Q^i_{4+i}, \dots, Q^i_{t-2-i}$ for $i = 1, 2$.  Moreover, as $\zQ$ is consistent with $X_2$, each $C_i''$ will intersect every element of $\zP$.

We are now in a position to define our rendition $\rho' = (\Gamma', \zD', c_0')$.  Let $\Gamma' = \Gamma_2$.  Let $D'$ be the closed disk bounded by the track of $C_{s+1}''$.  Let $\zD' = \{D \in \zD_2: D \nsubseteq D'\} \cup \{D'\}$ and $c_0' = D' \setminus ( N(\rho_2) \cap \partial(D'))$.  For $i = 1, 2$, let $\zQ_i' = \{Q^i_{4+s}, \dots, Q^i_{3+s+\theta}\}$ and let $\zQ' = \zQ_1' \cup \zQ_2'$.

By construction, the vortex society of $c_0'$ in $\rho'$ is simply the inner society of $C_{s+1}''$ with respect to the rendition $\rho_2$.  Given the order of the original linkage $\zQ$ and the construction of the $C_i''$, we have that $\zQ'$ is disjoint from $\sigma_{\rho'}(c_0')$, satisying \rm{(i)}.  The restriction of $\rho_2$ to $D'$ implies that the vortex society of $c_0'$ has the desired cylindrical rendition $\rho'' = (\Gamma'', \zD'', c_0'')$ in (ii).  Moreover, the cycles $(C_1'', \dots, C_s'')$ form a nest of order $s$ in $\rho''$.

We now turn our attention to the linkage $\zP$.  As the constituent linkages $\zQ_1$ and $\zQ_2$ satisfy the property that the $\zQ_i$-strip is rural and isolated, and since $\zP$ is disjoint from $\zQ$, we have that each element of $\zP$ has an endpoint in $V(\sigma_{\rho_2}(c_2)) = V(L_2)$ and thus (iii) is satisfied.  Moreover, by construction of the $C_i''$, for every $1 \le i \le k$ and $P \in \zP$, $C_i'' \cap P = C_{7+i} \cap P$.   We conclude that $\zP$ is orthogonal to the nest $(C_1'', \dots, C_s'')$ and that (iv) holds.

Conclusion (v) holds by the fact that $\zP$ and $\zQ$ have their endpoints in disjoint segments of the original society vertices $\Omega$ and the construction of the cycles $C_i''$.  Conclusion (vi) holds by the construction of $\rho'$, completing the proof of the lemma.
\end{proof}

\section{Finding a $\Sigma$-decomposition}

We are now ready to finish our proof which will proceed, in general terms, by induction on the genus of the surface.
We begin by introducing the technical structure we will inductively find while building a $\Sigma$-decomposition in progressively higher and higher genus surfaces.

\begin{DEF}
Let $G$ be a graph, $W$ a wall in $G$, $\zC$ a nest in $G$, and $\zP$ a linkage which is orthogonal to $\zC$.  We say that $(\zP, \zC)$ \emph{overlays $W$} if we can fix the horizontal paths of $W$ to be $H_1, \dots, H_k$ and the vertical paths $V_1, \dots, V_l$ such that there exists injective maps $\phi_{hor}: \zC \rightarrow \{H_1, \dots, H_k\}$ and $\phi_{vert}: \zP \rightarrow \{V_1, \dots, V_l\}$ such that for all $P \in \zP$ and $C \in \zC$, $P \cap C \supseteq \phi_{hor}(C) \cap \phi_{vert}(P)$.
\end{DEF}

\begin{DEF}
Let $G$  be a graph, let $\Sigma$ be a surface, and let $\delta=(\Gamma,\zD)$
be a $\Sigma$-decomposition of $G$.  Let $W$ be a wall in $G$, $W_1$ a subwall of $W$, and $W_0$ an $r$-subwall of $W_1$.  Assume that there exist  cells $c_0,c_1\in C(\delta)$ and a closed
disc $\Delta_0\subseteq\Sigma$ such that:
\begin{itemize}
\item no cell $c\in C(\delta)\setminus\{c_0,c_1\}$ is a vortex.
\item The vortex society $(G_0,\Omega_0)$
of $c_0$ in $\delta$ is rural, $W_0 \subseteq G_0$, and there exists a vortex free rendition of $(G_0, \Omega_0)$ for which $W_0$ is flat.
\item The cell $c_0\subseteq \Delta_0\subseteq \Sigma \setminus c_1$ and the boundary of $\Delta_0$ intersects no cell of $\delta$.
\item The complementary society $(G_2,\Omega_2)$ to the vortex society of $c_0$ has a cylindrical rendition
 $\rho_2=(\Gamma_2,\zD_2,c_2)$ with
a nest $\zC_2$,
where $\Gamma_2 = \Gamma$, $C(\rho_2)\setminus \{c_2\}=\{c\in C(\delta):c\subseteq \Delta_0\}\setminus \{c_0\}$,
$N(\rho_2)=N(\delta)\cap\Delta_0$, $\sigma_{\rho_2}(c_2)=\bigcup\{\sigma_\delta(c):c\not \subseteq \Delta _0\}$, and
$\sigma_\delta (c)=\sigma_{\rho_2}(c)$ for every $c\in C(\rho_2)\setminus\{c_2\}$.
\item The vortex society
of $c_1$ in $\delta$  has a cylindrical rendition $\rho_1=(\Gamma_1,\zD_1,c_1')$ with
a nest $\zC_1$.
\end{itemize}
Let $\tilde\Delta_0$ be the vertices of $N(\rho)$ on the boundary of $\Delta$.  Let $X_1,X_2$ be two complementary segments of $\Omega_2$.
Assume  that there exists a set $\gamma=\{ (\zQ_i, X_i^2): 1 \le i \le c+h\}$ and disjoint linkages $\zP_1, \zP_2$ in $(G_2,\Omega_2)$ such that:
\begin{itemize}
\item $\zP_1$ is a linkage from $X_1$ to $\sigma_{\rho_1}(c_1')$ which is orthogonal to both $\zC_1$ and $\zC_2$ and $(\zP_1, \zC_1)$ overlays $W_1$.
\item The linkage $\zP_2$ is a linkage from $X_2$ to $\tilde\Delta_0$ which is orthogonal to $\zC_2$ and $(\zP_1 \cup \zP_2, \zC_2)$ overlays $W_1$.
\item For $1 \le i \le c+h$, $\zQ_i$ is a transaction in $(G_2, \Omega_2)$ and $X_i^2 \subseteq X_2$ is a segment of $\Omega_2$.  The transactions $\zQ_1,\zQ_2,\ldots,\zQ_{c+h}$ are pairwise disjoint with endpoints in $X_i^2$ and $X_1^2, \dots, X_{c+h}^2$ are also pairwise disjoint.  For all $i$, $\zQ_i$ is coterminal with $\zP_2$ up to level 1 of $\zC_2$ in $\rho_2$ and $\zQ_i$ is disjoint from $\sigma_{\delta}(c_1)$ and disjoint from $\zP_1$.
\item The transactions $\zQ_1,\zQ_2,\ldots,\zQ_{h}$ are handle transactions, each of thickness $\theta$, and
\item $\zQ_{h+1},\zQ_{h+2},\ldots,\zQ_{h+c}$ are crosscap transactions, each of thickness $\theta$.
\item Let $\zP_1=\{P_1,P_2,\ldots ,P_m\}$, for $i=1,2,\ldots ,m$ let $x_i$ be the endpoint 
of $P_i$ in $X_1$ and let $y_i$ be the last vertex  of $P_i$ in $\widetilde{c_1}$.
If $x_1,x_2,\ldots ,x_m$ appear in $\Omega _2$ in the order listed,
then $y_1,y_2,\ldots ,y_m$ appear on $\widetilde{c_1}$ in the order listed.
\end{itemize}

In those circumstances we say that
$(W_1, W_0, \delta, c_0, c_1, \Delta_0, \zC_1,\zC_2,\zP_1, \zP_2,\gamma)$ is
a \emph{$\Sigma$-configuration} with parameters $(r, |\zC_1|,|\zC_2|,|\zP_1|,\theta,c,h)$.
We  will say  that $c_1$ is the {\em exceptional cell} of $\delta$.
\end{DEF}

We now present a lemma which will allow us to find a clique minor in $\zC_2 \cup \zQ_1 \cup \dots \cup \zQ_{c + h}$ of a $\Sigma$ configuration with appropriately chosen parameters in our given society $(G, \Omega)$.


\begin{LE}\label{lem:findingaclique}
Let $p \ge 2$, $t \ge 3$.  Let $(G, \Omega)$ be a society with a cylindrical rendition $\rho = (\Gamma, \zD, c_0)$ with a nest $\zC$ of order $2p$.  Let $X_1, \dots, X_t$ be pairwise disjoint segments of $\Omega$.  Let $\zQ_1, \zQ_2, \dots, \zQ_t$ be pairwise disjoint transactions such that for all $i$, $1 \le i \le t$, we have that
\begin{itemize}
\item $\zQ_i$ is orthogonal to $\zC$ and every element of $\zQ_i$ has both endpoints contained in $X_i$, and
\item $\zQ_i$ is either a handle or crosscap transaction of thickness $p$.
\end{itemize}
If $t \ge \binom{p}{2} + p + 1$, then $G$ has a $K_p$ minor grasped by $\zC \cup \bigcup_1^t \zQ_i$.
\end{LE}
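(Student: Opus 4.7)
My plan is to construct the $p$ branch sets of a $K_p$ model directly using the transactions as the branch-set spines and the cycles of $\zC$ as connecting rails. Since $t \ge \binom{p}{2} + p + 1$, I partition the transactions into $p$ \emph{vertex transactions} $\zQ^V_1,\ldots,\zQ^V_p$ (one per vertex of the target $K_p$) and $\binom{p}{2}$ \emph{edge transactions} $\zQ^E_{kl}$ (one per unordered pair $\{k,l\}\subseteq\{1,\ldots,p\}$), leaving at least one transaction unused as slack. From each vertex transaction I select a single path $V_k$, and from each edge transaction $\zQ^E_{kl}$ I select two paths $A_{kl}$ and $B_{kl}$; both are available because each transaction has thickness $p \ge 2$ and hence contains at least two disjoint paths.

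The branch set $X_k$ will then consist of $V_k$ together with $A_{kl}$ for each $l > k$ and $B_{lk}$ for each $l < k$, glued into a connected subgraph by arcs of the cycles of $\zC$. Because each transaction is confined to its own segment $X_i \subseteq V(\Omega)$ and is orthogonal to $\zC$, each of its paths meets every cycle of $\zC$ in exactly two vertices, and the cyclic ordering of the segments $X_1,\ldots,X_t$ around $\Omega$ induces a consistent cyclic order of these intersection points on every cycle. To produce an honest $G$-edge between $X_k$ and $X_l$, I arrange for some cycle $C \in \zC$ to have two consecutive vertices, one on a path assigned to $X_k$ and the other on a path assigned to $X_l$, and split the arcs of $C$ between the two branch sets so that this edge of $C$ is the required edge of the minor. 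Under this construction each $X_k$ contains one path from each of $p$ distinct transactions (its own vertex transaction together with the $p-1$ edge transactions incident to $k$), and since each such path meets every cycle of $\zC$, one can select $p$ distinct cycle--path intersections inside $X_k$ with distinct cycle indices and distinct path indices, which verifies the grasping condition.

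The main obstacle is the global combinatorial routing: assigning an arc of each of the $2p$ cycles to some branch set so that simultaneously every $X_k$ is connected, the $X_k$ are pairwise disjoint, and every pair $\{k,l\}$ receives at least one $G$-edge between $X_k$ and $X_l$. The cyclic ordering of the transaction segments along $\Omega$ together with orthogonality to $\zC$ reduces this global problem to a combinatorial scheduling problem on a cylinder, and the hypotheses $t \ge \binom{p}{2}+p+1$ and $|\zC| = 2p$ are calibrated to provide exactly the slack required --- roughly $p$ cycles for internal connectivity (one per branch set) and $p$ further cycles devoted to carrying the edges between branch sets. The distinction between handle and crosscap transactions affects only the pattern of endpoints along $\Omega$ at the outer boundary of the nest, not the intersection pattern with $\zC$ inside the nest, so once the paths $V_k$, $A_{kl}$, $B_{kl}$ have been chosen the remainder of the argument proceeds uniformly in both cases.
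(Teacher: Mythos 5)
There is a genuine gap, and it is precisely the part you defer. Your plan assigns a whole transaction to each pair $\{k,l\}$ and builds $X_k$ out of whole paths scattered over $p$ different segments of $\Omega$, to be ``glued into a connected subgraph by arcs of the cycles of $\zC$''; the paragraph admitting that ``the main obstacle is the global combinatorial routing'' is an admission that the entire content of the lemma is missing, since everything before it is bookkeeping. Worse, the specific allocation makes that routing problem essentially unsolvable. By orthogonality every selected path meets every cycle of $\zC$, and (after reordering) the intersections of the transactions with each cycle occur in sectors in the circular order of the segments $X_1,\dots,X_t$. So any arc of a cycle that $X_k$ uses to bridge two of its sectors that are not consecutive must pass through intermediate sectors and therefore contains intersection vertices of the paths selected there, which belong to other branch sets --- violating disjointness, since your branch sets contain those paths in full. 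Bridging arcs of this kind cannot be avoided: each edge-transaction is shared by two branch sets, so for $p\ge4$ you cannot order the $\binom{p}{2}+p$ sectors so that the sectors incident with each fixed $k$ are consecutive (this is a consecutive-ones obstruction for the stars of $K_p$). Trimming the spines to dodge these crossings fights against both connectivity and the grasping requirement (each branch set must contain $C_{i}\cap P_{j}$ for $p$ distinct cycles and $p$ distinct paths), and ``one cycle per branch set for connectivity plus $p$ cycles for edges'' does not survive this conflict. Your closing remark that the handle/crosscap distinction is irrelevant inside the nest is also not accurate as used: the crossing structure of the transaction is exactly what has to be exploited, and it is exploited differently in the two cases.

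The paper's proof takes a genuinely different route that avoids all of this. It reserves the first $p$ transactions $H_1,\dots,H_p$ (together with the cycles $C_{p+1},\dots,C_{2p}$) solely for the grasping extension, and then runs $p$ paths $\zR^*=\{R^*_1,\dots,R^*_p\}$ in parallel through the remaining transactions $H_{p+1},H_{p+2},\dots$ consecutively, using only the cycle arcs $R^i_j$ inside each sector, so no arc ever crosses a foreign sector. Each handle or crosscap transaction is then used to reroute the bundle so that two designated running paths become adjacent via a path $L_j$ inside that transaction, with the two cases (crosscap: reverse a block of paths; handle: shift through the two constituent transactions) treated separately. An extremal argument --- choose the triple $(\zR^*,\zL,k)$ maximizing the number $k$ of realized adjacencies --- shows $k=\binom{p}{2}$, which is why $t\ge\binom{p}{2}+p+1$ transactions suffice. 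If you want to salvage your write-up, this ``all branch sets sweep every transaction, one new adjacency per transaction'' scheduling is the idea you need; the one-transaction-per-pair allocation should be abandoned.
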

\begin{proof}
Let $\zC = (C_1, \dots, C_t)$.  For $i$, $1 \le i \le p$, let $x^i_1, \dots, x^i_t$ be vertices of $C_i$ such that traversing the vertices of $C_i$ we encounter $x_1^i, \dots, x_t^i$ in that order.  Let $R_j^i$ be the subpath of $C_i$ from $x_j^i$ to $x_{j+ 1}^i$ which is disjoint from $x_{j'}^i$ for all $j' \neq j, j+1$.  After possibly reordering the $\zQ_i$ and by appropriately choosing $x^i_j$, we may assume that the path $R^i_j$ contains $C_i \cap \zQ_j$ and that $R^i_j$ is internally disjoint from $\zQ_{j'}$ for $j' \neq j$.  Let $\zR_j = \{ R^i_j : 1 \le i \ \le p\}$ and let $H_j$ be the subgraph formed by the union of the elements of $\zQ_j$ and $\zR_j$.  Define $S_j = \{x^i_j: 1 \le i \le p\}$.  Note that since each $R^i_j$ is a non-trivial path, the sets $S_j$ are pairwise disjoint and both $S_j$ and $S_{j+1}$ are contained in $V(H_j)$.

Pick a triple $(\zR^*, \zL, k)$ with the following properties.  The value $k\ge 1$ is an integer.  The first term $\zR^*$ is a linkage of order $p$ with components $R^*_1, \dots, R^*_p$ such that:
\begin{itemize}
\item for all $1 \le i \le p$, $R^*_i$ is a path contained in $\bigcup_{l = p+1}^{p+ k}H_l$ with one endpoint equal to $x^i_{p+1}$, and
\item for all $1 \le i \le p$ and $p+1 \le l \le p+k$, if $H_l \cap R^*_i$ is non-empty, then it is either a path from $S_l$ to $S_{l+1}$ or a single vertex of $S_l$.
\end{itemize}
The second term $\zL$ is a linkage of order $k$ with with elements $L_1, \dots, L_k$ such that $L_i$ is a path in $H_{p+ i} - (S_{p+i}\cup S_{p+i+1})$ with endpoints in distinct elements of $\zR^*$ and with no internal vertex in $\zR^*$.  Moreover,
\begin{itemize}
\item for $i, i', j$, $1 \le i \le i' \le p$, $1 \le j < k$ such that $L_j$ has endpoints in $R_i^* \cup R_{i'}^*$, then no index $j' \neq j$ has the property that both endpoints of $L_{j'}$ are contained in $R^*_{i} \cup R^*_{i'}$.
\end{itemize}
We say that an element $R^*_i$ is \emph{uncovered} if there exists an index $i'\neq i$ such that for all $j \le k$, the path $L_j$ does not have both endpoints contained in $R_i^* \cup R_{i'}^*$.  Otherwise, we say $R^*_i$ is \emph{covered}.  Finally, we assume that
\begin{itemize}
\item for all $i$, $1 \le i \le p$, if there exists an index $j$ such that $R^*_i \cap H_j$ is either trivial or empty, then $R^*_i$ is covered.
\end{itemize}

Assume that we pick the triple $(\zR^*, \zL, k)$ to maximize $k$. Note that such a choice exists because we could fix $k=1$, $\zR^* = \zR_{p+1}$, and let $L_1$ be a path in $H_{p+1}$ linking $R^1_{p+1}$ and $R^2_{p+1}$ which is disjoint from $S_{p+1}\cup S_{p+2}$.

We claim that $k = \binom{p}{2}$ in our choice of $(\zR^*, \zL, k)$ to maximize $k$.  Obviously, by the constraint that each element of $\zL$ must link a distinct pair of paths in $\zR^*$, we have that $k \le \binom{p}{2}$.  Assume, to reach a contradiction, that $k < \binom {p}{2}$.  We will extend $\zR^*$ into $H_{k+1}$ and find a path $L_{k+1}$ violating our choice to maximize $k$.  The analysis of the subgraph $H_{k+1}$ will differ slightly depending on whether $\zQ_{k+1}$ is a crosscap or leap transaction.

Assume, as a case, that $\zQ_{k+1}$ is a crosscap transaction.  Fix $i$ to be the minimum index such that $x_{k+1}^i$ is contained in a path $R^*_j$
which is uncovered.  Fix the index $j$ and let $j'$ be such that for all $l\le k$, $L_l$ does not have endpoints in $R^*_j \cup R^*_{j'}$.
Let $i'$ be the index such that $R^*_{j'}$ has $x_{k+1}^{i'}$ as an endpoint.  Note that both $i$ and $i'$ are defined because, by the definition of $\zR^*$, both $R^*_j$ and $R^*_{j'}$ are non-trivial paths in $H_k$.  To extend the paths of $\zR^*$ to $S_{k+2}$, we route the elements of $\zR^*$ with endpoints in $x_{k+1}^i, \dots, x_{k_+1}^{i'-1}$ through $\zQ_{k+1}$ to terminate on $x_{k+2}^{i'-1}, \dots, x_{k+2}^{i}$, respectively.  For every element of $\zR^*$ terminating on $x_{k+1}^{i'},\dots, x_{k+1}^p$, we extend the path along $\zR_{k+1}$ to terminate on $x_{k+2}^{i'}, \dots, x_{k+2}^p$.  Note that any path of $\zR^*$ with endpoint in $x_{k+1}^1, \dots, x_{k+1}^{i-1}$ will not be extended and thus be a trivial path when restricted to $H_{k+1}$; however, by the choice of $i$, such paths are covered and we maintain the desired properties for $(\zR, \zL, k)$ in the extension.  Note as well that after extending the paths, $R^*_j$ terminates on $x_{k+2}^{i'-1}$ and $R^*_{j'}$ terminates on $x_{k+2}^{i'}$.  Thus, we can choose a subpath of $\zQ_{k+1}$ with endpoints in $R_{i'-1}$ and $R_{i'}$ to serve as $L_{k+1}$ and derive a contradiction to our choice of $(\zR, \zL, k)$ to maximize $k$.

The other case to consider is that $\zQ_{k+1}$ is a handle transaction; let $\zQ_{k+1}'$ and $\zQ_{k+1}''$ be the two constituent planar transactions of the handle $\zQ_{k+1}$, and assume that traversing $\zR_{k+1}$ from $S_{k+1}$ to $S_{k+2}$, we first encounter $\zQ_{k+1}'$.  Fix $i$ again to be the minimum index such that $x_{k+1}^i$ is contained in a path $R^*_j$ which is uncovered.  We will define $j, j', i'$ as in the previous paragraph.  Fix the index $j$, and let $j'\neq j$ be such that for all $l \le k$, $L_l$ does not have endpoints in $R^*_j \cup R^*_{j'}$.  Let $i'$ be the index such that $R^*_{j'}$ has $x^{i'}_{k+1}$ as an endpoint.  We extend the path $R^*_j$ along $R^i_{k+1}$ to $\zQ_{k+1}'$, through $\zQ_{k+1}'$ down to $R^{i'-1}_{k+1}$ and along $R^{i'-1}_{k+1}$ to $S_{k+2}$.  We extend the paths of $\zR^*$ with endpoints in $x^{i+1}_{k+1}, \dots, x^{i'-1}_{k+1}$ through $R^{i+1}_{k+1}, \dots, R^{i'-1}_{k+1}$ to $\zQ_{k+1}''$, through $\zQ''_{k+1}$ up to $R^{i}_{k+1}, \dots, R^{i'-2}_{k+1}$, and along $R^{i}_{k+1}, \dots, R^{i'-2}_{k+1}$ to $S_{k+2}$.  Finally, the paths of $\zR^*$ terminating on $x^{i'}_{k+1}, \dots, x^p_{k+1}$ extend along $\zR_{k+1}$ to their endpoints in $S_{k+2}$.  After doing so, $R^*_j$ terminates on $x^{i'-1}_{k+2}$ and $R^*_{j'}$ endpoints on $x^{i'}_{k+2}$.  Thus, we can fix a path of $\zQ_{k+1}''$ with one endpoint in $R^*_j$ and the other endpoint in $R^*_{j'}$ as $L_{k+1}$ and again derive a contradiction of our choice of triple to maximize $k$.  Note that we are using only one path of the linkage $\zQ_{k+1}'$ in routing the paths and so the argument is independent of whether or not the transaction $\zQ_{k+1}$ is consistent with $X_{k+1}$.

We conclude that $k = \binom{p}{2}$ and thus the elements of $\zR^*\cup \zL$ contains the branch sets of a $K_p$ minor.  We will use the subgraphs $H_1, \dots, H_p$ to extend the elements of $\zR^*$ to guarantee that the $K_p$ minor grasps $\zC \cup \zP$.  For $1 \le i \le p$, we add to $R^*_i$ a path contained in $R_i \cup H_i$ linking $R_i$ and $C_{2p}$ containing $Q \cap C_i$ for $p+1 \le i \le 2p$ and $p$ distinct elements $Q$ of $\zQ_i$.
These extensions can be chosen pairwise disjoint, completing the proof that $G$ contains a $K_p$ minor grasping $\zC \cup \bigcup_1^t\zQ_i$.

\end{proof}


We will deduce Theorem~\ref{thm:main} from the following result.

\begin{theorem}
\label{thm:main3}
Let $r, s_2,M ,p \ge 1$ be integers with $M \ge p, s_2 \ge 2p$.  Let $\mu = \left \lfloor \frac{1}{10^5p^{26}}p^{10^7p^{26}} \cdot M \right \rfloor$.  Let
$$R=49152p^{24}(r+ 2s_2) + \mu.$$
Let $G$ be a graph, and
let $W$ be an $R$-wall in $G$.
Then either $G$ has a model of a $K_p$ minor grasped by $W$,
or there exist integers $c,h \ge0$ with $c + h \le \binom{p+1}{2}$, a set $A\subseteq V(G)$ of size at most $\mu$, a surface $\Sigma$ obtained by adding $c$ crosscaps and $h$ handles to the sphere, and
a $\Sigma$-configuration $(W_1, W_0, \delta,c_0,c_1,\Delta_0,\zC_1,\zC_2,\zP_1, \zP_2,\gamma)$
of $G-A$ with parameters $(r, M,s_2,M,M,c,h)$ such that the vortex society of $c_1$ in $\delta$ has
depth at most $\mu$ and  breadth at most $2p^2$.
\end{theorem}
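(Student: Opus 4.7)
The plan is to prove the theorem by induction on the number $c+h$ of handles and crosscaps, building up the $\Sigma$-configuration one step at a time. For the base case $c=h=0$ (so $\Sigma$ is the sphere) I would apply the Flat Wall Theorem \ref{thm:flatwall} to $W$ with parameter $t=p$ and an appropriately chosen inner size, obtaining an apex set $A_0$ of size at most $12288p^{24}$ and a flat $r'$-subwall $W_0$ sitting inside a larger subwall $W_1$. The flat-wall rendition naturally provides a cell $c_0$ whose vortex society carries $W_0$, and the outer ``everything else'' cell becomes the initial exceptional cell $c_1$. The surrounding part of $W$ (the concentric bricks outside $W_1$) furnishes the initial nest $\zC_2$ of cardinality $s_2$, the nest $\zC_1$ of cardinality $M$ inside $c_1$, and the linkages $\zP_1$ and $\zP_2$ overlaying $W_1$; the required overlay relation is witnessed by horizontal/vertical paths of the wall. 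The set $\gamma$ is empty.

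For the inductive step, given a $\Sigma$-configuration with parameters $(r,M,s_2,M,M,c,h)$ and $c+h\le\binom{p+1}{2}$, I would apply Theorem~\ref{thm:maintrans2} to the vortex society $(G_1,\Omega_1)$ of the exceptional cell $c_1$, using its cylindrical rendition $\rho_1$, the nest $\zC_1$, the orthogonal linkage $\zP_1$, with thickness parameter $\theta=M$. Outcome (iii) gives a $K_p$ minor in $G_1$ grasped by $\zC_1\cup\zP_1$; since $(\zP_1,\zC_1)$ overlays $W_1\subseteq W$, this minor is grasped by $W$ and the theorem holds. Outcome (iv) gives a rendition of $G_1-Z$ in the disk of depth at most $K$ and breadth at most $2p^2$, which plugs directly into $\delta$ in place of the cell $c_1$ and yields exactly the final configuration demanded by the theorem.

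In outcomes (i) and (ii) we instead receive a rural, isolated crosscap transaction $\zQ$ or a rural, isolated handle transaction $\zQ=\zQ'\cup\zQ''$, each of thickness $\theta=M$, orthogonal to $\zC_1$ and coterminal with $\zP_1$ deep in the nest. I would feed these into Lemma~\ref{lem:xcaptrans} (resp.\ Lemma~\ref{lem:handletrans}), whose output replaces $\rho_1$ by a new rendition in a disk with a crosscap (resp.\ handle) attached, possessing a new exceptional sub-vortex $c_1'$ equipped with its own cylindrical rendition, a new nest $\zC_1''$ of cardinality $M$, and the linkage $\zP_1$ (or a large sublinkage of it) still orthogonal to $\zC_1''$. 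Gluing this modified rendition back into $\delta$ produces a new $\Sigma'$-configuration on a surface with one additional crosscap or handle, with the new transaction $\zQ$ appended to $\gamma$ (together with its segment $X_{c+h+1}^2\subseteq X_2$, picked disjoint from the earlier $X_i^2$'s from a fresh portion of $X_2$ reserved for this purpose). The overlay of $W_1$ by $(\zP_1,\zC_1'')$ and by $(\zP_1\cup\zP_2,\zC_2)$ is preserved because property (vi) of those lemmas ensures the rendition is unchanged outside of $c_1$, so $\zP_2$ and the outer portion of $\zC_2$ are untouched, while conclusions (iv)--(v) inside $c_1$ preserve the orthogonality and cyclic-order data of $\zP_1$ against the new nest.

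The iteration terminates because once $c+h$ exceeds $\binom{p+1}{2}$, Lemma~\ref{lem:findingaclique} applied to the $c+h$ pairwise-disjoint transactions in $\gamma$ together with the first $2p$ cycles of $\zC_2$ (which are present because $s_2\ge 2p$) yields a $K_p$ minor, and the overlay of $W_1$ by $(\zP_1\cup\zP_2,\zC_2)$ converts this into a $K_p$ minor grasped by $W$. The main obstacle, and the bulk of the bookkeeping in the actual write-up, is verifying that the parameters $R$, $\mu$, $s_2$, and $M$ are chosen with enough slack to survive all $\binom{p+1}{2}$ iterations: each iteration consumes at most $5\cdot 10^{20}p^{102}(M+12288p^{24})$ vertices (added to $A$), a bounded block of cycles from $\zC_2$ and a few paths from $\zP_2$ (used to absorb the new handle/crosscap and verify the orthogonality/overlay of the new transaction). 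The chosen value of $\mu\sim p^{10^7p^{26}}M/(10^5p^{26})$, combined with the factor $\binom{p+1}{2}\le p^2$ on the number of iterations, is precisely large enough to absorb these losses, and the choice of $R=49152p^{24}(r+2s_2)+\mu$ provides the initial wall size required to extract the initial configuration from Theorem~\ref{thm:flatwall} with the listed parameters $(r,M,s_2,M,M,0,0)$.
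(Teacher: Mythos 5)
Your overall architecture matches the paper's: start from the Flat Wall Theorem to get a sphere configuration, repeatedly apply Theorem~\ref{thm:maintrans2} to the vortex society of the exceptional cell, convert outcomes (i)/(ii) into a higher-genus configuration via Lemmas~\ref{lem:xcaptrans} and~\ref{lem:handletrans}, kill outcome (iii) through the overlay of $W_1$, stop the genus growth with Lemma~\ref{lem:findingaclique}, and finish from outcome (iv). However, there is a genuine gap in your parameter scheme. You claim the induction maintains configurations with constant parameters $(r,M,s_2,M,M,c,h)$ and that each step applies Theorem~\ref{thm:maintrans2} with thickness $\theta=M$ to the nest $\zC_1$ of cardinality $M$. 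That application is impossible: Theorem~\ref{thm:maintrans2} requires the nest and linkage to have cardinality at least $K=p^{10^7p^{24}}\theta$, so the nest must exceed the requested thickness by a factor of about $p^{10^7p^{24}}$. Moreover, the new nest produced inside the crosscap or handle by Lemmas~\ref{lem:xcaptrans}/\ref{lem:handletrans} is carved out of the transaction itself (a new nest of cardinality $s$ costs thickness $\theta+2s+7$, respectively $2\theta+4s+12$), so the inner nest and linkage are forced to shrink by a multiplicative factor of roughly $p^{10^7p^{24}}$ at every genus increment. This is exactly why the paper carries configurations with parameters $(r,s(g),s_2,t(g),M,c,h)$ where $s(g)=t(g)=\bigl(6p^{10^7p^{24}}\bigr)^{\binom{p+1}{2}+2-g}M$ decreases geometrically in $g$, sets $\theta(g)=2M+4s(g+1)+t(g+1)+12$ at each application of Theorem~\ref{thm:maintrans2}, and only prunes $\zC_1,\zP_1$ down to cardinality $M$ at the very end when outcome (iv) occurs.

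Your loss accounting reflects this gap: you budget each iteration as an additive cost of roughly $5\cdot10^{20}p^{102}(M+12288p^{24})$ vertices plus ``a bounded block of cycles from $\zC_2$ and a few paths from $\zP_2$,'' and then assert that $\mu$ absorbs $\binom{p+1}{2}$ such losses. If the losses really were of that size, a vastly smaller $\mu$ would do; the enormous value $\mu=\bigl\lfloor p^{10^7p^{26}}M/(10^5p^{26})\bigr\rfloor$ is needed precisely because the deletion set in outcome (iv) and the depth of the final vortex are measured against $\theta(g)\approx s(g+1)$, i.e.\ against quantities that are exponential in the number of remaining genus increments, not against $M$. (Also, in the paper $\zC_2$ and $\zP_2$ are not consumed at all --- $\zC_2$ keeps cardinality $s_2$ and $\zP_2$ only grows by the new transaction's tails; the consumption happens entirely in $\zC_1$ and $\zP_1$.) As written, your induction hypothesis cannot be re-established after a single genus increment, so the argument does not go through without replacing the constant parameter $M$ by a decreasing schedule of the paper's type and redoing the arithmetic that ties $\mu$ and $R$ to $s(0)$.
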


Note that in the proof of Theorem \ref{thm:main3}, the parameters $s_1$, $\theta$, and $|\zP_1|$ of the $\Sigma$-configuration all have the same asymptotic behavior, so to simplify the presentation, we use a single parameter $M$ for all three.  Similarly, the multiplicative term $\frac{1}{10^5p^{26}}$ in the definition of $\mu$ is not optimal, but rather chosen to simplify the calculations in the application of Theorem \ref{thm:main3} to the proof of Theorem \ref{thm:main} in the next section.

\begin{proof}
We first observe that the theorem holds trivially for $p \le 4$ because the $R$-wall contains $K_4$ as a minor for $R \ge 3$ and we can easily find a model of $K_4$ grasping the wall for slightly larger values of $R$.  Thus, we may assume that $p \ge 5$, and furthermore, we assume going forward that $G$ does not contain a $K_p$ minor grasped by $W$.

Let $c, h \ge 0$ be integers such that $c + h \le p(p+1)/2+1$ and let $g = c+h$.  We define the following:
\begin{equation*}
s(g) = t(g) = \left(6p^{10^7p^{24}}\right)^{\binom{p+1}{2} + 2-g}M.
\end{equation*}
Let $\alpha = 12288(p(p-1))^{12}$.  Note that, as $p \ge 5$, we have
\begin{align*}
49152p^{24}\left(40p^2 + (r + t(0) + 10 + 2s_2 + 2s(0)) \right)& \le 49152p^{24}\left[(r+ 2s_2) + 41p^2 + 3s(0)\right]\\
& \le 49152p^{24}(r+2s_2) + \\
& \qquad + 49152p^{24}\left[41p^2 + 3\left(6p^{10^7p^{24}}\right)^{\binom{p+1}{2} + 2} M \right] \\
& \le 49152p^{24}(r + 2s_2) + \\
& \qquad + 147457p^{24} \left(6p^{10^7p^{24}}\right)^{\binom{p+1}{2} + 2} M \\
& \le 49152p^{24}(r + 2s_2) + p^{32}\left( p^{10^7p^{24} + 2}\right)^{\binom{p+1}{2} + 2}M \\
& \le 49152p^{24}(r + 2s_2) + \frac{1}{10^5 p^{26}}\left( p^{10^7p^{24}}\right)^{\binom{p+1}{2} + 4}M  \\
& \le R.
\end{align*}.


\begin{claim}
\label{cl:basedec}
There exist a set $A\subseteq V(G)$ of size at most $\alpha$,
and an ${\mathbb S}^2$-configuration of $G-A$ $(W_0, W_1, \allowbreak  \delta', c_0, c_1, \Delta_0, \zC_1, \zC_2, \zP_1, \emptyset, \emptyset )$ with parameters
$(r,s(0),s_2,t(0),M,0,0)$.
\end{claim}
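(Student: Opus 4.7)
The plan is to derive the base $\mathbb{S}^2$-configuration by applying the Flat Wall Theorem (Theorem~\ref{thm:flatwall}) to $W$ with $t=p$ and carving the resulting flat subwall into concentric bands that realize every component of the configuration. I will set an auxiliary radius $r' := r + 2s_2 + 3s(0) + 40p^2$, large enough so that an $r'$-subwall $W^*$ of $W$ accommodates: the inner $r$-subwall $W_0$; a band of $s_2$ concentric rings to become $\zC_2$; a band of $s(0)$ concentric rings to become $\zC_1$; and $s(0)$ ``radial'' vertical paths (disjoint from the ring sides) to form $\zP_1$. The hypothesis $R \ge 49152p^{24}(40p^2 + r')$ of Theorem~\ref{thm:flatwall} amounts, after subtracting the term $49152p^{24}(r+2s_2)$ already built into $R$, to $\mu \ge 49152p^{24}\bigl(80p^2 + 3s(0)\bigr)$. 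Comparing the defining expressions $\mu \approx p^{10^7p^{26}}M / (10^5p^{26})$ and $s(0) = \bigl(6p^{10^7p^{24}}\bigr)^{\binom{p+1}{2}+2}M$, the exponent of $p$ in $\mu$ outpaces that in $p^{24}s(0)$ by roughly $5\cdot 10^6 p^{26}$ (for $p\ge 5$, which we may assume since the $p\le 4$ case of the theorem is trivial), so the inequality holds with enormous slack. Since by assumption $G$ admits no $K_p$ minor grasped by $W$, Theorem~\ref{thm:flatwall} thus produces $A\subseteq V(G)$ with $|A|\le 12288p^{24}=\alpha$ and an $r'$-subwall $W^*$ of $W$ that is flat in $G-A$; by flatness, the outer cycle $D^*$ of $W^*$ furnishes a separation $(B,C)$ of $G-A$ with $B\cap C \subseteq V(D^*)$, $V(W^*)\subseteq C$, and $(G[C],\Omega^*)$ rural.

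Given $W^*$ and the rural society $(G[C],\Omega^*)$, I will build the sphere decomposition $\delta'$ by partitioning $W^*$ between the two hemispheres of $\mathbb{S}^2$. Set $W_1 := W^*$, let $W_0$ be its inner $r$-subwall, and split the rings of $W^*$ around $W_0$ into an inner band of $s_2$ rings for $\zC_2$ and an outer band of $s(0)$ rings for $\zC_1$. Place $W_0$ together with the inner band inside a disk $\Delta_0\subseteq\mathbb{S}^2$, drawn via the vortex-free rendition of $(G[C],\Omega^*)$: the cell $c_0$ is a small disk around $W_0$, and the intermediate non-vortex cells of $\delta'$ inside $\Delta_0$ accommodate the $\zC_2$-rings. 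Declare $c_1 := \mathbb{S}^2\setminus\Delta_0$ to be the exceptional cell, so that its vortex society $\sigma(c_1)$ contains the outer $s(0)$-ring band of $W^*$ together with $G[B]$. Equip $\sigma(c_1)$ with a cylindrical rendition $\rho_1 = (\Gamma_1, \zD_1, c_1')$ whose nest $\zC_1$ is the outer $s(0)$ rings in their natural nested order around an inner vortex $c_1'$ that absorbs $G[B]$ and the remaining uncategorized material of $W^*$.

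The linkage $\zP_1$ is formed from $s(0)$ vertical paths of $W^*$, each truncated so as to run from its intersection with $\widetilde{c_0}$ near $W_0$, through the top of every ring of $\zC_2$ inside $\Delta_0$, across $\partial\Delta_0$, through the top of every ring of $\zC_1$ in $\sigma(c_1)$, and into $\sigma_{\rho_1}(c_1')$. The overlay conditions for $(\zP_1,\zC_1)$ and $(\zP_1,\zC_2)$ on $W_1=W^*$ are witnessed by the natural injective maps sending each ring cycle to its top horizontal path of $W^*$ and sending each $\zP_1$-path to its underlying vertical path; orthogonality of $\zP_1$ to $\zC_1\cup\zC_2$ and the order-preservation requirement~(v) on the endpoints of $\zP_1$ follow immediately from the grid-like structure of $W^*$. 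Since $\gamma=\emptyset$ and $\zP_2=\emptyset$, the remaining constraints of a $\Sigma$-configuration with $c=h=0$ are vacuous.

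The principal obstacle will be the topological bookkeeping required to extend the planar rendition of $(G[C],\Omega^*)$ into a globally consistent sphere decomposition $\delta'$ in which the outer $s(0)$ rings of $W^*$, once relocated into $\sigma(c_1)$, remain grounded cycles in $\rho_1$ and appear in the correct nested order around~$c_1'$; once this gluing across $\partial\Delta_0$ is carried out carefully, the claim follows.
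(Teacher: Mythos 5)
Your proposal is correct and takes essentially the same route as the paper: apply the Flat Wall Theorem with $t=p$ (checking, as you do, that the slack term $\mu$ in $R$ comfortably absorbs the required wall size), take the rural separation witnessing flatness, and build the sphere decomposition with the central rural vortex $c_0$ around an inner subwall, concentric wall rings supplying $\zC_2$ inside $\Delta_0$ and $\zC_1$ around the outer vortex containing the non-planar side, and truncated middle vertical paths supplying $\zP_1$. The only cosmetic differences are that the paper keeps a one-ring buffer of ordinary cells between the exceptional vortex and $\mathrm{bd}(\Delta_0)$, taking $\mathrm{bd}(\Delta_0)$ to be the track of a grounded ring so that it meets no cell (your literal choice $c_1=\mathbb{S}^2\setminus\Delta_0$ would violate that condition and needs this small adjustment), and that it extends the vertical paths along the outermost horizontal path into the non-flat side rather than enlarging the inner vortex $c_1'$ to absorb the leftover wall material.
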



\begin{cproof}
Let $r'= \lceil r/2 \rceil + \lceil t(0)/2\rceil + 4 + s_2 + s(0)$.  By Theorem \ref{thm:flatwall} and the above calculation, we may assume that there exists a subset $A \subseteq V(G)$ and an $2r'$-subwall $W_1$ of $W$ such that $W_1$ is flat in $G-A$.  Let $(X, Y)$ be a separation of $G-A$ such that there exists a set $B$ of pegs of $W_1$ such that $B \subseteq X$, $X \cap Y$ is contained in the outer cycle of $W_1$, $V(W_1) \subseteq X$, and $(G[X], \Omega')$ is rural where $\Omega'$ is the cyclic ordering of $V(X \cap Y)$ induced by the outer cycle of $W_1$.  Let $H_1, \dots, H_{2r'}$ and $U_1, \dots, U_{2r'}$ be the  horizontal and vertical paths of $W_1$, respectively.
Define an ${\mathbb S}^2$ decomposition $\rho'$ of $G$ by combining a vortex-free rendition of $(G[X], \Omega)$ in the disc with a single vortex, call it $c'$, such that $\sigma_{\rho'}(c') = G[Y] - E(G[X])$.  For $1 \le i \le r'$, let $C_i$ be the unique cycle contained in $U_i \cup U_{2r' - i + 1} \cup H_i \cup H_{2r' - i + 1}$.  Note that $(C_2, \dots, C_{r'})$ forms a nest around the vortex $c'$.

We modify the decomposition $\rho'$ as follows: let $T_0$ be the disc bounded by the track of $C_{r' - \lceil r/2 \rceil - \lceil t(0)/2\rceil }$ which does not contain $c'$ and let $T_1$ the disc which contains $c'$ bounded by the track of $C_{s(0) + 2}$ which contains $c'$.  Define $\delta'$ to be the decomposition merging all cells in $T_1$ into a single vortex $c_1$ and merging all the cells in $T_0$ into a single single vortex $c_0$.  Thus $\delta'$ has exactly two vortices, namely $c_0$ and $c_1$.

The restriction of $\rho'$ to $T_0$ implies that the vortex society of $c_0$ is rural, and the choice of parameters implies that it contains a subwall $W_0$ of $W_1$ which is an $r + t(0)$ wall with outer cycle equal to $C_{r' - \lceil r/2 \rceil - \lceil t(0)/2\rceil -1}$.  The restriction of $\rho'$ to $T_0$ provides the desired vortex-free rendition of the vortex society of $c_0$ with $W_0$ flat.

Define the disc $\Delta_0$ to be the closed disc bounded by the track of $C_{s(0) +3}$, and let $\zC_2 = (C_{s(0) + 4}, \dots, \allowbreak C_{s(0) + s_2 + 3})$.  Then the complementary society of $c_0$ has the desired cylindrical rendition with the nest $\zC_2$.  Note $s(0) + 3 + M = r' - \lceil r/2 \rceil - \lceil t(0)/2 \rceil - 1$, motivating the choice of $r'$.

Let $\zC_1 = (C_2, \dots, C_{s(0) + 1})$.  The restriction of $\rho'$ to $T_1$ instead yields the desired cylindrical rendition of the vortex society of $c_1$.

For each path $U_{r'- \lceil t(0)/2 \rceil + 1}, \dots, U_{r' + \lceil t(0)/2 \rceil}$, define $U_{r'- \lceil t(0)/2 \rceil + 1}', \dots, U_{r' + \lceil t(0)/2 \rceil}'$ to be a path from $H_{2r'}$ to $Y$ obtained by extending $U_i$ along $H_{1}$ to the first vertex in $Y$.  Define $\zP_1$ to be the restriction of the vertical paths $U_{r'- \lceil t(0)/2 \rceil + 1}', \dots, U_{r' + \lceil t(0)/2 \rceil}'$ from their last vertex in $\widetilde{c_0}$ to the first vertex in $Y$ when traversing each $U_i'$ from $H_{2r'}$ to $Y$. Thus, by the choice of $\zP_1$ and $\zC_1$, $\zP_1$ is orthogonal to $\zC_1$ and $\zC_2$ and $(\zP_1, \zC_1)$ and $(\zP_1, \zC_2)$ both overlay $W_1$.

As $c+h = 0$, setting $\gamma = \emptyset$ and $\zP_2 = \emptyset$ completes the definition of the desired $\mathbb{S}^2$-configuration.
\end{cproof}


By Claim~\ref{cl:basedec} we may select subwalls $W_0$ and $W_1$, a surface $\Sigma$ obtained from the sphere by the addition of $h$ handles and $c$ crosscaps,
a set $A\subseteq V(G)$ of size at most $2\alpha(g+1)$,
 and a $\Sigma$-configuration $\kappa:=(W_1, W_0, \delta,c_0,c_1,\Delta_0,\zC_1,\zC_2,\zP_1, \zP_2,\gamma)$
of $G-A$ which overlays $W_1$ with parameters $(r, s(g),s_2,t(g),\allowbreak M,c,h)$.  Let $\delta = (\Gamma, \zD)$. Let $X_1$ and $X_2$ be complementary segments of the complementary society to the vortex society of $c_0$ as in the definition of a configuration.  We assume that the $\Sigma$ configuration is chosen so that $g \le \binom{p+1}{2} + 1$ is maximum.

We will show that there exist $A'$ with $A \subseteq A'$ and a $\Sigma$-configuration $\kappa'$ of $(G-A', \Omega)$ which satisfy the conclusion of the theorem.

We first observe $g \le \binom{p+ 1}{2}$.  To see this, if $g = \binom{p+1}{2} + 1$ by the rendition in the disc of the outer society of the vortex $c_0$, the nest $\zC_2$ of order $s_2 \ge 2p$, and the handle and crosscap transactions of $\gamma$, each of order $M \ge p$, by Lemma \ref{lem:findingaclique} there exists a $K_p$ minor grasped by $\zC_2 \cup \zP_2$.  As $(\zP_1 \cup \zP_2, \zC_2)$ overlays $W_1$, the clique minor grasps $W$ as well, a contradiction.  Note, here we are also using the assumption that $s_2 \ge 2p$.

Let $\rho_1 = (\Gamma_1, \zD_1, c_1')$ be a cylindrical rendition of the vortex society $(G_1, \Omega_1)$ of $c_1$ in the closure of $c_1$.  Let $\zC_1 = (C_1, \dots, C_{s(g)})$ be the nest given by $\kappa$.  We may assume that $\pi_\kappa^{-1}(v) = \pi_{\rho_1}^{-1}(v)$ for all $v \in V(\Omega_1)$.

Apply Theorem~\ref{thm:maintrans2}
to  the society $(G_1,\Omega_1)$, the cylindrical rendition
$\rho_1$, $p$,  $\theta:=\theta(g)=2M +4s(g+1) + t(g+1)+12$, nest
 $(C_1, \dots, C_{s(g)})$ and the set $\zR_1$ of paths obtained by restricting each element of $\zP_1$ from the last vertex in $V(\Omega_1)$ to the endpoint in $\sigma_{\rho_1}(c_1')$.
Let $K_\theta = p^{10^7p^{24}} \theta$.
We observe that for $g \le \binom{p+1}{2}$, $p \ge 5$ and $M \ge p$,
\begin{align*}
K_\theta + 2s(g+1) +8 &=\left[ 2M + 4s(g+1) + t(g+1) + 12 \right] p^{10^7p^{24}} + 2 s(g+1) + 8 \\
&\le 5\cdot p^{10^7p^{24}}s(g+1) + (2M+14)p^{10^7p^{24}} + 2s(g+1) \\
&\le 5\cdot p^{10^7p^{24}}s(g+1) + 3s(g+1) \\
&\le  s(g) - M - 1.
\end{align*}

We consider the four possible outcomes of Theorem \ref{thm:maintrans2}.  The analysis of outcomes (i) and (ii) of Theorem \ref{thm:maintrans2} follows the same line of reasoning in each case.  In outcome (i), we use Lemma \ref{lem:xcaptrans} to find a subset $Z$ of at most $\alpha$ vertices and a rendition of $(G_1 - Z, \Omega_1)$ in the disc plus a crosscap with a unique vortex which is disjoint from a crosscap transaction of order $M$.
We glue the rendition of $(G_1-Z, \Omega_1)$ to the restriction of $\kappa$ to $\Sigma \setminus c_1$ to find a $\Sigma^*$-configuration in the surface $\Sigma^*$ obtained from $\Sigma$ by adding a single crosscap, contradicting our choice to maximize $g$.  In outcome (ii), the set $|Z|$ has at most $2\alpha$ vertices and instead we use Lemma \ref{lem:handletrans} to find a configuration in the surface obtained from $\Sigma$ by adding a single handle.  The value of $\theta$ is chosen with the application of Lemma \ref{lem:handletrans} in mind.
We present the analysis of outcome (ii) in detail and in the interest of brevity, we omit the analogous argument for outcome (i).

Assume, as a case, outcome (ii) of Theorem \ref{thm:maintrans2} holds.
Thus there exist a set $Z\subseteq V(G_1)\setminus V(\Omega_1)$
and a handle transaction $\zQ$ of thickness $\theta$ in $(G_1- Z, \Omega_1)$ with constituent transactions $\zQ_1$, $\zQ_2$ such that $|Z|\le 2\alpha$, the $\zQ_i$-strip in $(G_1- Z, \Omega_1)$ is isolated and rural for $i = 1,2$,
$\zQ_i$ is coterminal with $\zR_1$ up to level $K_\theta$ for $i = 1, 2$, and $Z$ is disjoint from the outer graph of $C_{K_\theta}$
in the cylindrical rendition $\rho_1$.

We wish to apply Lemma \ref{lem:handletrans}.  To do so, we first have to modify the rendition $\rho_1$ to get a rendition $\rho_2$ suitable for the application of Lemma \ref{lem:handletrans} as well as define our two linkages and their corresponding segments of the society vertices.  Lemma \ref{lem:handletrans} will yield a third rendition $\rho_3$ of $(G_1-Z, \Omega_1)$ which we can use to derive a contradiction to our choice to maximize $g$.  Define the cylindrical rendition $\rho_2$ of $(G_1 - Z, \Omega_1)$ as follows.  Let $\Delta_{K_\theta} \subseteq c_1$ be the closed disc bounded by the track of $C_{K_\theta}$ in the rendition $\rho_1$.  Let $\rho_2 = (\Gamma_1, {\zD_2}, {c_2})$ such that ${c_2} = \Delta_{K_\theta} \setminus \pi_{\rho_1}^{-1}(V(C_{K_\theta}))$ and ${\zD_2} = \{ c \in \zD_1: c \nsubseteq \Delta_{K_{\theta}}\} \cup \{\Delta_{K_\theta}\}$.

The transaction $\zQ$ is coterminal with $\zR_1$, and so we can extend the paths along $\zP_1$ to get a trasaction with endpoints in $X_1$.  We partition $X_1$ into two segments, $Y_1^*$ and $Y_2^*$ so that exactly $t(g+1)$ elements of $\zP_1$ which intersect $\zQ$ have an endpoint in $Y^*_1$.  By the properties of the $\Sigma$-configuration and that the cyclic order of the elements of $\zP_1$ on $\Omega_2$ and $\Omega_1$ are the same, we can find disjoint segments $Y_1$ and $Y_2$ of $\Omega_1$ such that for every $P \in \zP_1$ such that $P$ intersects an element of $\zQ$, $V(P) \cap V(\Omega_1) \subseteq Y_1$ if $P$ has an end in $Y_1^*$ and $V(P) \cap V(\Omega_1) \subseteq Y_2$ if $P$ has an end in $Y_2$.  Let ${\zR_2}$ be the $Y_1 - V(\sigma_{\rho_2}(c_2))$ linkage contained in $\zR_1$.  Note $|\zR_2| = t(g+1)$.  For $i = 1,2$, let $\zQ_i'$ be a set of $4s(g+1) + 2m + 13$ elements of $\zQ_i$ with an endpoint in $Y_2$.  Such a choice is possible as at most $t(g+1)$ elements of $\zQ_i$ have an end in $Y_1$ and thus not in $Y_2$.  The transactions $\zQ_1'$ and $\zQ_2'$ are the constituent transactions of a handle transaction of thickness $2M + 4s(g+1) + 12$.  Finally, note that $(C_{K_\theta + 1}, \dots, C_{s(g)})$ is a nest in $\rho_2$ of order at least $2s(g+1) + 8$ and both $\zR_2$ and $\zQ_1' \cup \zQ_2'$ are orthogonal to $(C_{K_\theta + 1}, \dots, C_{s(g)})$.

Apply Lemma \ref{lem:handletrans} to $(G_1 - Z, \Omega_1)$, the segments $Y_1, Y_2$ in $V(\Omega_1)$, rendition $\rho_2$, nest $(C_{K_\theta+1}, \dots, C_{s(g)})$, and linkages $\zQ_1' \cup \zQ_2'$ and $\zR_2$ with the parameters $s = s(g+1)$ and $\theta = M$.  Thus, there exist $\zQ^+ \subseteq \zQ_1'\cup \zQ_2'$ such that $\zQ^+$ is a handle transaction of thickness $M$ and rendition $\rho_3$ in $\Delta^+$ for $\Delta^+$ equal to the closure of $c_1$ with a handle added to the interior of $c_1'$.   Let $\rho_3 = (\Gamma_3, \zD_3)$ with the unique vortex labeled $c_3$.   The lemma ensures that $\zQ^+$ is disjoint from $\sigma_{\rho_3}(c_3)$.  The vortex society of $c_3$ has a cylindrical rendition $\rho_3' = (\Gamma_3', \zD_3', c_3')$ with a nest $\zC' = (C_1', \dots, C_s')$ as the outcome of Lemma \ref{lem:handletrans}.

As $\rho_3$ is equal to $\rho_1$ outside the disc bounded by the track of $C_{K_\theta + s(g+1) + 8}$ in $\rho_1$, we can extend the linkages $\zR_3$ and $\zQ^+$ along $\zP_1$ to obtain linkages $\bar\zP_1$ and $\bar\zQ^+$ such $\bar\zP_1$ links $Y_1$ and $\widetilde{c_3'}$ in $\rho_3'$ and every element of $\bar\zQ^+$ has both endpoints in $Y_2$.  Note that the final property in Lemma \ref{lem:xcaptrans} ensures that $\bar\zQ^+$ is a handle transaction in the complementary society to the vortex society of $c_0$ in $\delta$. Let $\bar\zP_2$ be the union of $\zP_2$ along with the $V(\Omega_2) - \tilde\Delta_0$ linkage contained in $\bar\zQ^+$.

Finally, we let $\Sigma^+$ be the surface $(\Sigma \setminus c_1 )\cup \Delta^+$ and we define a $\Sigma^+$-decomposition $\delta^+ = (\Gamma^+, \zD^+)$ as follows.  We define $\Gamma^+$ to be equal to $\Gamma$ in $\Sigma \setminus c_1$ and equal to $\Gamma_3$ in $\Delta^+$.  Define $\zD^+= (\zD \setminus c_1) \cup \zD_3$.
We conclude that $(W_1, W_0, \delta^+, c_0, c_3, \Delta_0, \zC', \zC_2, \bar\zP_1,\allowbreak \bar\zP_2, \gamma \cup (\zQ^+, Y_2))$ is a $\Sigma^*$-configuration of $G - (A \cup Z)$ with parameters $(r, s(g+1), s_2, t(g+1), M, c, h+1)$, contradicting our choice to maximize $g$.  This completes the analysis of the case of outcome (ii) in the application of Theorem \ref{thm:maintrans2}.

The analysis of outcome (i) of Theorem \ref{thm:maintrans2} is analogous to the previous argument, applying Lemma \ref{lem:xcaptrans} instead of Lemma \ref{lem:handletrans}.  We omit the details here.  Outcome (iii) yields a $K_p$ minor grasped by $\zC_1 \cup \zR_1$.  The elements of $\zR_1$ are subpaths of the elements of $\zP_1$ and by the property that $(\zP_1, \zC_1)$ overlays $W_1$, we see that the $K_p$ minor is grasped by $W$ as well, a contradiction.

We conclude that outcome (iv) of Theorem \ref{thm:maintrans2} holds.  Thus, there exists $Z \subseteq V(G_1) \setminus \Omega_1$ with $|Z| \le 5 \cdot 10^{20}p^{102}(\theta + \alpha)$ and a rendition in the disc of $(G_1 - Z, \Omega)$ of depth \begin{align*}
p^{10^7p^{24}}(2M + 4s(g+1) + t(g+1) + 12) &\le s(g) \\
&\le \left( 6 p^{10^7p^{24}}\right)^{\binom{p+1}{2} + 2} \cdot M \\
& \le \frac{1}{10^5p^{26}} \left( p^{10^7p^{24}}\right)^{\binom{p+1}{2} + 4} \cdot M \\
& \le \left \lfloor \frac{1}{10^5p^{26}} p^{10^7p^{26}} M \right \rfloor = \mu,
\end{align*}
 and breadth $2p^2$.
Using the inequality $K_\theta + 2s(g+1) + 8 \le s(g) - M - 1$ which we proved above and the fact that $p \ge 5$, we see that
\begin{align*}
|A \cup Z| &\le 2 \alpha \cdot (g+1)+ 5 \cdot 10^{20}p^{102}\theta + 5 \cdot 10^{20}p^{102} \alpha  \\
&\le K_\theta + 2\alpha \left( 5 \cdot 10^{20}p^{102} + g + 1\right) \\
&\le K_\theta + 2 s(g+1) \\
&\le s(G) - M \le \mu.
\end{align*}
It follows that we can fix $\zC_1'$ to be a set of $M$ elements of $\zC_1$ which are disjoint from $A \cup Z$ and similarly, fix $\zP_1'$ to be a set of $M$ elements of $\zP_1$ which are disjoint from $A \cup Z$.
Define $\delta' = (\Gamma', \zD')$ to be the $\Sigma$-decomposition of $G-(A \cup Z)$ with $\Gamma'$ the restriction of the drawing $\Gamma$ to the subgraph $G-(A \cup Z)$ and $\zD' = \zD$.  We conclude that $(W_1, W_0, \delta', c_0, c_1, \Delta_0, \zC_1', \zC_2, \zP_1', \zP_2, \gamma)$ is the desired $\Sigma$-decomposition of $G-(A \cup Z)$ with parameters $(r, M, s_2, M, M, c, h)$ such that the vortex society of $c_1$ has breadth at most $2p^2$ and depth at most $\mu$.  This completes the proof.
\end{proof}

\section{Bounded depth vortices and the proof of Theorem \ref{thm:main}}

The statements of Theorem \ref{thm:main} and \ref{thm:main3} are quite close - the final tool which we will need in order to deduce Theorem \ref{thm:main} is a further understanding of the structure of bounded depth vortices. Then we finish the proof of Theorem \ref{thm:main}. 

Recall that, as in Section \ref{sec:planarstrip}, by an interval we mean a set of consecutive, positive integers.
\begin{DEF}
Let $(G, \Omega)$ be a society.  A \emph{linear decomposition} of $(G, \Omega)$ is a labeling $v_1, \dots, v_n$ of $V(\Omega)$ such that $v_1, v_2, \dots, v_n$ occur in that order on $\Omega$ and  subsets $(X_1, X_2, \dots, X_n)$ such that
\begin{itemize}
\item $X_i \subseteq V(G)$ with $v_i \in X_i$ for all $i$ and $\bigcup_1^n X_i = V(G)$.  For every edge $uv \in E(G)$, there exists $i$ such that $\{u,v\} \subseteq X_i$.
\item For all $x \in V(G)$, the set $\{i : x \in X_i\}$ is an interval in $\{1, 2, \dots, n\}$.
\end{itemize}
The {\em adhesion} of such a linear decomposition is
$$\max(|X_i\cap X_{i+1}|\,:\,i=1,2,\ldots,n-1).$$
(This is called ``depth" in \cite{RS9}, but we prefer the term adhesion.)
\end{DEF}
It is not hard to see that every society with a linear decomposition of adhesion at most $d$ has depth at most $2d$.
The following partial converse is shown in \cite[Theorem 8.1]{RS9}.  We include the original proof for completeness.
\begin{theorem}[{\cite[Theorem 8.1]{RS9}}]\label{thm:bdeddepthvortex}
Let $d\ge0$ be an integer.
Every society of depth at most $d$ has a linear decomposition of adhesion at most $d$.
\end{theorem}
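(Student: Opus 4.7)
The plan is to construct the decomposition from canonical minimum separators obtained via Menger's theorem, and then to bound the adhesion using the depth assumption. Order the vertices of $V(\Omega)$ as $v_1,v_2,\ldots,v_n$ following $\Omega$, and for each $0\le i\le n$ set $A_i=\{v_1,\ldots,v_i\}$ and $B_i=V(\Omega)\setminus A_i$. By Menger's theorem there exists a minimum $A_i$--$B_i$ separator $T_i$ in $G$, and a family $\zR_i$ of $|T_i|$ pairwise disjoint $A_i$--$B_i$ paths. Truncating each path $R\in\zR_i$ to the subpath between its last vertex in $A_i$ and the subsequent first vertex in $B_i$ yields an $\Omega$-path, so the truncated family is a transaction whose two segments contain $A_i$ and $B_i$ respectively. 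The depth hypothesis therefore gives $|T_i|\le d$.

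Next, choose each $T_i$ canonically: let $T_i$ be the unique minimum $A_i$--$B_i$ separator whose $A_i$-side $L_i$ (the set of vertices reachable from $A_i$ in $G-T_i$, together with $A_i$) is inclusion-maximal. Existence and uniqueness of this choice follow from the standard submodular-uncrossing argument for minimum vertex cuts. The key monotonicity consequence is that for $i\le j$ we have $L_i\subseteq L_j$ and the corresponding $B$-sides $R_i\supseteq R_j$; in particular any vertex lying strictly on the $B$-side of $T_i$ either lies on the $B$-side of $T_j$ or belongs to $T_j\cup A_j$, and symmetrically. Define the bag
\[
X_i\;=\;\bigl(V(G)\setminus(L_{i-1}\cup R_i)\bigr)\cup T_{i-1}\cup T_i\cup\{v_i\}\qquad(1\le i\le n),
\]
with $T_0=T_n=\emptyset$. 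By construction $v_i\in X_i$, and every vertex of $G$ lies in some $X_i$ since $L_0=\emptyset$ and $R_n=\emptyset$. An edge $uv$ cannot jump across a separator $T_i$ with $u\in L_i\setminus T_i$ and $v\in R_i\setminus T_i$, so the set of $i$ with $\{u,v\}\subseteq X_i$ is non-empty. The monotonicity $L_i\subseteq L_{i+1}$ and $R_i\supseteq R_{i+1}$ shows that $\{i:x\in X_i\}$ is an interval for each $x$, verifying the structural axioms of a linear decomposition.

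Finally, to bound the adhesion, observe that $X_i\cap X_{i+1}$ is contained in $T_i$ together with vertices that both avoid $L_{i-1}\cup R_i$ and avoid $L_i\cup R_{i+1}$; using $L_{i-1}\subseteq L_i$ and $R_{i+1}\subseteq R_i$ this intersection reduces to vertices of $T_i$ (plus possibly $v_i$ or $v_{i+1}$, which by the canonical choice of $T_i$ already lie in $T_i$ or on the respective sides), so $|X_i\cap X_{i+1}|\le|T_i|\le d$. The main obstacle is the bookkeeping in this last step: one must ensure that the canonical choice of $T_i$ really does make the apparent "leftover" vertices cancel, which is exactly where submodularity of minimum separators and the monotonicity of the $L_i$'s are used in a non-trivial way. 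Everything else — the depth-to-separator translation via Menger, and the verification of the cover and interval axioms — is routine once this canonical choice has been fixed.
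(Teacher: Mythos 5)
Your proposal is correct and follows essentially the same route as the paper: translate the depth bound into an order-$\le d$ bound on minimum separators between each prefix $\{v_1,\dots,v_i\}$ and the complementary suffix of $V(\Omega)$ via Menger and path truncation, make a canonical extremal choice of these separators so that they are nested, and take as bags the slabs between consecutive separators, so that each adhesion set lies inside a single separator. The only differences are cosmetic: the paper chooses the separation of minimum order with $|A_i|$ minimum (you take the inclusion-maximal side instead) and simply sets $X_i=A_i\cap B_{i-1}$ (with $X_1=A_1$, $X_n=B_{n-1}$), which streamlines your reachability-set bookkeeping and automatically places components of $G$ avoiding $V(\Omega)$ into the last bag.
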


\begin{proof}
Fix an enumeration $v_1, \dots, v_n$ of $V(\Omega)$ such that $v_1, \dots, v_n$ occur in that order.  For every $i$, $1 \le i \le n-1$, there exists a separation of order at most $d$ separating the sets of vertices $\{x_1, \dots, x_i\}$ and $\{x_{i+1},\dots,x_n\}$.  Let $(A_i, B_i)$ to be such a separation of minimal order, and subject to that, we fix $(A_i, B_i)$ to minimize $|A_i|$ from all such minimal order separations.

It follows that $A_i \subsetneq A_{i+1}$ for $1 \le i \le n-2$.  To see this, fix an index $i$, and assume that $A_i \nsubseteq A_{i+1}$.  Thus, $(A_i \cap A_{i+1}, B_i \cup B_{i+1})$ is a separation separating $\{x_1, \dots, x_{i}\}$ from $\{x_{i+1}, \dots, x_n\}$ with $A_i \cap A_{i+1}$ a proper subset of $A_i$.  By our choice of $(A_i, B_i)$, it follows that the order of $(A_i \cap A_{i+1}, B_i \cup B_{i+1})$ is strictly greater than $|A_i \cap B_i|$.  But in the case, $(A_i \cup A_{i+1}, B_i \cap B_{i+1})$ is a separation of order strictly less than $|A_{i+1} \cap B_{i+1}|$ and separates $\{x_1, \dots, x_{i+1}\}$ and $\{x_{i+2}, \dots, x_n\}$, contrary to our choice of $(A_{i+1}, B_{i+1})$.  We conclude that $A_i \subseteq A_{i+1}$ and as $x_{i+1} \in A_{i+1} \setminus A_i$, $A_i \subsetneq A_{i+1}$.  Similarly, $B_i \supsetneq B_{i+1}$ for all $1 \le i \le n-2$.

For $2 \le i \le n-1$, fix $X_i = A_i \cap B_{i-1}$.  Let $X_1 = A_1$ and $X_n=B_{n-1}$.  Note $v_i \in X_i$ for $1 \le i \le n$.  We claim $(X_1, \dots, X_n)$ form a linear decomposition of $(G, \Omega)$.  First, observe that any isolated vertices (in fact, any components not containing a vertex $v_i$) are contained in the set $X_n$.  Consider an edge $e = uv$ of $G$.  Assume, as a case, that $\{u,v\} \subseteq A_{n}$.  Fix $i$ to be the smallest index such that both $u$ and $v$ are contained in $A_i$.  If $i =1$, then $\{u,v\} \subseteq X_1$ by definition.  Otherwise, as $u$ and $v$ are not both contained in $A_{i-1}$, it must hold that $\{u,v\} \subseteq B_{i-1}$ and thus, $\{u,v\} \subseteq X_i$.  We reduce to the case that both $u$ and $v$ are not contained in $A_n$.  However, as the two endpoints of every edge is either contained in $A_n$ or $B_n$, we may assume that $\{u,v\} \subseteq B_n$.  In this case, $\{u,v\} \subseteq X_n$.
We conclude every vertex and every edge is contained in some $X_i$.

To complete the proof that $(X_1, \dots, X_n)$ is a linear decomposition, consider a vertex $x$ of $G$.  As $A_1 \subseteq A_2 \subseteq \cdots A_n$ and $B_1 \supseteq B_2 \supseteq \cdots B_n$, there exists some index $s$ and an index $t$ such that $x \in A_i$ for all $i \ge s$ and $x \in B_i$ for all $i \le t$.  Moreover, as $x \in A_i$ or $x\in B_i$ for all $i$, we have that $s \le t$.  By the definition of $X_i$, we have that $x \in X_i$ for exactly the indices $i$ such that $s \le i \le t+1$ and specifically, there is an interval of indices $i$ such that $x \in A_i$ as desired.
\end{proof}

Note that from the definition of a linear decomposition $(X_1, \dots, X_n)$ of a society $(G, \Omega)$, it easily follows that for any $i$, the set $Z = (X_i \cap X_{i-1}) \cup (X_i \cap X_{i+1}) \cup \{v_i\}$ separates $X_i \setminus Z$ from the set $V(G) \setminus X_i$.  Moreover, as every edge is contained in some $X_i$ and each vertex is contained in an interval of the $X_i$, we have that for every $j$, $1 \le j \le n-1$, the set $Z = X_j \cap X_{j+1}$ is a cutset separating $\bigcup_1^j X_i \setminus Z$ from $\bigcup_{j+1}^n X_i \setminus Z$.  Thus, for any connected subgraph $H$ of $G$, we have that $\{i : V(H) \cap X_i \neq \emptyset\}$ is also an interval.

We now proceed with the proof of Theorem \ref{thm:main}.

\begin{proof}[Proof. Theorem \ref{thm:main}]
First, we observe that as in the proof of Theorem \ref{thm:main3}, for $p \le 4$, there trivially exists a $K_p$ minor grasped by $W$.  We assume for the remainder of the proof that $p \ge 5$ and that there is no $K_p$ minor grasped by $W$.

Apply Theorem \ref{thm:main3} with the values $M = p, s_2 = 2p$, $r' = r + \left \lfloor \frac{1}{10^5p^{24}}\cdot p^{10^7p^{26}} \right \rfloor$.  Let $\mu =  \left \lfloor \frac{1}{10^5p^{25}}\cdot p^{10^7p^{26}}\right \rfloor$.  We may do so because
\begin{align*}
49152p^{24}(r' + 4p) + \mu &\le 49152p^{24}(r + \frac{1}{10^5p^{24}}\cdot p^{10^7p^{26}} + 4p) + \mu \\
& \le 49152p^{24}r + \left( \frac{49152}{10^5} + \frac{1}{10^5} + \frac{1}{10^5p^{25}}\right) p^{10^7p^{26}}\\
&\le R.
\end{align*}

There exists a set $Z \subseteq V(G)$, $|Z| \le \mu$, and a surface $\Sigma$ formed by adding $c$ crosscaps and $h$ handles to the sphere such that $G-Z$ has a $\Sigma$-configuration $(W_1, S_0, \delta, c_0, c_1, \Delta_0, \zC_1, \zC_2, \zP_1, \zP_2, \gamma)$ with parameters $(r', p, 2p, p, p, c, h)$ such that the vortex society of $c_1$ has depth at most $\mu$.  Moreover, as $c+h \le \binom{p+1}{2}$, we have that $\Sigma$ has genus at most $p(p+1)$.

We extend $\delta$ to a $\Sigma$-decomposition $\delta' = (\Gamma', \zD')$ of $G-Z$ by fixing a vortex free rendition of the vortex society of $c_0$ such that the wall $W_0$ is flat in the rendition and fixing a rendition of breadth at most $2p^2$ and depth at most $\mu$ of the vortex society of $c_1$.  Thus, $\delta'$ has breadth $t \le 2p^2$ and depth at most $\mu$.

Let $c_1, \dots, c_t \in C(\delta')$ be the vortices of the decomposition $\delta'$.
Let $P_1, \dots, P_r$ be the horizontal paths and $Q_1, \dots, Q_r$ be the vertical paths of $W$.  Let $\zH = \{P_i \cup Q_j: 1 \le i \le r, 1 \le j \le r \text{ such that }(V(P_i) \cup V(Q_j)) \cap Z = \emptyset\}$.  As there exist $r' - \mu$ disjoint paths in $W - Z$ linking every element $T \in \zH$ and the wall $W_0$, it follows that the only cells $c \in C(\delta')$ such that $\sigma_{\delta'}(c)$ contains an element of $\zH$ as a subgraph are when $c = c_i$ for some $i\le t$.

\begin{claim}
Let $1 \le i \le t$ and let $(G_i, \Omega_i)$ be the vortex society of $c_i$.  There exists a set $Z_i \subseteq V(G_i)$, $|Z_i| \le 2\mu + 1$ such that $G_i - Z_i$ does not contain any element of $\zH$ as a subgraph.
\end{claim}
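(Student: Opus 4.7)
The plan is to apply Theorem~\ref{thm:bdeddepthvortex} to the society $(G_i, \Omega_i)$ of depth at most $\mu$ to obtain a linear decomposition $(X_1, X_2, \ldots, X_n)$ of adhesion at most $\mu$, with a labeling $v_1, v_2, \ldots, v_n$ of $V(\Omega_i)$ appearing in the cyclic order of $\Omega_i$. I will use two properties noted after the proof of that theorem: for every connected subgraph $H$ of $G_i$, the set $\{j : V(H) \cap X_j \ne \emptyset\}$ is an interval in $\{1, \ldots, n\}$; and each adhesion set $X_j \cap X_{j+1}$ has size at most $\mu$ and separates $\bigcup_{k \le j} X_k$ from $\bigcup_{k > j} X_k$ in $G_i$.

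If no element of $\zH$ is contained in $G_i$, take $Z_i := \emptyset$. Otherwise, for each $T \in \zH$ with $T \subseteq G_i$ let $I_T = [\alpha_T, \beta_T]$ be the associated interval. I would choose $T^* = P_{a^*} \cup Q_{b^*} \in \zH$ contained in $G_i$ minimizing $\beta_{T^*}$, set $J := \beta_{T^*}$, and define
$$Z_i := (X_{J-1} \cap X_J) \cup (X_J \cap X_{J+1}) \cup \{v_J\},$$
using the convention $X_0 \cap X_1 = X_n \cap X_{n+1} = \emptyset$. Then $|Z_i| \le 2\mu + 1$. Removing $Z_i$ splits $G_i$ into three mutually disconnected pieces in $G_i - Z_i$: a \emph{left} piece on $X_{[1,J-1]} \setminus Z_i$, a \emph{middle} piece $M := X_J \setminus Z_i$ containing no society vertex, and a \emph{right} piece on $X_{[J+1,n]} \setminus Z_i$. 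Any hypothetical $T = P_{a'} \cup Q_{b'} \in \zH$ with $T \subseteq G_i - Z_i$ must lie entirely in one of these pieces, and I will rule out each case.

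A $T$ entirely on the left has $\beta_T \le J-1$, contradicting the minimality of $J$. For a $T$ on the right I consider two subcases: if $T$ shares a wall-path with $T^*$ (say $a'=a^*$), then the path $P_{a'}=P_{a^*}$ is connected but would be forced to have vertices in both $[\alpha_{T^*}, J]$ (via $T^*$) and $[J+1,n]$ (via $T$), hence to meet $X_J \cap X_{J+1} \subseteq Z_i$, a contradiction; otherwise $a' \ne a^*$ and $b' \ne b^*$, and then the grid crossing $p_{a',b^*}$ lies in both $T$ (via $P_{a'}$) and $T^*$ (via $Q_{b^*}$), giving the same contradiction since the connected path containing $p_{a',b^*}$ would have to pass through the removed cut.

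For a $T$ entirely in $M$, I use a wall-expansion argument: every vertex of $M$ is interior to the vortex cell $c_i$ of $\delta'$, so each of its wall-edges is drawn in $c_i$ and has its other endpoint in $V(G_i)$; combined with the isolation of $M$ in $G_i - Z_i$, this forces every wall-neighbor of an $M$-vertex to lie in $M \cup Z_i$. Iterating from $V(T) \subseteq M$, the set $Z_i \cap V(W)$ becomes a vertex-cut in $W$ separating $V(T)$ from every wall-vertex not in $M \cup Z_i$. The $\Sigma$-configuration places $W_1$ in the vortex cell $c_0$ with $\Delta_0$ separating $c_0$ from $c_i$, so $V(W_1) \cap V(G_i) = \emptyset$, forcing $Z_i \cap V(W)$ to separate $V(T)$ from $V(W_1)$ inside $W$. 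The main obstacle is closing out this last step: since $|Z_i \cap V(W)| \le 2\mu + 1$ is dwarfed by the wall's dimensions guaranteed by Theorem~\ref{thm:main3}, a standard vertex-disjoint-paths argument in the subdivided $R$-wall should produce strictly more than $2\mu + 1$ pairwise disjoint paths from the cross $V(T)$ to the huge subwall $V(W_1)$, yielding the contradiction; the technical work is in verifying this connectivity bound carefully using the explicit numerics of $R$ relative to $\mu$.
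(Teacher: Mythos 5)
Your choice of $Z_i$ coincides with the paper's: apply Theorem~\ref{thm:bdeddepthvortex} to $(G_i,\Omega_i)$ (depth at most $\mu$, hence adhesion at most $\mu$), take the least index $J$ that occurs as the right endpoint of an interval $I_T$, and delete the two adhesion sets together with $v_J$. Your left and right cases are also correct; the paper compresses them into the single observation that any two members of $\zH$ intersect (a horizontal path of $W$ meets every vertical path), so every $T\in\zH$ contained in $G_i$ must meet $X_J$, which is exactly what your crossing-vertex argument establishes.

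The middle case, however, contains a genuine gap, and it is precisely where the proof's work lies. First, your reason for the target wall being outside the vortex rests on a misreading of the $\Sigma$-configuration: it is $W_0$, not $W_1$, that is placed in the rural vortex society of $c_0$; the larger subwall $W_1$ is overlaid by the nest $\zC_1$ inside the exceptional vortex $c_1$, so $V(W_1)\cap V(G_i)$ need not be empty, and the assertion that the configuration places $W_1$ in $c_0$ does not follow from the definitions --- the paper routes to $W_0$ for exactly this reason. Second, the cut you assert is wrong as stated: a vertex of $M$ may have wall-neighbours in the apex set $Z$ (those edges are simply absent from $\delta'$, which decomposes $G-Z$), so your expansion argument only yields the cut $(Z_i\cup Z)\cap V(W)$, of size up to $3\mu+1$, and the disjoint paths must accordingly be taken in $W-Z$. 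Third, the disjoint-path count itself, which you explicitly leave open as ``the main obstacle,'' is the step that carries the argument; the paper needs no new work here, because immediately before the claim it has already recorded that every $T\in\zH$ is joined to $W_0$ by $r'-\mu$ disjoint paths in $W-Z$, and since every society vertex of $G_i$ lying in $X_J$ belongs to $Z_i$ (by the interval property), $Z_i$ separates $X_J\setminus Z_i$ from the rest of $G-Z$ --- no wall-expansion argument is needed --- so $r'-\mu>3\mu>|Z_i|$ gives the contradiction at once. To complete your version you would have to replace $W_1$ by $W_0$, include $Z$ in the cut, and either invoke the already-established $r'-\mu$ paths or prove the routing explicitly (e.g.\ along the $r'$ horizontal paths of $W$ containing the horizontal paths of $W_0$, each of which meets the vertical path of $T$) together with the comparison against $\mu$.
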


\begin{cproof}
Fix $i$.  By Theorem \ref{thm:bdeddepthvortex}, there exists a linear decomposition $v_1, \dots, v_n$, $(X_1, \dots, X_n)$ of $(G_i, \Omega_i)$.  For every $T \in \zH$ which is contained in $G_i$, the set $I_T = \{j: 1 \le j \le n \text{ such that } V(T) \cap X_j \neq \emptyset\}$ is an interval since $T$ is a connected subgraph.  Fix $k$ to be the smallest index such that there exists $T \in \zH$ such that $k$ is the largest value in $I_T$.  Then for every $T \in \zH$ such that $T$ is a subgraph of $G_i$, it follows that $V(T) \cap X_k \neq \emptyset$.

Let $Z_i = (X_k \cap X_{k-1}) \cup (X_k \cap X_{k+1}) \cup \{v_k\}$.  The order of $Z_i$ is at most $2\mu + 1$, and we claim $Z_i$ is the desired set of vertices such that $G-Z_i$ does not contain any element of $\zH$.  Assume, to reach a contradiction, that there exists $T \in \zH$ which is a subgraph of $G_i$ but which is disjoint from $Z_i$.  As $Z_i$ is a cutset separating $X_k \setminus Z_i$ from the rest of the graph and $T$ is connected and intersects $X_k$, we have that $T$ is a subgraph of $G_i[X_k]$.  But there exist $r' - \mu$ disjoint paths in $W-Z$ linking $W_0$ and $T$
and
\begin{align*}
r' - \mu &= r + \left \lfloor \frac{1}{10^5p^{24}}p^{10^7p^{26}} \right \rfloor - \left \lfloor \frac{1}{10^5p^{25}}p^{10^7p^{26}}\right \rfloor \\
&\ge \frac{p -1}{10^5p^{25}}p^{10^7p^{26}}- 1 \\
& > 3 \mu > |Z_i|
\end{align*}
contradicting the fact that $Z_i$ is a cutset separating $W_0$ and $T$ in $G-Z$.  This completes the proof of the claim.
\end{cproof}

Fix $Z^* = Z \cup \bigcup_1^t Z_i$.  We have that
\begin{align*}
|Z^*| &\le 2p^2(2\mu + 1) + \mu \\
& \le \frac{5p^2}{10^5p^{25}} p^{10^7p^{26}}\\
& \le p^{10^7p^{26}}.
\end{align*}
Note as well that $Z^*\setminus Z \subseteq \bigcup_1^t V(\sigma_{\delta'}(c_i))$ and thus $W_0$ is disjoint from $Z^*$.  If we let $\delta^*$ be the restriction of $\delta'$ to the subgraph $G-Z^*$, we have the desired $W$-central $\Sigma$-decomposition of $G-Z^*$ of depth at most $\mu \le p^{10^7p^{24}}$ and breadth at most $2p^2$.  Moreover, the $r$-wall $W_0$ is flat in $\delta^*$, completing the proof of the theorem.
\end{proof} 
\section{Bounding the genus}
\label{sec:bdgenus}

The objective of this section is to prove Theorem~\ref{thm:main2}.

Let $\Sigma$ be a surface of Euler genus $g=2h+c$ with $h$ handles and $c$ cross-caps.
By a D-representation of $\Sigma$, we mean a
representation of $\Sigma$ as a disk $\Delta$ with $h$ handles and $c$ cross-caps attached to the boundary.
More precisely, a \emph{D-representation} of $\Sigma$ is a triple $\delta = (\Delta, I, (X_1, X_2, \dots, X_{c+h}))$ where $\Delta$ is a convex $4h+2c$-gon with a fixed clockwise order on the boundary, $I$ is a subset of  $\{1,2,\ldots,c+h\}$ of order exactly $h$, and $(X_1, X_2, \dots, X_{c+h})$ is a partition of the sides of $\Delta$ with the following properties.  For $i \in I$, $X_i$ consists of four consecutive sides of $\Delta$, say $a_i, b_i, a'_i, b'_i$, in clockwise order.  For $i \notin I$, $X_i$ consists of two consecutive sides $c_i, c_i'$ of $\Delta$.  We will typically assume that the sets of sides $X_1, X_2, \dots, X_{c+h}$ occur on the boundary in that order.  The \emph{realization} of $\delta$ is the surface $\Sigma'$ homeomorphic to $\Sigma$ obtained from $\Delta$ by identifying $a_i$ with $a_i'$ and $b_i$ with $b_i'$ in the same direction for $i\in I$
and identifying $c_j$ with $c_j'$ in opposite direction for $j\notin I$.  We say that $I$ is also the \emph{type} of $\delta$.  

Let $L$ be a graph which embeds in a surface $\Sigma$, and let $\delta = (\Delta, I, (X_1, \dots, X_{c+h})$ be a D-representation of $\Sigma$.  Let $\Sigma'$ be the realization $\delta$ homeomorphic to $\Sigma$.  A \emph{clean drawing} of $L$ in $\Sigma'$ is a drawing such that
\begin{itemize}
\item every vertex is in the interior of $\Delta$, and
\item each edge of $L'$ uses at most one handle or
cross-cap.  That is, every edge intersects at most one of the sides $a_i,b_i,c_j$, and if it does, it does so at most once
and does so transversally, by which we mean that if it intersects say $a_i$, then it crosses from $a_i$ to $a_i'$.
\item The set of edges which use handles or crosscaps forms a matching in $L$.
\end{itemize}
Note that this is sometimes called the ``cut graph" of a graph embedded in a surface.  It is easy to see that for every graph $L$ which embeds in a surface $\Sigma$ and for every D-representation of $\Sigma$ with realization $\Sigma'$, that there exists a graph $L'$ which contains $L$ as a minor and such that $L'$ has a clean drawing.  The graph $L'$ can actually be obtained by repeatedly subdividing edges of $L$

\begin{LE}
\label{lem:Hexists}
Let $L$ be a graph which embeds in a surface $\Sigma$.  Let $\delta = (\Delta, I, (X_1, \dots, X_{c+h}))$ be a D-representation of $\Sigma$ with realization $\Sigma'$.  There exists a graph $L'$ which contains $L$ as a minor such that $L'$ has a clean drawing in $\Sigma'$.  Moreover, if $X \subseteq E(L')$ is the set of edges of $L'$ which use either a cross-cap or handle in the drawing in $\Sigma$, then $L' - X$ has a Hamiltonian cycle.
\end{LE}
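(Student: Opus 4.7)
The plan is to build $L'$ in three stages. First, I will take any topological embedding of $L$ in the realization $\Sigma'$ of $\delta$, arranging all vertices in the interior of $\Delta$ and every edge to meet the identified sides of $\partial\Delta$ transversally in finitely many points, with no two edges meeting at a common point of the boundary. Second, I will subdivide the edges of $L$ so that each edge of the resulting graph $L_0$ either lies entirely in the interior of $\Delta$ or crosses exactly one identified pair of sides exactly once, and so that the two subdivision vertices introduced at any given crossing are fresh (not shared with any other crossing). Since $L_0$ is a subdivision of $L$, it contains $L$ as a minor, and it admits a clean drawing in $\Sigma'$ whose set $X_0$ of identification-using edges is automatically a matching.

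It remains to add further edges and vertices, all drawn inside $\Delta$, to obtain $L'$ with $L' - X_0$ Hamiltonian. First I add planar edges in the interior of $\Delta$ to ensure that $L_0 - X_0$ is connected, and then fix a spanning tree $T$ of the resulting graph. The boundary of a thin regular neighborhood of $T$ in $\Delta$ is a single simple closed curve $C$; traversing $C$ once, one encounters a small neighborhood of each vertex $v$ exactly $\deg_T(v)$ times, in a cyclic order dictated by the cyclic order of $T$-edges around $v$ in the embedding. For each vertex $v$ of $L_0$, I introduce $\deg_T(v) - 1$ new vertices of $L'$, placing each one on a distinct such ``visit'' of $C$ near $v$, and using the remaining visit to re-route $C$ through $v$ itself. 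Joining each new vertex to its associated $v$ by a short curve inside $\Delta$, I obtain a graph $L'$ with a clean drawing in which the modified curve $C$ is, by construction, a simple cycle that passes through every vertex of $L'$; that is, a Hamiltonian cycle of $L' - X_0$. Contracting each short edge joining a new vertex to its associated $v$ recovers $L_0$, so $L$ remains a minor of $L'$ (and $X = X_0$).

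The main technical point is the third stage: one must verify that the boundary of a small regular neighborhood of a tree embedded in a disk is indeed a single simple closed curve (which follows because removing a tree from a simply connected surface leaves a simply connected complement) and that the local modifications near each vertex can be performed without creating crossings or multiple edges. Both are routine topological verifications, so the argument reduces essentially to bookkeeping around each vertex.
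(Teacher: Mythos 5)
Your stages 1--2 are fine, but stage 3 has a genuine gap: the construction does not yield an embedding. The boundary curve $C$ of a thin regular neighbourhood of the spanning tree $T$ necessarily crosses every edge of $L_0$ that is incident with $T$ but does not belong to $T$: each non-tree edge of $L_0 - X_0$ is crossed near each of its two endpoints, and each edge of $X_0$ is crossed near each of its (degree-two subdivision) endpoints, because the arc leaving such an endpoint lies in one of the angular sectors between consecutive $T$-edges through which the contour must pass; your short spokes can cross these arcs as well. So once you retain the edges of $L_0$ (which you must, since you recover $L_0$ by contracting the spokes), adding $C$ and the spokes as edges of $L'$ produces a drawing with crossings. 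This is fatal for the lemma as it is used later: in the proof of Theorem~\ref{thm:main2} the drawing of $L'-E(X)$ restricted to $\Delta$ must be a plane embedding so that the Hamiltonian cycle separates the remaining edges into two planar graphs $L_1,L_2$; a "clean drawing" in which the cycle crosses other edges is useless there. The difficulty is precisely not a "routine local verification near each vertex": in every sector at a vertex $v$ between consecutive $T$-edges, the contour crosses all non-tree edges and $X_0$-arcs incident with $v$ in that sector, and this happens whenever $L_0-X_0$ contains a cycle or $X_0\neq\emptyset$, i.e.\ in essentially every case of interest.

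The missing ingredient is exactly the device on which the paper's own proof rests: whenever the newly drawn curve crosses an existing edge, subdivide that edge at the crossing point and route the cycle through the resulting subdivision vertices (subdivision preserves $L$ as a minor, the new vertices lie on the cycle, and an $X$-edge crossed near its ends is merely shortened, so the matching condition survives). The paper does this incrementally: choose $L'$ and a cycle $C$ in $L'-X$ minimizing $|V(L')\setminus V(C)|$; if some component of $(L'-X)-V(C)$ remains, take an edge $uv$ with $u\in V(C)$, draw a short curve from $v$ to a $C$-neighbour $u'$ of $u$ crossing only non-cycle edges incident with $u$, subdivide those edges, and reroute $C$ through $v$ and the subdivision vertices, contradicting minimality. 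Your one-shot tree-contour construction could very likely be repaired in the same spirit (subdivide every edge crossed by the contour and absorb the crossing points into the Hamiltonian cycle, then recheck the matching and minor-containment conditions), but as written the claim that $L'$ has a clean, crossing-free drawing is false.
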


\begin{proof}
Fix $L'$ to be a graph containing $L$ as a minor which has a clean drawing in $\Sigma'$.  Let $X$ be the set of edges of $L'$ which use either a crosscap or handle of the drawing in $\Sigma'$.  We may assume, by adding edges, that $L' - X$ is connected and that it has a cycle.  From all such possible $L'$, we pick $L'$ and a cycle $C$ in $L' - X$ to minimize the number of vertices in $V(L') \setminus V(C)$.
We claim that $C$ is a Hamiltonian cycle in $L' - X$.

Suppose to the contrary that $J$ is a component of $(L'-X)-V(C)$. Since $L' - X$ is connected,
it has an edge $uv$ with $u\in V(C)$ and $v\in V(J)$.
Let $u'$ be a neighbor of $u$ in $C$, and let $c$ be the image of a simple curve with endpoints $v$ and $u'$
whose interior intersects $L'$ only in edges of $H-E(C)$ incident with $u$, and it intersects each such edge
at most once. We now subdivide each edge intersected by $c$ and add a path $P$ from $v$ to $u'$
using the new vertices. Let the new graph be $H'$ and let $C'$ be the cycle in $H'$ obtained from $C-uu'$
by adding the path $P$ and edge $uv$.
Now $H',C'$ violate the choice of $L'$ and $C$.
We conclude $V(C) = V(L')$ and that $L'-X$ is Hamiltonian, as required.
\end{proof}

\begin{proof}[Proof of Theorem~\ref{thm:main2}]
Let $L$ and $r$ be given and let $p = |V(L)|$.  Let $G$ be a graph and $W$ an $R$-wall in $G$ where $R$ will be specified shortly.  Assume that $G$ does not contain an $L$ minor grasped by $W$ and thus $W$ does not grasp a $K_p$ minor as well.  We will apply Theorem \ref{thm:main3} with the above values of $r$ and $p$ above and $M$ and $s_2$ chosen below.  If the $\Sigma$-configuration $(W_1, W_0, c_0, c_1, \Delta_0, \zC_1, \zC_2, \zP_1, \zP_2, \gamma)$ returned by the theorem is in a surface $\Sigma$ in which $L$ cannot be embedded, then we have found the $\Sigma$-decomposition desired in the statement of the present theorem.  If instead, the graph $L$ can be drawn in $\Sigma$, we will explicitly find an $L$ minor in $\zC_2 \cup \gamma$ which is grasped by $W$ using the properties of the $\Sigma$-configuration, a contradiction.

For every surface $\Sigma$ of genus at most $p(p+1)$ with $h$ handles and $c$ cross-caps and $I \subseteq \{1, 2, \dots, c+h\}$ such that $L$ can be drawn in $\Sigma$, we define a value $n(\Sigma, c, h, I)$ as follows.  Fix a $D$-representation $\delta$ of $\Sigma$ of type $I$.  Fix $L'$ to be a graph satisfying the outcomes of Lemma \ref{lem:Hexists}, and let $n(\Sigma, c, h, I) = |V(L')|$.

Fix $N$ to be the maximum of $n(\Sigma, c, h, I)$ over all possible choices of $\Sigma, c, h, I$ where $\Sigma$ has genus at most $p(p+1)$ and such that $L$ can be drawn in $\Sigma$.

Let $s_2 = 3N$ and $M = N^3$.  Let $\nu = \left \lfloor \frac{1}{10^5p^{26}}p^{10^7p^{26}}N^3\right \rfloor $.  Assume that $R \ge 49152p^{24}(r+2s_2) + \nu$.  Let $\mu = 49152p^{24}2s_2 + \nu$ and note that $\mu$ depends only on the graph $L$.  Apply Theorem \ref{thm:main3} to $G$ and $W$ with parameters $r, p, s_2, M$.  As there is no $K_p$ minor grasped by $W$, the theorem implies that there exists $A \subseteq V(G)$, $|A| \le \nu$, a surface $\Sigma$ of genus at most $p(p+1)$ and a $\Sigma$-configuration $(W_1, W_0, \delta, c_0, c_1, \Delta_0, \zC_1, \zC_2, \zP_1, \zP_2, \gamma)$ with parameters $(r, M, s_2, M, M, c, h)$.

If the graph $L$ does not embed in $\Sigma$, then by adding a vortex-free rendition of the vortex society of $c_0$ in the disk and a rendition of the vortex society of $c_1$ of breadth at most $2p^2$ and depth at most $\nu$, we see that $G$ has a $\Sigma$-decomposition of breadth at most $2p^2\le \mu$ and depth at most $\nu\le \mu$.  Moreover, the $r$-wall $W_0$ is flat in the new $\Sigma$-decomposition of $G-A$, satisfying the conclusions of the theorem.

For the remainder of the proof, we may assume that $L$ can be drawn in the surface $\Sigma$ with $h$ handles and $c$ cross-caps.  By Lemma \ref{lem:Hexists}, there exists a D-representation $\delta' = (\Delta, I, (X_1, \dots, X_{c+h})$ with realization $\Sigma'$ and a graph $L'$ which contains $L$ as a minor such that $L'$ has a clean drawing in $\Sigma'$.  Let $X$ be the subgraph of $L'$ formed by the edges which use either a cross-cap or handle in the drawing.  Lemma \ref{lem:Hexists} implies that $X$ is a matching and furthermore that $L' - E(X)$ has a Hamiltonian cycle $C$.  Consider the graph plane graph $L' - E(X)$ given by the drawing of $L'-E(X)$ restricted to the disk $\Delta$.  The cycle $C$ defines a closed curve in $\Delta$ and partitions the remaining edges of $L'$ into two subgraphs: let $L_1$ be the subgraph consisting of edges contained in the disk bounded by $C$ and $L_2$ the subgraph formed edges not contained in the disk bounded by $C$.  Thus, $L'$ is the union of the pairwise edge disjoint subgraphs $X \cup C \cup L_1 \cup L_2$.  Let $n_i = |E(L_i)|$ for $i = 1, 2$.  Given the bound of $3n-6$ on the number of edges in a planar graph, we have that $n_1 \le n$ and $n_2 \le n$.

Fix $\zQ_1, \dots, \zQ_{c+h}$ to be the handle and crosscap transactions in $\gamma$.  Let $\zC_2 = (C_1, C_2, \dots, C_{4n})$ and $\zP_2 = P_1, P_2, \dots, P_t$ for some positive integer $t$ such that traversing the vertices $\widetilde{\Delta_0}$ defined in the $\Sigma$-configuration in the cyclic order around the boundary of $\Delta_0$, we encounter the endpoints of $P_1, P_2, \dots, P_t$ in that order.  Let $Y_{i,j}$ be the set of vertices of $V(P_i) \cap V(C_j)$

Fix $n = |V(L')|$; by the definition of $N$, we may assume that $n \le N$.  Let the vertices of $L'$ be labeled $1, 2, \dots, n$ and assume they occur on the cycle $C$ in that cyclic ordering.  Fix indices $a_1 \le a_2 \le \dots \le a_n$ with the following properties.
\begin{itemize}
\item $a_{i+1} - a_i \ge n$ and $t - a_n \ge n$, and
\item if there is an edge of $X$ connecting $i$ and $j$ through the crosscap or handle $X_{i'}$, there exists a path $Q_{i,j} \in \zQ_{i'}$ containing both $P_{a_i}$ and $P_{a_j}$.
\end{itemize}
Such indices exist by the choice of parameters $M$ and the properties of the $\Sigma$-configuration.

It is easy to see that the cycle $C_{n+1}$ along with the paths $\{Q_{i,j}: ij \in E(X)\}$ contains as a minor $X \cup C$.  We define $U^0_1, \dots, U^0_n$ to be the sets of vertices forming such a model of an $X \cup C$ minor, expanding the sets to ensure that the model is also grasped by $W$.  Essentially, to each branch set we will add an $n$-wall contained in $(C_{n+1}, \dots, C_{2n}) \cap \zP_2$.  Explicitly, for $1 \le i \le n$, let $U^0_i$ be a minimal subset of vertices containing:
\begin{itemize}
\item for $1 \le j \le n$ the vertices of a subpath of $C_{n+j}$ containing $Y_{a_i + l, n+j}$ for $0 \le l \le n-1$;
\item for $0 \le j \le n-1$ the vertices of a subpath of $P_{a_i + j}$ with one endpoint in $Y_{a_i + j, n + 1}$ and the other endpoint $Y_{a_i + j, 2n}$;
\item for $1 \le i \le n-1$ the vertices of a subpath of $C_{n+1}$ which contains $Y_{a_i, n+1}$ and a neighbor of $Y_{a_{i+1}, n+1}$ on $C_{n+1}$ but which is disjoint from $Y_{a_{l}, n+1}$ for $l \neq i$; for $i = n$ the vertices of a subpath of $C_{n+1}$ which contains $Y_{a_i, n+1}$ and a neighbor of $Y_{a_1, n+1}$ on $C_{n+1}$ but which is disjoint from $Y_{a_l, n+1}$ for $l \neq n$;
\item for $1 \le i \le n$, the vertices of a subpath of $P_{a_i}$ containing $Y_{a_{i}, n+1}$ and $Y_{a_i, 1}$;
\item for $1 \le i < j \le n$ such that $i$ is adjacent $j$ in $X$, the vertices of a subpath $Q_{i,j}$ which contains both $Y_{a_i, 1}$ and a neighbor of $Y_{a_j, 1}$ on $Q_{i,j}$ but which is disjoint from $Y_{a_j, 1}$.
\end{itemize}
By construction, $U^0_1, \dots, U^0_n$ form a model of $C \cup X$ grasped by $W$.

We define $U^i_1, \dots, U^i_n$ for $1 \le i \le n_1 + n_2$ forming branch sets of a minor adding one more edge of $L_1 \cup L_2$ each time we increment $i$.  Moreover, we do so maintaining the property that $U^i_j \subseteq U^{i+1}_j$ so that the end result is a $L'$ minor grasped by $W$, yielding a contradiction.  The process is the same for adding the edges of $L_1$ as for adding the edges of $L_2$; in the interest of brevity we only present the construction in detail for adding the edges of $L_1$.

Consider the graph $L_1$ which has a plane drawing in the subdisk of $\Delta$ bounded by the drawing of $C$.  Each edge of $L_1$ defines two subpaths of $C$ connecting it's endpoints.  By picking a minimal such subpath (by containment as a subgraph), we see from the plane embedding of $C \cup L_1$ that there exists an edge $e \in E(L_1)$ with the property that there exists a subpath $R$ of $C$ connecting the endpoints of $e$ such that $R$ is internally disjoint from all other edges of $L_1$.  By deleting the edge $e$ and reiterating the same argument, we conclude that we can enumerate the edges of $L_1$ as $e_1, e_2, \dots, e_{n_1}$ so that for all $i$, there exists a subpath $R_i$ of $C$ linking the endpoints of $e_i$ such that $R_i$ is internally disjoint from $e_{i+1}, e_{i+2}, \dots, e_{n_1}$.

Having defined $U_1^i, \dots, U_n^i$, we define $U_1^{i+1}, \dots, U_n^{i+1}$ as follows.  Let the endpoints of $e_{i+1}$ be $l$ and $l'$ with $l < l'$.  For $j$, $1 \le j \le n$, $j \neq l$ such that $j$ is incident an edge in $\{e_{i+1}, \dots, e_{n_1}\}$, define $U^{i+1}_j$ to be the union of $U^i_j$ along with the vertex set of a minimal subpath of $P_{a_j}$ containing $Y_{a_j,2n+i}$ and $Y_{a_j, 2n+i+1}$.  For $j$, $1 \le j \le n$ and $j \neq l$, let $U^{i+1}_j = U^i_j$.  Finally, the $U^{i+1}_l$ is defined to be the union of three sets: $U^i_l$, the vertex set of a minimal subpath of $P_{a_l}$ containing $Y_{a_l,2n+i}$ and $Y_{a_l, 2n+i+1}$, and the vertex set of a minimal subpath of $C_{2n+i}$ containing $Y_{a_l, 2n+1}$ and a neighbor of $Y_{a_{l'}, 2n+1}$ corresponding to the path $R_{i+1}$ linking $l$ and $l'$ in $C$.  The choice of the path $R_{i+1}$ ensures that the sets $U^{i+1}_1, \dots, U^{i+1}_n$ are pairwise disjoint.  Note as well that as $n_2 = |E(L_2)| \le n$ and as $s_2 = 3n$, the cycle $C_{i+1}$ in the definition of $U^{i+1}_j$ is always available.
We conclude that $U^{n_1}_1, \dots, U^{n_1}_n$ form a model of a $C \cup X \cup L_1$ minor which is grasped by $W$.

We now consider the subgraph $L_2$.  Let $Z$ be the subset of $V(L')$ which are incident to an edge of $X$.  The plane drawing of $L_2$ in $\Delta$ minus the interior of the disk bounded by the drawing of $C$ implies that there exists an edge $e \in E(L_2)$ and a path $R$ of $C$ such that neither an endpoint of an edge of $L_2$ nor a vertex of $Z$ is an interior vertex of $R$.  By deleting the edge $e$ and reiterating this argument, we can find an enumeration $e_{n_1+1}, \dots, e_{n_1 + n_2}$ of the edges of $L_2$ such that for every index $i$, $1 \le i \le n_2$, there exists a subpath of $C$ connecting the endpoints of $e_{n_1+i}$ which is internally disjoint from $Z$ and the edges $e_{n_1+i}, \dots, e_{n_1+n_2}$.  The same process as in the previous paragraph yields subsets $U^{n_1+n_2}_1, \dots, U^{n_1+n_2}_n$ forming a model of an $L'$ minor grasped by $W$, a contradiction and completing the proof of the theorem.
\end{proof}

Note that the proof above is not constructive.  Lemma \ref{lem:Hexists} does not yield a concrete bound on $|V(L')|$ and consequently, the proof of Theorem \ref{thm:main2} does not give an explicit bound on the parameters in the statement.  Robertson and Seymour's original proof of the excluded clique minor theorem had a similar existential argument on graphs embedded in surfaces in \cite{RS7}, and as a result, the original proof also does not yield an explicit bound on the parameters.  Recent work by Huynh, Geelen and Richter \cite{HGR} provides an explicit bound for this step.  The results of \cite{HGR} could also be used with our proof of Theorem \ref{thm:main2} to give explicit bounds.  We have chosen not to do so in the interests of simplifying the presentation and following our goal of giving a self-contained proof.

\section{The relationship between tangles and walls}\label{sec:tangles}

As we briefly discussed in the introduction, the original statement of Robertson and Seymour's excluded minor theorem gives the structure with respect to a tangle, whereas our Theorem \ref{thm:main} gives the structure in relation to a wall.  In this section, we give the necessary background on tangles and walls and discuss how we can go back and forth between the two.  Tangles, first introduced by Robertson and Seymour in \cite{RS10}, are now widely used in applications and are discussed in depth in graduate texts \cite{Diestel5}.  Lemmas \ref{lem:free}, \ref{lem:well-linked}, and \ref{lem:tw1} are well known facts on the properties of tangles and their relationships with tree decompositions.  We include the proofs here for completeness.

Let $G$ be a graph and $k \ge 1$ a positive integer.  A \emph{tangle of order $k$} is a set $\zT$ of ordered separations $(A, B)$ such that
\begin{itemize}
\item for every separation $(A, B)$ of $G$ of order strictly less than $k$, exactly one of $(A, B)$ and $(B, A)$ is in $\zT$, and
\item there do not exist $(A_1, B_1), (A_2, B_2), (A_3, B_3) \in \zT$ such that $G = \bigcup_1^3 G[A_i]$.
\end{itemize}
For a separation $(A, B) \in \zT$, we typically refer to $A$ as the \emph{small} half of the separation and $B$ as the \emph{large} half.  A tangle $\zT'$ in $G$ of order at most $k$ is a \emph{restriction} of $\zT$ if $\zT' \subseteq \zT$.

A canonical example of tangles is those defined by wall subgraphs.  Let $W$ be an $r$-wall subgraph in a graph $G$, and assume the horizontal and vertical paths are specified.  Let $\zT_W$ be the set of ordered separations $(A, B)$ of order at most $r-1$ such that $B \setminus A$ contains the vertex set of both a horizontal and vertical path of $W$.  Then $\zT_W$ is a tangle in $G$; we will refer to it as the tangle \emph{induced} by $W$.

A set $X \subseteq V(G)$ is \emph{free} in a tangle $\zT$ of order $k$ if for all separations $(A, B)$ of order at most $|X|-1$ with $X \subseteq A$, we have that $(B, A) \in \zT$.  Intuitively, no small order separation separates the set $X$ from the tangle.  We show that every tangle has a large free set.

\begin{LE}\label{lem:free}
Let $G$ be a graph and $\zT$ a tangle of order $k$ for some $k \ge 1$.  Then there exists as set $X \subseteq V(G)$ of order $k-1$ which is free in $\zT$.
\end{LE}
\begin{proof}
There exists a connected component $G'$ of $G$ such that $(V(G) \setminus V(G'), V(G')) \in \zT$ as otherwise every connected component defines the small side of a separation of order 0 and we get a contradiction to the definition of a tangle.  Thus, if we fix any elment $x \in V(G')$, the set $X =\{x\}$ is free.

Fix $X$ to be a free set of maximal order at least one and less than $k$, and assume that $|X| < k-1$.  Fix a separation $(A, B) \in \zT$ with $X \subseteq A$ of order $|X|$ chosen to minimize $|B|$.  Such a separation exists as the trivial separation $(X, V(G)) \in \zT$.  Fix a vertex $x \in B \setminus A$ and consider the set $X' = X \cup \{x\}$.  By the choice of $X$, the set $X'$ is not free.  Fix $(C,D)$ to be a separation of order less than $|X'|$ such that $X' \subseteq C$ and $(C,D) \in \zT$.  As $X$ is free, it follows that the order of $(C,D)$ is exactly $|X|$.

Consider the separation $(C \cap A, B \cup D)$.  We have that $(C \cap A, B \cup D) \in \zT$, as $G = G[A] \cup G[B \cup D]$.  As $X \subseteq A \cap C$, by the fact that $X$ is free we have that $(C \cap A, B \cup D)$ has order at least $|X|$.  It follows that the separation $(A \cup C, B \cap D)$ has order at most $|X|$.  As $A$ is a proper subset of $A \cup C$, by the choice of $(A, B)$ we have that $(B \cap D, A \cup C) \in \zT$.  But now $G = G[A] \cup G[C] \cup G[B \cap D]$, contradicting the properties of the tangle $\zT$.
\end{proof}

Free sets have useful properties; of interest here is that they are well-linked.  For a graph $G$ and $X \subseteq V(G)$, the set $X$ is \emph{well-linked} if for every partition $X_1, X_2$ of $X$, there does not exist a separation $(A, B)$ with $X_1 \subseteq A$, $X_2 \subseteq B$ and $|A \cap B| < min(|X_1|, |X_2|)$.

\begin{LE}\label{lem:well-linked}
Let $G$ be a graph, $\zT$ be a tangle in $G$ of order $k\ge 1$, and let $X\subseteq V(G)$, $|X| < k$, be free.  Then $X$ is well-linked.
\end{LE}

\begin{proof}
Fix a partition $X_1, X_2$ of $X$, and assume that there exists a separation $(A, B)$ of order $k$ with $X_1 \subseteq A$, $X_2 \subseteq B$ and $k < min(|X_1|, |X_2|)$.  Without loss of generality, we may assume that $(A, B) \in \zT$.  The separation $(A \cup X_2, B)$ has order at most $|X|< k$ and thus either $(A \cup X_2, B)$ or $(B, A \cup X_2)$ is in $\zT$.  It cannot be the case that $(B, A \cup X_2) \in \zT$ lest $G=G[A] \cup G[B]$ and the tangle conditions not hold.  Thus $(A \cup X_2, B) \in \zT$.  However, $|(A \cup X_2) \cap B| = |X_2| + |(A \cap B) \setminus X_2| < |X|$, contradicting the fact that $X$ is free and proving the claim.
\end{proof}

We will see that well-linked sets are a certificate that a graph has large tree-width.  To do so, we will need the definition of tree decompositions.
\begin{DEF}
Let $G$ be a graph.  A \emph{tree decomposition} of $G$ is a pair $(T, \zX)$ such that $T$ is a tree and $\zX = \{ X_v \subseteq V(G): v \in V(T)\}$ with the properties that
\begin{itemize}
\item for every edge $xy \in E(G)$, there exists $v \in V(T)$ such that $x,y \in X_v$, and
\item for every vertex $x \in V(G)$, the set of vertices $\{v \in V(T): x \in X_v\}$ induces a connected subtree of $T$.
\end{itemize}
The \emph{width} of the decomposition is $min_{v \in V(T)} |X_v| -1$.  The \emph{tree-width} of $G$ is the minimum width of a tree decomposition of $G$.
\end{DEF}

\begin{LE}\label{lem:tw1}
Let $G$ be a graph, $k \ge 1$ a positive integer, and $X \subseteq V(G)$ a well-linked set of order at least $3k$.  Then the tree-width of $G$ is at least $k$.
\end{LE}

\begin{proof}
Assume the claim is false and let $(T,\zX)$ be a tree decomposition of $G$ of width at most $k-1$.  We may assume that for all $v_1,v_2 \in V(T)$, $X_{v_1} \neq X_{v_2}$.  For every edge $v_1v_2 \in E(T)$, let $T_1$ and $T_2$ be the two trees of $T - v_1v_2$ such that $v_i \in V(T_i)$.  The trees $T_1$ and $T_2$ define a separation $\left ( \bigcup_{u \in V(T_1)} X_u, \bigcup_{u \in V(T_2)} X_u\right)$ of order at most $k-1$.  As $X$ is well-linked, at most one half of this separation has at least $k$ vertices of $X$.  Thus, we can orient the edge $v_1v_2$ in $T$ towards the endpoint $v_i$ for which $\bigcup_{u \in V(T_i)} X_u$ contains at least $2k$ vertices of $X$.

By this process, we orient every edge of the tree $T$ and fix $v \in V(T)$ to be a sink in the oriented graph with all incident edges oriented towards $v$.  Let $v_1, \dots, v_l$ be the neighbors of $v$ in $T$ and let $T_i$ be the subtree of $T - vv_i$ containing $v_i$.  The vertex set $\bigcup_{u \in V(T_i)} X_u$ contains at most $k-1$ vertices in $X$.  As $X_v$ has at most $k$ vertices of $X$, there exists a subset $I \subseteq \{1,\dots, l\}$ such that $Y = \bigcup_{i \in I}\bigcup_{u \in V(T_i)} X_u$ contains at least $k+1$ and at most $2k-1$ vertices of $X$.  If we let $W = \bigcup_{i \notin I} \bigcup_{u \in V(T_i)} X_u$, then the separation $(Y \cup X_v, W \cup X_v)$ contradicts the fact that $X$ is well-linked.
\end{proof}

As a consequence of these results, the presence of a large order tangle $\zT$ is a certificate that the graph has large treewidth.  We conclude, by Theorem \ref{thm:grid}, that the graph also has a large wall subgraph.  Such a wall itself induces a large tangle, and it can be shown that the tangle induced by the wall is a restriction of $\zT$.  The version of Theorem \ref{thm:grid} from \cite{RST} does just this, and this is a property we will require in the proofs to come.  However, neither the versions of Theorem \ref{thm:grid} from \cite{Diestel} nor the recent polynomial version of \cite{CC} maintain this stronger property.

The remainder of this section will be devoted to showing that by  applying any version of Theorem \ref{thm:grid} to an appropriately chosen subgraph, we can find a wall such that the tangle induced by the wall is a restriction of the larger, original tangle.  This will allow us to apply any variant of Theorem \ref{thm:grid} in the coming proofs.  We will require the following lemma which is closely related to a similar statement for edge disjoint paths linking two sets of vertices in \cite{ChuImproved}.

\begin{LE}\label{lem:linked_linkedsets}
Let $X$ and $Y$ be well-linked sets in a graph $G$.  Let $k\ge 1$ be a positive integer and assume that $|X| \ge k$, $|Y| \ge 3k$.  If there do not exist $k$ pairwise disjoint $X-Y$ paths, then there exists an edge $e \in E(G)$ such that $X$ is well-linked in $G-e$.
\end{LE}

\begin{proof}
Assume $k$ such $X-Y$ paths do not exist.  Fix $(A, B)$ to be a separation of order $k' < k$ with $X \subseteq A$ and $B$ has at least $k$ vertices of $Y$, and from all such separations, pick $(A, B)$ to minimize $|B|$.  Such a separation exists because there is a separation of order strictly less than $k$ separating $X$ and $Y$.  Note that by the well-linked property of $Y$, the set $B$ must have at least $2k+1$ vertices of $Y$.

Fix an edge $e= xy$ of $G[B] - A$.  Note that such an edge exists as otherwise every vertex of $(Y \cap B) \setminus A$ has every neighbor in $A \cap B$ and there exists a separation violating the well-linkedness of $Y$.

By assumption, $X$ is not well-linked in $G-e$.  There exists a separation $(C,D)$ of $G-e$ of order $l$ and a partition $X_1, X_2$ of $X$ with $X_1 \subseteq C$, $X_2 \subseteq D$, and $|X_i| > l$ for $i = 1,2$.  We may assume that $x \in C \setminus D$ and $y \in D \setminus C$ since otherwise $(C,D)$ is a separation of $G$ violating the well-linkedness of $X$.  Note as well that $x$ has at least one neighbor in $C \setminus D$ as otherwise again we get a separation violating the well-linkedness of $X$ in $G$.  Similarly, $y$ has at least one neighbor in $D \setminus C$.

The separation $(A \cap C, B \cup D)$ separates $X_1$ and $X_2$ in $G$ and therefore the order is at least $l+1$.  Thus, the separation $(A \cup C, B \cap D)$ of $G-e$ has order at most $k'-1$.  The pair $(A \cup C, (B \cap D) \cup \{x\})$ is a separation of $G$ of order at most $k' < k$.  As $x$ has a neighbor in $C \setminus D$, we have that $(B \cap D) \cup \{x\}$ is a proper subset of $B$.  Thus, by our choice of $(A, B)$, we have that $(B \cap D) \cup \{x\}$ has at most $k-1$ vertices of $Y$.  Symmetrically, considering the separation $(A \cup D, (B \cap C) \cup \{y\})$, we see that $(B \cap C) \cup \{y\}$ has at most $k-1$ vertices of $Y$ and thus $B$ has at most $2k-2$ vertices of $Y$.  As $|Y| \ge 3k$, we see that $A$ has at least $k+2$ vertices of $Y$.  The separation $(A, B)$ contradicts the well-linkedness of $Y$, completing the proof.
\end{proof}

We are now ready to present the main result of this section using Lemmas \ref{lem:free}, \ref{lem:well-linked}, and \ref{lem:tw1}. 

\begin{LE}\label{lem:wallplus}
Let $f$ be a function such that for every positive integer $r$, every graph with tree-width at least $f(r)$ contains an $r$-wall as a subgraph.  Let $k \ge 1$ be an integer.  Let $G$ be a graph and $\zT$ a tangle of order at least $3f(6k^2) + 1$.  Then there exists a $k$-wall $W$ in $G$ such that $\zT_W$ is a restriction of $\zT$.
\end{LE}

\begin{proof}
Let $\zT$ have order $3f(6k^2) + 1$ and fix $X$ to be a free set of order $3f(6k^2)$.  The set $X$ exists by Lemma \ref{lem:free} and is well-linked by Lemma \ref{lem:well-linked}.  Fix a minimal (by subgraph containment) subgraph $G'$ of $G$ such that $X$ is well-linked in $G'$.

By Lemma \ref{lem:tw1}, $G'$ has tree-width at least $f(6k^2)$ and by assumption, there exists a $6k^2$-wall subgraph $W$.  Fix the horizontal and vertical paths of $W$, and fix $P_1$ to be the first horizontal path in $W$.  Let $Y$ be the endpoints of the vertical paths in $V(P_1)$.  The set $Y$ is well-linked in $G'$ and has order $6k^2$.  By Lemma \ref{lem:linked_linkedsets} and the minimality of $G'$, there exists a linkage of order $2k^2$ from $X$ to $Y$.  Fix $Y'$ to be the endpoints of such a linkage.

Let $Q_1, \dots, Q_{6k^2}$ be the vertical paths of $W$.  Fix pairwise disjoint subpaths $R_1, \dots, R_{k}$ of $P_1$ such that the endpoints of $R_i$ are $s_i$ and $t_i$ with the following properties.
\begin{itemize}
\item The vertex $s_1$ is the endpoint of $Q_1$ and $t_{k}$ is an endpoint of $Q_{6k^2}$.
\item For $2 \le i \le k$, $s_i$ is an endpoint of a vertical path of $W$ and is adjacent on $P_1$ to $t_{i-1}$.
\item Each $R_i$ contains $2k$ vertices of $Y'$.
\end{itemize}

Let $W'$ be the $k$-wall defined by the vertical paths of $W$ with an endpoint in $\{s_1, s_2, \dots, s_k\}$, the horizontal path $P_1$ and another arbitrarily chosen $k-1$ horizontal paths of $W$.

We claim $W'$ is the wall desired by the lemma.  Fix a separation $(A, B)$ of order at most $k-1$ in $G$.  As $X$ is well-linked and has more than $3k$ vertices, one of $A \setminus B$ and $B \setminus A$ has at most $k$ vertices of $X$.  Assume $|(A \setminus B) \cap X| \le k$.  By the fact that $X$ is free, $(A, B \cup X) \in \zT$.  It follows that $(A, B) \in \zT$ as well since $G[A] \cup G[B] = G$.

Let the vertical paths of $W'$ be $Q_1', \dots, Q_{k}'$ so that the path $Q_i'$ has a common endpoint with $R_i$.  Thus we have $k$ pairwise disjoint paths $Q_i' \cup R_i$ for $1 \le i \le k$.  Fix an index $i$ such that $Q_i' \cup R_i$ is disjoint from $A \cap B$.  The path $R_i$ contains $2k$ vertices of $Y'$.  As there exist $2k$ disjoint paths from $X$ to $R$, at most $k-1$ of those paths intersect $A \cap B$, and $A \setminus B$ contains at most $k$ vertices of $X$, we conclude that $Q_i' \cup R_i$ is contained in $B \setminus A$.  Thus $(A, B) \in \zT_{W'}$.  As this holds for an arbitrary separation of order at most $k-1$, we conclude that $\zT_{W'}$ is a restriction of $\zT$, as desired.
\end{proof}


\section{Deriving the Robertson Seymour structure theorem}\label{sec:RSproof}

In this section, we give a proof of the original Robertson and Seymour structure theorem in terms of tangles.

We begin with the necessary notation to state the Robertson and Seymour Theorem 3.1 of \cite{RS16}.  Let $G$ and $H$ be graphs and let $|V(H)| = p$.  Let $\zT$ be a tangle in $G$.  A model of an $H$-minor is \emph{controlled} by $\zT$ if for every separation $(A, B) \in \zT$ of order at most $p-1$, there does not exist a branch set of the model contained in $A \setminus B$.  Note that if $\zT$ is equal to $\zT_W$ for a wall-subgraph $W$ of $G$ and if the $H$-minor model is grasped by $W$, then it is also controlled by $\zT_W$.  For a subset $Z$ of vertices, we denote by $\zT - Z$ the tangle in $G-Z$ formed by separations $\{(A, B): \text{ $(A, B)$ a separation in $G-Z$ of order at most $k-|Z| - 1$ and }(A \cup Z, B \cup Z) \in \zT\}$.  Given a surface $\Sigma$ and a $\Sigma$-decomposition $\delta$ of $G$, we say that the decomposition is \emph{$\zT$-central} if for every separation $(A, B) \in \zT$, there does not exist a cell $c \in C(\delta)$ such $B \subseteq V(\sigma_\delta(c))$.

We now state Robertson and Seymour's structure theorem in terms of our notation.

\begin{theorem}[Theorem 3.1 \cite{RS16}] \label{thm:RS_struct}
For all graphs $H$ there exist values $\alpha = \alpha(H)$ and $\Theta$ which satisfy the following.  Let $G$ and $H$ be a graph and $\zT$ a tangle in $G$ of order $\Theta$.  If $G$ does not have a model of $H$ as a minor controlled by $\zT$, then there exists a set $Z \subseteq V(G)$ of at most $\alpha(H)$ vertices, a surface $\Sigma$ in which $H$ cannot be drawn, and a $\zT-Z$-central $\Sigma$-decomposition of $G-Z$ of breadth and depth at most $\alpha$.
\end{theorem}

Before proceeding with the proof, we need a few additional results.  Let $G$ be a graph and let $\zT$ be a tangle of order $k \ge 1$ in $G$.  Let $S \subseteq V(G)$ be a subset of at most $k-1$ vertices.  There exists a unique subset $Y \subseteq V(G)$ such that $Y$ induces a connected component of $G-S$ and the separation $(V(G) \setminus Y, Y \cup S) \in \zT$.  We refer to the set $Y$ as the \emph{$S$-flap for $\zT$}.  Note that by definition, the $S$-flap induces a connected subgraph of $G$.

\begin{LE}\label{lem:flaps}
Let $G$ be a graph, $\zT$ a tangle in $G$ of order $k$, and $S$ and $S'$ disjoint subsets of $V(G)$ each of size at most $k-1$.  Let $Y$ and $Y'$ be the $S$ and $S'$-flaps of $\zT$, respectively.  Then either $Y \cap Y' \neq \emptyset$ or there exists an edge with one endpoint in $Y$ and one endpoint in $Y'$.
\end{LE}

\begin{proof}
As each of the separations $(V(G) \setminus Y, Y \cup S)$ and $(V(G) \setminus Y', S' \cup Y')$ is in $\zT$, by the definition of a tangle, there must be an edge of $G$ or a vertex not contained in either $G-Y$ or in $G-Y'$.  If there were such a vertex, then $Y \cap Y' \neq \emptyset$ and the claim holds.  Assume $e$ is an edge in neither $G-Y$ nor $G-Y'$, then at least one end of $e$ is in $Y$ and one end of $e$ is in $Y'$.  Let $u$ be the endpoint of $e$ in $Y$; we may assume $u$ is not in $Y'$ and therefore the other endpoint $v$ of $e$ is in $Y'$.  Thus, $e$ has one end in $Y$ and one end in $Y'$, completing the proof.
\end{proof}

The proof of the next lemma closely follows the ideas of the proof  of Theorem \ref{thm:main} with $S$-flaps playing a similar role to that of the crosses formed by horizontal and vertical paths.

\begin{LE}\label{lem:decomp4tangles}
Let $G$ be a graph, and let $k, l, r \ge 0$ such that $r \ge k(2l+1) + 6$.  Let $W$ be an $r$-wall in $G$.  Let $\zT$ be a tangle of order at least $k(2l+1)+6$ such that $\zT_W$ is a restriction of $\zT$.   Assume that there exists a $\Sigma$-decomposition $\delta$ of $G$ in a surface $\Sigma$ of breadth $k$ and depth $l$ such that $W$ is flat in $\delta$.  Then there exists $Z \subseteq V(G)$, $|Z| \le k(2l+1)$ and a decomposition of $G-Z$ in $\Sigma$ of breadth at most $k$ and depth at most $l$ which is $\zT-Z$ central.
\end{LE}

\begin{proof}
Let $\delta = (\Gamma, \zD)$.  Let $\zY$ be the set of $S$-flaps for all sets $S \subseteq V(G)$ of order less than the order of $\zT$.  Let $c_1, \dots, c_k$ be the vortices of $\delta$.  Let $(G_i, \Omega_i)$ be the vortex society of $c_i$.

\begin{claim}
For every $1 \le i \le k$, there exists a set $Z_i \subseteq V(G_i)$, $|Z_i| \le 2l+1$ such that $V(G_i) \setminus Z_i$ does not contain $Y$ for all $Y \in \zY$.
\end{claim}

\begin{cproof}
Fix the index $i$ and a linear decomposition $(X_1, \dots, X_n)$ of $(G_i, \Omega_i)$ with the vertices $V(\Omega_i)$ labeled $v_1, \dots, v_n$.  As each $Y \in \zY$ induces a connected graph, for every $Y \in \zY$ such that $Y \subseteq V(G_i)$, the set $\{j: X_j \cap Y \neq \emptyset\}$ is an interval of $\{1, 2, \dots, n\}$.  Fix $J$ to be the minimum index such that there exists $Y \in \zY$ such that $Y \subseteq V(G_i)$ and $\{j: X_j \cap Y \neq \emptyset\} \subseteq \{1, 2, \dots, J\}$.  Let $Z_i = (X_J \cap X_{J-1}) \cup (X_J \cap X_{J+1}) \cup \{v_J\}$.  Fix $Y \in \zY$ such that $Y \subseteq X_1 \cup \dots \cup X_J$.

Let $Y' \in \zY$ be such that $Y' \subseteq V(G_i)$ and assume, to reach a contradiction, that $Y' \cap Z_i = \emptyset$.  We first observe that $Y'$ must intersect a vertex of $X_j$ for some $j \le J$.  This trivially holds if $Y \cap Y' \neq \emptyset$.  If $Y \cap Y' = \emptyset$, then by Lemma \ref{lem:flaps} there exist vertices $y \in Y$, $y' \in Y'$ such that $yy' \in E(G)$.  As every edge is contained in $X_j$ for some $j$, it again follows that there exists $j$ such that $X_j \cap Y' \neq \emptyset$.  By the choice of $J$, it must be the case that $j \le J$.  This proves that $Y'$ intersects $X_j$ for some $j \le J$; by the minimality of $J$, it must hold that $X_J \cap Y' \neq \emptyset$ as well.

The set $Z_i$ separates $X_J \setminus Z_i$ from the rest of $G$.  It follows that $Y' \subseteq X_J \setminus Z_i$.  Given that $W$ is disjoint from $V(G_i) \setminus V(\Omega_i)$, we have that $(V(G) \setminus (X_J \setminus Z_i), X_J)$ is a separation of $G$ separating $W$ and $Y'$.

As $W$ has order at least $|Z_i| + 1$, the separation $(X_J, V(G) \setminus (X_J \setminus Z_i)) \in \zT_W \subseteq \zT$.

The set $Y'$ is a flap, and thus there exists a separation $(A, B) \in \zT$ with $Y' = B \setminus A$.  The set $Y' \subseteq X_J \setminus Z_i$ implies that $B$ must be a subset of $X_j$.  It now follows that $G = G[A] \cup G[X_j]$, contradicting the properties of the tangle $\zT$ and completing the proof of the claim.
\end{cproof}

Let $Z = \bigcup_1^k Z_i$.  The $\Sigma$-decomposition $\delta$ defines a $\Sigma$-decomposition $\delta' = (\Gamma', \zD')$ where $\Gamma'$ is obtained by restricting $\Gamma$ to $G-Z$ and for every $D' \in \zD'$, there is a unique element $D \in \zD$ such that $D' \subseteq D$ and the boundary of $D'$ intersects the boundary of $D$ exactly in the vertices of $V(G) \setminus Z$ contained in the boundary of $D$.  Note that every vertex of $V(W) \cap Z$ must be contained in the boundary cycle of $W$.  Thus, by deleting the boundary cycle of $W$, we see that there is a wall $W'$ which is a subwall of $W$ of order $k(2l + 1)+4$ which is flat in $\delta'$.

We claim that $\delta'$ is the desired $\zT-Z$ central $\Sigma$-decomposition.  If not, there exists a separation $(A, B) \in \zT - Z$ of $G-Z$ such that $B \subseteq V(\sigma_{\delta'}(c))$ for some cell $c \in C(\delta')$.  As $(A \cup Z, B \cup Z )\in \zT$, there exists an $(A \cap B) \cup Z$-flap $Y$ of $\zT$ contained in $B \setminus A$.  By the choice of $Z$ to intersect every flap of $\zT$ in the vortices, it follows that $c$ is not a vortex of $\delta'$.

Given that $c$ is not a vortex, there exists a separation $(A', B')$ of $G-Z$ of order at most three with $B \subseteq B'$ and $B' = V(\sigma_{\delta'}(c))$.  The wall $W'$ has at least $k(2l+1) + 4$ horizontal and vertical paths and so, the separation $(B', A')$ is in the tangle $\zT_{W'}$ of $G-Z$ and every horizontal and vertical path of $W'$ disjoint from $A' \cap B'$ is contained in $A' \setminus B'$.  Consider now the separation $(A' \cup Z, B' \cup Z)$ of $G$.  Each horizontal and vertical path of $W'$ is contained in a horizontal and vertical path, respectively, of the original wall $W$.  By the bound on the size of $Z$, we see that there is a horizontal and vertical path of $W$, each containing a horizontal and vertical path of $W'$, which are disjoint from $(A' \cap B') \cup Z$.  Thus, there is a horizontal and vertical path of $W$ as well contained in $A' \setminus B'$ and thus $(B' \cup Z, A' \cup Z) \in \zT_W \subseteq \zT$.

We conclude that $(B', A') \in \zT - Z$.  As $B \subseteq B'$, we have that $G-Z = G[A] \cup G[B']$, contradicting the definition of the tangle $\zT-Z$ and completing the proof.
\end{proof}

The final ingredient is the relationship between the tree-width of a graph and the presence of a large wall subgraph.  As we are not concerned here with the exact bounds on the parameters, we can apply the version of Theorem \ref{thm:grid} from \cite{Diestel}.

\begin{theorem}[Theorem 12.4.4 \cite{Diestel}]\label{thm:grid_easy}
For every integer $r$, there is an integer $k$ such that every graph with tree-width at least $k$ contains the $r$-wall as a subgraph.
\end{theorem}

\begin{proof}[Proof. Theorem \ref{thm:RS_struct}]
By Theorem \ref{thm:grid_easy} and Lemma \ref{lem:wallplus}, for every $R$ there exists a $\theta$ such that if $\zT$ is a tangle in $G$ of order $\Theta$, there exists an $R$-wall $W$ in $G$ with $\zT_W$ a restriction of $\zT$.  We will fix the value $R$ in the next paragraph.

Assume that there is no $H$-minor controlled by $\zT$.  Thus, the wall $W$ does not grasp an $H$ minor as well.  Let $\mu(H)$ be the value given by Theorem \ref{thm:main2}, let
$r = \mu(2\mu + 2) + 6$ and let $R = 49152|V(H)|^{24}r + \mu$.  By Theorem \ref{thm:main2}, there exists a set $A$ of at most $\mu$ vertices, a surface $\Sigma$ in which $H$ cannot be drawn, a $\Sigma$-decomposition $\delta$ of $G-A$ of depth and breadth at most $\mu$, and an $r$-subwall $W'$ of $W$ which is disjoint from $A$ and flat in $\delta$.

Let $r' = \mu(2\mu + 1) + 6$ and define an $r'$-subwall $W''$ of $W'$ by taking the first and last horizontal and vertical paths of $W'$ and another arbitrarily chosen $\mu (2\mu + 1) + 4$ horizontal and vertical paths of $W'$.  Note that $W''$ is a subwall of $W$ and is flat in $\delta$.  Moreover, there exist at least $r' + \mu$ distinct vertical paths of $W$ which intersect every horizontal path of $W''$.  The analogous statement holds for the horizontal paths of $W$ intersecting every vertical path of $W''$.

\begin{claim}
The tangle $\zT_{W''}$ of $G-A$ is a restriction of $\zT - Z$.
\end{claim}

\begin{cproof}
Fix a separation $(X, Y)\in \zT_{W''}$ of order at most $r' -1$.  By the construction of $W''$, there are at least $r' + \mu$ distinct vertical paths of $W$ which intersect a vertex of $Y \setminus X$.  If we consider the separation $(X \cup A, Y \cup A)$ of $G$ of order at most $\mu + r' - 1$, there are at least $r' + \mu$ vertical paths of $W$ which intersect a vertex of $(Y \cup A) \setminus (X \cup A)$.  Thus, there is a vertical path of $W$ contained in $(Y \cup A) \setminus (X \cup A)$, implying that $(X\cup A, Y \cup A) \in \zT_W \subseteq \zT$.  Thus, $(X, Y) \in \zT - A$, as desired.
\end{cproof}

We now apply Lemma \ref{lem:decomp4tangles} to $G-A$ with the tangle $\zT - A$, wall $W''$, paramters $k = \mu, l = \mu, r'$ and the $\Sigma$-decomposition $\delta$.  We conclude that there exists $Z \subseteq V(G) \setminus A$ with $|Z| \le \mu(2\mu + 1)$ such that $(G-A)-Z$ has a $\Sigma$-decomposition of width and breadth at most $\mu$ which is $(\zT - A) - Z = \zT - (A \cup Z)$-central.  The theorem now holds with $\alpha = \mu(2\mu + 2)$.
\end{proof}


\section{The global excluded minor decomposition}\label{sec:global}

As we noted in the introduction, a global decomposition theorem for graphs not containing a fixed $H$ as a minor can be quickly proven from Theorem \ref{thm:RS_struct}.  As the original structure theorem of Robertson and Seymour does not include explicit bounds on the parameters, no known bounds previously exist for the global structure theorem as well.

In this section, we use the best known bounds for Theorem \ref{thm:grid}, due to Chuzhoy and Zihan, to give a version of the global decomposition theorem with explicit bounds.

\begin{theorem}[Theorem 1.1 \cite{CZ}]\label{thm:polygrid}
There exist constants $c_1$ and $c_2$ such that for all $r \ge 2$ and every graph $G$ of tree-width at least $c_1r^9 log^{c_2} r$ contains an $r$-wall.
\end{theorem}

The next theorem is the analog of Theorem \ref{thm:RS_struct} obtained by using Theorem \ref{thm:polygrid} and Theorem \ref{thm:main} to obtain the best bounds possible.  The proof is essentially the same as the proof of Theorem \ref{thm:RS_struct}.  We include it for completeness.
\begin{theorem}\label{thm:RS_struct_exact}
There exists an absolute constant $c$ which satisfies the following.  Let $p \ge 1$ be a positive integer.  Let $\zT$ be a tangle in $G$ of order $\Theta$ with
$$\Theta \ge p^{18 \cdot 10^7p^{26} + c}.$$
Then either $\zT$ controls a $K_p$ minor or there exists $Z \subseteq V(G)$, $|Z| \le 5p^2\cdot p^{10^7p^{26}}$, a surface $\Sigma$ of Euler genus at most $p(p+1)$, and a $\Sigma$-decomposition of $G-Z$ of breadth at most $2p^2$ and depth at most $p^{10^7p^{26}}$ which is $\zT - Z$ central.
\end{theorem}

\begin{proof}

The theorem trivially holds if $p \le 2$.  We first fix the following values.
\begin{align*}
R &= 49152p^{24} r_1 + p^{10^7p^{26}} \\
r_1 &= (2p^2)(2p^{10^7p^{26}} + 1) + 6 + p^{10^7p^{26}}\\
\end{align*}
Let $c_1$ and $c_2$ be the constants in Theorem \ref{thm:polygrid}.  By appropriately choosing $c$, we have that $\Theta \ge 3c_1(6R^2)^9log^{c_2}(6R^2)+ 1$.  Thus, by Theorem \ref{thm:polygrid} and Lemma \ref{lem:wallplus}, there exists an $R$-wall $W$ in $G$ such that $\zT_W$ is a restriction of $\zT$.

Assume that $\zT$ does not control a $K_p$ minor.  This implies that the wall $W$ does not grasp a $K_p$ minor as well.  By Theorem \ref{thm:main} and the choice of $R$, there exists an $r_1$-subwall $W_1$, a surface $\Sigma$ of Euler genus at most $p(p+1)$, a subset $A \subseteq V(G)$ with $|A| \le p^{10^7p^{26}}$ disjoint from $W_1$, and a $W$-central $\Sigma$-decomposition $\delta$ of breadth at most $2p^2$ and depth at most $p^{10^7p^{26}}$ such that $W_1$ is flat in $\delta$.

As in the proof of Theorem \ref{thm:RS_struct}, we fix a subwall $W_2$ of $W_1$ containing the first and last horizontal and vertical path and another arbitrarily chosen $(2p^2)(2p^{10^7p^{26}} + 1) + 4$ horizontal and vertical subpaths of $W_1$.  Again, as in the proof of Theorem \ref{thm:RS_struct}, the tangle $\zT_{W_2}$ defined by $W_2$ in $G-A$ is a restriction of $\zT - A$.

Apply Lemma \ref{lem:decomp4tangles} to $G-A$, tangle $\zT - A$, wall $W_2$, $\Sigma$-decomposition $\delta$ and parameters $k = 2p^2$, $l = p^{10^7p^{26}}$, and $r= 2p^2(2p^{20^7p^{26}} + 1) + 6$.  This yields a subset $A' \subseteq V(G) \setminus A$ with $|A'| \le 2p^2(2\cdot p^{10^7p^{26}} + 1)$ and a $\Sigma$-decomposition $\delta'$ of breadth at most $2p^2$ and depth at most $p^{10^7p^{26}}$ which is $(\zT - A) - A' = \zT - (A \cup A')$ central.  Setting $Z = A \cup A'$ completes the proof of the theorem.

\end{proof}

In order to state the global structure theorem, we will require some additional notation.  We have already defined tree decompositions in Section \ref{sec:tangles}.  Let $G$ be a graph and $(T, \zX)$ a tree decomposition of $G$ with $\zX = \{X_v: v \in V(T)\}$.  The \emph{adhesion} of $(T, \zX)$ is $max_{uv \in E(T)} |X_u \cap X_v|$.  For a vertex $v$ of $T$, \emph{torso of $G$ at $v$} is the graph obtained from $G[X_v]$ by adding an edge to every non-adjacent pair of vertices $x,y \in X_v$ for which there exists a $u$ adjacent in $T$ to $v$ such that $\{x,y\} \subseteq X_v \cap X_u$.
Let $(G, \Omega)$ be a vortex, and consider a linear decomposition given by a labeling $v_1, v_2, \dots, v_t$ of $V(\Omega)$ and sets $(X_1, X_2, \dots, X_t)$.  The \emph{width} of the linear decomposition is $max_{1 \le i \le t} |X_i|$.  An \emph{$\alpha$-near embedding} of a graph $G$ in a surface $\Sigma$ of \emph{width} $k$ and \emph{breadth} $t$ is a pair $(\delta, A)$ such that $A \subseteq V(G)$, $|A| \le \alpha$ and $\delta$ is $\Sigma$-decomposition of $G-A$ of breadth $t$ such that
\begin{itemize}
\item for every $c \in C(\delta)$ such that $c$ is not a vortex, $V(\sigma(c))$ induces a clique in $G$ and $V(\sigma(c)) \subseteq \pi(N(\delta))$, and
\item for every $c \in C(\delta)$ which is a vortex, there exists a linear decomposition of the vortex society of $c$ of width at most $k$.
\end{itemize}

We now state the global decomposition theorem.

\begin{theorem} \label{thm:global}
Let $c$ be the constant in Theorem \ref{thm:RS_struct_exact}.  Let $p \ge 1$ be a positive integer and let $G$ be a graph which does not contain $K_p$ as minor.  Let $\alpha = p^{18\cdot 10^7p^{26}+c}.$
Then $G$ has a tree decomposition $(T, \zX)$ of adhesion at most $4\alpha$ such that for all $v \in V(T)$, if $G'$ is the torso of $G$ at $v$ then $G'$ has an $\alpha$-near embedding of breadth at most $2p^2$ and width at most $\alpha$ in a surface of Euler genus at most $p(p+1)$.
\end{theorem}

We follow the proof of Theorem 4 of \cite{DKMW} and actually prove a slightly stronger statement to facilitate the induction.  Theorem \ref{thm:global} immediately follows from the following.

\begin{theorem}
Let $c$ be the constant in Theorem \ref{thm:RS_struct_exact}.  Let $p \ge 1$ be a positive integer and let $G$ be a graph which does not contain $G$ as a minor.  Let $\alpha = p^{18\cdot 10^7p^{26}+c}.$
Let $Z \subseteq V(G)$ such that $|Z| \le 3\alpha$.
Then $G$ has a tree decomposition $(T, \zX)$ with a root $r \in V(T)$ of adhesion at most $\alpha$ such that for every $v \in V(T)$, there exists a surface $\Sigma_v$ of Euler genus at most $p(p+1)$ and the torso $G_v$ of $G$ at $v$ has a $4\alpha$-near embedding $(\delta_v, A_v)$ of width at most $\alpha$ and breadth at most $2p^2$.  Moreover, at the root $r$, the set $Z \subseteq A_r$.
\end{theorem}

\begin{proof}
Assume the theorem is false.  Fix $G$, $p$ and $Z$ to form a counterexample which minimizes $|V(G)| + |V(G) \setminus Z|$.  As the theorem trivially holds for $p \le 3$, we have that $p \ge 4$.  We observe as well that $|V(G)| > 4\alpha$ as otherwise the trivial tree decomposition on a tree with a single vertex $r$ and $V(G) = X_r$ and the trivial decomposition with $A_r = V(G)$ would satisfy the theorem.  It follows then by minimality that $|Z| = 3\alpha$ as otherwise, we could add an arbitrary vertex to $Z$ and contradict our choice of counterexample.

\begin{claim}\label{cl:global1}
For all separations $(U_1, U_2)$ of order at most $\alpha$, $|U_i \cap Z| \le |U_1 \cap U_2|$ for exactly one of  $i = 1$ or $i = 2$.
\end{claim}

\begin{cproof}
Fix a separation $(U_1, U_2)$ of order at most $\alpha$.  By the size of $Z$ and the order of $(U_1, U_2)$, it cannot hold that $|U_i \cap Z| \le |U_1 \cap U_2|$ for both $i = 1$ and $i = 2$.

Assume that $|U_i \cap Z| > |U_1 \cap U_2|$ for both $i = 1,2$. Note that $(U_1, U_2)$ must be a non-trivial separation.  Let $Z_i = (Z \cap U_i) \cup (U_1 \cap U_2)$.  By minimality, the theorem holds for the set $G[U_i]$ with the set $Z_i$.  Thus, for $i = 1,2$, there exists a tree decomposition $(T_i, \zX_i)$ with root $r_i$ such that for every $t \in V(T_i)$, there exists an $4\alpha$-near embedding $(\delta_t, A_t)$.  Let $T$ be the tree formed by the union of $T_1$ and $T_2$ along with a new vertex $r$ adjacent to $r_1$ and $r_2$.  Let $X_r = Z \cup (U_1 \cap U_2)$ and $\zX = \zX_1 \cup \zX_2 \cup \{X_r\}$ and now $(T, \zX)$ is a tree decomposition of $G$ of adhesion at most $4\alpha$.

For every vertex $t \in V(T) \setminus \{ r_1, r_2, r\}$, the torso of $G$ at $t$ is the same as the torso of $G[U_i]$ at $t$ for $i$ equal to one or two and so has the appropriate $4\alpha$-near embedding.  For $i = 1, 2$, the torso of $G$ at $r_i$ may have additional edges in $U_1 \cap U_2$ with respect to the torso of $G[U_i]$ at $r_i$; however, as $U_1 \cap U_2 \subseteq Z_i$ and $Z_i \subseteq A_{r_i}$, we conclude that $(\delta_{r_i}, A_{r_i})$ is a $4\alpha$-near embedding of the torso of $G$ at $r_i$ for $i = 1, 2$.  Finally, as $|X_r| \le 4\alpha$, the torso of $G$ at $r$ has a trivial $4\alpha$-near embedding with $A_r = Z \cup (U_1 \cap U_2)$.  Thus, $G$ does not form a counterexample to the theorem, a contradiction.
\end{cproof}

Let $\zT$ be the set of ordered separations $(U_1, U_2)$ of order at most $\alpha -1$ such that $|U_2 \cap Z| > \alpha$.  By Claim \ref{cl:global1}, for every separation $(U_1, U_2)$ of order at most $\alpha - 1$, exactly one of $(U_1, U_2)$ and $(U_2, U_1)$ is in $\zT$.  Moreover, given three separations $(U^1_1, U^1_2), (U^2_1, U^2_2), (U^3_1, U^3_2) \in \zT$, as $U^j_1$ contains at most $\alpha-1$ vertices of $Z$ for each $j = 1, 2, 3$, we have that at least one vertex of $Z$ is not contained in $U_1^1 \cup U_1^2 \cup U_1^3$ and thus $G \neq G[U_1^1]\cup G[U_1^2] \cup G[U_1^3]$.  We conclude that $\zT$ is a tangle of order $\alpha$.

By Theorem \ref{thm:RS_struct_exact} and the fact that $G$ has no $K_p$ minor, there exists a surface $\Sigma$ of genus at most $p(p+1)$, a subset $A \subseteq V(G)$, $|A| \le 5p^2 \cdot p^{10p^7p^{26}}$ and a $\Sigma$-decomposition $\rho$ of $G-A$ of breadth $t \le 2p^2$ and depth at most $p^{10^7p^{26}}$ which is $\zT - A$ central.  We break $G$ up into subgraphs based on the decomposition $\rho$, and using $\rho$ along with the minimality of $G$, find the desired tree decomposition for each subgraph.  We will do in such a way that the tree decompositions of the subgraphs can be combined to get a tree decomposition of the whole graph $G$, contradicting our choice of counterexample.

We begin by decomposing the vortices of $\rho$.  For every $1 \le i \le t$, let $c_1, \dots, c_t$ be the vortices of $\rho$, let $(G_i, \Omega_i)$ be the vortex society of $c_i$, and let $n(i) = |V(\Omega_i)|$.  Fix a linear order of $V(\Omega_i)$ $v^i_1,\dots, v^i_{n(i)}$ and a linear decomposition $(Y_1^i, \dots, Y_{n(i)}^i)$ of depth at most $p^{10^7p^{26}}$ for each $i$.  For all $1 \le i \le t$, $1 \le j \le n(i)$, fix $\bar{Y}_j^i = (Y^i_j \cap Y^i_{j-1}) \cup  (Y^i_j \cap Y^i_{j+1}) \cup \{v^i_j\}$ where $Y^i_0 = Y^i_{n(i) + 1} = \emptyset$.  Thus, every vertex of $V(G_i)$ which is not contained in $\bar{Y}^i_j$ for all indices $j$ must be a vertex of $V(G_i) \setminus V(\Omega_i)$ which is contained in exactly one set $Y^i_{j'}$ for some index $j'$.

Let $H^i_j$ be the induced subgraph $G_i[Y^i_j \cup A]$.  Let $W^i = \bigcup_{j = 1}^{n(i)} \bar{Y}_j^i$.  Note that $(\bar{Y}_1^i, \dots, \bar{Y}_{n(i)}^i)$ defines a linear decomposition of $(G_i[W^i], \Omega_i)$ of width at most $2p^{10^7p^{26}} + 1$.  By construction, for all $i$, $1 \le i \le t$ and for all $j$, $1 \le j \le n(i)$, $(V(G) \setminus (Y^i_j \setminus \bar{Y}^i_j), Y^i_j \cup A)$ is a separation of $G$ of order at most $|A| + |\bar{Y}^i_j| \le 2p^{10^7p^{26}} + 1 + 5p^2p^{10^7p^{26}} < 6p^2p^{10^7p^{26}} < \alpha$.  Thus, as $\rho$ is $\zT - A$ central, there are at most $6p^2p^{10^7p^{26}}$ vertices of $Z$ contained in $Y^i_j$.

For every $c \in C(\rho)$ such that $c$ is not a vortex, let $H^c$ be the subgraph of $G$ with vertex set $V(\sigma(c)) \cup A$ and edge set $E(\sigma(c))$ along with every edge with one endpoint in $A$ and one endpoint in $V(\sigma(c))$.  As above, $(V(G) \setminus (V(\sigma(c)) \setminus \pi(N)), V(H^c))$ is a separation of $G$ of order at most $|A| + 3 < 3p^2p^{10^7p^{26}}$ and we conclude that $H^c$ contains at most $3p^2p^{10^7p^{26}}$ vertices of $Z$.

Finally, let $H_0$ be the subgraph of $G$ induced by $\pi(N(\rho)) \cup A \cup Z \cup \bigcup_{i=1}^t W^i$.  Note that $\rho$ restricted to $H_0$ yields a $4\alpha$-near embedding $(\rho', A \cup Z)$ of breadth $t$ and depth at most $2p^{10^7p^{26}} + 1$ in a surface $\Sigma$ of genus at most $p(p+1)$.

For every $1 \le i \le t$ and for every $1 \le j \le n(i)$, by minimality, $H^i_j$ has a tree decomposition $(T^i_j, \zX^i_j)$ such that the torso at each vertex $x \in V(T^i_j)$ has a $4\alpha$-near embedding.  Moreover, there is a root $r^i_j$ of $T^i_j$ such that the deleted set of vertices in the near embedding of the torso at $r^i_j$ contains $A \cup \bar{Y}^i_j \cup  (Z\cap V(H^i_j)) = V(H^i_j) \cap V(H_0)$.  Similarly, for every $c \in C(\rho)$ which is not a vortex, the graph $H^c$ has a tree decomposition $(T^c, \zX^c)$ with root $r^c$ such that the deleted set of the near embedding at the torso of $r^c$ contains $A \cup (Z \cap V(H^c)) \cup (V(\sigma(c)) \cap \pi(N)) = V(H^c) \cap V(H_0)$.

We construct a tree decomposition for $G$ as follows.  Add a new vertex $r_0$ with $X_{r_0} = V(H_0)$ and define a new tree $T$ obtained by adding an edge from $r$ to $r^i_j$ for all values of $i$ and $j$ and from $r$ to $r^c$ for all non-vortex cells $c \in C(\rho)$.  Let $\zX = \bigcup_1^t\bigcup_1^{n(i)} \zX^i_j \cup \bigcup_{c \in C(\rho), \text{ $c$ not a vortex}} \zX^c \cup \{X_0\}$.
Note that the near embedding $\rho'$ we found above is also a near embedding for the torso of $G$ at $r$ by the careful choice of $(V(H_0) \cap V(H^i_j)) \setminus (A \cup Z)$ and $V(H_0) \cap V(H^c)) \setminus (A \cup Z)$.  We conclude that $(T, \zX)$ satisfies the conclusions of the theorem, contradicting our choice of counterexample.
\end{proof}

\section{Sufficiency}
\label{sec:suffic}

In this section we prove Theorem~\ref{thm:mainconverse}.
Our objective is to prove that if a graph satisfies the structure of Theorem~\ref{thm:main} even for
$r=0$, then $G$ has no $K_{p'}$ minor for some $p'$.
We need the following lemma.

\begin{LE}
\label{lem:mainconverse}
Let $b,g,p\ge 0$ be integers, let  $\Sigma$  be a surface of Euler genus at most $g$, $\Delta_1,\Delta_2,\ldots,\Delta_b\subseteq\Sigma$
be disjoint closed disks, and let $H_0$ be a graph drawn in $\Sigma$ in such a way that every vertex is
drawn in the boundary of one of the disks $\Delta_i$ and every edge is disjoint from the interiors of the disks.
For $i=1,2,\ldots,b$ let $(H_i,\Omega_i)$ be a society such that $V(H_i)=V(\Omega_i)$ consists of the vertices
of $H_0$ drawn in the boundary of $\Delta_i$ and the order of $\Omega_i$ is determined by the order on the boundary of
$\Delta_i$.  Assume that $(H_i, \Omega_i)$ has depth at most $d$ for all $i$.
Let $H:=H_0\cup H_1\cup\cdots\cup H_b$.
If $H$ has a $K_p$ minor, then
$$p\le 200(2g+b)^3d.$$
\end{LE}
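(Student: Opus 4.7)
The plan is to reduce the problem to a surface embedding bound by exploiting Theorem \ref{thm:bdeddepthvortex} to replace each depth-$d$ vortex with a low-complexity gadget, and then apply Euler's formula.

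First I would apply Theorem \ref{thm:bdeddepthvortex} to each vortex society $(H_i,\Omega_i)$ to obtain a linear decomposition $(X_1^i,X_2^i,\ldots,X_{n_i}^i)$ of adhesion at most $d$, where $v_1^i,\ldots,v_{n_i}^i$ is the enumeration of $V(\Omega_i)$ matching the cyclic order of $\Omega_i$. Fix a model of the $K_p$ minor with branch sets $B_1,\ldots,B_p$. Since each $B_j$ is connected in $H$ and $(X_1^i,\ldots,X_{n_i}^i)$ is a linear decomposition, the set $\{k : B_j\cap X_k^i\ne\emptyset\}$ is an interval $[a_j^i,b_j^i]$ whenever nonempty. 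In particular, the vertices of $B_j\cap V(\Omega_i)$ lie within a single arc of the cyclic boundary $\partial\Delta_i$ (after cutting the cycle at $v_1^i$).

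Second, I would construct an auxiliary multigraph $\tilde H$ drawn in $\Sigma$ with vertex set $\{w_1,\ldots,w_p\}$ (one per branch set) and one edge per pair of adjacent branch sets, realized as a curve tracing through a witness edge together with connectors inside the two branch sets. For edges that travel through a vortex $H_i$, the key point is that at each separator $S_k^i:=X_k^i\cap X_{k+1}^i$ of size at most $d$, only $\le d$ branch sets can have a nontrivial portion straddling the cut inside $H_i$, because any such $B_j$ must use a vertex of $S_k^i$ to connect its left and right halves within the vortex. This allows me to push the routing of $\tilde H$-edges into at most $d$ parallel ``lanes'' per vortex, so that $\tilde H$ can be drawn in a surface obtained from $\Sigma$ by adding $O(bd)$ handles or crosscaps to absorb the routing overlaps.

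Third, I would apply the Euler-formula bound: a simple graph drawn on a surface of Euler genus $g^\star$ with $n$ vertices has at most $3n+3g^\star-6$ edges. Applied to $\tilde H$, which contains all $\binom{p}{2}$ edges of $K_p$ drawn in a surface of genus $g^\star=O((g+b)d)$ (after the routing construction above), this yields $\binom{p}{2}\le 3p+3g^\star-6$ and hence a bound on $p$ that is polynomial in $g+b$ and linear in $d$. The cubic factor $(2g+b)^3$ in the statement provides ample slack for the constants and for the lossy steps in the routing argument.

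The hard part will be the second step: carefully routing the edges of $\tilde H$ through each vortex without creating uncontrolled topological features. The cyclic structure of each $\partial \Delta_i$, together with the fact that the arcs corresponding to different branch sets may nest and interleave in complicated ways, means that the routing must be performed with care so that the extra genus introduced per vortex is indeed controlled by $d$. The adhesion bound from Theorem \ref{thm:bdeddepthvortex} is the single numerical input that makes this control possible, and the generous cubic factor in $(2g+b)^3$ allows us to tolerate a non-optimal accounting in the routing.
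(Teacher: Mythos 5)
There is a genuine gap, and it sits exactly where you flag the difficulty: the routing step. Your claim that the edges of $\tilde H$ passing through a vortex can be confined to at most $d$ ``lanes'', so that $\tilde H$ embeds after adding only $O(bd)$ handles or crosscaps, is not justified and is in fact false. The adhesion bound from Theorem \ref{thm:bdeddepthvortex} controls the number of \emph{disjoint paths} crossing a separator, but the quantity that drives your Euler count is the number of pairwise \emph{adjacencies} between branch sets realized inside a vortex, and that can be as large as $\binom{p}{2}$. Concretely, take $g=0$, $b=1$, and let $H_1=K_n$ with all vertices on $\partial\Delta_1$ (and $H_0$ edgeless). Every $\Omega_1$-path is a single edge, so the depth of $(H_1,\Omega_1)$ is about $n/2$, i.e.\ $d\approx n/2$, while $H$ itself is $K_n$, so $p=2d$. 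Any drawing of $\tilde H\supseteq K_p$ needs Euler genus $\Theta(p^2)=\Theta(d^2)$, so the extra genus per vortex is quadratic in $d$, not $O(d)$. This also kills the final step as stated: Euler's formula gives $p=O(\sqrt{g^\star})$, and with your claimed $g^\star=O((g+b)d)$ this would force $2d\le O(\sqrt d)$ in the example above. To recover a bound linear in $d$ you would need $g^\star$ quadratic in $d$, and proving \emph{any} such control on the routing genus is essentially the entire content of the lemma, which the proposal defers. A secondary issue: a branch set whose trace in a vortex straddles a separator $S_k^i$ need not use a vertex of $S_k^i$, since it may reconnect through $H_0$ outside the disk (or through another vortex), so even the interval/arc structure you rely on is not automatic.

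For comparison, the paper does not use Theorem \ref{thm:bdeddepthvortex} here at all (that result enters later, in the proof of Theorem \ref{thm:main}, to split vortices). Instead it proves a strengthened statement $(*)$ by induction on $|V(H)|$: when $b\ge2$ it merges two disks along a connecting edge (deleting its two ends), when some edge together with $\Delta_1$ carries a non-contractible curve it cuts the surface open after removing a Menger separator of size $O(t^2d)$, and in the remaining planar case it triangulates outside a Hamilton cycle and runs a counting argument over faces to extract a transaction that is too large. If you want to salvage your approach, you would need to replace the $O(bd)$-genus routing claim with an argument that bounds the adjacencies realizable inside a depth-$d$ vortex directly (in effect re-proving the planar case of the paper's induction), at which point the surface-surgery bookkeeping still has to be done for the non-contractible curves, which is what the paper's first two induction cases handle.
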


\begin{proof}

We prove a stronger statement to assist with the induction.

\begin{itemize}
\item[($*$)]{\it
Let $b, g, p \ge 0$ and $\Sigma$ a surface of Euler of genus at most $g$ the disks $\Delta_1, \dots, \Delta_b \subseteq \Sigma$, subgraphs $H_0, H_1, \dots, H_b$, and societies $(H_i, \Omega_i)$ be as in the statement of Lemma \ref{lem:mainconverse}.  Let $H = \bigcup_0^b H_i$.  Let $D \ge 1, T \ge 10(2g + b)$ be integers and assume that there exists a partition $X_1, \dots, X_t$ of $V(H)$, $t \le T$, with the following properties.  For all $1 \le i \le t$ there exists an index $j$ such that $X_i$ is a segment of $\Omega_j$ and every transction in $(H_j, \Omega_j)$ with all endpoints in $X_i$ has order at most $D$.  Moreover, for every pair of indices $i$ and $j$, there exist at most $D$ edges of $E(H) \setminus E(H_0)$ with one endpoint in $X_i$ and the other endpoint in $X_j$.  Then if $H$ contains $K_p$ as a minor, we have that
$$p \le 2(2g+b) T^2 D.$$
}
\end{itemize}

Assume that $(*)$ is false, and pick a counterexample $b, g, p, \Sigma$, $\Delta_1, \dots, \Delta_b$, $H_0, \dots, H_b$, $T$, $D$, $X_1, \dots, X_t$ to minimize $|V(H)|$.  Thus, we may fix $p > 2(2g+b)T^2D$ such that $H$ has a $K_p$ minor, violating the desired bound in the claim.

Proposition $(*)$ clearly holds when $H$ is the null graph, and hence $H$ has at least one vertex.  As $H$ is non-empty, we have that $b$ is at least one, and so $T \ge 10$ and $p \ge 200$.  Moreover, if $H$ were not connected, a single connected component would still satisfy the conditions of $(*)$ and contain $K_p$ as a minor, contradicting our choice of counterexample to minimize the number of vertices.  Thus, we have that $H$ is a connected~ graph.

As a first case, assume that $b\ge2$. Since $H$ is connected, there exists an edge $uv\in E(H)$ such that $u,v$ belong
to different graphs $H_i$, say $u\in V(H_{b-1})$ and $v\in V(H_b)$.
Let $H':=H - \{u,v\}$; we wish to define disks and a partition of the vertices on the boundary of the disks in order to apply the minimality of our counterexample to bound the maximum size of a clique minor in $H'$.
To that end we need to merge the disks $\Delta_{b-1}$ and $\Delta_b$ by ``blowing up" the edge $uv$.
 More precisely, let $\Lambda$ be a small closed neighborhood of (the drawing of) the edge $uv$ such that
\begin{itemize}
\item $\Lambda$ is disjoint from all vertices and edges of $H_0\setminus\{u,v\}$,
\item $\Lambda\cap\Delta_i=\emptyset$ for all $i=1,2,\ldots,b-2$, and
\item $\Lambda\cap\Delta_i$ homeomorphic to the closed unit interval for $i=b-1,b$.
\end{itemize}
Then $\Delta_{b+1}:=\Delta_{b-1}\cup\Lambda\cup\Delta_b$ is a closed disk disjoint from $\Delta_1, \dots, \Delta_{b-2}$.  The vertices of $H'$ are drawn on the boundary of $\Delta_1, \dots, \Delta_{b-2}, \Delta_{b+1}$ and no edge has an internal point which intersects the boundary of one of those disks.  Let $H_{b+1}$ be the subgraph of $H'$ drawn in the disc $\Delta_{b+1}$ and let $(H_{b+1}, \Omega_{b+1})$ be the society drawn in $\Delta_{b+1}$ as in the statement.   We define a partition $X_1', \dots, X_{t'}'$ for some $t' \le t + 2$ as follows.  If $X_i$ is disjoint from $\{u,v\}$, we let $X_i' = X_i$.  If $X_i$ contains $u$ or $v$, we replace $X_i$ by either one or two segments of $\Omega_{b+1}$ partitioning $X_i$.  Note that as each $X_i'$ is a subset of some $X_j$, we still have the property that at most $D$ edges of $E(H') \setminus E(H_0)$ have endpoints in distinct $X_i'$ and $X_j'$.  The graph $H'$ contains $K_{p-2}$ as a minor.  By the minimality of our counterexample, we have that
\begin{align*}
p-2 &\le 2(2g + b -1) (T+2)^2 D\\
& = \left[ 2(2g + b) - 2\right] \cdot \left[ T^2 + ( 4T + 4) \right ] D \\
& = 2(2g + b) T^2 D + \left [ (4T + 4) 2(2g+b) - 2T^2 \right]D - (8T + 8)D\\
& \le 2(2g + b) T^2 D + \left [ 10T(2g+b) - 2T^2 \right]D - (8T + 8)D\\
& < 2(2g + b)T^2 D -2,
\end{align*}
contradicting the bounds on $p$.  Note that the last inequality holds by the assumption that $T \ge 10(2g + b)$.
We conclude that $b=1$ and that all the vertices of $H$ lie on the boundary of $\Delta_1$.

As a second case let us assume that $H_0$  has an edge $e=uv$ such that $\Delta_1\cup e$ includes the image of
a non-null-homotopic  simple closed curve, say $\phi$.
The image of $\phi$ divides $\Delta_1$ into two closed disks, say $\Delta'$ and $\Delta''$.
Let $\Sigma'$ be the  manifold obtained by cutting $\Sigma$ open along the curve $\phi$
and attaching disk(s) and  to the resulting boundary component(s).
We will regard $\Delta'$ and $\Delta''$ as subsets of $\Sigma'$.
Let $Z'$ be the set of vertices of $H - \{u,v\}$  that lie on $\Delta'$ and let $Z''$ be the set of those that lie  on $\Delta''$.
Since there exist at most $D$ disjoint paths in $H_1$ between any pair of segments $X_i$ and $X_j$, we have that there are at most  $t^2D$ disjoint paths in $H_1 - \{u,v\}$ from $Z'$
to $Z''$.
By Menger's theorem there exists a subset of $V(H_1) \setminus \{u,v\}$ of size at most $t^2D$ that separates $Z'$ from $Z''$.
 By adding $u$ and $v$ to that set we arrive at a set $Y\subseteq V(H_1)$ of size at most $t^2D+2$ such that $u,v\in Y$
and the graph $H_1 -  Y$
 can be written as $H_1 - Y =H'\cup H''$, where $V(H')\subseteq Z'$ and $V(H'')\subseteq Z''$.
Let $\Omega'$ be the cyclic ordering of $V(H')$ determined by the order of appearance on the boundary  of $\Delta'$,
and let $\Omega''$ be defined analogously.  Let $X_1', \dots, X_{t'}'$ be the partition of $V(H_1) \setminus Y$ into segments of $\Omega'$ and $\Omega''$ obtained by restricting the segments $X_1, \dots, X_t$ to $\Omega'$ and $\Omega''$.  Note that at most two $X_i$ have vertices in both $V(\Omega')$ and $V(\Omega'')$, and so we have that $t' \le t+2$.  As each $X_i'$ is a subset of some $X_j$, we have that there are most $D$ edges of $H'$ or $H''$ with endpoints in distinct segments of $X_1', \dots, X_{t'}'$.
The graph $H\setminus Y$ has a $K_{p-t^2D-2}$ minor.

Assume as a case that $\Sigma'$ is connected; then it is a surface of Euler genus at most $g-1$.  By the minimality of $G$, using the inequalities above we have that
\begin{align*}
p - t^2D - 2 & \le 2(2(g-1) + b+ 1)(T + 2)^2D \\
& \le 2(2g + b) T^2 D + \left [ 10T(2g+b) - 2T^2 \right]D - (8T + 8)D\\
& \le 2(2g + b) T^2 D  - T^2 D - 2, \\
\end{align*}
a contradiction.

We may therefore assume that $\Sigma'$ is not connected. In that case $\Sigma'$ is the union of two surfaces,
each of  genus at most $g-1$, where each of the two surfaces embeds a component of $H\setminus Y$.
One of those components  has a $K_{p-t^2D-2}$ minor, and  the lemma follows by the same argument as above.  This completes the analysis of the case that $H_0$  has an edge $uv$ such that $\Delta_1 \cup uv$ includes the image of a non-null-homotopic simple closed curve,
and so we conclude that such an edge does not exist.

The nonexistence of an edge of the previous paragraph implies that the graph $H$ and disk  $\Delta_1$ are
contained in a disk.
We may therefore assume that $\Sigma$ is the sphere.
We may also assume that the graph $H_0$  is edge-maximal. Thus it has a Hamilton cycle
that traces the vertex-set
of $H_0$ in the same order as $\Omega_1$. The Hamilton cycle  bounds a face of $H_0$, say $f_1$, that includes the interior of $\Delta_1$.
Every face of $H_0$ other than $f_1$ is bounded by a triangle.
Let $f_0$ be a face of $H_0$  other than $f_1$.
Let $Y_1,Y_2,\ldots,Y_p$ be a model of a $K_p$ minor in $H$.
Let $e\in E(H_0)$. The complement of the point set $\Delta_1\cup e$  consists of two connected components,
each homeomorphic to an open disk.
Let $\Lambda_e$ denote the closure of the component which is disjoint from $f_0$,  let $I_e$ be the set of
integers $i\in\{1,2,\ldots,p\}$ such that $\Lambda_e$ includes a vertex of $Y_i$ that is not an endpoint of $e$,
and  let $J_e$ be the set of
integers $i\in\{1,2,\ldots,p\}$ such that the complement of $\Lambda_e$ includes a vertex of $Y_i$.
If $e_1,e_2,e_3$ are the three edges incident with $f_0$, then $I_{e_1}\cup I_{e_2}\cup I_{e_3}$ includes all but possibly
three elements of the set $\{1,2,\ldots,p\}$.
It follows that there exists an edge $e\in E(H_0)$  such that $|I_e|\ge p/3-1$, and so we may choose such an edge
so that $\Lambda_e$ is minimal with respect to inclusion. We claim that $|J_e|\ge p/3-1$.
To prove the claim suppose for a contradiction that $|J_e|< p/3-1$.
Thus $|I_e|\ge p-|J_e|-2>2p/3-1$.  Since $p\ge2$, it  follows that $I_e$ is non-empty and thus $e\not\in E(C)$.
The edge $e$ is incident with a face $f$ of $H_0$ such that $f\ne f_1$ and $f\subseteq\Lambda_e$.
Let $e',e''$ be the other two edges incident with $f$. It follows that $\Lambda_{e'}\cup\Lambda_{e''}\subseteq\Lambda_e$
and $|I_{e'}|+|I_{e''}|\ge|I_e|-1\ge2p/3-2$. Thus one of the edges $e',e''$ contradicts the choice of $e$.
This proves our claim that $|J_e|\ge p/3-1$.

Let $k:=\lfloor p/3\rfloor-1$, let $A$ be the set of vertices of $H$ that belong to $\Lambda_e$, except the endpoints of $e$,
and  let $B$ be the set of vertices of $H$ that do not belong to $\Lambda_e$.
Since $|I_e|,|J_e|\ge p/3-1$, the existence of the model of the $K_p$  minor implies that there exist $k$ disjoint paths
in $H$ from $A$ to $B$.  Of those paths, at least $k-2$  do not use either an endpoint of $e$, and therefore may be chosen
to be of length one. The presence of the edge $e$ implies that those $k-2$ paths are  subgraphs of $H_1$,
and hence they form a transaction $\zP$ in the society $(H_1,\Omega_1)$.   As $g = 0$ and $b = 1$, we conclude that
$$|\zP| \ge \left \lfloor \frac{2T^2D + 1}{3} \right \rfloor -3 \ge \frac{2T^2D}{3} -4.$$

If we consider the two segments of $\Omega_1$ defined by the endpoints of the edge $e$, one intersects at most $T/2 + 1$ distinct $X_i$ because at most two $X_i$ contain an endpoint of $e$ and intersect both segments of $\Omega_1$ defined by $e$.  As there are at most $D$ elements of $\zP$ between each pair of segments $X_i$, we have that
$$|\zP| \le \left ( \frac{T+2}{2} \right) T \cdot D,$$
implying
$$\frac{2T^2D}{3} -4 \le \left ( \frac{T+2}{2} \right) T \cdot D,$$
a contradition given that $T \ge 10$.  This completes the proof of $(*)$.

The lemma holds now applying $(*)$ with $T = 10(2g + b)$, $D = d$, and the partition $X_i = V(\Omega_i)$ for $1 \le i \le b$.
\end{proof}

Lemma \ref{lem:mainconverse} quickly bounds the size of a clique minor in a graph admitting a $\Sigma$-decomposition in terms of the breadth and depth of the decomposition along with the genus of the surface.


\begin{LE}\label{lem:mainconverse2}
Let $g, b, d, p \ge 0$ be integers.  Let $G$ be a graph, and let $\Sigma$ be a surface of Euler genus $g$.  Assume that $G$ has a $\Sigma$-decomposition $\delta = (\Gamma, \zD)$ of breadth $b$ and depth $d$.  Let $X_1, \dots, X_p$ be a model of a $K_p$ minor in $G$ and assume that for every cell $c \in C(\delta)$ and for all $1 \le i \le p$, $X_i \nsubseteq V(\sigma(c)) \setminus \widetilde{c}$.  Then $$p \le 4 + \sqrt{6g}  + 200(2g + b)^3d.$$
\end{LE}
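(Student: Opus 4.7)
My plan is to partition the branch sets according to whether they meet a vortex, bounding each class separately: purely non-vortex sets by the classical Heawood bound for $K_p$-minors of graphs embedded in a fixed surface, and vortex-touching sets by Lemma~\ref{lem:mainconverse}. To set things up, let $c_1,\ldots,c_b$ be the vortex cells of $\delta$ with closed disks $\Delta_1,\ldots,\Delta_b$. Since a non-vortex cell $c$ has $|\widetilde c|\le 3$, its society $(\sigma(c),\widetilde c)$ has no cross trivially, so Theorem~\ref{thm:crossreduct} lets me redraw $\sigma(c)$ cross-free inside $c$. The subgraph $G_0$ of $G$ obtained by deleting all vertices and edges drawn in vortex-cell interiors then embeds without crossings in $\Sigma-\bigcup_i\operatorname{int}(\Delta_i)$, with $\widetilde{c_i}$ on $\partial\Delta_i$. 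The hypothesis on each $X_l$ guarantees that $X_l$ meets some node of $\delta$: if $X_l$ spans two cells, connectedness of $G[X_l]$ forces a vertex on their shared boundary; otherwise $X_l\not\subseteq V(\sigma(c))\setminus\widetilde c$ delivers one directly. Partition $[p]=I_P\sqcup I_V$, where $l\in I_P$ iff $X_l\subseteq V(G_0)\setminus\bigcup_i\widetilde{c_i}$.

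For $l\in I_P$, both $X_l$ and any inter-branch-set edge to some $X_{l'}$ with $l'\in I_P$ live entirely in $V(G_0)\setminus\bigcup_i\widetilde{c_i}$, so by (D4) every such edge is drawn in a non-vortex cell and hence belongs to $G_0$. Thus $G_0$ carries a $K_{|I_P|}$-minor, and since $G_0$ embeds in a surface of Euler genus at most $g$, the standard inequality $(p'-3)(p'-4)\le 6g$ for complete minors of surface-embeddable graphs gives $|I_P|\le 4+\sqrt{6g}$.

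To bound $|I_V|$ I will apply Lemma~\ref{lem:mainconverse}. Take $H_i:=\sigma(c_i)$ with society $(H_i,\widetilde{c_i})$, which has depth at most $d$ by hypothesis. Define $H_0$ to be the graph drawn on $\Sigma-\bigcup_i\operatorname{int}(\Delta_i)$ with vertex set $\bigcup_i\widetilde{c_i}$ (all on disk boundaries) and with an edge $uv$, drawn along its realizing path, whenever $G_0$ contains a $u$-to-$v$ path internally disjoint from $\bigcup_i\widetilde{c_i}$; the embedding of $G_0$ makes this drawing cross-free. Set $X_l^H:=X_l\cap V(H)$ for $l\in I_V$, which is nonempty by the definition of $I_V$. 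Any $G[X_l]$-path between two vertices of $X_l^H$ that leaves and re-enters the vortex region decomposes at each vortex-boundary crossing into $G_0$-segments internally disjoint from $\bigcup_i\widetilde{c_i}$, each of which is an $H_0$-edge whose endpoints already lie in $X_l\cap\bigcup_i\widetilde{c_i}\subseteq X_l^H$; hence $H[X_l^H]$ is connected. The same decomposition realizes every $K_p$-edge between distinct $X_l,X_{l'}$ (for $l,l'\in I_V$) as either an edge of some $H_i$ or a walk in $H_0$ using only vertices of $X_l^H\cup X_{l'}^H$. The upshot is a $K_{|I_V|}$-minor of $H$, so Lemma~\ref{lem:mainconverse} yields $|I_V|\le 200(2g+b)^3 d$, and adding the two bounds proves the claim. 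The hard part will be the bookkeeping in this last construction: one must verify simultaneously that the shortcut $H_0$-edges used to restore connectivity of each $X_l^H$ and the $H$-edges used to realize the $K_p$-structure across different branch sets can be chosen without overlap, which should follow from the disjointness of the original $X_l$'s together with the fact that every entry/exit point of $G[X_l]$ into the vortex region automatically belongs to $X_l^H$.
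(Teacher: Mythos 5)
Your overall split---bounding the branch sets that avoid the vortices by the Euler-formula bound $4+\sqrt{6g}$, and the remaining ones by Lemma~\ref{lem:mainconverse}---is the same division the paper makes, and the first half of your argument is sound. The second half, however, has two genuine gaps. First, Lemma~\ref{lem:mainconverse} requires $V(H_i)=V(\Omega_i)$: the vortex graphs in that lemma have \emph{all} of their vertices on the disk boundaries. Your choice $H_i:=\sigma(c_i)$ will in general have vertices drawn in the interior of $\Delta_i$, so the lemma does not apply to your $H$; extending it to vortices with interior vertices is essentially the content of Lemma~\ref{lem:mainconverse2} itself, so it cannot simply be quoted. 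Second, your $H_0$---one edge for every pair $u,v$ of boundary nodes joined by a $G_0$-path internally disjoint from $\bigcup_i\widetilde{c_i}$---is not cross-free, contrary to your claim and to the hypothesis of Lemma~\ref{lem:mainconverse} that $H_0$ is drawn in $\Sigma$ with its edges outside the disks: a single vertex of $G_0$ outside the disks adjacent to four nodes $u_1,u_2,u_3,u_4$ occurring in this order on the boundary of one vortex already forces the edges $u_1u_3$ and $u_2u_4$ of your $H_0$ to cross. The ``bookkeeping'' you defer at the end is therefore not routine; selecting which shortcut edges to keep and routing them without crossings (and compatibly with the arcs used to reconnect each $X_l^H$) is where the real work lies.

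The paper avoids both problems by induction on $|V(G)|+|E(G)|$: whenever some cell $c$ contains an edge with at most one endpoint in $\widetilde c$ and both endpoints in one branch set, that edge is contracted (the $\Sigma$-decomposition survives, and breadth, depth and the hypothesis on the $X_i$ are preserved). This reduces to the case $V(\sigma(c))=\widetilde c$ for every cell, where every branch set drawn outside the vortex disks is a single vertex; those vertices induce a clique \emph{subgraph} embedded in $\Sigma$ (bounded by $4+\sqrt{6g}$ via Euler's formula), and the remaining graph literally satisfies the hypotheses of Lemma~\ref{lem:mainconverse}. If you wish to keep your non-inductive framing, you would need to carry out an analogous suppression of the non-boundary vertices of each branch set explicitly (for instance by contraction inside each branch set, noting that depth does not increase under contraction)---at which point you are reproducing the paper's argument.
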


\begin{proof}
We proceed by induction on $|V(G)| + |E(G)|$.  The claim clearly holds when $G$ is the empty graph, so we may assume that $G$ is non-empty and that the statement holds for all smaller graphs.  We may also assume that every vertex of $G$ is contained in one of the subsets $X_i$ by applying induction to the graph after deleting any vertex not contained in the model of the $K_p$ minor.

Fix the $\Sigma$-decomposition $\delta = (\Gamma, \zD)$ of $G$.  If there exists a cell $c \in C(\delta)$ and an edge $e \in E(\sigma(c))$ such that at most one endpoint of $e$ is in $\widetilde{c}$ and both endpoints of $e$ are contained in $X_i$ for some $i$, then there exists a $\Sigma$-decomposition of $G/e$ of breadth $b$ and depth at most $d$ obtained by contracting the edge $e$ in the drawing $\Gamma$.  By induction, the desired bound holds on $p$.

We conclude that for every cell $c$, $V(\sigma(c)) = \widetilde{c}$.  Otherwise, any vertex $v$ of $V(\sigma(c)) \setminus \widetilde{c}$ would be contained in some $X_i$ and as no edge incident $v$ can be contained in both $\sigma(c)$ and $X_i$, we would have that $X_i = \{v\}$, contrary to the assumption that no $X_i$ is contained in the interior of a vortex.

Let $\Delta_1, \dots, \Delta_b$ be the elements of $\zD$ for which the corresponding cell is a vortex.  The drawing $\Gamma$ consists of a drawing of $G$ with every crossing edge contained in union of the interiors of $\Delta_1, \dots, \Delta_b$.

Let $X \subseteq V(G)$ be the set of vertices of $G$ such that the image in $\Gamma$ is disjoint from $\Delta_1 \cup \dots \cup \Delta_b$.  For all $x \in X$, there exists $i$ such that $x \in X_i$.  If $X_i \neq x$, then $x$ has a neighbor $y$ in $X_i$.  The graph $G/xy$ satisfies the conditions of the theorem, and the claim holds by induction.  We conclude that $X_i = \{x\}$, and that this holds for every element of $X$.

It follows that $G[X]$ is a clique subgraph drawn in $\Sigma$.
The graph $G-X$ has every vertex drawn on the boundary of one of the $\Delta_i$, every edge internally disjoint from the boundary of the disks $\Delta_i$ and every crossing contained in the interior of some $\Delta_i$.  Thus, $G-X$ satisfies the conditions of Lemma \ref{lem:mainconverse}.  The sets $\{X_i: X_i \nsubseteq X\}$ form a model of a $K_{p-|X|}$ minor and thus $p - |X| \le 200(2g+b)^3d$.  The set $X$ induces a clique subgraph drawn in $\Sigma$.  It follows immediately (see [\cite{JW}, Lemma 2.1] and \cite{BT}) from Euler's formula that $|X| \le \sqrt{6g} + 4$, completing the proof of the lemma.
\end{proof}
We now present the proof of Theorem \ref{thm:mainconverse}.

\begin{proof}[Proof. (Theorem \ref{thm:mainconverse})]
Let $X_1, \dots, X_p$ be a model of a clique minor grasped by the $R$-wall $W$. Assume that the theorem is false and that $p > \alpha + 2bd + 4 + \sqrt{6g} + 200(2g + b)^3d$.  Note that this implies that $R > \alpha + 2bd + 4 + \sqrt{6g} + 200(2g + b)^3d$ as well.

Let $c \in C(\delta)$ be a cell of the $W$-central $\Sigma$-decomposition $\delta$ of $G-A$.  Assume for the moment that $|\widetilde{c} | \le 3$.  We claim that there does not exist an index $i$ with $X_i \subseteq V(\sigma(c)) \setminus \widetilde{c}$.  To see this, assume that such a set $X_i$ existed.  There exist $p> 3$ distinct horizontal paths and $p$ distinct vertical paths intersecting $X_i$.  As $|\widetilde{c}| \le 3$, at least one horizontal and one vertical path are contained in $\sigma(c)  - \widetilde{c}$.  In this case, $(V(\sigma(c)) \cup A, V(G) \setminus (V(\sigma(c)) \setminus \widetilde{c}))$ is a separation of $G$ order at most $|A| + 3 < R$ with the $W$-majority side contained in a cell of $\delta$, contrary to the fact that $\delta$ is $W$-central.  We conclude that no such $X_i$ is contained in $V(\sigma(c)) \setminus \widetilde{c}$.

Assume now that $c$ is a vortex of the decomposition.  Let $(H, \Omega)$ be the vortex society of $c$.  Let $X_i$ be a branch set such that $X_i \subseteq V(H) \setminus V(\Omega)$.  We claim that there exist $p-|A|$ disjoint paths in $H$ from $X_i$ to $V(\Omega)$.  If not, then there exists a separation $(Y, Z)$ of $G-A$ of order at most $p - |A| - 1$ with $X_i \subseteq Z \subseteq V(H) \setminus V(\Omega)$ and $V(\Omega) \subseteq Y$.  The separation $(Y \cup A, Z \cup A)$ is of order at most $p-1$, and as above, there must exist a horizontal path and a vertical path of $W$ contained in $G[Z]$, contradicting the fact $\delta$ is $W$-central.  Fix $\zP_i$ to be a linkage in $H$ of order $p - |A|$ from $X_i$ to $V(\Omega)$.

Let $I = \{i: X_i \subseteq V(H) \setminus V(\Omega)\}$ and assume, to reach a contradiction, that $|I| \ge 2b+1$.  Fix $S$ to be a minimal segment, by containment, of $\Omega$ such that for at least $b+1$ distinct indices $i \in I$, at least $b+2$ of the elements of $\zP_i$ have an endpoint in $S$.  Let $x$ be an endpoint of $S$.  Then for at least $b+1$ of the indices $i \in I$, at least $b+1$ of the elements of $\zP_i$ have an endpoint in $S -x$.  Moreover, by the minimality of $S$, at most $b$ indices $i$ satisfy the property that at least $b+2$ elements of $\zP_i$ have an endpoint in $S-x$.  Thus, for at least $b+1$ distinct indices $i$ of $I$, at least $b+1$ elements of $\zP_i$ have an endpoint in $V(\Omega) \setminus (S - x)$.  Note that here we are using that $p \ge 2b+3$.

We claim that there exist $b+1$ disjoint paths from $S-x$ to $V(\Omega) \setminus (S-x)$ in $H$.  If not, there exists a set $Z$ of at most $b$ vertices intersecting each such path.  Thus, there exists an index $i \in I$ such that $X_i$ is disjoint from $Z$ and $b+1$ of the elements of $\zP_i$ have an endpoint in $S-x$.  Similarly, there exists an index $i' \in I$ such that $X_{i'}$ is disjoint from $Z$ and $b+1$ elements of $\zP_{i'}$ have an endpoint in $V(\Omega) \setminus (S-x)$.  Thus, there exists a path from $S-x$ to $V(\Omega) \setminus (S-x)$ in $\zP_i \cup X_i \cup X_{i'} \cup \zP_{i'}$ using the fact that $X_i$ and $X_{i'}$ induce connected subgraphs connected by an edge.  But $Z$ was a cut set separating $S-x$ and $V(\Omega) \setminus (S-x)$, a contradiction.  We conclude that the desired $b+1$ paths exist, but this contradicts the fact that the breadth of $(H, \Omega)$ is at most $b$.  This contradiction proves that $|I| \le 2b$.

By restricting to the $X_i$ which are disjoint from $A$ and for which $X_i$ is not drawn in the interior of any vortex, we see that $G-A$ has a $K_{p'}$ minor for some $p' > 4 + \sqrt{6g} + 200(2g + b)^3d$, contradicting Lemma \ref{lem:mainconverse2} and completing the proof of the theorem.
\end{proof}

We conclude the section proving Corollary \ref{cor:mainconverse}.


\begin{proof}[Proof. (Corollary \ref{cor:mainconverse})]
Assume, to reach a contradiction, that $G$ contains a model of a $K_p$ minor $X_1, \dots, X_p$ with $p > 8R^2$.  As $p > 8R^2$, the sets $X_1, \dots, X_{8R^2}$ also form the branch sets of a model of a $4R \times 2R$-grid minor.  We conclude that $G$ contains a $2R$-wall $W$ with $V(W) \subseteq \bigcup_1^{8R^2} X_i$ such that the vertices $x_{i,j}$ of $W$ corresponding to each of the vertices $(i,j)$ in the elementary $2R$-wall have the property that $x_{i,j} \in X_{4R(j-1) + i}$.  Moreover, the horizontal paths $P_1, \dots, P_{2R}$ and vertical paths $Q_1, \dots, Q_{2R}$ have the property that $V(P_i) \cap V(Q_j) \subseteq X_{4R(i-1) + 2j -1} \cup X_{4R(i-1) + 2j}$.

We can group the $X_i$ together to find a smaller clique minor which is grasped by the wall $W$.  To form the first branch set of the smaller minor, we group together the bags $X_l$ containing the union of $P_i \cap Q_i$ for $1 \le i \le R$ down the diagonal of the wall.  Subsequent branch sets shift the diagonal to the right.  Explicitly, define $Y_l$, $1 \le l \le R$ as $Y_l = \{ X_{4r(i-1) + 2(i + l) - 1} \cup X_{4R(i-1) + 2(i + l)} : 1 \le i \le R\}$.  Thus, $Y_1, \dots, Y_R$ are the branch sets of a model of $K_R$ grasped by $W$.

By assumption, there exists a subset $A$ of at most $\alpha$ vertices such that $G-A$ has a $W$-central $\Sigma$-decomposition of breadth at most $b$ and depth at most $d$ for a surface $\Sigma$ of genus at most $g$.  The clique minor formed by $Y_1,\dots, Y_l$ contradicts the conclusion of Theorem \ref{thm:mainconverse}, completing the proof of the corollary.
\end{proof}

\begin{center}{\bf ACKNOWLEDGEMENTS} \end{center}

Robin Thomas passed away in the Spring of 2020 as we were in the final stages of preparing this paper.  He was our mentor, our colleague, and our friend.  We will miss him.


\baselineskip 11pt
\vfill
\smallrm
\noindent
This material is based upon work supported by the National Science Foundation
under Grant No.~DMS-0701077.
Any opinions, findings, and conclusions or recommendations expressed in
this material are those of the authors and do not necessarily reflect
the views of the National Science Foundation.

\end{document}